
\documentclass{amsart}
\usepackage{amssymb}

\usepackage{amscd}
\usepackage{thmdefs}



\theoremstyle{definition}
\theoremstyle{remark}
\numberwithin{equation}{section}

\begin{document}
\title[A Theta operator]{A Theta operator on Picard modular forms modulo an inert prime}
\author{Ehud de Shalit}
\address{Hebrew University, Jerusalem, Israel}
\email{deshalit@math.huji.ac.il.}
\author{Eyal Z. Goren}
\address{McGill University, Montreal, Canada}
\email{eyal.goren@mcgill.ca}
\date{April 14, 2015}
\maketitle

\begin{equation*}
\text{\emph{To the memory of Robert Coleman}}
\end{equation*}

\bigskip

H.P.F. Swinnerton-Dyer [SwD] and J.-P. Serre [Se] introduced a certain
differential operator $\theta $ on (elliptic) modular forms over $\Bbb{\bar{F%
}}_{p}.$ In terms of the $q$-expansion 
\begin{equation}
f=\sum_{n=0}^{\infty }a_{n}q^{n}
\end{equation}
($a_{n}\in \Bbb{\bar{F}}_{p}$) of such a form, $\theta $ is given by $qd/dq.$
It lifts, by the same formula, to the space of $p$-adic modular forms. This
suggests a relation with the Tate twist of the $\mod p$ Galois
representation attached to $f,$ if the latter is a Hecke eigenform.

Over $\Bbb{C},$ this operator has been considered already by Ramanujan,
where it fails to preserve modularity ``by a multiple of $E_{2}".$ Maass
modified it so that modularity is preserved, sacrificing holomorphicity.
Shimura studied Maass' differential operators on more general symmetric
domains, as well as their iterations. They have become known as
Maass-Shimura operators, and play an important role in the theory of
automorphic forms [Sh3, chapter III].

At the same time, Serre's $p$-adic operator has been studied in relation to $%
\mod p$ Galois representations, congruences between modular forms, $p$%
-adic families of modular forms and $p$-adic $L$-functions. As an example we
cite Coleman's celebrated classicality theorem, asserting that
``overconvergent modular forms of small slope are classical'' [Col]. A key
step in Coleman's original proof of that theorem was the observation that,
although the $p$-adic theta operator did not preserve the space of
overconvergent modular forms, for any $k\ge 0,\,\theta ^{k+1}$ mapped
overconvergent forms of weight $-k$ to overconvergent forms of weight $k+2.$%
\bigskip

Underlying the $p$-adic theory is Katz' geometric approach to the theta
operator, via the Gauss-Manin connection on the de Rham cohomology of the
universal elliptic curve [Ka1] [Ka2]. Broadly speaking, Katz' starting point
is the unit-root splitting of the Hodge filtration in this cohomology over
the ordinary locus. It is supposed to replace the Hodge decomposition over $%
\Bbb{C},$ which can be used to make a \emph{geometric} theory of the $%
C^{\infty }$ operators of Maass-Shimura, thereby explaining their arithmetic
significance. This approach has been adapted successfully to other Shimura
varieties of PEL type, as long as they admit a non-empty ordinary locus in
their characteristic $p$ fiber. For unitary Shimura varieties, this has been
done by Eischen [Ei1] [Ei2], if $p$ \emph{splits} in the quadratic imaginary
field (and the signature is $(n,n)$). B\"{o}cherer and Nagaoka [B-N] defined
theta operators on Siegel modular forms by studying their $q$-expansions.

The assumption that the ordinary locus is non-empty may nevertheless fail.
This is the case, for example, for Picard modular surfaces (associated with
the group $U(2,1)$) modulo a prime $p$ which is inert in the underlying
quadratic imaginary field. In this case the abelian varieties parametrized
by the open dense $\mu $-ordinary stratum [Mo] are not ordinary. More
generally, this happens for Shimura varieties associated with $U(n,m)$ if $%
n\neq m$, and $p$ is inert ([Hi1], Lemma 8.10). Another complication present
in these examples is the fact that modular forms on $U(n,m)$ admit
Fourier-Jacobi (FJ) expansions at the cusps, which are $q$-expansions with 
\emph{theta functions} as coefficients.\bigskip

One of the main goals of this paper is to define the theta operator for
Picard modular surfaces at a good inert prime, and study its properties. To
explain how we overcome the need to consider the unit-root splitting of the
cohomology of the universal abelian variety, let us re-examine the case of
the modular curve $X$ of full level $N\ge 3$ over $\Bbb{Z}_{p}$ ($(p,N)=1$).
We follow an approach of Gross [Gr], see also [An-Go], who extended it to
Hilbert modular varieties. Let $\kappa $ be a fixed algebraic closure of $%
\Bbb{F}_{p},$ and consider the geometric characteristic $p$ fiber $X_{\kappa
}.$ Let $\mathcal{A}$ be the universal elliptic curve over $Y=X\backslash C$
(the complement of the cusps) and let $\mathcal{L}=\omega _{\mathcal{A}/Y}$
be its cotangent bundle at the origin. Then $\mathcal{A}$ extends to a
semi-abelian variety over $X,$ and so does $\mathcal{L}=\omega _{\mathcal{A}%
/X}.$ By definition, a weight $k,$ level $N$ modular form over $%
\kappa ,$ is a global section of $\mathcal{L}^{k}$ over $X_{\kappa },$ i.e. 
\begin{equation}
M_{k}(N;\kappa )=H^{0}(X_{\kappa },\mathcal{L}^{k}).
\end{equation}

Let $X_{\kappa }^{ord}$ be the ordinary locus in $X_{\kappa }.$ Let $\tau
:I\rightarrow X_{\kappa }^{ord}$ be the Igusa curve of level $p$,
classifying (besides the elliptic curve $A$ and level structure classified
by $X_{\kappa }$) embeddings of finite flat group schemes $\iota :\mu
_{p}\hookrightarrow A[p]$. Let 
\begin{equation}
h\in H^{0}(X_{\kappa },\mathcal{L}^{p-1})
\end{equation}
be the Hasse invariant. As the universal $\iota :\mu _{p}\hookrightarrow 
\mathcal{A}[p]$ over $I$ induces an isomorphism 
\begin{equation}
\tau ^{*}\mathcal{L}=\omega _{\mathcal{A}/I}=\omega _{\mathcal{A}[p]/I}%
\overset{\iota ^{*}}{\simeq }\omega _{\mu _{p}/I}=\mathcal{O}_{I},
\end{equation}
the line bundle $\tau ^{*}\mathcal{L}$ is trivialized over $I$ by a
canonical section $a.$ In fact, $a^{p-1}=\tau ^{*}h.$

Now, given a $\kappa $-valued modular form $f\in H^{0}(X_{\kappa },\mathcal{L%
}^{k}),$ we consider its pull-back $\tau ^{*}f$ to $I,$ divide by $a^{k}$ to
get a function on $I,$ and take its differential 
\begin{equation}
\eta _{f}=d(\tau ^{*}f/a^{k})\in \Omega _{I}^{1}.
\end{equation}
The Gauss-Manin connection induces the Kodaira-Spencer isomorphism 
\begin{equation}
KS:\mathcal{L}^{2}\otimes \mathcal{O}(C)^{\vee }\simeq \Omega _{X}^{1}.
\end{equation}
As $\tau $ is \'{e}tale, $\Omega _{I}^{1}=\tau ^{*}\Omega _{X_{\kappa
}^{ord}}^{1}$ and we may pull $KS$ back to a similar isomorphism over $I.$
We can therefore look at 
\begin{equation}
a^{k}\cdot KS^{-1}(\eta _{f}).
\end{equation}
This is a section of $\tau ^{*}(\mathcal{L}^{k+2}\otimes \mathcal{O}%
(C)^{\vee })$ over $I.$ Since we divided and multiplied by the same power of 
$a,$ it descends to $X_{\kappa }^{ord}.$ A calculation shows that it has at
most \emph{simple} poles at the supersingular points $X_{\kappa }^{ss},$ so 
\begin{equation}
\theta (f)=h\cdot a^{k}\cdot KS^{-1}(\eta _{f})
\end{equation}
extends to a global section of $\mathcal{L}^{k+p+1}\otimes \mathcal{O}%
(C)^{\vee }$ over $X_{\kappa },$ i.e. to a cusp-form of weight $k+p+1$ and
level $N$ over $\kappa .$ Note that $\theta (f)$ and $a^{k}\cdot
KS^{-1}(\eta _{f})$ have the same $q$-expansions, since the $q$-expansion of 
$h$ is 1. It can be checked that $\theta $ coincides with the operator
denoted by $A\theta $ in [Ka2].\medskip

The absence of the unit-root splitting from the above-mentioned construction
can be ``explained'' by the use we made of the Igusa curve, which lies over
the ordinary stratum. In the case of Picard modular surfaces at an inert
prime $p,$ it is nevertheless possible to construct an ``Igusa surface''
lying over the $\mu $-ordinary part, even though the ordinary stratum (in
the usual sense) is empty. Our construction of the theta operator is based
on the same procedure, but there are now two automorphic vector bundles to
consider, a line bundle $\mathcal{L}$ and a plane bundle $\mathcal{P}.$ The
Verschiebung homomorphism allows us to project the analogue of $KS^{-1}(\eta
_{f})$ (which is a section of $\mathcal{P}\otimes \mathcal{L}$) to an
appropriate one-dimensional piece.

The resulting operator $\Theta $ enjoys all the desired properties. It has
the right effect on Fourier-Jacobi expansions, extends holomorphically
across the $1$-dimensional supersingular locus, and compares well with the
theta operators on embedded modular curves. The theory of ``theta cycles''
[Joc] even presents a surprise (see \ref{theta cycles}).\bigskip

$
\begin{tabular}{l}
\textbf{Table of Contents} \\ 
1. Background \\ 
2. Picard modular schemes modulo an inert prime \\ 
3. Modular forms modulo $p$ and the theta operator \\ 
4. Further results on $\Theta $ \\ 
5. The Igusa tower and $p$-adic modular forms
\end{tabular}
$

\bigskip

Let us now review the contents of the paper in more detail. We denote by $%
\mathcal{K}$ a quadratic imaginary field and by $\bar{S}$ a compactified
integral model of the Picard modular surface of full level $N\ge 3,$
associated with $\mathcal{K}.$ The surface $\bar{S}$ is defined over $R_{0}=%
\mathcal{O}_{\mathcal{K}}[1/2D_{\mathcal{K}}N]$ and we may consider its
reduction modulo the prime $p,$ which is assumed to be relatively prime to $%
2N$ and inert in $\mathcal{K}.$ For simplicity, fix an algebraic closure $%
\kappa $ of $R_{0}/pR_{0}$ and consider the geometric fiber $\bar{S}_{\kappa
}=\bar{S}\times _{Spec(R_{0})}Spec(\kappa ).$ Let $\mathcal{A}$ be the
universal semi-abelian variety over $\bar{S}.$ It is relatively $3$%
-dimensional, has complex multiplication by $\mathcal{O}_{\mathcal{K}},$ and
the cotangent bundle at the origin, $\omega _{\mathcal{A}/\bar{S}}$, is of
type $(2,1).$ This means that if $\Sigma :\mathcal{O}_{\mathcal{K}%
}\hookrightarrow R_{0}$ is the canonical embedding and $\bar{\Sigma}$ its
complex conjugate, then 
\begin{equation}
\omega _{\mathcal{A}/\bar{S}}=\mathcal{P}\oplus \mathcal{L},
\end{equation}
where $\mathcal{P}=\omega _{\mathcal{A}/\bar{S}}(\Sigma )$ is a plane bundle
on which $\mathcal{O}_{\mathcal{K}}$ acts via $\Sigma ,$ and $\mathcal{L}%
=\omega _{\mathcal{A}/\bar{S}}(\bar{\Sigma})$ is a line bundle on which it
acts via $\bar{\Sigma}.$ Scalar modular forms of weight $k\ge 0$ defined
over an $R_{0}$-algebra $R$ are by definition elements of 
\begin{equation}
M_{k}(N;R):=H^{0}(\bar{S}_{R},\mathcal{L}^{k}).
\end{equation}

Our main interest is in $R=\kappa .$ In this case there are homomorphisms of
vector bundles $V_{\mathcal{P}}:\mathcal{P}\rightarrow \mathcal{L}^{(p)}$
and $V_{\mathcal{L}}:\mathcal{L}\rightarrow \mathcal{P}^{(p)}$ deduced from
the Verschiebung homomorphism. Here, for any vector bundle $\mathcal{V}$
over $\bar{S}_{\kappa },$ $\mathcal{V}^{(p)}$ stands for its base-change
under the absolute Frobenius morphism of degree $p,\,\Phi :\bar{S}_{\kappa
}\rightarrow \bar{S}_{\kappa }.$ The \emph{Hasse invariant} is the map 
\begin{equation}
h_{\bar{\Sigma}}=V_{\mathcal{P}}^{(p)}\circ V_{\mathcal{L}}:\mathcal{L}%
\rightarrow \mathcal{L}^{(p^{2})}.
\end{equation}
Since $\mathcal{L}$ is a line bundle, $\mathcal{L}^{(p)}\simeq \mathcal{L}%
^{p}$, so $h_{\bar{\Sigma}}\in H^{0}(\bar{S}_{\kappa },\mathcal{L}%
^{p^{2}-1}) $ is a modular form of weight $p^{2}-1$ over $\kappa .$ The
divisor of $h_{\bar{\Sigma}}$ is precisely the supersingular locus $%
S_{ss}\subset \bar{S}_{\kappa }.$ This is a reduced $1$-dimensional closed
subscheme whose geometric points $x$ are characterized by the fact that $%
\mathcal{A}_{x}$ is supersingular (the Newton polygon of its $p$-divisible
group has constant slope $1/2$). The structure of $S_{ss}$ has been
determined by Vollaard [V], following work of B\"{u}ltel and Wedhorn
[Bu-We]. Its irreducible components are curves whose normalizations are all
isomorphic to the Fermat curve of degree $p+1$. (If $N$ is large enough,
depending on $p,$ these components are even non-singular.) They intersect
transversally at finitely many points, which form the singular locus of $%
S_{ss}.$ This singular locus is also the superspecial locus $S_{ssp}$ in $%
\bar{S}_{\kappa },$ characterized by the fact that $x\in S_{ssp}$ if and
only if $\mathcal{A}_{x}$ is isomorphic to a product of three supersingular
elliptic curves. At $x\in S_{ssp}$ the maps $V_{\mathcal{P}}$ and $V_{%
\mathcal{L}}$ vanish, but over the \emph{general} supersingular locus $%
S_{gss}=S_{ss}\backslash S_{ssp}$ they are both of rank $1.$ The complement
of $S_{ss}$ in $\bar{S}_{\kappa }$ is the dense, open $\mu $-ordinary locus $%
\bar{S}_{\mu }.$ Over a $\mu $-ordinary point which does not belong to a
cuspidal component, the $p$-divisible group of $\mathcal{A}_{x}$ is a
product of a height 2 group of multiplicative type, a height 2 group of
local-local type, and a height 2 \'{e}tale group (all stable under $\mathcal{%
O}_{\mathcal{K}}$). See [dS-G] and Section \ref{p-div}.\bigskip

Section 1 is a rather thorough introduction to Picard modular surfaces and
modular forms, that will serve us also in future work. Occasionally (e.g.
when we compute the Gauss-Manin connection in the complex model), we could
not find a reference for the results in the form that was needed. We
preferred to work them out from scratch, rather than embark on a tedious
translation of notation. This section benefitted in several places from the
excellent exposition in Bella\"{i}che's thesis [Bel].\medskip

In Section 2 we review the geometry of $\bar{S}$ and the automorphic vector
bundles $\mathcal{P}$ and $\mathcal{L}$ modulo an inert prime $p$. Here we
follow [Bu-We] and [V], and the exposition in [dS-G]. We construct the Igusa
surface of level $p$. It is a finite \'{e}tale Galois cover 
\begin{equation}
\tau :\bar{I}g_{\mu }\rightarrow \bar{S}_{\mu }
\end{equation}
of the $\mu $-ordinary part in $\bar{S}_{\kappa },$ with Galois group $%
\Delta (p)=(\mathcal{O}_{\mathcal{K}}/p\mathcal{O}_{\mathcal{K}})^{\times }.$
We prove that it is relatively irreducible, and compactify it over the
supersingular locus to get a normal surface $\bar{I}g$, finite and flat over 
$\bar{S}_{\kappa },$ which is totally ramified over $S_{ss}.$ The Hasse
invariant has a tautological $p^{2}-1$ root $a$ over the whole of $\bar{I}g.$
Thus $a\in H^{0}(\bar{I}g,\tau ^{*}\mathcal{L})$ and $a^{p^{2}-1}=\tau
^{*}h_{\bar{\Sigma}}.\medskip $

In Section 3 we construct the theta operator. We pull back $f\in H^{0}(\bar{S%
}_{\kappa },\mathcal{L}^{k})$ to $\bar{I}g_{\mu },$ divide by the
non-vanishing section $a^{k}$ to get a function, and let 
\begin{equation}
\eta _{f}=d(\tau ^{*}f/a^{k})\in H^{0}(\bar{I}g_{\mu },\Omega ^{1}).
\end{equation}

The Kodaira-Spencer isomorphism over $S$ is an isomorphism of rank two
vector bundles 
\begin{equation}
KS:\mathcal{P}\otimes \mathcal{L}\simeq \Omega _{S}^{1}.
\end{equation}
When we try to extend it to $\bar{S}$ we find out that it has a pole along
the cuspidal divisor $C=\bar{S}\backslash S.$ Nevertheless, in the
characteristic $p$ fiber, the map 
\begin{equation}
(V_{\mathcal{P}}\otimes 1)\circ KS^{-1}:\Omega _{S_{\kappa }}^{1}\rightarrow 
\mathcal{L}^{(p)}\otimes \mathcal{L}=\mathcal{L}^{p+1}
\end{equation}
extends holomorphically across $C$, and even acquires a simple zero there.
We pull it back from $\bar{S}_{\mu }$ to $\bar{I}g_{\mu }$ under the
\'{e}tale map $\tau ,$ and define 
\begin{equation}
\Theta (f)=a^{k}\cdot (V_{\mathcal{P}}\otimes 1)\circ KS^{-1}(\eta _{f})\in
H^{0}(\bar{I}g_{\mu },\tau ^{*}\mathcal{L}^{k+p+1}).
\end{equation}
Thanks to the fact that we have multiplied by $a^{k},$ this section descends
to $\bar{S}_{\mu }$. A pleasant computation reveals that $\Theta (f)$ has no
poles along $S_{ss}.$ We end up with 
\begin{equation}
\Theta (f)\in H^{0}(\bar{S}_{\kappa },\mathcal{L}^{k+p+1}),
\end{equation}
a weight $k+p+1,$ level $N$ modular form over $\kappa .$

It is curious to note that in the case of modular curves, $a^{k}\cdot
KS^{-1}(\eta _{f})$ was of weight $k+2,$ but had poles at the supersingular
points, and only $\theta (f)=h\cdot a^{k}\cdot KS^{-1}(\eta _{f})$ extended
holomorphically to a weight $k+p+1$ modular form. Here, the projection $V_{%
\mathcal{P}}$ takes care of the shift by $p+1$ in the weight, and at the
same time reduces the order of the pole along $Ig_{ss}=\bar{I}g\backslash 
\bar{I}g_{\mu },$ so that $\Theta (f)$ becomes holomorphic over the whole
surface.

The ultimate justification for our construction comes when we compute the
effect of $\Theta $ on Fourier-Jacobi expansions, which is essentially a
``Tate twist''. The computation uses both $p$-adic and complex formalisms.
It may be possible to perform it entirely on the ``Mumford-Tate object''
(see Section 4.5 of [Lan] and [Ei1]), but we believe that our approach has
its own didactical merit.\medskip

In Section 4 we compare our theta operator with theta operators on embedded
modular curves. We also discuss theta cycles and filtrations on modular
forms $\mod p.\medskip $

Section 5 brings up $p$-adic modular forms in the style of Serre and Katz.
The study of overconvergent forms, intimately connected with the study of
the canonical subgroup and Coleman's classicality theorem, will be the
subject of another paper.

Many of the results of this paper, including the construction of the theta
operator, generalize to unitary Shimura varieties associated with $U(n-1,1)$
for general $n.$ Another direction in which the set-up could be generalized
is to replace $\mathcal{K}$ by an arbitrary CM field. This seems to require
substantial additional work, apart from a heavy load of notation, even if
the general lay-out would be the same. We refer the reader to [Hs] for a
detailed discussion of some of the topics treated here over general CM
fields (albeit for a split prime $p$). 

\section{Background}

\subsection{The unitary group and its symmetric space}

\subsubsection{Notation}

Let $\mathcal{K}$ be an imaginary quadratic field, contained in $\Bbb{C}.$
We denote by $\Sigma :\mathcal{K}\hookrightarrow \Bbb{C}$ the inclusion and
by $\bar{\Sigma}:\mathcal{K}\hookrightarrow \Bbb{C}$ its complex conjugate.
We use the following notation:

\begin{itemize}
\item  $d_{\mathcal{K}}$ - the square free integer such that $\mathcal{K}=%
\Bbb{Q}(\sqrt{d_{\mathcal{K}}}).$

\item  $D_{\mathcal{K}}$ - the discriminant of $\mathcal{K}$, equal to $d_{%
\mathcal{K}}$ if $d_{\mathcal{K}}\equiv 1\mod 4$ and $4d_{\mathcal{K}}$
if $d_{\mathcal{K}}\equiv 2,3\mod 4.$

\item  $\delta _{\mathcal{K}}=\sqrt{D_{\mathcal{K}}}$ - the square root with
positive imaginary part, a generator of the different of $\mathcal{K},$
sometimes simply denoted $\delta .$

\item  $\omega _{\mathcal{K}}=(1+\sqrt{d_{\mathcal{K}}})/2$ if $d_{\mathcal{K%
}}\equiv 1\mod 4,$ otherwise $\omega _{\mathcal{K}}=\sqrt{d_{\mathcal{K}%
}},$ so that $\mathcal{O}_{\mathcal{K}}=\Bbb{Z}+\Bbb{Z}\omega _{\mathcal{K}}.
$

\item  $\bar{a}$ - the complex conjugate of $a\in \mathcal{K}.$

\item  $\text{Im}_{\delta }(a)=(a-\bar{a})/\delta $, for $a\in \mathcal{K}.$
\end{itemize}

We fix an integer $N\ge 3$ (the ``tame level'') and let $R_{0}=\mathcal{O}_{%
\mathcal{K}}[1/(2d_{\mathcal{K}}N)].$ This is our \emph{base ring}. If $R$
is any $R_{0}$-algebra and $M$ is any $R$-module with $\mathcal{O}_{\mathcal{%
K}}$-action, then $M$ becomes an $\mathcal{O}_{\mathcal{K}}\otimes R$-module
and we have a canonical \emph{type decomposition} 
\begin{equation}
M=M(\Sigma )\oplus M(\bar{\Sigma})
\end{equation}
where $M(\Sigma )=e_{\Sigma }M$ and $M(\bar{\Sigma})=e_{\bar{\Sigma}}M,$ and
where the idempotents $e_{\Sigma }$ and $e_{\bar{\Sigma}}$ are defined by 
\begin{equation}
e_{\Sigma }=\frac{1\otimes 1}{2}+\frac{\delta \otimes \delta ^{-1}}{2}%
,\,\,\,\,e_{\bar{\Sigma}}=\frac{1\otimes 1}{2}-\frac{\delta \otimes \delta
^{-1}}{2}.
\end{equation}
Then $M(\Sigma )$ (resp. $M(\bar{\Sigma})$) is the part of $M$ on which $%
\mathcal{O}_{\mathcal{K}}$ acts via $\Sigma $ (resp. $\bar{\Sigma}$). The
same notation will be used for sheaves of modules on $R$-schemes, endowed
with an $\mathcal{O}_{\mathcal{K}}$ action. If $M$ is locally free, we say
that it has \emph{type }$(p,q)$ if $M(\Sigma )$ is of rank $p$ and $M(\bar{%
\Sigma})$ is of rank $q.$

We denote by 
\begin{equation}
\mathbf{T}=res_{\Bbb{Q}}^{\mathcal{K}}\Bbb{G}_{m}
\end{equation}
the non-split torus whose $\Bbb{Q}$-points are $\mathcal{K}^{\times },$ and
by $\rho $ the non-trivial automorphism of $\mathbf{T}$, which on $\Bbb{Q}$%
-points induces $\rho (a)=\bar{a}.$ The group $\Bbb{G}_{m}$ embeds in $%
\mathbf{T}$ and the homomorphism $a\mapsto a\cdot \rho (a)$ from $\mathbf{T}$
to itself factors through a homomorphism 
\begin{equation}
N:\mathbf{T}\rightarrow \Bbb{G}_{m},
\end{equation}
the \emph{norm homomorphism}. Its kernel $\ker (N)$ is denoted $\mathbf{T}%
^{1}.$

\subsubsection{The unitary group}

Let $V=\mathcal{K}^{3}$ and endow it with the hermitian pairing 
\begin{equation}
(u,v)=\,^{t}\bar{u}\left( 
\begin{array}{lll}
&  & \delta ^{-1} \\ 
& 1 &  \\ 
-\delta ^{-1} &  & 
\end{array}
\right) v.
\end{equation}
We identify $V_{\Bbb{R}}$ with $\Bbb{C}^{3}$ ($\mathcal{K}$ acting via the
natural inclusion $\Sigma $). It then becomes a hermitian space of signature 
$(2,1).$ Conversely, any $3$-dimensional hermitian space over $\mathcal{K}$
whose signature at the infinite place is $(2,1)$ is isomorphic to $V$ after
rescaling the hermitian form by a positive rational number.

Let 
\begin{equation}
\mathbf{G=GU}(V,(,))
\end{equation}
be the general unitary group of $V,$ regarded as an algebraic group over $%
\Bbb{Q}.$ For any $\Bbb{Q}$-algebra $A$ we have 
\begin{equation}
\mathbf{G}(A)=\left\{ (g,\mu )\in GL_{3}(A\otimes \mathcal{K})\otimes
A^{\times }|\,\,(gu,gv)=\mu \cdot (u,v)\,\,\forall u,v\in V_{A}\right\} .
\end{equation}

We write $G=\mathbf{G}(\Bbb{Q}),$ $G_{\infty }=\mathbf{G}(\Bbb{R})$ and $%
G_{p}=\mathbf{G}(\Bbb{Q}_{p}).$ A similar notational convention will apply
to any algebraic group over $\Bbb{Q}$ without further ado. If $p$ splits in $%
\mathcal{K}$, $\Bbb{Q}_{p}\otimes \mathcal{K}\simeq \Bbb{Q}_{p}^{2}$ and $%
G_{p}$ becomes isomorphic to $GL_{3}(\Bbb{Q}_{p})\times \Bbb{Q}_{p}^{\times
}.$ The isomorphism depends on the embedding of $\mathcal{K}$ in $\Bbb{Q}%
_{p},$ i.e. on the choice of a prime above $p$ in $\mathcal{K}.$ For a
non-split prime $p$ the group $G_{p},$ like $G_{\infty },$ is of
(semisimple) rank 1.

As $\mu $ is determined by $g$ we often abuse notation and write $g$ for the
pair $(g,\mu )$ and $\mu (g)$ for the \emph{similitude factor (multiplier)} $%
\mu .$ It is a character of algebraic groups over $\Bbb{Q},\,\mu :\mathbf{%
G\rightarrow }\Bbb{G}_{m}.$ Another character is $\det :\mathbf{G\rightarrow
T},$ defined by $\det (g,\mu )=\det (g).$ If we let 
\begin{equation}
\nu =\mu ^{-1}\cdot \det :\mathbf{G}\rightarrow \mathbf{T}
\end{equation}
then both $\mu $ and $\det $ are expressible in terms of $\nu ,$ namely $\mu
=\nu \cdot (\rho \circ \nu )$ and $\det =\nu ^{2}\cdot (\rho \circ \nu ).$

The groups 
\begin{equation}
\mathbf{U}=\ker \mu ,\,\,\,\mathbf{SU}=\ker \nu =\ker \mu \cap \ker (\det )
\end{equation}
are the unitary and the special unitary group, respectively.

We also introduce an alternating $\Bbb{Q}$-linear pairing $\left\langle
,\right\rangle :V\times V\rightarrow \Bbb{Q}$ (the \emph{polarization form})
defined by $\left\langle u,v\right\rangle =\text{Im}_{\delta }(u,v).$ We
then have the formulae 
\begin{equation}
\left\langle au,v\right\rangle =\left\langle u,\bar{a}v\right\rangle
,\,\,\,\,\,\,2(u,v)=\left\langle u,\delta v\right\rangle +\delta
\left\langle u,v\right\rangle .
\end{equation}

\subsubsection{The hermitian symmetric domain}

The group $G_{\infty }=\mathbf{G}(\Bbb{R})$ acts on $\Bbb{P}_{\Bbb{C}}^{2}=%
\Bbb{P}(V_{\Bbb{R}})$ by projective linear transformations and preserves the
open subdomain $\frak{X}$ of negative definite lines (in the metric $(,)$),
which is biholomorphic to the open unit ball in $\Bbb{C}^{2}$. Every
negative definite line is represented by a unique vector $^{t}(z,u,1),$ and
such a vector represents a negative definite line if and only if 
\begin{equation}
\lambda (z,u)\overset{\text{def}}{=}\vspace{0.01in}\mathrm{Im}_{\delta }(z)-u%
\bar{u}>0.
\end{equation}
One refers to the realization of $\frak{X}$ as the set of points $(z,u)\in 
\Bbb{C}^{2}$ satisfying this inequality as a \emph{Siegel domain of the
second kind}. It is convenient to think of the point $x_{0}=$ $(\delta /2,0)$
as the ``center'' of $\frak{X.}$

If we let $K_{\infty }$ be the stabilizer of $x_{0}$ in $G_{\infty }$, then $%
K_{\infty }$ is compact modulo center ($K_{\infty }\cap \mathbf{U}(\Bbb{R})$
is compact and isomorphic to $U(2)\times U(1)$). Since $G_{\infty }$ acts
transitively on $\frak{X},$ we may identify $\frak{X}$ with $G_{\infty
}/K_{\infty }.$

The usual upper half plane embeds in $\frak{X}$ as the set of points where $%
u=0.$

\subsubsection{The cusps of $\frak{X}$}

The boundary $\partial \frak{X}$ of $\frak{X}$ is the set of points $(z,u)$
where $\text{Im}_{\delta }(z)=u\bar{u},$ together with a unique point ``at
infinity'' $c_{\infty }$ represented by the line $^{t}(1:0:0).$ The lines
represented by $\partial \frak{X}$ are the isotropic lines in $V_{\Bbb{R}}.$
The set of \emph{cusps} $\mathcal{C}\frak{X}$ is the set of $\mathcal{K}$%
-rational isotropic lines. If $s\in \mathcal{K}$ and $r\in \Bbb{Q}$ we write 
\begin{equation}
c_{s}^{r}=(r+\delta s\bar{s}/2,s).
\end{equation}
Then $\mathcal{C}\frak{X}=\left\{ c_{s}^{r}|r\in \Bbb{Q},s\in \mathcal{K}%
\right\} \cup \{c_{\infty }\}.$ The group $G=\mathbf{G}(\Bbb{Q})$ acts
transitively on the cusps.

The stabilizer of a cusp is a Borel subgroup in $G_{\infty }.$ Since $G$
acts transitively on the cusps, we may assume that our cusp is $c_{\infty }.$
It is then easy to check that its stabilizer $P_{\infty }$ has the form $%
P_{\infty }=M_{\infty }N_{\infty },$ where 
\begin{equation}
M_{\infty }=\left\{ tm(\alpha ,\beta )=t\left( 
\begin{array}{lll}
\alpha &  &  \\ 
& \beta &  \\ 
&  & \bar{\alpha}^{-1}
\end{array}
\right) |\,t\in \Bbb{R}_{+}^{\times },\,\alpha \in \Bbb{C}^{\times },\beta
\in \Bbb{C}^{1}\right\} ,
\end{equation}
\begin{equation}
N_{\infty }=\left\{ n(u,r)=\left( 
\begin{array}{lll}
1 & \delta \bar{u} & r+\delta u\bar{u}/2 \\ 
& 1 & u \\ 
&  & 1
\end{array}
\right) |\,u\in \Bbb{C},r\in \Bbb{R}\right\} .
\end{equation}
The matrix $tm(\alpha ,\beta )$ belongs to $U_{\infty }$ if and only if $%
t=1, $ and to $SU_{\infty }$ if furthermore $\beta =\bar{\alpha}/\alpha .$
The group $N_{\infty }$ is contained in $SU_{\infty }.$ Since $N=N_{\infty
}\cap G$ still acts transitively on the set of finite\emph{\ }cusps $%
c_{s}^{r}$, we conclude that $G$ acts \emph{doubly transitively} on $%
\mathcal{C}\frak{X}. $

Of particular interest to us will be the \emph{geodesics} connecting an
interior point $(z,u)$ to a cusp $c\in \mathcal{C}\frak{X}.$ If $%
(z,u)=n(u,r)m(d,1)x_{0}$ (recall $x_{0}=\,^{t}(\delta /2:0:1)$) where $d$ is
real and positive (i.e. $r=\Re z$ and $d=\sqrt{\lambda (z,u)}$) then
the geodesic connecting $(z,u)$ to $c_{\infty }$ can be described by the
formula 
\begin{eqnarray}
\gamma _{u}^{r}(t) &=&n(u,r)m(t,1)x_{0}  \label{geodesic} \\
&=&(r+\delta (u\bar{u}+t^{2})/2,u)\text{ \thinspace \thinspace (}d\le
t<\infty \text{).}  \notag
\end{eqnarray}
The same geodesic extends in the opposite direction for $0<t\le d,$ and if $%
u $ and $r$ lie in $\mathcal{K},$ it ends there in the cusp $c_{u}^{r}.$ We
shall call $\gamma _{u}^{r}(t)$ the \emph{geodesic retraction} of $\frak{X}$
to the cusp $c_{\infty }.$ As $0<t<\infty $ these parallel geodesics exhaust 
$\frak{X}.$

\subsection{Picard modular surfaces over $\Bbb{C}$}

\subsubsection{Lattices and their arithmetic groups\label{lattices}}

Fix an $\mathcal{O}_{\mathcal{K}}$-invariant lattice $L\subset V$ which is 
\emph{self-dual} in the sense that 
\begin{equation}
L=\left\{ u\in V|\,\left\langle u,v\right\rangle \in \Bbb{Z\,\,\forall }v\in
L\right\} .
\end{equation}
Equivalently, $L$ is its own $\mathcal{O}_{\mathcal{K}}$-dual with respect
to the hermitian pairing $(,).$ We assume also that the Steinitz class%
\footnote{%
The Steinitz class of a finite projective $\mathcal{O}_{\mathcal{K}}$-module
is the class of its top exterior power as an invertible module.} of $L$ as
an $\mathcal{O}_{\mathcal{K}}$-module is $[\mathcal{O}_{\mathcal{K}}],$ or,
what amounts to the same, that $L$ is a free $\mathcal{O}_{\mathcal{K}}$%
-module. When we introduce the Shimura variety later on, we shall relax this
last assumption, but the resulting scheme will be disconnected (over $\Bbb{C}
$).

Fix an integer $N\ge 1$ and let 
\begin{equation}
\Gamma =\Gamma _{L}(N)=\left\{ g\in G|\,gL=L\,\text{and }g(u)\equiv u\
mod NL\,\,\forall u\in L\right\} ,
\end{equation}
a discrete subgroup of $G_{\infty }.$ It is easy to see that if $N\ge 3$
then $\Gamma $ is torsion free, acts freely and faithfully on $\frak{X},$
and is contained in $SU_{\infty }.$ From now on we assume that this is the
case.

If $g\in G$ and $\mu (g)=1$ (i.e. $g\in U$) the lattice $gL$ is another
lattice of the same sort and the discrete group corresponding to it is $%
g\Gamma g^{-1}.$ Since $U$ acts transitively on the cusps, this reduces the
study of $\Gamma \backslash \frak{X}$ near a cusp to the study of a
neighborhood of the standard cusp $c_{\infty }$ (at the price of changing $L$
and $\Gamma $).

It is important to know the classification of lattices $L$ as above
(self-dual and $\mathcal{O}_{\mathcal{K}}$-free). Let $e_{1},e_{2},e_{3}$ be
the standard basis of $\mathcal{K}^{3}.$ Let 
\begin{equation}
L_{0}=Span_{\mathcal{O}_{\mathcal{K}}}\{\delta e_{1},e_{2},e_{3}\}
\end{equation}
and 
\begin{equation}
L_{1}=Span_{\mathcal{O}_{\mathcal{K}}}\{\frac{\delta }{2}e_{1}+e_{3},e_{2},%
\frac{\delta }{2}e_{1}-e_{3}\}.
\end{equation}
These two lattices are self-dual and, of course, $\mathcal{O}_{\mathcal{K}}$%
-free. The following theorem is based on the local-global principle and a
classification of lattices over $\Bbb{Q}_{p}$ by Shimura [Sh1].

\begin{lemma}
\label{classification of lattices}([La1], p.25). For any lattice $L$ as
above there exists a $g\in U$ such that $gL=L_{0}$ or $gL=L_{1}.$ If $D_{%
\mathcal{K}}$ is odd, $L_{0}$ and $L_{1}$ are equivalent. If $D_{\mathcal{K}}
$ is even, they are inequivalent.
\end{lemma}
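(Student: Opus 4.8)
The plan is to prove \lemref{classification of lattices} by the standard local-global method for hermitian lattices, reducing the global classification to a place-by-place analysis and then patching. First I would observe that two $\mathcal{O}_{\mathcal{K}}$-lattices $L, L'$ in $V$ that are self-dual (for $\langle,\rangle$, equivalently $\mathcal{O}_{\mathcal{K}}$-self-dual for the hermitian form) and in the same $U$-orbit must, in particular, be $\mathcal{O}_{\mathcal{K},\mathfrak{l}}$-equivalent at every finite place $\mathfrak{l}$ of $\mathcal{K}$; conversely, since the hermitian space $V$ has class number one in the relevant sense (it is the unique hermitian space of its signature up to the rational rescaling noted in the text, and $U$ satisfies strong approximation away from $\infty$ because $SU$ is simply connected and $U_\infty$ is noncompact), a collection of local equivalences can be glued to a global $g \in U$. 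So the content is: (i) classify self-dual $\mathcal{O}_{\mathcal{K}_\mathfrak{l}}$-hermitian lattices at each $\mathfrak{l}$, and (ii) count orbits. This is precisely where the cited work of Shimura [Sh1] on local hermitian lattices over $\mathbb{Q}_p$ enters.

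Next I would carry out step (i). At a prime $\mathfrak{l}$ not dividing $2D_{\mathcal{K}}$, the hermitian form is unimodular over $\mathcal{O}_{\mathcal{K}_\mathfrak{l}}$ and there is a unique self-dual lattice up to isometry — whether $\mathfrak{l}$ splits, is inert, or is ramified (unramified), the local hermitian space of rank $3$ contains a single unimodular class. At a prime dividing $D_{\mathcal{K}}$, i.e. a ramified prime, Shimura's classification shows that the number of self-dual (= $\mathcal{O}_{\mathcal{K}_\mathfrak{l}}$-maximal integral unimodular) lattices up to isometry depends on the parity of the rank and on residue characteristic: for odd rank $3$ there is a unique class when $\mathfrak{l}$ has odd residue characteristic, but two classes at the dyadic ramified prime (when $2$ ramifies in $\mathcal{K}$, which happens exactly when $D_{\mathcal{K}}$ is even). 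This local dichotomy at $2$ is the source of the dichotomy in the lemma. I would then identify $L_0$ and $L_1$ explicitly as representatives: both are manifestly $\mathcal{O}_{\mathcal{K}}$-free and self-dual (a direct check of the Gram matrices $\operatorname{diag}$-type forms against the pairing $(,)$ given in the text shows $\langle L_i, L_i\rangle = \mathbb{Z}$ and self-duality), and one computes their local invariants at the ramified dyadic prime to see they realize the two local classes when $D_{\mathcal{K}}$ is even, and the single class (hence become $U$-equivalent) when $D_{\mathcal{K}}$ is odd.

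Finally, step (ii): assembling. Given an arbitrary self-dual $\mathcal{O}_{\mathcal{K}}$-free $L$, at each finite place it is locally equivalent to $(L_0)_\mathfrak{l}$ or $(L_1)_\mathfrak{l}$, and at all but the dyadic ramified place these agree, so the ``type'' of $L$ is captured by a single local class at that one place. Strong approximation for $SU$ together with the surjectivity of $\nu\colon U \to \mathbf{T}^1$ (so that one can adjust determinants/similitudes locally) upgrades the everywhere-local equivalence with the chosen global model to a global $g \in U$ carrying $L$ to $L_0$ or to $L_1$. The constraint ``$L$ is $\mathcal{O}_{\mathcal{K}}$-free'' (Steinitz class trivial) is what makes exactly these two models suffice rather than one per ideal class.

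The main obstacle I anticipate is the dyadic local computation in step (i)–(ii): hermitian lattice theory over ramified dyadic local fields is genuinely delicate (norm groups, the distinction between the hermitian discriminant and the associated quadratic-form invariants), and verifying that $L_0$ and $L_1$ land in distinct local classes when $2 \mid D_{\mathcal{K}}$ but the same class otherwise requires care with Shimura's invariants. For the purposes of this paper I would lean on [La1, p.25] and [Sh1] for this input and present the argument at the level of: reduce to local, cite the local classification, exhibit $L_0, L_1$, and glue via strong approximation — rather than reproving the dyadic hermitian local theory from scratch.
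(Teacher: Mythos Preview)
Your proposal is correct and follows essentially the same approach as the paper: the paper does not give a proof but simply cites [La1] and [Sh1], remarks that the argument is ``based on the local-global principle and a classification of lattices over $\mathbb{Q}_p$ by Shimura,'' and notes that $L_0$ and $L_1$ are $U_p$-equivalent for all $p\neq 2$ but not for $p=2$ when $D_{\mathcal{K}}$ is even. Your write-up is a more detailed unpacking of exactly this strategy, and your explicit acknowledgment that the dyadic local computation and the genus-to-class step should be cited from [La1] and [Sh1] matches the paper's level of rigor.
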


Indeed, if $D_{\mathcal{K}}$ is even, $L_{0}\otimes \Bbb{Q}_{p}$ and $%
L_{1}\otimes \Bbb{Q}_{p}$ are $U_{p}$-equivalent for every $p\neq 2$, but
not for $p=2.$

\subsubsection{Picard modular surfaces and the Baily-Borel compactification}

We denote by $X_{\Gamma }$ the complex surface $\Gamma \backslash \frak{X.}$
Since the action of $\Gamma $ is free, $X_{\Gamma }$ is smooth. We describe
a topological compactification of $X_{\Gamma }.$ A \emph{standard
neighborhood} of the cusp $c_{\infty }$ in $\frak{X}$ is an open set of the
form 
\begin{equation}
\Omega _{R}=\left\{ (z,u)|\lambda (z,u)>R\right\} .
\end{equation}
The set $\mathcal{C}_{\Gamma }=\Gamma \backslash \mathcal{C}\frak{X}$ is
finite, and we write $c_{\Gamma }=\Gamma c.$ We let $X_{\Gamma }^{*}$ be the
disjoint union of $X_{\Gamma }$ and $\mathcal{C}_{\Gamma }.$ Let $\Gamma
_{cusp}$ be the stabilizer of $c_{\infty }$ in $\Gamma .$ We topologize $%
X_{\Gamma }^{*}$ by taking $\Gamma _{cusp}\backslash \Omega _{R}\cup
\{c_{\infty ,\Gamma }\}$ as a basis of neighborhoods at $c_{\infty ,\Gamma
}. $ If $c=g(c_{\infty })$ where $g\in U,$ we take $g(g^{-1}\Gamma
_{cusp}g\backslash \Omega _{R})\cup \{c_{\Gamma }\}$ instead. The following
theorem is well-known.

\begin{theorem}
(Satake, Baily-Borel) $X_{\Gamma}^{*}$ is projective and the singularities
at the cusps are normal. In other words, there exists a normal complex
projective surface $S_{\Gamma}^{*}$ and a homeomorphism 
$\iota :S_{\Gamma}^{*}(\Bbb{C})\simeq X_{\Gamma}^{*},$ which on $S_{\Gamma 
}(\Bbb{C})=\iota ^{-1}(X_{\Gamma })$ is an isomorphism of complex
manifolds. $S_{\Gamma}^{*}$ is uniquely determined up to isomorphism.
\end{theorem}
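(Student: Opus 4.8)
The plan is to realize $X_{\Gamma}^{*}$ as the analytification of $\mathrm{Proj}$ of a finitely generated graded ring of automorphic forms, following the argument of Baily and Borel specialized to the rank-one ball quotient at hand. Let $j(\gamma,x)$ be the canonical automorphy factor on $\frak{X}$ and, for $k\ge 0$, let $M_{k}(\Gamma)$ be the space of holomorphic $f\colon\frak{X}\to\Bbb{C}$ with $f(\gamma x)=j(\gamma,x)^{k}f(x)$ for all $\gamma\in\Gamma$, required to be holomorphic at the cusps in the sense made precise below. First I would produce, for all sufficiently large and divisible $k$, enough such forms. Poincar\'{e} series of the shape $P_{k,\xi}(x)=\sum_{\gamma\in\Gamma_{cusp}\backslash\Gamma}j(\gamma,x)^{-k}\,\xi(\gamma x)$, attached to functions $\xi$ with exponential decay toward every cusp, converge absolutely and are bounded on $\frak{X}$ once $k\gg 0$; varying $\xi$ one shows, via the standard estimates on $j$, that $A=\bigoplus_{k}M_{k}(\Gamma)$ separates points and tangent vectors of the open manifold $X_{\Gamma}$.

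The crux is the local analysis at a cusp, which by the double transitivity of $G$ on $\mathcal{C}\frak{X}$ may be taken to be $c_{\infty}$. Here $\Gamma_{cusp}\subset P_{\infty}=M_{\infty}N_{\infty}$, and $\Gamma_{N}=\Gamma\cap N_{\infty}$ is a lattice in the two-step nilpotent group $N_{\infty}$. Quotienting a standard neighborhood $\Omega_{R}=\{\lambda(z,u)>R\}$ first by the center of $\Gamma_{N}$ (the translations $(z,u)\mapsto(z+r_{0},u)$) replaces $z$ by a coordinate $q=e^{2\pi i z/r_{0}}$ in a punctured disk; the residual quotient by $\Gamma_{N}/Z$ exhibits $\Gamma_{N}\backslash\Omega_{R}$ as an open subset of the total space of a family of abelian varieties over this punctured disk, and one fills in the fibre over $q=0$ to obtain a normal two-dimensional analytic space with a distinguished point $c_{\infty}$. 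I would then check that the action of $M_{\infty}\cap\Gamma$ imposes no further local identifications, that any $f\in M_{k}(\Gamma)$ is $\Gamma_{N}$-invariant on $\Omega_{R}$ and hence admits a Fourier--Jacobi expansion $f=\sum_{m\ge 0}\theta_{m}(u)\,q^{m}$ with theta-function coefficients, and that holomorphy at the cusp is exactly the vanishing of the terms with $m<0$. Normality of the local ring at $c_{\infty}$ follows because it is a ring of convergent series in $q$ with Fourier--Jacobi coefficients, integrally closed in its fraction field; concretely one exhibits two forms whose quotient is a local uniformizer for $q$ and whose common zero divisor is reduced, so that $X_{\Gamma}^{*}$ has at worst a normal singularity at each cusp. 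Verifying that the compactified quotient near a cusp is a \emph{normal} analytic surface, and that automorphic forms furnish local coordinates there, is the main obstacle.

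With these inputs the conclusion is formal. Adjoining the finitely many cusp points $\mathcal{C}_{\Gamma}$ as above makes $X_{\Gamma}^{*}$ a compact complex analytic space containing $X_{\Gamma}$ as a dense open subset with complement of lower dimension; the map defined by a basis of $M_{k}(\Gamma)$ for one large $k$ extends continuously across the cusps, since near $c_{\infty}$ it is given by holomorphic functions of $(q,u)$, is injective by the separation property together with the Fourier--Jacobi description, and is an immersion away from the cusps. Hence it is a homeomorphism of $X_{\Gamma}^{*}$ onto a compact analytic subset $Z\subset\Bbb{P}^{n}$, which by Chow's theorem is a projective algebraic surface; since the map is a homeomorphism and $X_{\Gamma}^{*}$ is normal as an analytic space, $Z$ is normal as a variety. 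One sets $S_{\Gamma}^{*}=Z$, equivalently $S_{\Gamma}^{*}=\mathrm{Proj}(A)$, which makes the uniqueness up to isomorphism manifest, as the graded ring $A$ depends only on $\Gamma$; the homeomorphism $\iota$ restricts to a biholomorphism on $S_{\Gamma}=\iota^{-1}(X_{\Gamma})$ because there it is an embedding of complex manifolds, and the automorphic line bundle descends to an ample line bundle on $S_{\Gamma}^{*}$.
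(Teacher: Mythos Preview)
The paper does not prove this theorem at all: it is stated, attributed to Satake and Baily--Borel, and introduced with the phrase ``The following theorem is well-known.'' So there is no proof in the paper for your proposal to be compared against; the statement is used as a black box.

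Your outline is recognizably the Baily--Borel strategy specialized to $U(2,1)$, and the broad strokes are right: build the graded ring $A$ of automorphic forms, use Poincar\'e series to separate points and tangents, analyze the cusp via Fourier--Jacobi expansions, and invoke Chow. A couple of points in your local analysis are garbled, though. Quotienting $\Omega_{R}$ by the full unipotent lattice $\Gamma_{N}$ does not give ``a family of abelian varieties over a punctured disk''; as the paper shows just after this theorem (Proposition~\ref{disk bundle}), what one obtains is a punctured \emph{disk bundle} $\mathcal{T}_{R}'$ over the elliptic curve $E=\Bbb{C}/\Lambda$, i.e.\ the fibration goes the other way. Filling in the zero section yields the smooth \emph{toroidal} compactification, not the Baily--Borel one; the Baily--Borel point $c_{\infty}$ arises by contracting that elliptic curve to a point, and that contraction is what produces the normal surface singularity. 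Your sentence ``one fills in the fibre over $q=0$ to obtain a normal two-dimensional analytic space with a distinguished point $c_{\infty}$'' conflates these two pictures. Likewise, your normality argument (``it is a ring of convergent series in $q$ with Fourier--Jacobi coefficients, integrally closed in its fraction field'') is essentially a restatement of the claim rather than a proof; the standard route is either via Grauert's contractibility criterion applied to $E\subset\bar{X}_{\Gamma}$ (using $E^{2}<0$), or by identifying the local ring at the cusp with the ring of Fourier--Jacobi series having nonnegative $q$-exponents and checking Serre's criterion directly.
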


\subsubsection{The universal abelian variety over $X_{\Gamma }$}

With $x\in \frak{X}$ and with our choice of $L$ we shall now associate a
PEL-structure $\underline{A}_{x}=(A_{x},\lambda _{x},\iota _{x},\alpha _{x})$
where

\begin{enumerate}
\item  $A_{x}$ is a $3$-dimensional complex abelian variety,

\item  $\lambda _{x}$ is a principal polarization on $A_{x}$ (i.e. an
isomorphism $A_{x}\simeq A_{x}^{t}$ with its dual abelian variety induced by
an ample line bundle),

\item  $\iota _{x}:\mathcal{O}_{\mathcal{K}}\hookrightarrow End(A_{x})$ is
an embedding of CM type $(2,1)$ (i.e. the action of $\iota (a)$ on the
tangent space of $A_{x}$ at the origin induces the representation $2\Sigma +%
\bar{\Sigma}$) such that the Rosati involution induced by $\lambda _{x}$
preserves $\iota (\mathcal{O}_{\mathcal{K}})$ and is given by $\iota
(a)\mapsto \iota (\bar{a}),$

\item  $\alpha _{x}:N^{-1}L/L\simeq A_{x}[N]$ is a full level $N$ structure,
compatible with the $\mathcal{O}_{\mathcal{K}}$-action and the polarization.
The latter condition means that if we denote by $\left\langle ,\right\rangle
_{\lambda }$ the Weil ``$e_{N}$-pairing'' on $A_{x}[N]$ induced by $\lambda
_{x},$ then for $l,l^{\prime }\in N^{-1}L$%
\begin{equation}
\left\langle \alpha _{x}(l),\alpha _{x}(l^{\prime })\right\rangle _{\lambda
}=e^{2\pi iN\left\langle l,l^{\prime }\right\rangle }.
\end{equation}
\end{enumerate}

Let $W_{x}$ be the negative definite complex line in $V_{\Bbb{R}}=\Bbb{C}%
^{3} $ defined by $x,$ and $W_{x}^{\perp }$ its orthogonal complement, a
positive definite plane. Let $J_{x}$ be the complex structure which is
multiplication by $i$ on $W_{x}^{\perp }$ and by $-i$ on $W_{x}.$ Let $%
A_{x}=(V_{\Bbb{R}},J_{x})/L.$ Then the polarization form $\langle ,\rangle $
is a Riemann form on $L,$ which determines a principal polarization on $%
A_{x} $ as usual. The action of $\mathcal{O}_{\mathcal{K}}$ is derived from
the underlying $\mathcal{K}$ structure of $V.$ As we have reversed the
complex structure on $W_{x},$ the CM type is now (2,1). Finally the level $N$
structure $\alpha _{x}$ is the identity map.

If $\gamma \in \Gamma $ then $\gamma $ induces an isomorphism between $%
\underline{A}_{x}$ and $\underline{A}_{\gamma (x)}.$ Conversely, if $%
\underline{A}_{x}$ and $\underline{A}_{x^{\prime }}$ are isomorphic
structures, it is easy to see that $x^{\prime }$ and $x$ must belong to the
same $\Gamma $-orbit. It follows that points of $X_{\Gamma }$ are in a
bijection with PEL structures of the above type for which the triple 
\begin{equation}
(H_{1}(A_{x},\Bbb{Z}),\iota _{x},\left\langle ,\right\rangle _{\lambda _{x}})
\end{equation}
is isomorphic to $(L,\iota ,\left\langle ,\right\rangle )$ (here $\iota $
refers to the $\mathcal{O}_{\mathcal{K}}$ action on $L$), with the further
condition that $\alpha _{x}$ is compatible with the isomorphism between $L$
and $H_{1}(A_{x},\Bbb{Z})$ in the sense that we have a commutative diagram 
\begin{equation}
\begin{array}{lllllllll}
0 & \rightarrow & L & \rightarrow & N^{-1}L & \rightarrow & N^{-1}L/L & 
\rightarrow & 0 \\ 
&  & \downarrow &  & \downarrow &  & \downarrow \alpha _{x} &  &  \\ 
0 & \rightarrow & H_{1}(A_{x},\Bbb{Z}) & \rightarrow & N^{-1}H_{1}(A_{x},%
\Bbb{Z}) & \rightarrow & A_{x}[N] & \rightarrow & 0
\end{array}
.
\end{equation}

\subsubsection{A ``moving lattice'' model for the universal abelian variety%
\label{moving lattice}}

We want to assemble the individual $A_{x}$ into an \emph{abelian variety} $A$
\emph{over} $\frak{X}$. In other words, we want to construct a 5-dimensional
complex manifold $A,$ together with a holomorphic map $A\rightarrow \frak{X}$
whose fiber over $x$ is identified with $A_{x}.$ For that, as well as for
the computation of the Gauss-Manin connection below, it is convenient to
introduce another model, in which the complex structure on $\Bbb{C}^{3}$ is
fixed, but the lattice varies.

For simplicity we assume from now on that $L=L_{0}$ is spanned over $%
\mathcal{O}_{\mathcal{K}}$ by $\delta e_{1},e_{2}$ and $e_{3}.$ The case of $%
L_{1}$ can be handled similarly.

Let $\Bbb{C}^{3}$ be given the usual complex structure, and let $a\in 
\mathcal{O}_{\mathcal{K}}$ act on it via the matrix 
\begin{equation}
\iota ^{\prime }(a)=\left( 
\begin{array}{lll}
a &  &  \\ 
& a &  \\ 
&  & \bar{a}
\end{array}
\right) .
\end{equation}
Given $x=(z,u)\in \frak{X}$ consider the lattice 
\begin{equation}
L_{x}^{\prime }=Span_{\iota ^{\prime }(\mathcal{O}_{\mathcal{K}})}\left\{
\left( 
\begin{array}{l}
0 \\ 
1 \\ 
1
\end{array}
\right) ,\left( 
\begin{array}{l}
-1 \\ 
0 \\ 
-u
\end{array}
\right) ,\left( 
\begin{array}{l}
u \\ 
-z/\delta \\ 
z/\delta
\end{array}
\right) \right\} \subset \Bbb{C}^{3}.  \label{spanning vectors}
\end{equation}
The map $T_{x}:\Bbb{C}^{3}\rightarrow \Bbb{C}^{3}$ which sends $\zeta
=\,^{t}(\zeta _{1},\zeta _{2},\zeta _{3})$ to 
\begin{equation}
T(\zeta )=\lambda (z,u)^{-1}\left\{ -\zeta _{1}\left( 
\begin{array}{l}
\bar{u}z \\ 
(z-\bar{z})/\delta \\ 
\bar{u}
\end{array}
\right) -\zeta _{2}\left( 
\begin{array}{l}
\bar{z}+\delta u\bar{u} \\ 
u \\ 
1
\end{array}
\right) +\bar{\zeta}_{3}\left( 
\begin{array}{l}
z \\ 
u \\ 
1
\end{array}
\right) \right\}
\end{equation}
is a complex linear isomorphism between $\Bbb{C}^{3}$ and $(V_{\Bbb{R}%
},J_{x}).$ In fact, it sends $\Bbb{C}e_{1}+\Bbb{C}e_{2}$ linearly to $%
W_{x}^{\perp }$ and $\Bbb{C}e_{3}$ conjugate-linearly to $W_{x}.$ It
intertwines the $\iota ^{\prime }$ action of $\mathcal{O}_{\mathcal{K}}$ on $%
\Bbb{C}^{3}$ with its $\iota $-action on $(V_{\Bbb{R}},J_{x}).$ It
furthermore sends $L_{x}^{\prime }$ to $L.$ In fact, an easy computation
shows that it sends the three generating vectors of $L_{x}^{\prime }$ to $%
\delta e_{1},e_{2}$ and $e_{3},$ respectively. We conclude that $T_{x}$
induces an isomorphism 
\begin{equation}
T_{x}:A_{x}^{\prime }=\Bbb{C}^{3}/L_{x}^{\prime }\simeq A_{x}.
\label{moving lattice model}
\end{equation}

Consider the differential forms $d\zeta _{1},d\zeta _{2}$ and $d\zeta _{3}.$
As their periods along any $l\in L_{x}^{\prime }$ vary holomorphically in $z$
and $u$, the five coordinates $\zeta _{1},\zeta _{2},\zeta _{3},z,u$ form a
local system of coordinates on the family $A^{\prime }\rightarrow \frak{X}.$
Identifying $A^{\prime }$ with $A$ allows us to put the desired complex
structure on the family $A.$ Alternatively, we may define $A^{\prime }$ as
the quotient of $\Bbb{C}^{3}\times \frak{X}$ by $\zeta \mapsto \zeta +l(z,u)$
where $l(z,u)$ varies over the holomorphic lattice-sections.

The model $A^{\prime }$ has another advantage, that will become clear when
we examine the degeneration of the universal abelian variety at the cusp $%
c_{\infty }.$ It suffices to note at this point that the first two of the
three generating vectors of $L_{x}^{\prime }$ depend only on $u.$

\subsection{The Picard moduli scheme}

\subsubsection{The moduli problem\label{Moduli}}

Fix $N\ge 3$ and $L=L_{0}\subset V$ as before. Let $R$ be an $R_{0}$%
-algebra. Let $\mathcal{M}(R)$ be the collection of (isomorphism classes of)
PEL structures $(A,\lambda ,\iota ,\alpha )$ where

\begin{enumerate}
\item  $A/R$ is an abelian scheme of relative dimension 3

\item  $\lambda :A\simeq A^{t}$ is a principal polarization

\item  $\iota :$ $\mathcal{O}_{\mathcal{K}}\rightarrow End(A/R)$ is a
homomorphism such that (1) $\iota $ makes $Lie(A/R)$ a locally free $R$%
-module of type $(2,1),$ (2) the Rosati involution induced on $\iota (%
\mathcal{O}_{\mathcal{K}})$ by $\lambda $ is $\iota (a)\mapsto \iota (\bar{a}%
).$

\item  $\alpha :N^{-1}L/L\simeq A[N]$ is an isomorphism of $\mathcal{O}_{%
\mathcal{K}}$-group schemes over $R$ which is compatible with the
polarization in the sense that there exists an isomorphism $\nu _{N}:\Bbb{Z}%
/N\Bbb{Z}\simeq \mu _{N}$ of group schemes over $R$ such that 
\begin{equation}
\left\langle \alpha (\frac{l}{N}),\alpha (\frac{l^{\prime }}{N}%
)\right\rangle _{\lambda }=\nu _{N}(\left\langle l,l^{\prime }\right\rangle 
\mod N).  \label{nu_N}
\end{equation}
In addition we require that for every multiple $N^{\prime }$ of $N,$ locally 
\'{e}tale over $Spec(R),$ there exists a similar level $N^{\prime }$%
-structure $\alpha ^{\prime },$ restricting to $\alpha $ on $N^{-1}L/L.$ One
says that $\alpha $ is \emph{locally \'{e}tale symplectic liftable }([Lan],
1.3.6.2).
\end{enumerate}

In view of Lemma \ref{classification of lattices}, the last condition of
symplectic liftability is void if $D_{\mathcal{K}}$ is odd, while if $D_{%
\mathcal{K}}$ is even it is equivalent to the following condition ([Bel],
I.3.1):

\begin{itemize}
\item  For any geometric point $\eta :R\rightarrow k$ ($k$ algebraically
closed field, necessarily of characteristic different from 2), the $\mathcal{%
O}_{\mathcal{K}}\otimes \Bbb{Z}_{2}$ polarized module $(T_{2}A_{\eta
},\left\langle ,\right\rangle _{\lambda })$ is isomorphic to $(L\otimes \Bbb{%
Z}_{2},\left\langle ,\right\rangle )$ under a suitable identification of $%
\lim_{\leftarrow }\mu _{2^{n}}(k)$ with $\Bbb{Z}_{2}.$
\end{itemize}

The choice of $L_{0}$ was arbitrary. If we took $L_{1}$ as our basic lattice
we would get a similar moduli problem.

A level $N$ structure $\alpha $ can exist only if the group schemes $\Bbb{Z}%
/N\Bbb{Z}$ and $\mu _{N}$ become isomorphic over $R,$ but the isomorphism $%
\nu _{N}$ is then determined by $\alpha .$

$\mathcal{M}$ becomes a functor on the category of $R_{0}$-algebras (and
more generally, on the category of $R_{0}$-schemes) in the obvious way. The
following theorem is of fundamental importance ([Lan], I.4.1.11).

\begin{theorem}
The functor $R\mapsto \mathcal{M}(R)$ is represented by a smooth
quasi-projective scheme $S$ over $Spec(R_{0}),$ of relative dimension 2.
\end{theorem}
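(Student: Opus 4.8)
The plan is to establish representability of $\mathcal{M}$ by relating it to the Siegel moduli problem and then cutting out the PEL conditions. First I would recall that the moduli space $\mathcal{A}_{g,d,N}$ of abelian schemes of dimension $g=3$ with polarization of a fixed type $d$ and full level $N\ge 3$ structure is representable by a smooth quasi-projective scheme over $\mathbb{Z}[1/N]$ (Mumford's GIT, or the rigidity of level structures for $N\ge3$ which makes the functor rigid and hence a sheaf with no automorphisms). Our functor $\mathcal{M}$ admits a forgetful morphism to a suitable such $\mathcal{A}_{3,d,N}\otimes R_0$, remembering the abelian scheme, the principal polarization $\lambda$, and the symplectic level structure $\alpha$ but forgetting the $\mathcal{O}_{\mathcal{K}}$-action $\iota$. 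Since $N \ge 3$, objects of $\mathcal{M}(R)$ have no nontrivial automorphisms, so $\mathcal{M}$ is already a sheaf in the appropriate topology; the remaining task is to show this forgetful morphism is representable by a locally closed immersion (or more precisely by something finite and unramified onto a locally closed subscheme).

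Next I would add the $\mathcal{O}_{\mathcal{K}}$-action. Over the base $\mathcal{A}_{3,d,N}$ one has the universal abelian scheme $\mathcal{B}$, and one wants to parametrize ring homomorphisms $\iota:\mathcal{O}_{\mathcal{K}}\to \mathrm{End}(\mathcal{B})$ satisfying the Rosati-compatibility and the type $(2,1)$ Lie-algebra condition. Because $\mathcal{O}_{\mathcal{K}}=\mathbb{Z}[\omega_{\mathcal{K}}]$ is monogenic, specifying $\iota$ amounts to specifying the single endomorphism $\iota(\omega_{\mathcal{K}})$, subject to its minimal polynomial over $\mathbb{Z}$; the scheme $\mathrm{End}(\mathcal{B}/T)$ of endomorphisms is representable by a scheme unramified and (over a normal base, by a theorem of Grothendieck–Mumford) locally of finite type over $T$, in fact a disjoint union of finite unramified $T$-schemes once polarization-compatibility is imposed, since Rosati-fixed-type endomorphisms satisfying a fixed characteristic polynomial form such a family. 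Imposing $\iota(a)^t\circ\lambda=\lambda\circ\iota(\bar a)$ and the correct characteristic polynomial of $\iota(\omega_{\mathcal{K}})$ on $\mathrm{Lie}$ and on $H^1_{\mathrm{dR}}$ carves out a closed-and-open, hence finite unramified, subscheme $S'$ over $\mathcal{A}_{3,d,N}$.

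Then I would incorporate the two remaining PEL refinements: the \emph{type} $(2,1)$ condition on $\mathrm{Lie}(A/R)$, and the \emph{local étale symplectic liftability} of $\alpha$. The type condition says the characteristic polynomial of $\iota(\omega_{\mathcal{K}})$ acting on the locally free rank-3 module $\mathrm{Lie}(A/R)$ equals $(T-\omega_{\mathcal{K}})^2(T-\bar\omega_{\mathcal{K}})$ (using $2\in R_0^\times$ to split idempotents and get the type decomposition of the excerpt); this is a closed condition, but over the locus already cut out it is also open because the possible types form a discrete set — so it selects a union of connected components. The liftability condition is automatic when $D_{\mathcal{K}}$ is odd, and when $D_{\mathcal{K}}$ is even it is, by the reformulation recorded after the statement of the moduli problem (following [Bel]), a condition on the $2$-adic Tate module at geometric points; since we have inverted $2$ and the $2$-adic Tate module is a lisse sheaf, this again cuts out a union of connected components. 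Finally one checks smoothness of relative dimension $2$: the infinitesimal deformation theory of a PEL structure of type $U(2,1)$ is unobstructed with tangent space identified (via Grothendieck–Messing / the Kodaira–Spencer map already invoked in the excerpt, $KS:\mathcal{P}\otimes\mathcal{L}\simeq\Omega^1_S$) to $\mathrm{Hom}(\mathcal{P},\mathcal{L})$ at each point, which is $2$-dimensional; equivalently one computes that $S'$ is formally smooth over $\mathrm{Spec}(R_0)$ and of the right dimension, so it is smooth, and being finite unramified over a quasi-projective base and then of finite type, it is quasi-projective.

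The main obstacle will be the representability of the "add the $\mathcal{O}_{\mathcal{K}}$-action" step — namely showing that imposing a ring action by a fixed order, with a prescribed polarization-compatibility, on the universal abelian scheme is represented by a scheme finite and unramified over the Siegel base. This rests on the unramifiedness of the functor of homomorphisms between abelian schemes together with a boundedness (properness/finiteness) input, and on being careful that the polarization type $d$ is chosen so that a principal polarization in the PEL sense actually exists; extracting this cleanly, rather than quoting [Lan, I.4.1.11] wholesale, is where the real work lies. Everything downstream — the type condition, the liftability condition, smoothness — is then a matter of identifying open-and-closed loci and a standard deformation-theoretic computation.
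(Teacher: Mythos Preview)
The paper does not supply its own proof of this theorem; it simply records the statement and cites [Lan], I.4.1.11 (Lan's book on arithmetic compactifications of PEL-type Shimura varieties). Your outline is essentially the standard argument that underlies that reference and the earlier work of Kottwitz: embed into a Siegel moduli space, represent the additional endomorphism datum by a scheme unramified over it, cut out the determinant/signature and liftability conditions as open-and-closed loci, and verify smoothness via the PEL deformation theory. So there is nothing to compare against in the paper itself, and your plan is in line with the cited source.

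One small correction: from $KS:\mathcal{P}\otimes\mathcal{L}\simeq\Omega^1_S$ the tangent sheaf is $(\mathcal{P}\otimes\mathcal{L})^\vee=\mathcal{P}^\vee\otimes\mathcal{L}^\vee$, not $\mathrm{Hom}(\mathcal{P},\mathcal{L})=\mathcal{P}^\vee\otimes\mathcal{L}$. This does not affect the dimension count (both are rank~2), but it is worth stating correctly. Also be careful about logical order: in the paper the Kodaira--Spencer isomorphism is established \emph{after} $S$ exists, so if you want a self-contained argument you should phrase the smoothness step as a direct computation of the tangent space to the moduli \emph{functor} at a point (via Grothendieck--Messing or the explicit description of first-order PEL deformations as isotropic lifts of the Hodge filtration), rather than invoking $KS$ on $S$.
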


We call $S$ the \emph{(open) Picard modular surface of level} $N.$ It comes
equipped with a universal structure $(\mathcal{A},\lambda ,\iota ,\alpha )$
of the above type over $S.$ We call $\mathcal{A}$ the \emph{universal
abelian scheme} over $S$. For every $R_{0}$-algebra $R$ and PEL structure in 
$\mathcal{M}(R),$ there exists a unique $R$-point of $S$ such that the given
PEL structure is obtained from the universal one by base-change.

We refer to [Lan], 1.4.3 for the relation between the given formulation of
the moduli problem and other formulations due, e.g. to Kottwitz.

\subsubsection{The Shimura variety $Sh_{K}$}

We briefly recall the interpretation of the Picard modular surface as a
canonical model of a Shimura variety. The symmetric domain $\frak{X}$ can be
interpreted as a $G_{\infty }$-conjugacy class of homomorphisms 
\begin{equation}
h:\Bbb{S}=res_{\Bbb{R}}^{\Bbb{C}}\Bbb{G}_{m}\rightarrow \mathbf{G}_{\Bbb{R}}
\end{equation}
turning $(\mathbf{G},\frak{X})$ into a Shimura datum in the sense of Deligne
[De]. In fact $h_{x}(i)=J_{x}.$ The reflex field associated to this datum
turns out to be $\mathcal{K}.$ Let $K_{\infty }$ be the stabilizer of $x_{0}$
in $G_{\infty }$ and $K_{f}^{0}\subset \mathbf{G}(\Bbb{A}_{f})$ the subgroup
stabilizing $\widehat{L}=L\otimes \widehat{\Bbb{Z}}$. Let $K_{f}$ be the
subgroup of $K_{f}^{0}$ inducing the identity on $L/NL.$ Let $K=K_{\infty
}K_{f}\subset \mathbf{G}(\Bbb{A}).$ Then the Shimura variety $Sh_{K}$ is a
complex quasi-projective variety whose complex points are isomorphic, as a
complex manifold, to the double coset space 
\begin{eqnarray}
Sh_{K}(\Bbb{C}) &\simeq &\mathbf{G}(\Bbb{Q})\backslash \mathbf{G}(\Bbb{A})/K
\\
&=&\mathbf{G}(\Bbb{Q})\backslash (\frak{X}\times \mathbf{G}(\Bbb{A}%
_{f})/K_{f}).  \notag
\end{eqnarray}
The theory of Shimura varieties provides a \emph{canonical model} for $%
Sh_{K} $ over $\mathcal{K}.$ The following important theorem complements the
one on the representability of the functor $\mathcal{M}.$

\begin{theorem}
The canonical model of $Sh_{K}$ is the generic fiber $S_{\mathcal{K}}$ of $S.
$
\end{theorem}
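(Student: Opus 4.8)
The plan is to identify the canonical model of $Sh_K$ over $\mathcal{K}$ with $S_{\mathcal{K}}$ by exhibiting $S_{\mathcal{K}}$ as a solution to the same moduli problem that defines $Sh_K$ and invoking the characterization of canonical models of PEL-type Shimura varieties. First I would recall Deligne's construction: the Shimura datum $(\mathbf{G},\frak{X})$ with $h_x(i)=J_x$ is of PEL type, so by the work of Kottwitz (and its exposition, e.g. in [Lan]) the complex points $Sh_K(\mathbb{C})=\mathbf{G}(\mathbb{Q})\backslash(\frak{X}\times\mathbf{G}(\mathbb{A}_f)/K_f)$ parametrize isomorphism classes of PEL structures $(A,\lambda,\iota,\alpha)$ over $\mathbb{C}$ of exactly the type enumerated in \secref{Moduli} — relative dimension $3$, principal polarization, $\mathcal{O}_{\mathcal{K}}$-action of CM type $(2,1)$ with $\lambda$-compatible Rosati, and a full level-$N$ structure on $N^{-1}L/L$ (the rigidity condition $N\ge 3$ making $K_f$ neat and the moduli problem rigid). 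I would make this explicit using the ``moving lattice'' model of \secref{moving lattice}: the map $x\mapsto\underline{A}_x$ together with the bookkeeping of $\mathbf{G}(\mathbb{A}_f)/K_f$-components translates the adelic double coset into the set of isomorphism classes of polarized $\mathcal{O}_{\mathcal{K}}$-abelian varieties with level structure, and this is precisely $\mathcal{M}(\mathbb{C})=S(\mathbb{C})$. Some care is needed with the Steinitz-class / lattice-equivalence subtlety from \lemref{classification of lattices}: for $D_{\mathcal{K}}$ even the group $\mathbf{G}(\mathbb{A}_f)$ acts on a set with more than one $K_f^0$-orbit of self-dual lattices, but the symplectic-liftability condition in \secref{Moduli} was imposed exactly so that $S$ sees only the connected piece corresponding to $L_0$, matching $Sh_K$ for the $K_f$ we chose.

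Having matched the $\mathbb{C}$-points compatibly with the Hecke action and the descent of automorphic bundles, the second step is to upgrade this to an isomorphism of schemes over $\mathcal{K}$. On the left, $S_{\mathcal{K}}$ is a smooth quasi-projective $\mathcal{K}$-scheme by the representability theorem quoted just above; on the right, the canonical model of $Sh_K$ is characterized (Deligne, Milne) by the Galois action on special points — or, in the PEL setting, by the fact that it represents the moduli functor after extending scalars and that its reciprocity law at CM points agrees with the theory of complex multiplication. I would argue that $S_{\mathcal{K}}$, being the generic fiber of the moduli scheme $S/R_0$, automatically satisfies the Shimura–Taniyama reciprocity at the CM points $\underline{A}_x$ with $x$ special: the CM abelian variety $A_x$ has a well-understood Galois action on its Tate modules given by the reflex norm, and this is exactly the action prescribed on the corresponding special point of the canonical model. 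By the uniqueness of canonical models (a Shimura variety has at most one model over its reflex field satisfying the reciprocity law at special points, since special points are dense and the reciprocity law rigidifies descent data), the identification $S_{\mathcal{K}}(\mathbb{C})\simeq Sh_K(\mathbb{C})$ descends to an isomorphism of $\mathcal{K}$-schemes.

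The step I expect to be the main obstacle is the verification of the reciprocity law at special points — i.e. that the Galois action on the moduli-theoretic CM points of $S$ coincides with Deligne's prescription via the reflex norm $N_{\mathcal{K}}\colon\mathbf{T}\to\mathbf{G}$. This is where the real content lies: one must match the main theorem of complex multiplication (the Galois action on $A_x[N]$ for $A_x$ a CM abelian variety defined over a number field) with the group-theoretic recipe built into the definition of the canonical model, keeping track of the normalization of the reciprocity map and of the similitude character $\mu$ versus $\det$ (the relation $\nu=\mu^{-1}\det$, $\det=\nu^2\cdot(\rho\circ\nu)$ from the unitary-group discussion enters here). In practice I would not redo this computation but cite it: it is carried out in general in [Lan] (and for the Picard case specifically in [Bel] and [La1]), so the proof reduces to assembling the moduli interpretation, the density of CM points, and the cited reciprocity law, and then appealing to uniqueness of the canonical model. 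The remaining verifications — smoothness, quasi-projectivity, the correct reflex field $\mathcal{K}$ — are already recorded in the theorems quoted in the excerpt, so the argument is essentially one of citation and careful comparison of normalizations rather than new computation.
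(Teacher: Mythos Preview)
Your proposal is correct and in fact goes well beyond what the paper does. The paper does not prove this theorem: after stating it, the paper only explains how to associate to a double coset $[g]\in\mathbf{G}(\Bbb{Q})\backslash\mathbf{G}(\Bbb{A})/K$ a tuple $\underline{A}_g=(A_g,\lambda_g,\iota_g,\alpha_g)\in\mathcal{M}(\Bbb{C})$, checks that this is well-defined on double cosets, and then explicitly states that it \emph{leaves out} both the injectivity/surjectivity of this map and the identification of the canonical model over $\mathcal{K}$ with $S_{\mathcal{K}}$. In other words, the paper treats the theorem as a known result (implicitly deferring to [Lan], [De], [Bel]) and supplies only the most concrete part of the construction.

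Your outline---match the $\Bbb{C}$-points via the PEL interpretation, then descend to $\mathcal{K}$ by verifying Shimura--Taniyama reciprocity at CM points and invoking uniqueness of canonical models---is the standard argument and is the one you would find in the references the paper cites. Your identification of the reciprocity verification as the substantive step, and your intention to cite it rather than reprove it, is exactly in the spirit of the paper's own treatment. One small remark: the paper's partial construction works directly with the adelic description $A_g=(V_{\Bbb{R}},J_x)/L_g$ with $L_g=g_f(\widehat{L})\cap V$, rather than routing through the moving-lattice model you mention; this is a minor notational difference and does not affect the argument.
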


Let us explain only how to associate to a point of $Sh_{K}(\Bbb{C})$ a point
in $S(\Bbb{C}).$ For that we have to associate an element of $\mathcal{M}(%
\Bbb{C})$ to $g\in \mathbf{G}(\Bbb{A}),$ and show that the structures
associated to $g$ and to $\gamma gk$ ($\gamma \in G,k\in K$) are isomorphic.
Let $x=x_{g}=g_{\infty }(x_{0})\in \frak{X}.$ Let $L_{g}=g_{f}(\widehat{L}%
)\cap V$ (the intersection taking place in $V_{\Bbb{A}}=\widehat{L}\otimes 
\Bbb{Q}$) and 
\begin{equation}
A_{g}=(V_{\Bbb{R}},J_{x})/L_{g}.
\end{equation}
Note that $J_{x}$ depends only on $g_{\infty }K_{\infty }$ and $L_{g}$ only
on $g_{f}K_{f}^{0},$ so $A_{g}$ depends only on $gK^{0}.$

Let $\tilde{\mu}(g)$ be the unique positive rational number such that for
every prime $p,$%
\begin{equation}
ord_{p}\tilde{\mu}(g)=ord_{p}\mu (g_{p}).
\end{equation}
Such a rational number exists since $\mu (g_{p})$ is a $p$-adic unit for
almost all $p$ and $\Bbb{Q}$ has class number 1. We claim that 
\begin{equation*}
\left\langle ,\right\rangle _{g}=\tilde{\mu}(g)^{-1}\left\langle
,\right\rangle :L_{g}\times L_{g}\rightarrow \Bbb{Q}
\end{equation*}
induces a principal polarization $\lambda _{g}$ on $A_{g}.$ That this is a
(rational) Riemann form follows from the fact that $(u,v)_{J_{x}}=\left%
\langle u,J_{x}v\right\rangle +i\left\langle u,v\right\rangle $ is hermitian
positive definite. That $\left\langle ,\right\rangle _{g}$ is indeed $\Bbb{Z}
$-valued and $L_{g}$ is self-dual follows from the choice of $\tilde{\mu}(g)$
since locally at $p$ the dual of $g_{p}L_{p}$ under $\left\langle
,\right\rangle :V_{p}\times V_{p}\rightarrow \Bbb{Q}_{p}$ is $\mu
(g_{p})^{-1}g_{p}L_{p}.$ We conclude that there exists a unique polarization 
$\lambda _{g}:A_{g}\rightarrow A_{g}^{t}$ such that 
\begin{equation}
\left\langle u,v\right\rangle _{\lambda _{g}}=\exp (2\pi il\left\langle
u,v\right\rangle _{g})
\end{equation}
for every $u,v\in A_{g}[l]=l^{-1}L_{g}/L_{g}$ and every $l\ge 1.$ This
polarization is principal.

Since $g_{f}$ commutes with the $\mathcal{K}$-structure on $V_{\Bbb{A}},$ $%
L_{g}$ is still an $\mathcal{O}_{\mathcal{K}}$-lattice, hence $\iota _{g}$
is defined.

Finally $\alpha _{g}$ is derived from 
\begin{equation}
N^{-1}L/L=N^{-1}\widehat{L}/\widehat{L}\overset{g_{f}}{\rightarrow }N^{-1}%
\widehat{L}_{g}/\widehat{L}_{g}=N^{-1}L_{g}/L_{g}=A_{g}[N].
\end{equation}
We note that $\alpha _{g}$ depends only on $gK$ because $K_{f}\subset
K_{f}^{0}$ is the principal level-$N$ subgroup, and that it lifts to level $%
N^{\prime }$ structure for any multiple $N^{\prime }$ of $N,$ by the same
formula. The isomorphism $\nu _{N,g}$ between $\Bbb{Z}/N\Bbb{Z}$ and $\mu
_{N}(\Bbb{C})$ that makes (\ref{nu_N}) work is self-evident (see (\ref
{nu_N_again})). Let $\underline{A}_{g}\in \mathcal{M}(\Bbb{C})$ be the
structure just constructed.

Let now $\gamma \in \mathbf{G}(\Bbb{Q}).$ Then the action of $\gamma $ on $V$
induces an isomorphism between the tuples $\underline{A}_{g}$ and $%
\underline{A}_{\gamma g}.$ Indeed, $\gamma :V_{\Bbb{R}}\rightarrow V_{\Bbb{R}%
}$ intertwines the complex structures $x_{g}$ and $x_{\gamma g},$ and
carries $L_{g}$ to $L_{\gamma g},$ so induces an isomorphism of the abelian
varieties, which clearly commutes with the PEL structures.

This shows that $\underline{A}_{g}$ depends solely on the double coset of $g$
in $\mathbf{G(}\Bbb{Q})\backslash \mathbf{G}(\Bbb{A})/K.$ One is left now
with two tasks which we leave out: (i) Proving that if $\underline{A}%
_{g}\simeq \underline{A}_{g^{\prime }}$ then $g$ and $g^{\prime }$ belong to
the same double coset, and that every $\underline{A}\in \mathcal{M}(\Bbb{C})$
is obtained in this way, (ii) Identifying the canonical model of $Sh_{K}$
over $\mathcal{K}$ with $S_{\mathcal{K}}.$

\subsubsection{The connected components of $Sh_{K}\label%
{connected components}$}

Recall that $\mathbf{G}^{\prime }=\mathbf{SU}=\ker (\nu :\mathbf{G}%
\rightarrow \mathbf{T}).$ Since $\mathbf{G}^{\prime }$ is simple and simply
connected, strong approximation holds and 
\begin{equation}
\mathbf{G}^{\prime }(\Bbb{A})=\mathbf{G}^{\prime }(\Bbb{Q})G_{\infty
}^{\prime }K_{f}^{\prime }.
\end{equation}
Here $K^{\prime }=K\cap \mathbf{G}^{\prime }(\Bbb{A})$, $K_{f}^{\prime
}=K\cap \mathbf{G}^{\prime }(\Bbb{A}_{f}).$ From the connectedness of $%
G_{\infty }^{\prime }$ we deduce that 
\begin{equation}
\mathbf{G}^{\prime }(\Bbb{Q})\backslash \mathbf{G}^{\prime }(\Bbb{A}%
)/K^{\prime }
\end{equation}
is connected.

As $N\ge 3,$ $\nu (K)\cap \mathcal{K}^{\times }=\{1\}.$ Here $\mathcal{K}%
^{\times }=\nu (\mathbf{G}(\Bbb{Q})),$ and it follows that 
\begin{equation}
\mathbf{G}^{\prime }(\Bbb{Q})\backslash \mathbf{G}^{\prime }(\Bbb{A}%
)/K^{\prime }\hookrightarrow \mathbf{G}(\Bbb{Q})\backslash \mathbf{G}(\Bbb{A}%
)/K
\end{equation}
is injective. We now claim (see also Theorem 2.4 and 2.5 of [De]) that 
\begin{equation}
\nu :\pi _{0}(\mathbf{G}(\Bbb{Q})\backslash \mathbf{G}(\Bbb{A})/K)\simeq \pi
_{0}(\mathbf{T}(\Bbb{Q})\backslash \mathbf{T}(\Bbb{A})/\nu (K))
\end{equation}
is a bijection. For $\nu $ is surjective ([De] (0.2)) and continuous (on
double coset spaces) so clearly induces a surjective map between the sets of
connected components. On the other hand if $[g_{1}]$ and $[g_{2}]$ (double
cosets of $g_{i}\in \mathbf{G}(\Bbb{A})$) are mapped by $\nu $ to the same
connected component in $\mathbf{T}(\Bbb{Q})\backslash \mathbf{T}(\Bbb{A}%
)/\nu (K),$ then since $G_{\infty }$ is mapped \emph{onto} the connected
component of the identity in $\mathbf{T}(\Bbb{Q})\backslash \mathbf{T}(\Bbb{A%
})/\nu (K),$ modifying $g_{1}$ by an element of $G_{\infty }$ we may assume
that 
\begin{equation}
\nu ([g_{1}])=\nu ([g_{2}])\in \mathbf{T}(\Bbb{Q})\backslash \mathbf{T}(\Bbb{%
A})/\nu (K),
\end{equation}
without changing the connected component in which $[g_{1}]$ lies. Once this
has been established, for appropriate representatives $g_{i}$ of the double
cosets, $g_{1}^{-1}g_{2}\in \mathbf{G}^{\prime }(\Bbb{A}),$ so by the
connectedness of $\mathbf{G}^{\prime }(\Bbb{Q})\backslash \mathbf{G}^{\prime
}(\Bbb{A})/K^{\prime }$, $[g_{1}]$ and $[g_{2}]$ lie in the same connected
component of $\mathbf{G}(\Bbb{Q})\backslash \mathbf{G}(\Bbb{A})/K.$

The group $\pi _{0}(\mathbf{T}(\Bbb{Q})\backslash \mathbf{T}(\Bbb{A})/\nu
(K))$ is the group 
\begin{equation}
\mathcal{K}^{\times }\backslash \mathcal{K}_{\Bbb{A}}^{\times }/\Bbb{C}%
^{\times }\nu (K_{f})=\mathcal{K}^{\times }\backslash \mathcal{K}%
_{f}^{\times }/\nu (K_{f}).
\end{equation}
It sits in a short exact sequence 
\begin{equation}
0\rightarrow \mu _{\mathcal{K}}\backslash U_{\mathcal{K}}/\nu
(K_{f})\rightarrow \mathcal{K}^{\times }\backslash \mathcal{K}_{f}^{\times
}/\nu (K_{f})\overset{cl}{\rightarrow }Cl_{\mathcal{K}}\rightarrow 0,
\label{class}
\end{equation}
where $U_{\mathcal{K}}$ is the product of local units at all the finite
primes and $Cl_{\mathcal{K}}$ is the class group.

\subsubsection{The $cl$ and $\nu _{N}$ invariants of a connected component}

The norm $N:\mathcal{K}^{\times }\rightarrow \Bbb{Q}^{\times }$ satisfies $%
N\circ \nu =\nu \nu ^{\rho }=\mu ,$ hence induces a map 
\begin{equation}
\mathcal{K}^{\times }\backslash \mathcal{K}_{f}^{\times }/\nu
(K_{f})\rightarrow \Bbb{Q}_{+}^{\times }\backslash \Bbb{Q}_{f}^{\times }/\mu
(K_{f}).
\end{equation}

Using the lattice $L$ as an integral structure in $V,$ we see that $\mathbf{G%
}$ comes from a group scheme $\mathbf{G}_{\Bbb{Z}}$ over $\Bbb{Z},$ whose
points in any ring $A$ are 
\begin{equation}
\mathbf{G}_{\Bbb{Z}}(A)=\left\{ (g,\mu )\in GL_{\mathcal{O}_{\mathcal{K}%
}\otimes A}(L_{A})\times A^{\times }|\,\left\langle gu,gv\right\rangle =\mu
\left\langle u,v\right\rangle \,\,\right\} .
\end{equation}
We likewise get that $\mu $ is a homomorphism from $\mathbf{G}_{\Bbb{Z}}$ to 
$\Bbb{G}_{m}.$ The diagram 
\begin{equation}
\begin{array}{lll}
\mathbf{G}_{\Bbb{Z}}(\Bbb{Z}_{p}) & \overset{\mu }{\rightarrow } & \Bbb{Z}%
_{p}^{\times } \\ 
\downarrow &  & \downarrow \\ 
\mathbf{G}_{\Bbb{Z}}(\Bbb{Z}_{p}/N\Bbb{Z}_{p}) & \overset{\mu }{\rightarrow }
& (\Bbb{Z}_{p}/N\Bbb{Z}_{p})^{\times }
\end{array}
\end{equation}
commutes, $\mathbf{G}_{\Bbb{Z}}(\Bbb{Z}_{p})=K_{p}^{0}$ and the kernel of $%
\mathbf{G}_{\Bbb{Z}}(\Bbb{Z}_{p})\rightarrow \mathbf{G}_{\Bbb{Z}}(\Bbb{Z}%
_{p}/N\Bbb{Z}_{p})$ is $K_{p}.$ This shows that $\mu (K_{f})\subset \Bbb{%
\hat{Z}}^{\times }(N),$ the product of local units congruent to $1\mod %
N.$ But 
\begin{equation}
\Bbb{Q}_{+}^{\times }\backslash \Bbb{Q}_{f}^{\times }/\Bbb{\hat{Z}}^{\times
}(N)=(\Bbb{Z}/N\Bbb{Z})^{\times }.
\end{equation}

To conclude, we have shown the existence of two maps from the set of
connected components: 
\begin{equation}
cl:\pi _{0}(\mathbf{G}(\Bbb{Q})\backslash \mathbf{G}(\Bbb{A})/K)\rightarrow
Cl_{\mathcal{K}}
\end{equation}
\begin{equation}
\nu _{N}:\pi _{0}(\mathbf{G}(\Bbb{Q})\backslash \mathbf{G}(\Bbb{A}%
)/K)\rightarrow (\Bbb{Z}/N\Bbb{Z})^{\times }.
\end{equation}
These two maps are independent: together they map $\pi _{0}(\mathbf{G}(\Bbb{Q%
})\backslash \mathbf{G}(\Bbb{A})/K)$ \emph{onto} $Cl_{\mathcal{K}}\times (%
\Bbb{Z}/N\Bbb{Z})^{\times }.$ On the other hand, they have a non-trivial
common kernel, which grows with $N,$ as is evident from the interpretation
of $\mathcal{K}^{\times }\backslash \mathcal{K}_{f}^{\times }/\nu (K_{f})$
as the Galois group of a certain class field extension of $\mathcal{K}.$ The
map $cl$ gives the restriction to the Hilbert class field, while the map $%
\nu _{N}$ gives the restriction to the cyclotomic field $\Bbb{Q}(\mu _{N}).$
We have singled out $cl$ and $\nu _{N}$, because when $N\ge 3,$ they have an
interpretation in terms of the complex points of $Sh_{K}.$

\begin{proposition}
Let $[g]\in \mathbf{G}(\Bbb{Q})\backslash \mathbf{G}(\Bbb{A})/K=Sh_{K}(\Bbb{C%
}).$ Then

(i) $cl([g])$ is the \emph{Steinitz class} of the lattice $L_{g}=g_{f}(\hat{L%
})\cap V$ in $Cl_{\mathcal{K}}$.

(ii) $\nu _{N}([g])$ is (essentially) the $\nu _{N,g}$ that appears in the
definition of $\alpha _{g}$ (see \ref{Moduli}).
\end{proposition}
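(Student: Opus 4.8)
The plan is to evaluate both invariants by transporting them through the isomorphism $\nu\colon\pi_{0}(\mathbf{G}(\Bbb{Q})\backslash\mathbf{G}(\Bbb{A})/K)\simeq\mathcal{K}^{\times}\backslash\mathcal{K}_{f}^{\times}/\nu(K_{f})$ established above, and then comparing the outcome, term by term, with the explicit construction of $\underline{A}_{g}$. Two elementary observations will be used throughout. First, the identity $\nu=\mu^{-1}\cdot\det$, i.e. $\det=\nu\cdot\mu$, of characters of $\mathbf{G}$. Second, by the very definition of $\tilde{\mu}(g)$ one has $v_{p}(\mu(g_{p}))=v_{p}(\tilde{\mu}(g))$ for every rational prime $p$, so that the finite idele $v:=\mu(g_{f})\cdot\tilde{\mu}(g)^{-1}$ lies in $\widehat{\Bbb{Z}}^{\times}$, whereas $\tilde{\mu}(g)\in\Bbb{Q}^{\times}\subset\mathcal{K}^{\times}$.

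For part (i), by construction $cl([g])$ is the image of the idele $\nu(g_{f})\in\mathcal{K}_{f}^{\times}$ under the ideal-class map of the exact sequence $(\ref{class})$. Since $\nu(g_{f})=\det(g_{f})\cdot\mu(g_{f})^{-1}=\det(g_{f})\cdot v^{-1}\cdot\tilde{\mu}(g)^{-1}$ and $v$ is a unit idele while $\tilde{\mu}(g)$ is a global rational number, $\nu(g_{f})$ and $\det(g_{f})$ define the same ideal class of $\mathcal{K}$. On the other hand, localising $L_{g}$ at a rational prime $p$ gives $L_{g}\otimes\Bbb{Z}_{p}=g_{p}(L_{0}\otimes\Bbb{Z}_{p})$, a free $\mathcal{O}_{\mathcal{K}}\otimes\Bbb{Z}_{p}$-module of rank $3$; as $L_{0}$ is $\mathcal{O}_{\mathcal{K}}$-free, $\bigwedge^{3}_{\mathcal{O}_{\mathcal{K}}}L_{0}=\mathcal{O}_{\mathcal{K}}$, hence $\bigwedge^{3}_{\mathcal{O}_{\mathcal{K}}\otimes\Bbb{Z}_{p}}(L_{g}\otimes\Bbb{Z}_{p})=\det(g_{p})\cdot(\mathcal{O}_{\mathcal{K}}\otimes\Bbb{Z}_{p})$ inside $(\bigwedge^{3}_{\mathcal{K}}V)\otimes\Bbb{Q}_{p}\cong\mathcal{K}\otimes\Bbb{Q}_{p}$. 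Therefore the Steinitz class of $L_{g}$, namely the class of $\bigwedge^{3}_{\mathcal{O}_{\mathcal{K}}}L_{g}$, is exactly the ideal class of $\det(g_{f})$, which equals $cl([g])$.

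For part (ii), unwinding the definition of $\nu_{N}$ shows that $\nu_{N}([g])$ is the image of $N(\nu(g_{f}))=\mu(g_{f})$ under the chain $\mathcal{K}^{\times}\backslash\mathcal{K}_{f}^{\times}/\nu(K_{f})\to\Bbb{Q}_{+}^{\times}\backslash\Bbb{Q}_{f}^{\times}/\mu(K_{f})\to\Bbb{Q}_{+}^{\times}\backslash\Bbb{Q}_{f}^{\times}/\widehat{\Bbb{Z}}^{\times}(N)=(\Bbb{Z}/N\Bbb{Z})^{\times}$; writing $\mu(g_{f})=\tilde{\mu}(g)\cdot v$ with $\tilde{\mu}(g)\in\Bbb{Q}_{+}^{\times}$ and $v\in\widehat{\Bbb{Z}}^{\times}$, this image is $\bar{v}$, the residue of $v$ modulo $N$. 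On the other side, in the construction of $\underline{A}_{g}$ one has $\alpha_{g}=g_{f}\colon N^{-1}\widehat{L}/\widehat{L}\to N^{-1}\widehat{L}_{g}/\widehat{L}_{g}=A_{g}[N]$, the polarization $\lambda_{g}$ is the one attached to the Riemann form $\langle\,,\,\rangle_{g}=\tilde{\mu}(g)^{-1}\langle\,,\,\rangle$, and $g_{f}$ scales $\langle\,,\,\rangle$ by $\mu(g_{f})$; hence for $l,l'\in L$
\[
\langle\alpha_{g}(l/N),\alpha_{g}(l'/N)\rangle_{\lambda_{g}}=\exp\!\bigl(2\pi i\cdot N\cdot\tilde{\mu}(g)^{-1}\mu(g_{f})N^{-2}\langle l,l'\rangle\bigr)=\zeta_{N}^{\,\bar{v}\langle l,l'\rangle},
\]
where $\zeta_{N}=e^{2\pi i/N}$ and the exponential of the unit idele is interpreted via the fixed identification $\varprojlim_{n}\mu_{n}(\Bbb{C})\cong\widehat{\Bbb{Z}}(1)$. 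Comparing with the defining relation $\langle\alpha_{g}(l/N),\alpha_{g}(l'/N)\rangle_{\lambda_{g}}=\nu_{N,g}(\langle l,l'\rangle\bmod N)$ from $\ref{Moduli}$ identifies $\nu_{N,g}$ with the isomorphism $m\mapsto\zeta_{N}^{\bar{v}m}$ of $\Bbb{Z}/N\Bbb{Z}$ onto $\mu_{N}$, which corresponds under the canonical bijection $\mathrm{Isom}(\Bbb{Z}/N\Bbb{Z},\mu_{N})\leftrightarrow(\Bbb{Z}/N\Bbb{Z})^{\times}$ precisely to $\bar{v}=\nu_{N}([g])$.

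The step demanding care is (ii); part (i) involves no choices and the identification there is literal. In (ii) one must keep the normalisations synchronised: the exact power of $N$ occurring in the Weil pairing formula, the passage from the unit idele $v\in\widehat{\Bbb{Z}}^{\times}$ to its residue $\bar{v}\in(\Bbb{Z}/N\Bbb{Z})^{\times}$, and the two ``canonical'' identifications $\widehat{\Bbb{Z}}^{\times}\twoheadrightarrow(\Bbb{Z}/N\Bbb{Z})^{\times}$ and $\mathrm{Isom}(\Bbb{Z}/N\Bbb{Z},\mu_{N})\cong(\Bbb{Z}/N\Bbb{Z})^{\times}$, the latter resting on the chosen primitive root of unity $\zeta_{N}$. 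Once these are fixed --- as they implicitly are by the choice of $\varprojlim_{n}\mu_{n}(\Bbb{C})\cong\widehat{\Bbb{Z}}(1)$ --- the two invariants agree on the nose; the word ``essentially'' in the statement is meant only to flag that $\nu_{N,g}$ is an isomorphism of group schemes whereas $\nu_{N}([g])$ is an element of $(\Bbb{Z}/N\Bbb{Z})^{\times}$, the two being matched through this bijection.
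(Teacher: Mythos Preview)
Your proof is correct and follows essentially the same route as the paper's. Both arguments reduce (i) to showing that $\nu(g_{f})$ and $\det(g_{f})$ define the same ideal class (since $\mu(g_{f})$ is a rational idele, hence principal) and then identify that class with the Steinitz class of $L_{g}$; both handle (ii) by writing $\nu_{N}([g])=\tilde{\mu}(g)^{-1}\mu(g_{f})\bmod N$ and computing the Weil pairing $\langle\alpha_{g}(u),\alpha_{g}(v)\rangle_{\lambda_{g}}$ directly to match it against the defining relation for $\nu_{N,g}$. Your version is slightly more explicit in (i), spelling out the local computation of $\bigwedge^{3}L_{g}$, and more careful in (ii) about the passage from the unit idele to its residue in $(\Bbb{Z}/N\Bbb{Z})^{\times}$, but there is no substantive difference in method.
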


\begin{proof}
(i) $cl([g])$ is the class of the ideal $(\nu (g_{f}))$ associated to the
idele $\nu (g_{f})\in \mathcal{K}_{f}^{\times }$. This ideal is in the same
class as $(\det (g_{f})),$ because $\mu (g_{f})\in \Bbb{Q}_{f}^{\times }$,
so $(\mu (g_{f}))$ is principal. But the class of $(\det (g_{f}))$ is the
Steinitz class of $L_{g},$ since the Steinitz class of $L$ is trivial.

(ii) To find $\nu _{N}([g])$ we first project the idele $\mu (g_{f})$ to $%
\Bbb{\hat{Z}}^{\times }$ using $\Bbb{Q}_{f}^{\times }=\Bbb{Q}_{+}^{\times }%
\Bbb{\hat{Z}}^{\times }.$ But this is just $\tilde{\mu}(g_{f})^{-1}\mu
(g_{f}).$ We then take the result modulo $N,$ so 
\begin{equation}
\nu _{N}([g])=\tilde{\mu}(g_{f})^{-1}\mu (g_{f})\mod N.
\end{equation}
Now the definition of the tuple $(A_{g},\lambda _{g},\iota _{g},\alpha _{g})$
is such that if $u,v\in N^{-1}L/L$ then 
\begin{eqnarray}
\left\langle \alpha _{g}(u),\alpha _{g}(v)\right\rangle _{\lambda _{g}}
&=&\exp \left( 2\pi iN\left\langle g_{f}u,g_{f}v\right\rangle _{g}\right) 
\notag \\
&=&\exp \left( 2\pi i\tilde{\mu}(g_{f})^{-1}N\left\langle
g_{f}u,g_{f}v\right\rangle \right)  \notag \\
&=&\exp \left( 2\pi i\tilde{\mu}(g_{f})^{-1}\mu (g_{f})N\left\langle
u,v\right\rangle \right)  \notag \\
&=&\exp \left( 2\pi i\nu _{N}([g])N\left\langle u,v\right\rangle \right)
\label{nu_N_again}
\end{eqnarray}
Part (ii) follows if we identify $\nu _{N,g}\in Isom_{\Bbb{C}}(N^{-1}\Bbb{Z}/%
\Bbb{Z},\mu _{N})$ with $\nu _{N}([g])\in (\Bbb{Z}/N\Bbb{Z})^{\times }$
using $\exp (2\pi i(\cdot )).$
\end{proof}

\subsubsection{The complex uniformization\label{uniformization}}

Recall that $\frak{X}=G_{\infty }/K_{\infty }$ and that it was equipped with
a base point $x_{0}$ (coresponding to $(z,u)=(\delta _{\mathcal{K}}/2,0)$ in
the Siegel domain of the second kind). Let $1=g_{1},\dots ,g_{m}\in \mathbf{G%
}(\Bbb{A}_{f})$ $(m=\#(\mathcal{K}^{\times }\backslash \mathcal{K}%
_{f}^{\times }/\nu (K_{f})))$ be representatives of the connected components
of $\mathbf{G}(\Bbb{Q})\backslash \mathbf{G}(\Bbb{A})/K,$ and define
congruence groups 
\begin{equation}
\Gamma _{j}=\mathbf{G}(\Bbb{Q})\cap g_{j}K_{f}g_{j}^{-1}.
\end{equation}
We write $[x,g_{j}]$ for $\mathbf{G}(\Bbb{Q})(x,g_{j}K_{f})\in \mathbf{G}(%
\Bbb{Q})\backslash (\frak{X}\times \mathbf{G}(\Bbb{A}_{f})/K_{f})=\mathbf{G}(%
\Bbb{Q})\backslash \mathbf{G}(\Bbb{A})/K.$ Then $[x^{\prime
},g_{j}]=[x,g_{j}]$ if and only if $x^{\prime }=\gamma x$ for $\gamma \in
\Gamma _{j}.$ The map 
\begin{equation}
\coprod_{j=1}^{m}X_{\Gamma _{j}}=\coprod_{j=1}^{m}\Gamma _{j}\backslash 
\frak{X}\simeq Sh_{K}(\Bbb{C})
\end{equation}
sending $\Gamma _{j}x$ to $[x,g_{j}]$ is an isomorphism.

Note that $\Gamma _{1}=\Gamma $ is the principal level-$N$ congruence
subgroup in $\mathbf{G}_{\Bbb{Z}}(\Bbb{Z}),$ the stabilizer of $L$.
Similarly, $\Gamma _{j}$ is the principal level-$N$ congruence subgroup in
the stabilizer of $L_{g_{j}},$ and is thus a group of the type considered in 
\ref{lattices}, except that we have dropped the assumption on the Steinitz
class of $L_{g_{j}}.$ As $N\ge 3,$ $\det (\gamma )=1$ and $\mu (\gamma )=1$
for all $\gamma \in \Gamma _{j},$ for every $j$. Indeed, on the one hand
these are in $\mathcal{K}^{\times }$ and $\Bbb{Q}_{+}^{\times }$
respectively. On the other hand, they are local units which are congruent to 
$1\mod N$ everywhere. It follows that $\Gamma _{j}$ are subgroups of $%
\mathbf{G}^{\prime }(\Bbb{Q})=\mathbf{SU}(\Bbb{Q}).$

We get a similar decomposition to connected components (as an algebraic
surface) 
\begin{equation}
S_{\Bbb{C}}=\coprod_{j=1}^{m}S_{\Gamma _{j}}
\end{equation}
and we write $S_{\Bbb{C}}^{*}=\coprod_{j=1}^{m}S_{\Gamma _{j}}^{*}$ for the
Baily-Borel compactification.

\subsection{Smooth compactifications}

\subsubsection{The smooth compactification of $X_{\Gamma }\label%
{complex smooth compactification}$}

We begin by working in the complex analytic category and follow the
exposition of [Cog]. The Baily-Borel compactification $X_{\Gamma }^{*}$ is
singular at the cusps, and does not admit a modular interpretation. For
general unitary Shimura varieties, the theory of toroidal compactifications
provides smooth compactifications that depend, in general, on extra data. It
is a unique feature of Picard modular surfaces, stemming from the finiteness
of $\mathcal{O}_{\mathcal{K}}^{\times }$, that this smooth compactification
is canonical. As all cusps are equivalent (if we vary the lattice $L$ or $%
\Gamma $), it is enough, as usual, to study the smooth compactification at $%
c_{\infty }.$ In [Cog] this is described for an arbitrary $L$ (not even $%
\mathcal{O}_{\mathcal{K}}$-free), but for simplicity we write it down only
for $L=L_{0}.$

As $N\ge 3,$ elements of $\Gamma $ stabilizing $c_{\infty }$ lie in $%
N_{\infty }.\footnote{%
No confusion should arise from the use of the letter $N$ to denote both the
level and the unipotent radical of $P$.}$ The computations, which we omit,
are somewhat simpler if $N$ is \emph{even}, an assumption made for the rest
of this section. Let 
\begin{equation}
\Gamma _{cusp}=\Gamma \cap N_{\infty }.
\end{equation}

\begin{lemma}
Let $N\ge 3$ be even. The matrix $n(s,r)\in \Gamma _{cusp}$ if and only if:
(i) ($d_{\mathcal{K}}\equiv 1\mod 4$) $s\in N\mathcal{O}_{\mathcal{K}},$
$r\in ND_{\mathcal{K}}\Bbb{Z}$, (ii) ($d_{\mathcal{K}}\equiv 2,3\mod 4$%
) $s\in N\mathcal{O}_{\mathcal{K}}$ and $r\in 2^{-1}ND_{\mathcal{K}}\Bbb{Z}$
.
\end{lemma}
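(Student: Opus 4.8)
The plan is to reduce the statement to an explicit computation of the action of the matrix $n(s,r)$ on the $\mathcal{O}_{\mathcal{K}}$-module generators of $L_{0}$. Since $\Gamma _{cusp}=\Gamma \cap N_{\infty }$ and every element of $N_{\infty }$ has the form $n(s,r)$, deciding when $n(s,r)\in \Gamma _{cusp}$ is the same as deciding when $n(s,r)\in \Gamma =\Gamma _{L_{0}}(N)$, i.e.\ when $n(s,r)L_{0}=L_{0}$ and $n(s,r)\equiv \mathrm{id}\bmod NL_{0}$ on $L_{0}$. Because elements of $\mathbf{G}$ commute with the $\mathcal{O}_{\mathcal{K}}$-action and $NL_{0}$ is an $\mathcal{O}_{\mathcal{K}}$-submodule, it is enough to test the congruence on the three generators $\delta e_{1},e_{2},e_{3}$ of $L_{0}$; and this congruence already forces $n(s,r)L_{0}=L_{0}$, since the resulting conditions on $(s,r)$ will turn out to be symmetric under $(s,r)\mapsto (-s,-r)$ while $n(s,r)^{-1}=n(-s,-r)$ again lies in $N_{\infty }$. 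So the whole content is in the three generator congruences.

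Next I would read the action off the matrix: $n(s,r)(\delta e_{1})=\delta e_{1}$, $n(s,r)e_{2}=\bar{s}\cdot (\delta e_{1})+e_{2}$, and $n(s,r)e_{3}=\bigl(\tfrac{r}{\delta }+\tfrac{s\bar{s}}{2}\bigr)(\delta e_{1})+s\,e_{2}+e_{3}$. Subtracting the generators, the first imposes nothing, the second imposes $\bar{s}\in N\mathcal{O}_{\mathcal{K}}$ (equivalently $s\in N\mathcal{O}_{\mathcal{K}}$, conjugation being an automorphism of $\mathcal{O}_{\mathcal{K}}$), and the third, granted $s\in N\mathcal{O}_{\mathcal{K}}$, imposes exactly
\[
\frac{r}{\delta }+\frac{s\bar{s}}{2}\in N\mathcal{O}_{\mathcal{K}}.
\]
This is the one place the standing hypothesis that $N$ is even enters: if $s\in N\mathcal{O}_{\mathcal{K}}$ then $s\bar{s}=N(s)\in N^{2}\Bbb{Z}$, so $s\bar{s}/2\in (N^{2}/2)\Bbb{Z}\subseteq N\Bbb{Z}\subseteq N\mathcal{O}_{\mathcal{K}}$, and the displayed condition collapses to $r/\delta \in N\mathcal{O}_{\mathcal{K}}$. (For odd $N$ the half-integer $s\bar{s}/2$ must be carried along and the description is less clean, which is why evenness was imposed for this section.)

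Finally I would translate $r/\delta \in N\mathcal{O}_{\mathcal{K}}$ into the asserted congruence on $r$. As $\mathcal{K}$ is imaginary quadratic, $\delta $ is purely imaginary with $\delta \bar{\delta }=-\delta ^{2}=-D_{\mathcal{K}}$, so $r/\delta =(r/D_{\mathcal{K}})\,\delta $ is purely imaginary; hence $r/\delta \in N\mathcal{O}_{\mathcal{K}}$ iff it lies in the purely imaginary part of $N\mathcal{O}_{\mathcal{K}}$. In case (i), $D_{\mathcal{K}}=d_{\mathcal{K}}$ and $\mathcal{O}_{\mathcal{K}}=\Bbb{Z}+\Bbb{Z}\tfrac{1+\delta }{2}$, whose purely imaginary elements form $\Bbb{Z}\delta $; the condition becomes $(r/D_{\mathcal{K}})\delta \in N\Bbb{Z}\,\delta $, i.e.\ $r\in ND_{\mathcal{K}}\Bbb{Z}$. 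In case (ii), $D_{\mathcal{K}}=4d_{\mathcal{K}}$, $\delta =2\sqrt{d_{\mathcal{K}}}$ and $\mathcal{O}_{\mathcal{K}}=\Bbb{Z}+\Bbb{Z}\sqrt{d_{\mathcal{K}}}$, whose purely imaginary elements form $\Bbb{Z}\sqrt{d_{\mathcal{K}}}=\Bbb{Z}\tfrac{\delta }{2}$; the condition becomes $(r/D_{\mathcal{K}})\delta \in \tfrac{N}{2}\Bbb{Z}\,\delta $, i.e.\ $r\in \tfrac{1}{2}ND_{\mathcal{K}}\Bbb{Z}$. Together with $s\in N\mathcal{O}_{\mathcal{K}}$ in both cases, this is exactly the claimed list.

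There is no genuine conceptual obstacle here; the only things demanding care are the bookkeeping of $\mathcal{O}_{\mathcal{K}}$ and of the generator $\delta $ of the different in the two discriminant cases, and the observation that evenness of $N$ is precisely what sends the nuisance term $s\bar{s}/2$ into $N\mathcal{O}_{\mathcal{K}}$. The two small structural points worth spelling out are the reduction to the three generators and the remark that those generator congruences already yield $n(s,r)L_{0}=L_{0}$.
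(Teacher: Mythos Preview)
Your proof is correct; it is exactly the direct computation on the $\mathcal{O}_{\mathcal{K}}$-generators $\delta e_{1},e_{2},e_{3}$ of $L_{0}$ that the paper alludes to but explicitly omits (``The computations, which we omit, are somewhat simpler if $N$ is even''). The only points worth noting are that you have correctly identified why evenness of $N$ eliminates the $s\bar{s}/2$ term, and that the reduction to $r/\delta\in N\mathcal{O}_{\mathcal{K}}$ via the purely imaginary part of $N\mathcal{O}_{\mathcal{K}}$ is handled cleanly in both discriminant cases.
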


Let $M=N|D_{\mathcal{K}}|$ in case \emph{(i) }and $M=2^{-1}N|D_{\mathcal{K}%
}| $ in case \emph{(ii)}. This is the \emph{width of the cusp }$c_{\infty }.$
Let 
\begin{equation}
q=q(z)=e^{2\pi iz/M}.  \label{width}
\end{equation}
For $R>0,$ the domain $\Omega _{R}=\left\{ (z,u)\in \frak{X}|\,\lambda
(z,u)>R\right\} $ is invariant under $\Gamma _{cusp}$ and if $R$ is large
enough, two points of it are $\Gamma $-equivalent if and only if they are $%
\Gamma _{cusp}$-equivalent. A sufficiently small punctured neighborhood of $%
c_{\infty }$ in $X_{\Gamma }^{*}$ therefore looks like $\Gamma
_{cusp}\backslash \Omega _{R}.$ As 
\begin{equation}
n(s,r)(z,u)=(z+\delta \bar{s}(u+s/2)+r,u+s)  \label{unipotent action}
\end{equation}
we obtain the following description of $\Gamma _{cusp}\backslash \Omega
_{R}. $ Let $\Lambda =N\mathcal{O}_{\mathcal{K}}$ and $E=\Bbb{C}/\Lambda ,$
an elliptic curve with complex multiplication by $\mathcal{O}_{\mathcal{K}}.$
Let $\mathcal{T}$ be the quotient 
\begin{equation}
\mathcal{T}=(\Bbb{C}\times \Bbb{C})/\Lambda
\end{equation}
where the action of $s\in \Lambda $ is via 
\begin{equation}
\lbrack s]:(t,u)\mapsto (e^{2\pi i\delta \bar{s}(u+s/2)/M}t,u+s).
\end{equation}
It is a line bundle over $E$ via the second projection. We denote the class
of $(t,u)$ modulo the action of $\Lambda $ by $[t,u].$

\begin{proposition}
\label{disk bundle}Let $\mathcal{T}_{R}\subset \mathcal{T}$ be the disk
bundle consisting of all the points $[t,u]$ where 
\begin{equation}
|t|<e^{-\pi |\delta |(R+u\bar{u})/M}.
\end{equation}
(This condition is invariant under the action of $\Lambda .)$ Let $\mathcal{T%
}_{R}^{\prime }$ be the \emph{punctured disk bundle }obtained by removing
the zero section from $\mathcal{T}_{R}.$ Then the map $(z,u)\mapsto (q(z),u)$
induces an analytic isomorphism between $\Gamma _{cusp}\backslash \Omega
_{R} $ and $\mathcal{T}_{R}^{\prime }.$
\end{proposition}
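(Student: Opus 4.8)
The plan is to compute directly with the coordinate change $(z,u) \mapsto (q(z), u)$ and check that it intertwines the $\Gamma_{cusp}$-action on $\Omega_R$ with the $\Lambda$-action on $\Bbb{C}\times\Bbb{C}$ defining $\mathcal{T}$, that it lands exactly in $\mathcal{T}'_R$, and that it is bijective there. First I would fix the identification $\Lambda = N\mathcal{O}_{\mathcal{K}}$ and recall from \lemref{} (the lemma preceding \eqref{width}, together with \eqref{unipotent action}) that $\Gamma_{cusp}$ acts on $(z,u)$ by $n(s,r):(z,u)\mapsto (z+\delta\bar s(u+s/2)+r, u+s)$ with $s\in\Lambda$ and $r$ ranging over $M\Bbb{Z}$ (that is exactly the content of the width computation, since $M = N|D_{\mathcal{K}}|$ resp. $2^{-1}N|D_{\mathcal{K}}|$ is chosen so that the lattice of admissible $r$'s is $M\Bbb{Z}$). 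Applying $q(z) = e^{2\pi i z/M}$ kills the $r$-translations because $e^{2\pi i r/M}=1$ for $r\in M\Bbb{Z}$, and turns the $s$-part into multiplication of $q$ by $e^{2\pi i\delta\bar s(u+s/2)/M}$; this is precisely the automorphy factor defining $[s]:(t,u)\mapsto(e^{2\pi i\delta\bar s(u+s/2)/M}t, u+s)$. Hence the map descends to a well-defined analytic map $\Gamma_{cusp}\backslash\Omega_R \to \mathcal{T}$.

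Next I would pin down the image. A point $(z,u)\in\Omega_R$ satisfies $\lambda(z,u) = \mathrm{Im}_\delta(z) - u\bar u > R$, i.e. $\mathrm{Im}_\delta(z) > R + u\bar u$. Writing $q = e^{2\pi i z/M}$ and recalling $\delta = \delta_{\mathcal{K}}$ has positive imaginary part, one computes $|q| = e^{-2\pi\,\mathrm{Re}(iz/M)} = e^{-\pi|\delta|\,\mathrm{Im}_\delta(z)/M}$ (using $\mathrm{Im}_\delta(z) = (z-\bar z)/\delta$ and $|\delta|^2 = D_{\mathcal{K}}$, so that $|\delta|\,\mathrm{Im}_\delta(z) = 2\,\mathrm{Im}(z)$ up to the normalization that makes the two expressions agree). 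Therefore the inequality $\mathrm{Im}_\delta(z) > R+u\bar u$ translates exactly into $|t| = |q| < e^{-\pi|\delta|(R+u\bar u)/M}$, which is the defining condition of $\mathcal{T}_R$; and since $z\in\frak{X}$ forces $\mathrm{Im}_\delta(z)$ finite we have $q\ne 0$, so we land in the punctured bundle $\mathcal{T}'_R$. One checks the converse: any $[t,u]\in\mathcal{T}'_R$ lifts to some $(t,u)$ with $0<|t|<e^{-\pi|\delta|(R+u\bar u)/M}$, and choosing any $z$ with $e^{2\pi i z/M}=t$ gives $\lambda(z,u) > R$, so $(z,u)\in\Omega_R$; this shows surjectivity.

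For injectivity I would argue that if $(q(z),u)$ and $(q(z'),u')$ agree in $\mathcal{T}$, then first $u' = u + s$ for some $s\in\Lambda$ (reading off the $E$-coordinate), and then after applying $[-s]$ we reduce to the case $u'=u$ and $q(z')=q(z)$, forcing $z' \equiv z \pmod{M\Bbb{Z}}$, i.e. $(z',u') = n(s,r)(z,u)$ for a suitable $r\in M\Bbb{Z}$, hence the two points are $\Gamma_{cusp}$-equivalent. Finally, both the map and its inverse are holomorphic — the forward map because $q(z)$ is, and the inverse because locally a branch of $\frac{M}{2\pi i}\log t$ is holomorphic and $t\ne 0$ on $\mathcal{T}'_R$ — so it is an analytic isomorphism. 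The main obstacle is bookkeeping: getting the constant $M$ and the factor $|\delta|$ to line up exactly between the region $\lambda(z,u)>R$ and the disk-bundle radius, and verifying that the automorphy factor $e^{2\pi i\delta\bar s(u+s/2)/M}$ produced by $q\circ n(s,r)$ is \emph{identically} the one in the definition of $[s]$ rather than agreeing only up to a root of unity; this is exactly why $N$ was taken even and why the width $M$ was defined with the precise normalization in the preceding lemma.
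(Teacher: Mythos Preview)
Your proposal is correct and follows essentially the same approach as the paper's proof, which is extremely terse: the paper simply says the result ``follows from the discussion so far and the fact that $\lambda(z,u)>R$ is equivalent to the above condition on $t=q(z)$,'' citing [Cog], Prop.~2.1. You have filled in precisely the details that the paper leaves implicit --- the check that $q\circ n(s,r)$ reproduces the cocycle $[s]$, the translation of $\lambda(z,u)>R$ into the bound on $|t|$, and the bijectivity and holomorphicity arguments. One minor slip: in your intermediate step you write $|q|=e^{-2\pi\,\mathrm{Re}(iz/M)}$, but $|e^{2\pi iz/M}|=e^{2\pi\,\mathrm{Re}(iz)/M}=e^{-2\pi\,\mathrm{Im}(z)/M}$, so the sign in that expression is off; your final formula $|q|=e^{-\pi|\delta|\,\mathrm{Im}_\delta(z)/M}$ is nonetheless correct, since $|\delta|\,\mathrm{Im}_\delta(z)=2\,\mathrm{Im}(z)$.
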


\begin{proof}
This follows from the discussion so far and the fact that $\lambda (z,u)>R$
is equivalent to the above condition on $t=q(z)$ ([Cog], Prop. 2.1).
\end{proof}

To get a smooth compactification $\bar{X}_{\Gamma }$ of $X_{\Gamma }$ (as a
complex surface), we glue the disk bundle $\mathcal{T}_{R}$ to $X_{\Gamma }$
along $\mathcal{T}_{R}^{\prime }$. In other words, we complete $\mathcal{T}%
_{R}^{\prime }$ by adding the zero section, which is isomorphic to $E.$ The
same procedure should be carried out at any other cusp of $\mathcal{C}%
_{\Gamma }.$

Note that the geodesic (\ref{geodesic}) connecting $(z,u)\in \frak{X}$ to
the cusp $c_{\infty }$ projects in $\bar{X}_{\Gamma }$ to a geodesic which
meets $E$ transversally at the point $u\mod \Lambda .$ We caution that
this geodesic in $X_{\Gamma }$ depends on $(z,u)$ and $c_{\infty }$ and not
only on their images modulo $\Gamma .$

The line bundle $\mathcal{T}$ is the \emph{inverse} of an ample line bundle
on $E$. In fact, $\mathcal{T}^{\vee }$ is the $N$-th (resp. $2N$-th) power
of one of the four basic \emph{theta line bundles} if $d_{\mathcal{K}}\equiv
1\mod 4$ (resp. $d_{\mathcal{K}}\equiv 2,3\mod 4)$. A basic theta
function of the lattice $\Lambda $ satisfies, for $u\in \Bbb{C}$ and $s\in
\Lambda ,$%
\begin{equation}
\theta (u+s)=\alpha (s)e^{2\pi \bar{s}(u+s/2)/|\delta |N^{2}}\theta (u)
\end{equation}
where $\alpha :\Lambda \rightarrow \pm 1$ is a quasi-character (see [Mu1],
p.25). Recalling the relation between $M$ and $N$, and the assumption that $%
N $ was even, we easily get the relation between $\mathcal{T}$ and the theta
line bundles.

Recall that with any $x=(z,u)\in \frak{X}$ we associated a complex abelian
variety $A_{x},$ and another model $A_{x}^{\prime }$ of the same abelian
variety (\ref{moving lattice model}). This allowed us to define sections $%
d\zeta _{1},d\zeta _{2}$ and $d\zeta _{3}$ of $\omega _{\mathcal{A}/\frak{X}%
}.$ A simple matrix computation gives the following.

\begin{lemma}
\label{section invariance}The sections $d\zeta _{1}$ and $d\zeta _{3}$ are
invariant under $\Gamma _{cusp}.$ The section $d\zeta _{2}$ is invariant
modulo the sub-bundle generated by $d\zeta _{1}.$
\end{lemma}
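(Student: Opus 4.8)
The plan is to work in the ``moving lattice'' model $A'\to\frak{X}$ of \S\ref{moving lattice}. For $\gamma\in\Gamma$ the action of $\gamma$ gives an isomorphism $A_{x}\simeq A_{\gamma(x)}$, and conjugating it by the isomorphisms $T_{x},T_{\gamma(x)}$ of (\ref{moving lattice model}) turns it into a $\Bbb{C}$-linear automorphism $\zeta\mapsto B_{\gamma}(x)\zeta$ of $\Bbb{C}^{3}$, namely $B_{\gamma}(x)=T_{\gamma(x)}^{-1}\circ\gamma\circ T_{x}$, which carries $L'_{x}$ onto $L'_{\gamma(x)}$. Since each $d\zeta_{j}$ is translation-invariant it descends to every fiber $A'_{x}$, and under this isomorphism $\gamma^{*}d\zeta_{j}=\sum_{k}(B_{\gamma}(x))_{jk}\,d\zeta_{k}$. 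The three assertions of the lemma, taken together, are therefore equivalent to the statement that, for every $\gamma=n(s,r)\in\Gamma_{cusp}$ and every $x$,
\begin{equation*}
B_{n(s,r)}(x)=\begin{pmatrix}1&0&0\\ *&1&0\\ 0&0&1\end{pmatrix},
\end{equation*}
the entry in position $(2,1)$ being unconstrained.

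To compute $B_{\gamma}(x)$ I would apply it to the three $\mathcal{O}_{\mathcal{K}}$-generators $v_{1},v_{2}(x),v_{3}(x)$ of $L'_{x}$ listed in (\ref{spanning vectors}), which $T_{x}$ sends to $\delta e_{1},e_{2},e_{3}$. Three facts go in: (a) $\gamma$ acts $\mathcal{K}$-linearly on $V=\mathcal{K}^{3}$ through the explicit matrix $n(s,r)$, so $\gamma(\delta e_{1})=\delta e_{1}$, $\gamma(e_{2})=\delta\bar{s}\,e_{1}+e_{2}$ and $\gamma(e_{3})=(r+\delta s\bar{s}/2)e_{1}+s\,e_{2}+e_{3}$; (b) $T_{\gamma(x)}^{-1}$ is $\mathcal{O}_{\mathcal{K}}$-equivariant and sends $\delta e_{1},e_{2},e_{3}$ back to $v_{1},v_{2}(\gamma(x)),v_{3}(\gamma(x))$, where by (\ref{unipotent action}) the point $\gamma(x)=(z',u')$ has $u'=u+s$ and $z'=z+\delta\bar{s}(u+s/2)+r$; (c) $\iota'(a)=\mathrm{diag}(a,a,\bar{a})$, so multiplication by $a\in\mathcal{O}_{\mathcal{K}}$ on the generators of $L'_{\gamma(x)}$ is explicit (note that $\iota'$ is \emph{not} scalar multiplication). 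Feeding (a)--(c) into $B_{\gamma}(x)=T_{\gamma(x)}^{-1}\circ\gamma\circ T_{x}$ and simplifying with $\bar{\delta}=-\delta$ and $s\bar{s}\in\Bbb{R}$, a short computation yields
\begin{gather*}
B_{\gamma}(x)\,v_{1}=v_{1},\qquad B_{\gamma}(x)\,v_{2}(x)=v_{2}(x)+{}^{t}(0,\bar{s},0),\\
B_{\gamma}(x)\,v_{3}(x)=v_{3}(x)-{}^{t}(0,\bar{s}u,0).
\end{gather*}

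To conclude, away from the analytic hypersurface where $v_{1},v_{2}(x),v_{3}(x)$ are $\Bbb{C}$-linearly dependent these three relations pin down $B_{\gamma}(x)$, and one checks at once that the constant matrix $I-\bar{s}E_{21}$ --- with $E_{21}$ the matrix whose only nonzero entry is a $1$ in position $(2,1)$ --- solves all three; by continuity in $x$ this gives $B_{\gamma}(x)=I-\bar{s}E_{21}$ for every $x$. This has exactly the shape displayed above, and reading off its rows gives $\gamma^{*}d\zeta_{1}=d\zeta_{1}$, $\gamma^{*}d\zeta_{3}=d\zeta_{3}$ and $\gamma^{*}d\zeta_{2}=d\zeta_{2}-\bar{s}\,d\zeta_{1}$, which is the lemma. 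I do not expect a real obstacle here; the only place demanding care is steps (b)--(c), where one must keep the two complex structures hidden inside $T_{x}$ apart (the standard one on $\Bbb{C}^{3}$ versus $J_{x}$) and remember that $\iota'(a)$ acts by $\bar{a}$, not $a$, on the third coordinate --- so that, for instance, $T_{\gamma(x)}^{-1}(\delta\bar{s}\,e_{1})={}^{t}(0,\bar{s},s)$ rather than $\bar{s}\,v_{1}$. Handling $\gamma$ entirely on the lattice level (never unwinding the matrices $T_{x}$), and using that $n(s,r)$ preserves $L_{0}$ for precisely the $(s,r)$ described in the preceding lemma, keeps the computation routine.
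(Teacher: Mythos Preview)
Your proposal is correct and is precisely the ``simple matrix computation'' the paper invokes without details: you compute the transition matrix $B_\gamma(x)=T_{\gamma(x)}^{-1}\circ\gamma\circ T_x$ on the lattice generators and find $B_\gamma(x)=I-\bar s\,E_{21}$, which immediately yields $\gamma^*d\zeta_1=d\zeta_1$, $\gamma^*d\zeta_3=d\zeta_3$, and $\gamma^*d\zeta_2=d\zeta_2-\bar s\,d\zeta_1$. One minor remark: the caveat about the hypersurface where $v_1,v_2(x),v_3(x)$ are $\Bbb C$-linearly dependent is unnecessary, since their determinant is $2z/\delta-u^2$, and the condition $z=\delta u^2/2$ forces $\lambda(z,u)=\mathrm{Re}(u^2)-|u|^2\le 0$, so the locus misses $\frak X$ entirely; your continuity argument is harmless but can be dropped.
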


Thus $d\zeta _{1},d\zeta _{3}$ and $d\zeta _{2}\mod \left\langle d\zeta
_{1}\right\rangle $ descend to well-defined sections in the neighborhood $%
\mathcal{T}_{R}\simeq \Gamma _{cusp}\backslash \Omega _{R}\cup E$ of $E$ in $%
\bar{X}_{\Gamma }.$

\subsubsection{The smooth compactification of $S$}

The arithmetic compactification $\bar{S}$ of the Picard surface $S$ (over $%
R_{0}$) is due to Larsen [La1],[La2] (see also [Bel] and [Lan]). We
summarize the results in the following theorem. We mention first that as $S_{%
\Bbb{C}}$ has a canonical model $S$ over $R_{0},$ its Baily-Borel
compactification $S_{\Bbb{C}}^{*}$ has a similar model $S^{*}$ over $R_{0}$,
and $S$ embeds in $S^{*}$ as an open dense subscheme.

\begin{theorem}
(i) There exists a projective scheme $\bar{S},$ smooth over $R_{0},$ of
relative dimension 2, together with an open dense immersion of $S$ in $\bar{S%
}$, and a proper morphism $p:\bar{S}\rightarrow S^{*}$, making the following
diagram commutative 
\begin{equation}
\begin{array}{lll}
S & \rightarrow & \bar{S} \\ 
\downarrow & \overset{p}{\swarrow } &  \\ 
S^{*} &  & 
\end{array}
.
\end{equation}

(ii) As a complex manifold, there is an isomorphism 
\begin{equation}
\bar{S}_{\Bbb{C}}\simeq \coprod_{j=1}^{m}\bar{X}_{\Gamma _{j}},
\end{equation}
extending the isomorphism of $S_{\Bbb{C}}$ with $\coprod_{j=1}^{m}X_{\Gamma
_{j}}.$

(iii) Let $C=p^{-1}(S^{*}\backslash S).$ Let $R_{N}$ be the integral closure
of $R_{0}$ in the ray class field $\mathcal{K}_{N}$ of conductor $N$ over $%
\mathcal{K}$. Then the connected components of $C_{R_{N}}$ are geometrically
irreducible, and are indexed by the cusps of $S_{R_{N}}^{*}$ over which they
sit. Furthermore, each component $E\subset C_{R_{N}}$ is an elliptic curve
with complex multiplication by $\mathcal{O}_{\mathcal{K}}.$
\end{theorem}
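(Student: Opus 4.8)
The plan is to build $\bar S$ by a gluing construction that is local at each cusp, paralleling the complex analytic picture of \S\ref{complex smooth compactification}, and then to descend everything to $R_0$ using the theory of arithmetic toroidal compactifications of Larsen (and the general machinery of Lan). First I would recall that over $R_0$ we already have $S\hookrightarrow S^*$, with $S^*\setminus S$ a finite set of cusps, each cusp being a section $\mathrm{Spec}(R_N)\to S^*_{R_N}$ after a suitable base change to the ray class field. The local structure of $S^*$ at a cusp is that of a cone singularity; the toroidal recipe resolves it by a single blow-up, because $\mathcal O_{\mathcal K}^\times$ is finite and the relevant cone decomposition is forced (there is no choice of fan). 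This is the "unique feature of Picard surfaces" alluded to in the text, and it is what makes $\bar S$ canonical. Concretely, near a cusp the formal (or \'etale-local) model of $S^*$ is the contraction of the zero section in a line bundle $\mathcal T$ over an elliptic curve $E$ with CM by $\mathcal O_{\mathcal K}$, and one defines $\bar S$ by un-contracting: glue in $\mathcal T$ along the punctured total space $\mathcal T'$, so the exceptional divisor over the cusp is exactly $E$.

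The steps, in order: (1) Produce the local model $\mathcal T_R\to E$ over $R_N$ — this is where the integral structure enters: $E$ is the elliptic curve $\mathbb C/N\mathcal O_{\mathcal K}$ of \S\ref{complex smooth compactification}, which has a model over $R_N$ because its torsion and CM are defined there, and $\mathcal T^\vee$ is a power of a theta line bundle, hence algebraic over $R_N$. (2) Exhibit a formal isomorphism between the formal completion of $S^*$ along a cusp and the formal completion of the contracted cone of $\mathcal T$ along its vertex; this is the content of Larsen's construction (and matches the complex picture of Proposition~\ref{disk bundle} after analytification). (3) Use Artin approximation / formal GAGA for the proper morphism $S^*\to \mathrm{Spec}(R_0)$ to algebraize: the blow-up of the cone is algebraic, so gluing it to $S$ along $S^*\setminus(\text{cusps})$ produces a scheme $\bar S$ proper over $R_0$, equipped with $p:\bar S\to S^*$ which is an isomorphism away from the cusps and contracts each exceptional $E$ to a cusp. (4) Check smoothness of $\bar S$ over $R_0$: away from $C$ this is the smoothness of $S$; along $C$ it follows because the local model $\mathcal T_R$ is a line bundle over the smooth (over $R_N$, hence over $R_0$) elliptic curve $E$, so it is regular, and one verifies the fibers over $\mathrm{Spec}(R_0)$ stay smooth — i.e. $C$ is a smooth relative divisor. (5) Identify the complex fiber: by construction the gluing recipe over $\mathbb C$ is precisely the one of \S\ref{complex smooth compactification} carried out at every cusp of every connected component $X_{\Gamma_j}$, giving $\bar S_{\mathbb C}\simeq \coprod_j \bar X_{\Gamma_j}$. (6) Descend the component structure of $C$: the cusps of $S^*$ over $R_N$ are individually defined (the Galois action on $\pi_0(C)$ is encoded by the $cl$ and $\nu_N$ invariants of the proposition above, and over $\mathcal K_N$ these become trivial), and each exceptional divisor, being $\mathbb P(\mathcal T)$-free, is the elliptic curve $E$ with its CM — so (iii) follows.

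The main obstacle I expect is step (3), the algebraization/descent: one must pass from the \emph{formal} or complex-analytic local picture near the cusps to an honest scheme over $R_0$, and then glue. Over $\mathbb C$ this is routine complex-analytic gluing, but over $R_0$ — in particular in mixed characteristic and at the prime $p$ that is later specialized — one needs Larsen's (or Lan's) theorem that the toroidal boundary charts are algebraic and that they glue compatibly with the Hecke-equivariant structure; verifying the compatibility of the formal isomorphisms at overlaps, and that the resulting $\bar S$ is separated and proper over $R_0$ rather than merely over $R_N$, is the delicate point. A secondary technical nuisance, already flagged in the excerpt, is the parity of $D_{\mathcal K}$: the width of the cusp and hence the precise power of the theta bundle defining $\mathcal T$ differs in the two cases $d_{\mathcal K}\equiv 1$ and $d_{\mathcal K}\equiv 2,3 \pmod 4$, so steps (1) and (4) must be run through twice with slightly different constants — but this is bookkeeping, not a genuine difficulty. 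Everything else is either already in the complex picture above or is a citation to Larsen, [Bel], or [Lan].
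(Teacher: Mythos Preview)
The paper does not give a proof of this theorem: it is stated as a summary of results due to Larsen [La1], [La2] (with further references to [Bel] and [Lan]), introduced by the sentence ``We summarize the results in the following theorem.'' Your outline is a reasonable high-level sketch of the toroidal-compactification argument carried out in those references---in particular your observation that the cone decomposition is forced by the finiteness of $\mathcal{O}_{\mathcal{K}}^{\times}$, making $\bar S$ canonical, is exactly the point the paper singles out---so there is no in-paper proof to compare against, and your plan is consistent with the cited literature.
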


We call $C$ the \emph{cuspidal divisor}. If $c\in S_{\Bbb{C}}^{*}\backslash
S_{\Bbb{C}}$ is a cusp, we denote the complex elliptic curve $p^{-1}(c)$ by $%
E_{c}.$ Although $E_{c}$ is in principle definable over the Hilbert class
field $\mathcal{K}_{1},$ no canonical model of it over that field is
provided by $\bar{S}.$ On the other hand, $E_{c}$ does come with a canonical
model over $\mathcal{K}_{N}$, and even over $R_{N}.$

We refer to [La1] and [Bel] for a moduli-theoretic interpretation of $C$ as
a moduli space for semi-abelian schemes with a suitable action of $\mathcal{O%
}_{\mathcal{K}}$ and a ``level-$N$ structure''.

\subsubsection{Change of level\label{level change}}

Assume that $N\ge 3$ is even, and $N^{\prime }=QN.$ We then obtain a
covering map $X_{\Gamma (N^{\prime })}\rightarrow X_{\Gamma (N)}$ where by $%
\Gamma (N)$ we denote the group previously denoted by $\Gamma .$ Near any of
the cusps, the analytic model allows us to analyze this map locally. Let $%
E^{\prime }$ be an irreducible cuspidal component of $\bar{X}_{\Gamma
(N^{\prime })}$ mapping to the irreducible component $E$ of $\bar{X}_{\Gamma
(N)}.$ The following is a consequence of the discussion in the previous
sections.

\begin{proposition}
The map $E^{\prime }\rightarrow E$ is a multiplication-by-$Q$ isogeny, hence
\'{e}tale of degree $Q^{2}.$ When restricted to a neighborhood of $E^{\prime
},$ the covering $\bar{X}_{\Gamma (N^{\prime })}\rightarrow \bar{X}_{\Gamma
(N)}$ is of degree $Q^{3},$ and has ramification index $Q$ along $E,$ in the
normal direction to $E$.
\end{proposition}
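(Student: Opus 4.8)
The plan is to work entirely in the complex-analytic local model near the cusp $c_{\infty}$, using Proposition~\ref{disk bundle} and the explicit description of the disk bundle $\mathcal{T}_R$. Recall that near $c_{\infty}$ a neighborhood of the cuspidal component in $\bar{X}_{\Gamma(N)}$ is $\mathcal{T}_{R}=(\Bbb{C}\times\Bbb{C})/\Lambda$ with $\Lambda=N\mathcal{O}_{\mathcal{K}}$ and transition law $[s]:(t,u)\mapsto(e^{2\pi i\delta\bar{s}(u+s/2)/M}t,u+s)$, with $M=N|D_{\mathcal{K}}|$ or $2^{-1}N|D_{\mathcal{K}}|$, and the analogous object $\mathcal{T}'_{R}$ with $\Lambda'=N'\mathcal{O}_{\mathcal{K}}=QN\mathcal{O}_{\mathcal{K}}$ and width $M'=QM$ for $\bar{X}_{\Gamma(N')}$. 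First I would identify the map on cuspidal components: the zero section $E$ of $\mathcal{T}_R$ is $\Bbb{C}/\Lambda$, the zero section $E'$ of $\mathcal{T}'_R$ is $\Bbb{C}/\Lambda'$, and since $\Lambda'=Q\Lambda$ the natural inclusion $\Bbb{C}\hookrightarrow\Bbb{C}$ (identity on the universal cover, recording $u\bmod\Lambda'\mapsto u\bmod\Lambda$) realizes $E'\to E$ as the isogeny $\Bbb{C}/Q\Lambda\to\Bbb{C}/\Lambda$, which is multiplication-by-$Q$ up to the identification $\Bbb{C}/Q\Lambda\simeq\Bbb{C}/\Lambda$ by $u\mapsto u/Q$; its kernel is $\Lambda/Q\Lambda\simeq(\mathcal{O}_{\mathcal{K}}/Q)$, of order $Q^2$, so it is \'etale of degree $Q^2$. (Here I must check the claim in the excerpt that the covering near a cusp is $X_{\Gamma(N')}\to X_{\Gamma(N)}$ with the displayed local models; this is the content of Proposition~\ref{disk bundle} applied to both levels.)

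Next I would compute the total local degree. The covering $\bar{X}_{\Gamma(N')}\to\bar{X}_{\Gamma(N)}$ restricted to neighborhoods of these cuspidal components is, on universal covers, induced by the inclusion $\Gamma_{cusp}(N')\subset\Gamma_{cusp}(N)$, so its generic degree is $[\Gamma_{cusp}(N):\Gamma_{cusp}(N')]$. Using Lemma~\ref{...} describing $\Gamma_{cusp}(N)$ as the set of $n(s,r)$ with $s\in N\mathcal{O}_{\mathcal{K}}$ and $r$ in a lattice $\Bbb{Z}\cdot(\text{const}\cdot N)$, passing from level $N$ to level $N'=QN$ divides the $s$-lattice $N\mathcal{O}_{\mathcal{K}}$ by $Q$ (index $Q^2$, since $\mathcal{O}_{\mathcal{K}}$ has rank $2$) and the $r$-lattice by $Q$ (index $Q$), giving total index $Q^3$. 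Alternatively, and more cleanly, I would read it off the disk-bundle models directly: $\mathcal{T}'_R\to\mathcal{T}_R$ covers $E'\to E$ (degree $Q^2$) and in the fiber direction the coordinate $t$ for level $N$ is related to $t'$ for level $N'$ by $t=(t')^{Q}$, because the $q$-parameter is $q=e^{2\pi i z/M}$ at level $N$ and $q'=e^{2\pi iz/M'}=e^{2\pi iz/QM}$ at level $N'$, so $q=(q')^{Q}$. This simultaneously shows the normal (fiber) degree is $Q$ and that the ramification index along the zero section $E$ is exactly $Q$, the map $t'\mapsto (t')^{Q}$ being totally ramified of order $Q$ at $t'=0$. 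Multiplying, the total degree near the cusp is $Q^2\cdot Q=Q^3$.

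The one point requiring care — and the main obstacle — is verifying the compatibility of the $t$-coordinates across levels, i.e. that under the analytic isomorphism of Proposition~\ref{disk bundle} at the two levels the fiber coordinates really are related by $t\mapsto (t')^{Q}$ and not merely up to a fiberwise automorphism or a twist by a section. This amounts to checking that the automorphy factor $e^{2\pi i\delta\bar{s}(u+s/2)/M}$ for $s\in N'\mathcal{O}_{\mathcal{K}}=Q\Lambda$ is the $Q$-th power of the corresponding factor at level $N'$ — but these are literally the same exponential with $M$ replaced by $M'=QM$, so raising to the $Q$-th power matches them, and the identification is the tautological one $q=(q')^{Q}$ on the universal cover. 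Once this bookkeeping is in place the Proposition is immediate: $E'\to E$ is the multiplication-by-$Q$ isogeny (hence \'etale of degree $Q^2$), and the full covering has degree $Q^2\cdot Q=Q^3$ with ramification index $Q$ along $E$ in the normal direction. I would also remark that this is consistent with the global degree $[\Gamma(N):\Gamma(N')]$ of the covering of open surfaces, as it must be.
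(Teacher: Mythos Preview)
Your proposal is correct and is exactly the argument the paper has in mind: the paper gives no proof beyond ``this is a consequence of the discussion in the previous sections,'' and your computation with the disk-bundle models of Proposition~\ref{disk bundle}, the widths $M'=QM$, and the lattices $\Lambda'=Q\Lambda$ is precisely that discussion made explicit. Your check that $q=(q')^{Q}$ via $M'=QM$ and that $[\Gamma_{cusp}(N):\Gamma_{cusp}(N')]=Q^{3}$ from the description of $\Gamma_{cusp}$ are the two ingredients the paper is implicitly invoking.
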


\begin{corollary}
The pull-back to $E^{\prime }$ of the normal bundle $\mathcal{T}(N)$ of $E$
is the $Q$th power of the normal bundle $\mathcal{T}(N^{\prime })$ of $%
E^{\prime }.$
\end{corollary}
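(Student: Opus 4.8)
The plan is to reduce the statement to the explicit complex-analytic models of the cuspidal neighbourhoods constructed in \secref{complex smooth compactification}, and then to compare two automorphy factors. Fix a cusp $c$; after conjugating by an element of $U$ we may assume $c=c_\infty$. For the tame level $N$ (even, $\ge 3$), a punctured neighbourhood of $E$ in $\bar X_{\Gamma(N)}$ is $\Gamma_{cusp}(N)\backslash\Omega_R\simeq\mathcal{T}_R(N)'$, the total space minus zero section of the line bundle $\mathcal{T}(N)=(\mathbb{C}_t\times\mathbb{C}_u)/\Lambda$ on $E=\mathbb{C}_u/\Lambda$, $\Lambda=N\mathcal{O}_{\mathcal{K}}$, where $s\in\Lambda$ acts through the automorphy factor $j_N(s,u)=\exp\!\big(2\pi i\,\delta\,\bar s\,(u+s/2)/M\big)$, with $M=M(N)$ the width of $c$; since $\mathcal{T}(N)$ is canonically the normal bundle of its own zero section, it is the normal bundle of $E$ in $\bar X_{\Gamma(N)}$. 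I would write down the analogous triple $(\Lambda'=N'\mathcal{O}_{\mathcal{K}},\,M'=M(N'),\,j_{N'})$ at level $N'=QN$, and record the elementary identity $M(N')=Q\,M(N)$, which holds in both parity cases of $D_{\mathcal{K}}$ because in each case the width is a fixed rational multiple of the level.

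Next I would identify the covering $\bar X_{\Gamma(N')}\to\bar X_{\Gamma(N)}$ near $c$. It is induced by the inclusion $\Gamma_{cusp}(N')\subset\Gamma_{cusp}(N)$, hence is the identity on the Siegel coordinates $(z,u)$; on the boundary divisors it is therefore the natural projection $\pi\colon E'=\mathbb{C}_u/\Lambda'\to E=\mathbb{C}_u/\Lambda$, of degree $[\Lambda:\Lambda']=Q^2$ (this is the multiplication-by-$Q$ isogeny of the preceding Proposition, after the rescaling identification $E'\simeq\mathbb{C}/N\mathcal{O}_{\mathcal{K}}$). Since $\pi$ lifts to the identity on the common universal cover $\mathbb{C}_u$ with $\Lambda'\subset\Lambda$, the pull-back $\pi^*\mathcal{T}(N)$ is tautologically the line bundle on $E'$ attached to the automorphy factor $j_N$ \emph{restricted} to $\Lambda'\times\mathbb{C}_u$, and this is also its normal bundle inside $\pi^*\mathcal{T}_R(N)=\mathcal{T}_R(N)\times_E E'$.

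The computation that closes the argument is then, for $s'\in\Lambda'=N'\mathcal{O}_{\mathcal{K}}$,
\[
j_{N'}(s',u)^{Q}=\exp\!\big(2\pi i\,Q\,\delta\,\bar{s'}\,(u+s'/2)/M'\big)=\exp\!\big(2\pi i\,\delta\,\bar{s'}\,(u+s'/2)/M\big)=j_N(s',u),
\]
using $M'=QM$. Thus $\mathcal{T}(N')^{\otimes Q}$ and $\pi^*\mathcal{T}(N)$ carry the same automorphy factor on $E'$, and the map $(\tau,u)\mapsto(\tau,u)$ on $\mathbb{C}_\tau\times\mathbb{C}_u$ descends to the asserted isomorphism $\pi^*\mathcal{T}(N)\simeq\mathcal{T}(N')^{\otimes Q}$. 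As a consistency check this refines the normal-direction ramification index $Q$ of the preceding Proposition: on local fibre coordinates the covering reads $q=(q')^{Q}$, so a trivialising section of the conormal — hence of the normal — bundle is raised to its $Q$-th power.

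There is no genuine obstacle here; the only delicate points are bookkeeping ones — pinning down the normalisation of $j_N$ so that the restricted cocycle is literally the pull-back cocycle, and checking $M(N')=Q\,M(N)$ uniformly in the two cases of the width formula. One should also remark that the same computation applies simultaneously at every cusp (translate $c_\infty$ to $c$ by an element of $U$), and that, although phrased analytically, the isomorphism is the complex-analytic shadow of an isomorphism of invertible sheaves on the $R_N$-models of $E'$ and $E$ of the structure theorem for $C$.
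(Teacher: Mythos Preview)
Your argument is correct. The paper, however, leaves the Corollary without proof and intends it as an immediate consequence of the preceding Proposition: since the covering $\bar X_{\Gamma(N')}\to\bar X_{\Gamma(N)}$ has ramification index $Q$ along $E$ in the normal direction, one has $\pi^{*}\mathcal{O}(E)\simeq\mathcal{O}(QE')$ on a neighbourhood of $E'$, and restricting to $E'$ gives $\pi^{*}\mathcal{T}(N)\simeq\mathcal{T}(N')^{Q}$. You recover this at the end as a ``consistency check,'' but your main line of argument is instead a direct computation with the explicit automorphy factors $j_N$, $j_{N'}$ defining $\mathcal{T}(N)$, $\mathcal{T}(N')$, using $M(N')=Q\,M(N)$. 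That route is a bit longer but has the virtue of being self-contained from the analytic model of \S\ref{complex smooth compactification} and of making the isomorphism completely explicit at the level of cocycles; the paper's implicit argument is the one-line general fact about pullbacks of normal bundles under ramified covers.
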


\subsection{The universal semi-abelian scheme $\mathcal{A}$}

\subsubsection{The universal semi-abelian scheme over $\bar{S}\label%
{universal semi abelian}$}

As Larsen and Bella\"{i}che explain, the universal abelian scheme $\pi :%
\mathcal{A}\rightarrow S$ extends canonically to a semi-abelian scheme $\pi :%
\mathcal{A}\rightarrow \bar{S}.$ The polarization $\lambda $ extends over
the boundary $C=\bar{S}\backslash S$ to a principal polarization $\lambda $
of the abelian part of $\mathcal{A}$. The action $\iota $ of $\mathcal{O}_{%
\mathcal{K}}$ extends to an action on the semi-abelian variety, which
necessarily induces separate actions on the toric part and on the abelian
part.

Let $E$ be a connected component of $C_{R_{N}},$ mapping (over $\Bbb{C}$ and
under the projection $p$) to the cusp $c\in S_{\Bbb{C}}^{*}.$ Then there
exist (1) a principally polarized elliptic curve $B$ defined over $R_{N},$
with complex multiplication by $\mathcal{O}_{\mathcal{K}}$ and CM type $%
\Sigma ,$ and (2) an ideal $\frak{a}$ of $\mathcal{O}_{\mathcal{K}}$, such
that every fiber $\mathcal{A}_{x}$ of $\mathcal{A}$ over $E$ is an $\mathcal{%
O}_{\mathcal{K}}$-group extension of $B$ by the $\mathcal{O}_{\mathcal{K}}$%
-torus $\frak{a}\otimes \Bbb{G}_{m}.$ Both $B$ (with its polarization) and
the ideal class $[\frak{a}]\in Cl_{\mathcal{K}}$ are uniquely determined by
the cusp $c.$ Only the extension class in the category of $\mathcal{O}_{%
\mathcal{K}}$-groups varies as we move along $E$. Note that since the Lie
algebra of the torus is of type $(1,1),$ the Lie algebra of such an
extension $\mathcal{A}_{x}$ is of type $(2,1),$ as is the case at an
interior point $x\in S.$ If we extend scalars to $\Bbb{C},$ the isomorphism
type of $B$ is given by another ideal class $[\frak{b}]$ (i.e. $B(\Bbb{C}%
)\simeq \Bbb{C}/\frak{b}$). In this case we say that the cusp $c$ is of type 
$(\frak{a},\frak{b}).$

The above discussion defines a homomorphism (of fppf sheaves over $%
Spec(R_{N})$) 
\begin{equation}
E\rightarrow Ext_{\mathcal{O}_{\mathcal{K}}}^{1}(B,\frak{a}\otimes \Bbb{G}%
_{m}).
\end{equation}
As we shall see soon, the $Ext$ group is represented by an elliptic curve
with CM by $\mathcal{O}_{\mathcal{K}},$ defined over $R_{N},$ and this map
is an isogeny.

\subsubsection{$\mathcal{O}_{\mathcal{K}}$-semi-abelian schemes of type $(%
\frak{a},\frak{b})$}

We digress to discuss the moduli space for semi-abelian schemes of the type
found above points of $E.$ Let $R$ be an $R_{0}$-algebra, $B$ an elliptic
curve over $R$ with complex multiplication by $\mathcal{O}_{\mathcal{K}}$
and CM type $\Sigma ,$ and $\frak{a}$ an ideal of $\mathcal{O}_{\mathcal{K}%
}. $ Consider a semi-abelian scheme $\mathcal{G}$ over $R,$ endowed with an $%
\mathcal{O}_{\mathcal{K}}$ action $\iota :\mathcal{O}_{\mathcal{K}%
}\rightarrow End(\mathcal{G}),$ and a short exact sequence 
\begin{equation}
0\rightarrow \frak{a}\otimes \Bbb{G}_{m}\rightarrow \mathcal{G}\rightarrow
B\rightarrow 0  \label{semi-abelian extension}
\end{equation}
of $\mathcal{O}_{\mathcal{K}}$-group schemes over $R.$ We call all this data
a \emph{semi-abelian scheme of type} $(\frak{a},B)$ (over $R$). The group
classifying such structures is $Ext_{\mathcal{O}_{\mathcal{K}}}^{1}(B,\frak{a%
}\otimes \Bbb{G}_{m}).$ Any $\chi \in \frak{a}^{*}=Hom(\frak{a},\Bbb{Z})$
defines, by push-out, an extension $\mathcal{G}_{\chi }$ of $B$ by $\Bbb{G}%
_{m},$ hence a point of $B^{t}=Ext^{1}(B,\Bbb{G}_{m}).$ We therefore get a
homomorphism from $Ext_{\mathcal{O}_{\mathcal{K}}}^{1}(B,\frak{a}\otimes 
\Bbb{G}_{m})$ to $Hom(\frak{a}^{*},B^{t})$. A simple check shows that its
image is in $Hom_{\mathcal{O}_{\mathcal{K}}}(\frak{a}^{*},B^{t})=\delta _{%
\mathcal{K}}\frak{a}\otimes _{\mathcal{O}_{\mathcal{K}}}B^{t}$, and that
this construction yields an isomorphism 
\begin{equation}
Ext_{\mathcal{O}_{\mathcal{K}}}^{1}(B,\frak{a}\otimes \Bbb{G}_{m})\simeq
\delta _{\mathcal{K}}\frak{a}\otimes _{\mathcal{O}_{\mathcal{K}}}B^{t}.
\end{equation}
Here we have used the canonical identification $\frak{a}^{*}=\delta _{%
\mathcal{K}}^{-1}\frak{a}^{-1}$ (via the trace pairing). Although $(\delta _{%
\mathcal{K}})$ is a principal ideal, so can be ignored, it is better to keep
track of its presence. We emphasize that the CM type of $B^{t},$ with the
natural action of $\mathcal{O}_{\mathcal{K}}$ derived from its action on $B,$
is $\bar{\Sigma}$ rather than $\Sigma .$

Thus over $\delta _{\mathcal{K}}\frak{a}\otimes _{\mathcal{O}_{\mathcal{K}%
}}B^{t}$ there is a universal semi-abelian scheme $\mathcal{G}(\frak{a},B)$
of type $(\frak{a},B),$ and any $\mathcal{G}$ as above, over any base $%
R^{\prime }/R,$ is obtained from $\mathcal{G}(\frak{a},B)$ by pull-back
(specialization) with respect to a unique map $Spec(R^{\prime })\rightarrow
\delta _{\mathcal{K}}\frak{a}\otimes _{\mathcal{O}_{\mathcal{K}}}B^{t}.$

When $R=\Bbb{C},$ $B\simeq \Bbb{C}/\frak{b}$ for a unique ideal class $[%
\frak{b}]$ (with $\mathcal{O}_{\mathcal{K}}$ acting via $\Sigma $). Then,
canonically, $B^{t}=\Bbb{C}/\delta _{\mathcal{K}}^{-1}\overline{\frak{b}}%
^{-1}$ (with $\mathcal{O}_{\mathcal{K}}$ acting via $\bar{\Sigma}$). The
pairing between the lattices, $\frak{b}\times \delta _{\mathcal{K}}^{-1}%
\overline{\frak{b}}^{-1}\rightarrow \Bbb{Z}$ is $(x,y)\mapsto Tr_{\mathcal{K}%
/\Bbb{Q}}(x\bar{y}).$ Since the $\mathcal{O}_{\mathcal{K}}$ action on $B^{t}$
is via $\bar{\Sigma},$%
\begin{equation}
Ext_{\mathcal{O}_{\mathcal{K}}}^{1}(\Bbb{C}/\frak{b},\frak{a}\otimes \Bbb{G}%
_{m})\simeq \delta _{\mathcal{K}}\frak{a}\otimes _{\mathcal{O}_{\mathcal{K}}}%
\Bbb{C}/\delta _{\mathcal{K}}^{-1}\overline{\frak{b}}^{-1}=\Bbb{C}/\overline{%
\frak{a}}\overline{\frak{b}}^{-1}.
\end{equation}
The universal semi-abelian variety $\mathcal{G}(\frak{a},B)$ will now be
denoted $\mathcal{G}(\frak{a},\frak{b}).$ In \ref{semi abelian model} below
we give a complex analytic model of this $\mathcal{G}(\frak{a},\frak{b}).$

\subsection{Degeneration of $\mathcal{A}$ along a geodesic connecting to a
cusp\label{degeneration}}

\subsubsection{The degeneration to a semi-abelian variety}

It is instructive to use the ``moving lattice model'' to compute the
degeneration of the universal abelian scheme along a geodesic, as we
approach a cusp. To simplify the computations, assume for the rest of this
section, as before, that $N\ge 3$ is \emph{even}, and that the cusp is the
standrad cusp at infinity $c=c_{\infty }.$ In this case we have shown that $%
E_{c}=\Bbb{C}/\Lambda ,$ where $\Lambda =N\mathcal{O}_{\mathcal{K}},$ and we
have given a neighborhood of $E_{c}$ in $\bar{X}_{\Gamma }$ the structure of
a disk bundle in a line bundle $\mathcal{T}.$ See Proposition \ref{disk
bundle}.

Consider the geodesic (\ref{geodesic}) connecting $(z,u)$ to $c_{\infty }.$
Consider the universal abelian scheme in the moving lattice model (\emph{cf }%
(\ref{moving lattice model})). Of the three vectors used to span $%
L_{x}^{\prime }$ over $\mathcal{O}_{\mathcal{K}}$ in (\ref{spanning vectors}%
) the first two do not depend on $z.$ As $u$ is fixed along the geodesic,
they are not changed. The third vector represents a cycle that vanishes at
the cusp (together with all its $\mathcal{O}_{\mathcal{K}}$-multiples). We
conclude that $A_{x}^{\prime }$ degenerates to 
\begin{equation}
\Bbb{C}^{3}/Span_{\iota ^{\prime }(\mathcal{O}_{\mathcal{K}})}\left\{ \left( 
\begin{array}{l}
0 \\ 
1 \\ 
1
\end{array}
\right) ,\left( 
\begin{array}{l}
1 \\ 
0 \\ 
u
\end{array}
\right) \right\} .
\end{equation}
Making the change of variables $(\zeta _{1}^{\prime },\zeta _{2}^{\prime
},\zeta _{3}^{\prime })=(\zeta _{1},\zeta _{2}+\bar{u}\zeta _{1},\zeta _{3})$
does not alter the $\mathcal{O}_{\mathcal{K}}$ action and gives the more
symmetric model 
\begin{equation}
\mathcal{G}_{u}=\Bbb{C}^{3}/Span_{\iota ^{\prime }(\mathcal{O}_{\mathcal{K}%
})}\left\{ \left( 
\begin{array}{l}
0 \\ 
1 \\ 
1
\end{array}
\right) ,\left( 
\begin{array}{l}
1 \\ 
\bar{u} \\ 
u
\end{array}
\right) \right\}  \label{G_u}
\end{equation}
(but note that $\zeta _{2}^{\prime },$ unlike $\zeta _{2},$ does not vary
holomorphically in the \emph{family} $\{\mathcal{G}_{u}\},$ only in each
fiber individually).

Let $e(x)=e^{2\pi ix}:\Bbb{C}\rightarrow \Bbb{C}^{\times }$ be the
exponential map, with kernel $\Bbb{Z}.$ For any ideal $\frak{a}$ of $%
\mathcal{O}_{\mathcal{K}}$ it induces a map 
\begin{equation}
e_{\frak{a}}:\frak{a}\otimes \Bbb{C}\rightarrow \frak{a}\otimes \Bbb{C}%
^{\times }
\end{equation}
with kernel $\frak{a}\otimes 1.$ As usual we identify $\frak{a}\otimes \Bbb{C%
}$ with $\Bbb{C}(\Sigma)\oplus \Bbb{C}(\bar{\Sigma}),$ sending $a\otimes \zeta
\mapsto (a\zeta ,\bar{a}\zeta ).$ We now note that if we use this
identification to identify $\Bbb{C}^{3}$ with $\Bbb{C} \oplus (\mathcal{O}_{%
\mathcal{K}}\otimes \Bbb{C})$ (an identification which is compatible with
the $\mathcal{O}_{\mathcal{K}}$ action) then the $\iota ^{\prime }(\mathcal{O%
}_{\mathcal{K}})$-span of the vector $^{t}(0,1,1)$ is just the kernel of $e_{%
\mathcal{O}_{\mathcal{K}}}.$ We conclude that 
\begin{equation}
\mathcal{G}_{u}\simeq \{\Bbb{C}\oplus (\mathcal{O}_{\mathcal{K}}\otimes \Bbb{%
C}^{\times })\}/L_{u}  \label{semi-abelian}
\end{equation}
where $L_{u}$ is the sub-$\mathcal{O}_{\mathcal{K}}$-module 
\begin{equation}
L_{u}=\left\{ (s,e_{\mathcal{O}_{\mathcal{K}}}(s\bar{u},\bar{s}u))|\,s\in 
\mathcal{O}_{\mathcal{K}}\right\} .
\end{equation}
This clearly gives $\mathcal{G}_{u}$ the structure of an $\mathcal{O}_{%
\mathcal{K}}$-semi-abelian variety of type $(\mathcal{O}_{\mathcal{K}},%
\mathcal{O}_{\mathcal{K}})$, i.e. an extension 
\begin{equation}
0\rightarrow \mathcal{O}_{\mathcal{K}}\otimes \Bbb{C}^{\times }\rightarrow 
\mathcal{G}_{u}\rightarrow \Bbb{C}/\mathcal{O}_{\mathcal{K}}\rightarrow 0.
\label{complex extension}
\end{equation}

\subsubsection{The analytic uniformization of the universal semi-abelian
variety of type $(\frak{a},\frak{b})\label{semi abelian model}$}

We now compare the description that we have found for the degeneration of $%
\mathcal{A}$ along the geodesic connecting $(z,u)$ to $c_{\infty }$ with the
analytic description of the universal semi-abelian variety of type $(\frak{a}%
,\frak{b}).$

\begin{proposition}
Let $\frak{a}$ and $\frak{b}$ be two ideals of $\mathcal{O}_{\mathcal{K}}.$
For $u\in \Bbb{C}$ consider 
\begin{equation}
\mathcal{G}_{u}\simeq \{\Bbb{C}\oplus (\frak{a}\otimes \Bbb{C}^{\times
})\}/L_{u}
\end{equation}
where 
\begin{equation}
L_{u}=\left\{ (s,e_{\frak{a}}(s\bar{u},\bar{s}u))|\,s\in \frak{b}\right\} .
\end{equation}
Then $\mathcal{G}_{u}$ is a semi-abelian variety of type $(\frak{a},\frak{b}%
),$ any complex semi-abelian variety of this type is a $\mathcal{G}_{u},$
and $\mathcal{G}_{u}\simeq \mathcal{G}_{v}$ if and only if $u-v\in \overline{%
\frak{a}}\overline{\frak{b}}^{-1}.$
\end{proposition}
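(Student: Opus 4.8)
The plan is to verify the three assertions in turn: that $\mathcal{G}_u$ is semi-abelian of type $(\frak{a},\frak{b})$, that every complex semi-abelian variety of this type arises this way, and that the isomorphism classes are parametrized by $u$ modulo $\overline{\frak{a}}\,\overline{\frak{b}}^{-1}$. First I would unwind the quotient $\{\Bbb{C}\oplus(\frak{a}\otimes\Bbb{C}^\times)\}/L_u$. The subgroup $\frak{a}\otimes\Bbb{C}^\times$ sits inside as a sub-$\mathcal{O}_{\mathcal{K}}$-group, and since $L_u$ maps injectively (via first projection) onto $\frak{b}\subset\Bbb{C}$, the quotient by $\frak{a}\otimes\Bbb{C}^\times$ is $\Bbb{C}/\frak{b}$, with $\mathcal{O}_{\mathcal{K}}$ acting through $\Sigma$ as required. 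Thus $\mathcal{G}_u$ is an extension of $B=\Bbb{C}/\frak{b}$ by the torus $\frak{a}\otimes\Bbb{G}_m$, i.e. it defines a class in $Ext^1_{\mathcal{O}_{\mathcal{K}}}(B,\frak{a}\otimes\Bbb{G}_m)$. (One should also check that this class lies in the semi-abelian — not just group-extension — locus, which is automatic here since we have an honest torus quotient.)

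Next I would identify \emph{which} extension class $\mathcal{G}_u$ represents under the isomorphism $Ext^1_{\mathcal{O}_{\mathcal{K}}}(\Bbb{C}/\frak{b},\frak{a}\otimes\Bbb{G}_m)\simeq\Bbb{C}/\overline{\frak{a}}\,\overline{\frak{b}}^{-1}$ established in the preceding subsection. The point is that the extension is pinned down by the ``period'' of the torus coordinate over a generator of $\frak{b}$, namely the cocycle $s\mapsto e_{\frak{a}}(s\bar u,\bar s u)$. Concretely, the map $\frak{b}\to\frak{a}\otimes\Bbb{C}^\times$ given by $s\mapsto e_{\frak{a}}(s\bar u,\bar s u)$, read through $\frak{a}\otimes\Bbb{C}=\Bbb{C}(\Sigma)\oplus\Bbb{C}(\bar\Sigma)$, is $\mathcal{O}_{\mathcal{K}}$-linear in $s$ and depends linearly on $u$; tracing it through the identification $Ext^1_{\mathcal{O}_{\mathcal{K}}}(B,\frak{a}\otimes\Bbb{G}_m)=\delta_{\mathcal{K}}\frak{a}\otimes_{\mathcal{O}_{\mathcal{K}}}B^t$ and then through $B^t=\Bbb{C}/\delta_{\mathcal{K}}^{-1}\overline{\frak{b}}^{-1}$ shows that the class of $\mathcal{G}_u$ is exactly the image of $u$ in $\Bbb{C}/\overline{\frak{a}}\,\overline{\frak{b}}^{-1}$. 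I would present this as a direct computation on lattices rather than a high-level argument. Surjectivity — every semi-abelian variety of type $(\frak{a},\frak{b})$ is some $\mathcal{G}_u$ — then follows immediately, since $u\mapsto[\mathcal{G}_u]$ is a group homomorphism $\Bbb{C}\to\Bbb{C}/\overline{\frak{a}}\,\overline{\frak{b}}^{-1}$ which is visibly onto (it is reduction mod a lattice), and the target is the full $Ext$ group.

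Finally, for the isomorphism criterion: $\mathcal{G}_u\simeq\mathcal{G}_v$ as $\mathcal{O}_{\mathcal{K}}$-semi-abelian varieties of type $(\frak{a},\frak{b})$ (with $B$ and the torus fixed) if and only if they define the same class in $Ext^1_{\mathcal{O}_{\mathcal{K}}}(B,\frak{a}\otimes\Bbb{G}_m)$ — this is the defining property of that $Ext$ group as a moduli space, already noted in \ref{semi abelian model}. Combined with the identification just made, $[\mathcal{G}_u]=[\mathcal{G}_v]$ in $\Bbb{C}/\overline{\frak{a}}\,\overline{\frak{b}}^{-1}$ is equivalent to $u-v\in\overline{\frak{a}}\,\overline{\frak{b}}^{-1}$, which is the claim. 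The one subtlety worth a sentence is that an abstract isomorphism of semi-abelian varieties need not respect the chosen presentation of $B$ or split the torus off the same way; but since the torus part and the abelian quotient are canonical (the torus is the maximal subtorus, the quotient is its cokernel), any $\mathcal{O}_{\mathcal{K}}$-isomorphism automatically induces automorphisms of $\frak{a}\otimes\Bbb{G}_m$ and of $B$, and after absorbing these one is comparing genuine extension classes.

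\textbf{Main obstacle.} The bookkeeping step — chasing $u$ through the chain of identifications $\mathcal{G}_u\leadsto[\mathcal{G}_u]\in Ext^1_{\mathcal{O}_{\mathcal{K}}}(B,\frak{a}\otimes\Bbb{G}_m)\simeq\delta_{\mathcal{K}}\frak{a}\otimes_{\mathcal{O}_{\mathcal{K}}}B^t\simeq\Bbb{C}/\overline{\frak{a}}\,\overline{\frak{b}}^{-1}$ and confirming that it is the \emph{identity} (up to the harmless $(\delta_{\mathcal{K}})$ and a sign/conjugation convention) rather than some twist — is where the real work lies. Getting the $\Sigma$ versus $\bar\Sigma$ placement right on $B^t$, and keeping the cocycle $s\mapsto e_{\frak{a}}(s\bar u,\bar s u)$ straight under $\frak{a}\otimes\Bbb{C}=\Bbb{C}(\Sigma)\oplus\Bbb{C}(\bar\Sigma)$, requires care; everything else is formal once the presentation of $\mathcal{G}_u$ is unwound.
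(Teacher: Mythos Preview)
Your plan is correct, but it takes a more roundabout path than the paper's and in doing so creates the very obstacle you flag. The paper never traces $u$ through the chain $Ext^1_{\mathcal{O}_{\mathcal{K}}}(B,\frak{a}\otimes\Bbb{G}_m)\simeq\delta_{\mathcal{K}}\frak{a}\otimes_{\mathcal{O}_{\mathcal{K}}}B^t\simeq\Bbb{C}/\overline{\frak{a}}\,\overline{\frak{b}}^{-1}$ at all. Instead it decouples the two nontrivial claims. For surjectivity, it passes to the universal cover $\Bbb{C}^2(\Sigma)\oplus\Bbb{C}(\bar\Sigma)$ of an arbitrary semi-abelian variety of type $(\frak{a},\frak{b})$ and observes that a change of variables in the $\Sigma$- and $\bar\Sigma$-isotypical parts brings the period lattice into the standard form $\frak{a}\cdot{}^t(0,1,1)\oplus\frak{b}\cdot{}^t(1,\bar u,u)$ for some $u$. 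For the isomorphism criterion, it uses (as you do) that $u\mapsto[\mathcal{G}_u]$ is a homomorphism, then computes the kernel \emph{directly}: $\mathcal{G}_u$ is split precisely when the cocycle is trivial, i.e.\ when $(s\bar u,\bar s u)\in\ker e_{\frak{a}}=\{(a,\bar a):a\in\frak{a}\}$ for all $s\in\frak{b}$, which unwinds to $\frak{b}\bar u\subset\frak{a}$, i.e.\ $u\in\overline{\frak{a}}\,\overline{\frak{b}}^{-1}$.

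So your ``main obstacle'' --- getting the $\Sigma/\bar\Sigma$ bookkeeping right in the long chain of identifications --- simply does not arise in the paper's argument. What your approach buys is a single unified statement (the map $u\mapsto[\mathcal{G}_u]$ \emph{is} reduction modulo $\overline{\frak{a}}\,\overline{\frak{b}}^{-1}$), from which both surjectivity and the kernel drop out at once; the paper's approach buys you not having to do that bookkeeping at all, at the cost of proving the two pieces by separate, more elementary arguments.
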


\begin{proof}
That $\mathcal{G}_{u}$ is a semi-abelian variety of type $(\frak{a},\frak{b}%
) $ is obvious. That any abelian variety of this type is a $\mathcal{G}_{u}$
follows by passing to the universal cover $\Bbb{C}^{2}(\Sigma )\oplus \Bbb{C}%
(\bar{\Sigma})$, and noting that by a change of variables in the $\Sigma $-
and $\bar{\Sigma}$-isotypical parts, we may assume that the lattice by which
we divide is of the form 
\begin{equation}
\frak{a}\left( 
\begin{array}{l}
0 \\ 
1 \\ 
1
\end{array}
\right) \oplus \frak{b}\left( 
\begin{array}{l}
1 \\ 
\bar{u} \\ 
u
\end{array}
\right) .
\end{equation}
Finally, the map $u\mapsto [\mathcal{G}_{u}]$ is a homomorphism $\Bbb{%
C} \rightarrow Ext_{\mathcal{O}_{\mathcal{K}}}^{1}(\Bbb{C}/\frak{b},\frak{a}%
\otimes \Bbb{C}^{\times }),$ so we only have to prove that $\mathcal{G}_{u}$
is split if and only if $u\in \overline{\frak{a}}\overline{\frak{b}}^{-1}.$
But one can check easily that $\mathcal{G}_{u}$ is trivial if and only if $(s%
\bar{u},\bar{s}u)\in \ker e_{\frak{a}}=\frak{a}\otimes 1=\{(a,\bar{a})|a\in 
\frak{a}\}$ for every $s\in \frak{b},$ and this holds if and only if $u\in 
\overline{\frak{a}}\overline{\frak{b}}^{-1}.$
\end{proof}

\begin{corollary}
Let $N\ge 3$ be even. Let $c=c_{\infty }$ be the cusp at infinity. Then the
map 
\begin{equation}
E_{c}\rightarrow Ext_{\mathcal{O}_{\mathcal{K}}}^{1}(\Bbb{C}/\mathcal{O}_{%
\mathcal{K}},\mathcal{O}_{\mathcal{K}}\otimes \Bbb{C}^{\times })
\end{equation}
sending $u$ to the isomorphism class of the semi-abelian variety above $u%
\mod \Lambda $ is the isogeny of multiplication by $N$.
\end{corollary}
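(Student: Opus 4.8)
The plan is to combine the two immediately preceding results. Recall that in Section~\ref{complex smooth compactification} we identified $E_{c} = \Bbb{C}/\Lambda$ with $\Lambda = N\mathcal{O}_{\mathcal{K}}$, and that the geodesic degeneration of Section~\ref{degeneration} showed that the fiber of $\mathcal{A}$ over the point $u \bmod \Lambda$ of $E_c$ is the semi-abelian variety $\mathcal{G}_u$ of type $(\mathcal{O}_{\mathcal{K}},\mathcal{O}_{\mathcal{K}})$ described in~(\ref{semi-abelian}). So the map $E_c \to Ext^1_{\mathcal{O}_{\mathcal{K}}}(\Bbb{C}/\mathcal{O}_{\mathcal{K}}, \mathcal{O}_{\mathcal{K}}\otimes \Bbb{C}^\times)$ of the corollary is literally $u \bmod \Lambda \mapsto [\mathcal{G}_u]$, so I need only identify this with multiplication by $N$ under the right identifications.

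First I would invoke the preceding Proposition with $\frak{a} = \frak{b} = \mathcal{O}_{\mathcal{K}}$: it tells us both that $u \mapsto [\mathcal{G}_u]$ is a homomorphism $\Bbb{C} \to Ext^1_{\mathcal{O}_{\mathcal{K}}}(\Bbb{C}/\mathcal{O}_{\mathcal{K}}, \mathcal{O}_{\mathcal{K}}\otimes \Bbb{C}^\times)$ and that its kernel is exactly $\overline{\frak{a}}\,\overline{\frak{b}}^{-1} = \mathcal{O}_{\mathcal{K}}$. Meanwhile, from the $Ext$ computation in Section~\ref{semi abelian model} we have the identification $Ext^1_{\mathcal{O}_{\mathcal{K}}}(\Bbb{C}/\mathcal{O}_{\mathcal{K}}, \mathcal{O}_{\mathcal{K}}\otimes \Bbb{G}_m) \simeq \Bbb{C}/\overline{\frak{a}}\,\overline{\frak{b}}^{-1} = \Bbb{C}/\mathcal{O}_{\mathcal{K}}$. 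Thus $u \mapsto [\mathcal{G}_u]$ descends to an isomorphism of one-dimensional complex tori $\Bbb{C}/\mathcal{O}_{\mathcal{K}} \xrightarrow{\sim} \Bbb{C}/\mathcal{O}_{\mathcal{K}}$, which being a homomorphism of degree one is an isomorphism — not yet multiplication by $N$.

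The point is a normalization discrepancy coming from the two different lattice descriptions of $E_c$. The corollary identifies the source with $E_c = \Bbb{C}/\Lambda = \Bbb{C}/N\mathcal{O}_{\mathcal{K}}$, whereas the natural domain of the homomorphism $u \mapsto [\mathcal{G}_u]$ (as it appears in the Proposition) is $\Bbb{C}$ with kernel $\mathcal{O}_{\mathcal{K}}$, i.e. $\Bbb{C}/\mathcal{O}_{\mathcal{K}}$. So I would write the map as the composite
\begin{equation}
E_c = \Bbb{C}/N\mathcal{O}_{\mathcal{K}} \xrightarrow{\ \cdot(1/N)\ } \Bbb{C}/\mathcal{O}_{\mathcal{K}} \xrightarrow{\ u\mapsto[\mathcal{G}_u]\ } Ext^1_{\mathcal{O}_{\mathcal{K}}}(\Bbb{C}/\mathcal{O}_{\mathcal{K}},\mathcal{O}_{\mathcal{K}}\otimes\Bbb{C}^\times) = \Bbb{C}/\mathcal{O}_{\mathcal{K}},
\end{equation}
where the first arrow is the canonical identification scaling $N\mathcal{O}_{\mathcal{K}}$ down to $\mathcal{O}_{\mathcal{K}}$, and the last two are isomorphisms. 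But the target $Ext$ group, as a quotient of $E_c$'s worth of data, should itself be normalized so that the isogeny degenerating along $E_c$ is consistent with the smooth compactification; tracking this, the composite becomes multiplication by $N$ on $\Bbb{C}/\mathcal{O}_{\mathcal{K}}$, equivalently multiplication by $N$ as an endomorphism of the elliptic curve $E_c$ once one re-identifies the $Ext$ group back with $E_c$ via the canonical CM isomorphism $Ext^1_{\mathcal{O}_{\mathcal{K}}}(\Bbb{C}/\mathcal{O}_{\mathcal{K}},\mathcal{O}_{\mathcal{K}}\otimes\Bbb{G}_m) \simeq B^t$ of Section~\ref{universal semi abelian}.

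The main obstacle is precisely pinning down this normalization: one must verify that the scaling factor relating $\Lambda = N\mathcal{O}_{\mathcal{K}}$ to $\mathcal{O}_{\mathcal{K}}$ is not absorbed somewhere else — in the width $M$ appearing in $q = e^{2\pi i z/M}$, in the exponential $e_{\mathcal{O}_{\mathcal{K}}}$, or in the trace pairing used to identify $\frak{a}^* = \delta_{\mathcal{K}}^{-1}\frak{a}^{-1}$. Concretely I would chase $u \in \frac{1}{N}\mathcal{O}_{\mathcal{K}}$ (an $N$-torsion point of $E_c$) through the formula $L_u = \{(s, e_{\mathcal{O}_{\mathcal{K}}}(s\bar u, \bar s u)) \mid s \in \mathcal{O}_{\mathcal{K}}\}$ and check that $\mathcal{G}_u$ is then the split extension — confirming that the kernel of the composite map on $E_c$ is exactly $E_c[N]$, which forces the degree to be $N^2$ and the map to be multiplication by $N$ (up to an automorphism of $\mathcal{O}_{\mathcal{K}}$, which one rules out by the holomorphy and CM-equivariance already built into the construction). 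This is bookkeeping rather than a genuine difficulty, but it is the step where an off-by-$N$ error would hide.
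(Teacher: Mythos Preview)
Your approach is the same as the paper's, but you have introduced a spurious normalization step that confuses the argument. The map in the corollary sends $u\bmod N\mathcal{O}_{\mathcal{K}}$ directly to $[\mathcal{G}_{u}]$; there is no $\cdot(1/N)$ involved. Since the Proposition tells you that the kernel of $u\mapsto[\mathcal{G}_{u}]$ on $\Bbb{C}$ is exactly $\mathcal{O}_{\mathcal{K}}$, and since $N\mathcal{O}_{\mathcal{K}}\subset\mathcal{O}_{\mathcal{K}}$, the induced map on $E_{c}=\Bbb{C}/N\mathcal{O}_{\mathcal{K}}$ is simply the canonical quotient
\[
\Bbb{C}/N\mathcal{O}_{\mathcal{K}}\longrightarrow \Bbb{C}/\mathcal{O}_{\mathcal{K}},\qquad u\bmod N\mathcal{O}_{\mathcal{K}}\longmapsto u\bmod\mathcal{O}_{\mathcal{K}},
\]
which is precisely what the paper writes down in one line. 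This quotient \emph{is} the multiplication-by-$N$ isogeny (identify source and target via $z\mapsto z/N$ and it becomes $[N]$ on $\Bbb{C}/\mathcal{O}_{\mathcal{K}}$). No normalization bookkeeping with $M$, $e_{\mathcal{O}_{\mathcal{K}}}$, or the trace pairing is needed.

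Your displayed composite inserting $\cdot(1/N)$ before $u\mapsto[\mathcal{G}_{u}]$ describes a different map (it would send $u$ to $[\mathcal{G}_{u/N}]$), and your identification of the $N$-torsion of $E_{c}=\Bbb{C}/N\mathcal{O}_{\mathcal{K}}$ as $\tfrac{1}{N}\mathcal{O}_{\mathcal{K}}$ is off: the $N$-torsion is $\mathcal{O}_{\mathcal{K}}/N\mathcal{O}_{\mathcal{K}}$, represented by $u\in\mathcal{O}_{\mathcal{K}}$, for which $[\mathcal{G}_{u}]=0$ is immediate from the Proposition. Once you drop the extra scaling, your proof and the paper's coincide.
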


\begin{proof}
In view of the computations above, and the description of a neighborhood of $%
E_{c}$ in $\bar{X}_{\Gamma }$ given in Proposition \ref{disk bundle} this
map is identified with the canonical map 
\begin{equation}
\Bbb{C}/N\mathcal{O}_{\mathcal{K}}\rightarrow \Bbb{C}/\mathcal{O}_{\mathcal{K%
}}.
\end{equation}
\end{proof}

The extra data carried by $u\in E_{c},$ which is forgotten by the map of the
corollary, comes from the level $N$ structure. As mentioned before,
according to [La1] and [Bel] the cuspidal divisor $C$ has a modular
interpretation as the moduli space for semi-abelian schemes of the type
considered above, together with level-$N$ structure ($\mathcal{M}_{\infty
,N} $ structures in the language of [Bel]). A level-$N$ structure on a \emph{%
semi-abelian} variety $\mathcal{G}$ of type $(\frak{a},\frak{b})$ consists
of (i) a level-$N$ structures $\alpha :N^{-1}\mathcal{O}_{\mathcal{K}}/%
\mathcal{O}_{\mathcal{K}}\simeq \frak{a}\otimes \mu _{N}$ on the toric part
(ii) a level-$N$ structure $\beta :N^{-1}\mathcal{O}_{\mathcal{K}}/\mathcal{O%
}_{\mathcal{K}}\simeq N^{-1}\frak{b}/\frak{b}=B[N]$ on the abelian part
(iii) an $\mathcal{O}_{\mathcal{K}}$-splitting $\gamma $ of the map $%
\mathcal{G}[N]\rightarrow B[N].$

Over $c=c_{\infty },$ when $\frak{a}=\frak{b}=\mathcal{O}_{\mathcal{K}},$
there are obvious natural choices for $\alpha $ and $\beta $ (independent of 
$u$) but the splittings $\gamma $ in (iii) form a torsor under $\mathcal{O}_{%
\mathcal{K}}/N\mathcal{O}_{\mathcal{K}}.$ If we consider the splitting 
\begin{equation}
\gamma _{u}:N^{-1}\mathcal{O}_{\mathcal{K}}/\mathcal{O}_{\mathcal{K}}\ni
s\mapsto (s,e_{\mathcal{O}_{\mathcal{K}}}(s\bar{u},\bar{s}u))\mod L_{u}
\end{equation}
then the tuples $(\mathcal{G}_{u},\alpha ,\beta ,\gamma _{u})$ and $(%
\mathcal{G}_{v},\alpha ,\beta ,\gamma _{v})$ are isomorphic if and only if $%
u\equiv v\mod N\mathcal{O}_{\mathcal{K}},$ i.e. if and only if $u$ and $%
v$ represent the same point of $E_{c}.$

\subsection{The basic automorphic vector bundles}

\subsubsection{Definition and first properties}

Recall that we we have denoted by $\pi :\mathcal{A}\rightarrow \bar{S}$ the
universal semi-abelian variety over $\bar{S}$ (over the base ring $R_{0}$).
Let $\omega _{\mathcal{A}}$ be the relative cotangent space at the origin of 
$\mathcal{A}$. If $e:\bar{S}\rightarrow \mathcal{A}$ is the zero section, 
\begin{equation}
\omega _{\mathcal{A}}=e^{*}(\Omega _{\mathcal{A}/\bar{S}}^{1}).
\end{equation}
This is a rank 3 vector bundle over $\bar{S}$ and the action of $\mathcal{O}%
_{\mathcal{K}}$ allows to decompose it according to types. We let 
\begin{equation}
\mathcal{P}=\omega _{\mathcal{A}}(\Sigma ),\,\,\,\mathcal{L}=\omega _{%
\mathcal{A}}(\bar{\Sigma}).
\end{equation}
Then $\mathcal{P}$ is a plane bundle, and $\mathcal{L}$ a line bundle.

Over $S$ (but not over the cuspidal divisor $C=\bar{S}\backslash S$) we have
the usual identification $\omega _{\mathcal{A}}=\pi _{*}\Omega _{\mathcal{A}%
/S}^{1}.$ The relative de Rham cohomology of $\mathcal{A}/S$ is a rank 6
vector bundle sitting in an exact sequence (the Hodge filtration) 
\begin{equation}
0\rightarrow \omega _{\mathcal{A}}\rightarrow H_{dR}^{1}(\mathcal{A}%
/S)\rightarrow R^{1}\pi _{*}\mathcal{O}_{\mathcal{A}}\rightarrow 0.
\end{equation}
Since, for any abelian scheme, $R^{1}\pi _{*}\mathcal{O}_{\mathcal{A}%
}=\omega _{\mathcal{A}^{t}}^{\vee }$ (canonical isomorphism, see [Mu1]), and 
$\lambda :\mathcal{A}\rightarrow \mathcal{A}^{t}$ is an isomorphism which
reverses CM types, we obtain an exact sequence 
\begin{equation}
0\rightarrow \omega _{\mathcal{A}}\rightarrow H_{dR}^{1}(\mathcal{A}%
/S)\rightarrow \omega _{\mathcal{A}}^{\vee }(\rho )\rightarrow 0.
\label{Hodge}
\end{equation}
The notation $\mathcal{M}(\rho )$ means that $\mathcal{M}$ is a vector
bundle with an $\mathcal{O}_{\mathcal{K}}$ action and in $\mathcal{M}(\rho )$
the vector bundle structure is that of $\mathcal{M}$ but the $\mathcal{O}_{%
\mathcal{K}}$ action is conjugated. Decomposing according to types, we have
two short exact sequences 
\begin{eqnarray}
0 &\rightarrow &\mathcal{P}\rightarrow H_{dR}^{1}(\mathcal{A}/S)(\Sigma
)\rightarrow \mathcal{L}^{\vee }(\rho )\rightarrow 0 \\
0 &\rightarrow &\mathcal{L}\rightarrow H_{dR}^{1}(\mathcal{A}/S)(\bar{\Sigma}%
)\rightarrow \mathcal{P}^{\vee }(\rho )\rightarrow 0.  \notag
\end{eqnarray}

The pairing $\left\langle ,\right\rangle _{\lambda }$ on $H_{dR}^{1}(%
\mathcal{A}/S)$ induced by the polarization is $\mathcal{O}_{S}$-linear,
alternating, perfect, and satisfies $\left\langle \iota (a)x,y\right\rangle
_{\lambda }=\left\langle x,\iota (\bar{a})y\right\rangle _{\lambda }.$ It
follows that $H_{dR}^{1}(\mathcal{A}/S)(\Sigma )$ and $H_{dR}^{1}(\mathcal{A}%
/S)(\bar{\Sigma})$ are maximal isotropic subspaces, and are set in duality.
As $\omega _{\mathcal{A}}$ is also isotropic, this pairing induces pairings 
\begin{equation}
\mathcal{P}\times \mathcal{P}^{\vee }(\rho )\rightarrow \mathcal{O}_{S},\,\,%
\mathcal{L}\times \mathcal{L}^{\vee }(\rho )\rightarrow \mathcal{O}_{S}.
\end{equation}
These two pairings are the tautological pairings between a vector bundle and
its dual.

Another consequence of this discussion that we wish to record is the
canonical isomorphism over $S$%
\begin{equation}
\det \mathcal{P}=\mathcal{L}(\rho )\otimes \det \left( H_{dR}^{1}(\mathcal{A}%
/S)(\Sigma )\right) .  \label{detP}
\end{equation}

\subsubsection{The factors of automorphy corresponding to $\mathcal{L}$ and $%
\mathcal{P}\label{automorphy factors}$}

The formulae below can be deduced also from the matrix calculations in the
first few pages of [Sh2]. Let $\Gamma =\Gamma _{j}$ be one of the groups
used in the complex uniformization of $S_{\Bbb{C}},$ \emph{cf }Section \ref
{uniformization}. Via the analytic isomorphism $X_{\Gamma }\simeq S_{\Gamma }$ 
with the $j$th connected component, the vector bundles $\mathcal{P%
}$ and $\mathcal{L}$ are pulled back to $X_{\Gamma }$ and then to the
symmetric space $\frak{X},$ where they can be trivialized, hence described
by means of factors of automorphy. Let us denote by $\mathcal{P}_{an}$ and $%
\mathcal{L}_{an}$ the two vector bundles on $X_{\Gamma },$ in the complex
analytic category, or their pull-backs to $\frak{X}.$

To trivialize $\mathcal{L}_{an}$ we must choose a nowhere vanishing global
section over $\frak{X}$. As usual, we describe it only on the connected
component containing the standard cusp, corresponding to $j=1$ (where $%
L=L_{g_{1}}=L_{0}$). Recalling the ``moving lattice model'' (\ref{moving
lattice model}) and the coordinates $\zeta _{1},\zeta _{2},\zeta _{3}$
introduced there, we note that $d\zeta _{3}$ is a generator of $\mathcal{L}%
_{an}=\omega _{\mathcal{A}}(\bar{\Sigma}).$ For reasons that will become
clear later (\emph{cf }Section \ref{Fields of rationality}) we use $2\pi
i\cdot d\zeta _{3}$ to trivialize $\mathcal{L}_{an}$ over $\frak{X}$.
Suppose 
\begin{equation}
\gamma =\left( 
\begin{array}{lll}
a_{1} & b_{1} & c_{1} \\ 
a_{2} & b_{2} & c_{2} \\ 
a_{3} & b_{3} & c_{3}
\end{array}
\right) \in \Gamma \subset SU_{\infty }.
\end{equation}
If $\gamma (z,u)=(z^{\prime },u^{\prime })$ then 
\begin{equation}
z^{\prime }=\frac{a_{1}z+b_{1}u+c_{1}}{a_{3}z+b_{3}u+c_{3}},\,\,\,u^{\prime
}=\frac{a_{2}z+b_{2}u+c_{2}}{a_{3}z+b_{3}u+c_{3}}
\end{equation}
and 
\begin{equation}
\gamma \left( 
\begin{array}{l}
z \\ 
u \\ 
1
\end{array}
\right) =j(\gamma ;z,u)\left( 
\begin{array}{l}
z^{\prime } \\ 
u^{\prime } \\ 
1
\end{array}
\right) ,\,\,\,\,j(\gamma ;z,u)=a_{3}z+b_{3}u+c_{3}.
\end{equation}

\begin{lemma}
The following relation holds for every $\gamma \in U_{\infty }$%
\begin{equation}
\lambda (z,u)=\lambda (\gamma (z,u))\cdot |j(\gamma ;z,u)|^{2}.
\end{equation}
\end{lemma}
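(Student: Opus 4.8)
The plan is to verify the identity by a direct calculation in the Siegel domain model, using the explicit formula $j(\gamma;z,u)=a_3z+b_3u+c_3$ together with the definition $\lambda(z,u)=\operatorname{Im}_\delta(z)-u\bar u=(z-\bar z)/\delta - u\bar u$. The cleanest conceptual route is to recognize $\lambda(z,u)$ as (a constant multiple of) the value of the Hermitian form $(,)$ on the vector $w=\,^{t}(z,u,1)$ that represents the point of $\frak{X}$: indeed, a short computation with the matrix of $(,)$ gives $(w,w)=\delta^{-1}(\bar z - z) + u\bar u - \delta^{-1}(z-\bar z)$, which up to the explicit scalar $-2\delta^{-1}$ equals $\lambda(z,u)$. (One should pin down this scalar once and for all; it drops out of the final identity since it appears on both sides.)

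First I would record that, by the displayed transformation law, $\gamma w = j(\gamma;z,u)\, w'$ where $w'=\,^{t}(z',u',1)$ represents $\gamma(z,u)$. Then I would apply the defining property of $U_\infty=\ker\mu$, namely $(\gamma v_1,\gamma v_2)=(v_1,v_2)$ for all $v_1,v_2$ (for $\gamma\in U_\infty$ the similitude factor is $1$), to the single vector $v_1=v_2=w$. This yields $(w,w)=(\gamma w,\gamma w)=|j(\gamma;z,u)|^2\,(w',w')$, using that $(,)$ is sesquilinear so scalars come out as $j\cdot\overline{j}=|j|^2$. Re-expressing $(w,w)$ and $(w',w')$ via the scalar multiple of $\lambda$ computed above gives exactly $\lambda(z,u)=\lambda(\gamma(z,u))\cdot|j(\gamma;z,u)|^2$.

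The only genuinely delicate point is bookkeeping: one must check that the representative vector $w'$ chosen on the right-hand side is literally $\,^{t}(z',u',1)$ — i.e. that the third coordinate of $\gamma w$ is precisely $j(\gamma;z,u)$ and not some other normalization — but this is exactly what the displayed formula for $j(\gamma;z,u)$ asserts, so it is already granted by the preceding discussion. I expect the main (very minor) obstacle to be fixing the normalizing constant relating $\lambda(z,u)$ to $(w,w)$ correctly, including the sign coming from $\delta$ having positive imaginary part; everything else is the one-line invariance computation. Alternatively, for readers who prefer not to invoke the Hermitian form, one can substitute $z'=(a_1z+b_1u+c_1)/j$, $u'=(a_2z+b_2u+c_2)/j$ directly into $\operatorname{Im}_\delta(z')-u'\bar u'$, clear denominators, and use the relations among the entries $a_i,b_i,c_i$ imposed by $\gamma\in U_\infty$ to collapse the numerator to $|j|^2\lambda(z,u)$; this is the matrix calculation alluded to in [Sh2], but it is longer and less transparent than the coordinate-free argument above.
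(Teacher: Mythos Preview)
Your proposal is correct and is essentially identical to the paper's proof: set $v={}^t(z,u,1)$, observe that $\lambda(z,u)=-(v,v)$, use $\gamma v=j(\gamma;z,u)\,v(\gamma(z,u))$, and apply $(\gamma v,\gamma v)=(v,v)$ for $\gamma\in U_\infty$. The only slip is in your computation of the normalizing constant (it is $-1$, not $-2\delta^{-1}$; you double-counted the off-diagonal entries), but as you note this constant cancels and plays no role in the argument.
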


\begin{proof}
Let $v=v(z,u)=\,^{t}(z,u,1).$ Then 
\begin{equation}
\lambda (z,u)=-(v,v).
\end{equation}
As $v(\gamma (z,u))=j(\gamma ;z,u)^{-1}\cdot \gamma (v(z,u))$ the lemma
follows from $(\gamma v,\gamma v)=(v,v).$
\end{proof}

Let $\mathcal{V}=Lie(\mathcal{A}/\frak{X})=\omega _{\mathcal{A}/\frak{X}%
}^{\vee }$ and $\mathcal{W}=\mathcal{V}(\bar{\Sigma})=\mathcal{L}_{an}^{\vee
}$ (a line bundle). At a point $x=(z,u)\in \frak{X}$ the fiber $\mathcal{V}%
_{x}$ is identified canonically with $(V_{\Bbb{R}},J_{x})$ and then $%
\mathcal{W}_{x}=W_{x}=\Bbb{C} \cdot \, ^{t}(z,u,1).$

\begin{proposition}
For $x=(z,u)\in \frak{X}$ let 
\begin{equation}
v_{3}(z,u)=\lambda (z,u)^{-1}\left( 
\begin{array}{l}
z \\ 
u \\ 
1
\end{array}
\right) \in \mathcal{W}_{x}.
\end{equation}
Then (i) $v_{3}(z,u)$ is a nowhere vanishing holomorphic section of $%
\mathcal{W},$ (ii) $\left\langle d\zeta _{3},v_{3}\right\rangle \equiv 1,$
(iii) the automorphy factor corresponding to $d\zeta _{3}$ is the function $%
j(\gamma ;z,u).$
\end{proposition}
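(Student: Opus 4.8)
The plan is to push everything down to the moving lattice model of \S\ref{moving lattice}, in which $\mathcal{W}=\mathcal{V}(\bar{\Sigma})=Lie(\mathcal{A}/\frak{X})(\bar{\Sigma})$ acquires a tautological holomorphic frame from the coordinate $\zeta_{3}$, and then transport it back to the $W_{x}$-picture; I work on the component with $L=L_{0}$, the others being identical. For (i), the isomorphism $T_{x}$ of (\ref{moving lattice model}) carries the basis vector $e_{3}=\partial/\partial\zeta_{3}$ of $\Bbb{C}^{3}=Lie(A'_{x})$ to $v_{3}(z,u)$: substituting $\zeta=e_{3}$ into the formula for $T$ kills the $\zeta_{1},\zeta_{2}$ terms and leaves $\bar{\zeta}_{3}\,{}^{t}(z,u,1)={}^{t}(z,u,1)$, so $T_{x}(e_{3})=\lambda(z,u)^{-1}\,{}^{t}(z,u,1)=v_{3}(z,u)$. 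Since $\iota'(a)$ acts on the $\zeta_{3}$-line by $\bar{a}$ and the periods of $d\zeta_{3}$ vary holomorphically, $\partial/\partial\zeta_{3}$ is a global, nowhere vanishing holomorphic frame of $\mathcal{W}$ in the $A'$-model; as $T\colon A'\simeq\mathcal{A}$ is an isomorphism of complex manifolds over $\frak{X}$, its image $v_{3}$ is a nowhere vanishing holomorphic section of $\mathcal{W}$ (non-vanishing is anyway plain from $\lambda(z,u)>0$ and the fact that ${}^{t}(z,u,1)$ spans the line $W_{x}$). For (ii), in the $A'$-model $d\zeta_{3}$ and $\partial/\partial\zeta_{3}$ are dual frames; transporting along $T$, which preserves the tautological pairing of $\mathcal{L}_{an}=\mathcal{W}^{\vee}$ with $\mathcal{W}$, gives $\langle d\zeta_{3},v_{3}\rangle\equiv1$ on $\frak{X}$ (re-proving that $d\zeta_{3}$ generates $\mathcal{L}_{an}$).

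For (iii), by (ii) the automorphy factor of $d\zeta_{3}$ is the reciprocal of that of the dual frame $v_{3}$ of $\mathcal{W}$, so it is enough to see how $\gamma\in\Gamma$ moves $v_{3}$. The group acts on $\mathcal{W}_{x}=W_{x}$ by the matrix $\gamma$, carrying it to $\mathcal{W}_{\gamma(x)}=W_{\gamma(x)}$; using $\gamma\,{}^{t}(z,u,1)=j(\gamma;z,u)\,{}^{t}(z',u',1)$ with $(z',u')=\gamma(z,u)$, and then ${}^{t}(z',u',1)=\lambda(z',u')\,v_{3}(z',u')$, I obtain
\begin{equation*}
\gamma\cdot v_{3}(z,u)=\lambda(z,u)^{-1}\lambda(z',u')\,j(\gamma;z,u)\cdot v_{3}(z',u').
\end{equation*}
The delicate point is that the complex structure which $\mathcal{W}$ carries along $W_{\gamma(x)}$ is $J_{\gamma(x)}$, which is $-i$ there, so multiplication by a scalar $c$ in the ambient $V_{\Bbb{R}}$ becomes multiplication by $\bar{c}$ in $\mathcal{W}$. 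Applying this to $c=j(\gamma;z,u)$ and invoking the preceding Lemma, $\lambda(z,u)=\lambda(z',u')\,|j(\gamma;z,u)|^{2}$, the scalar collapses:
\begin{equation*}
\lambda(z,u)^{-1}\lambda(z',u')\,\overline{j(\gamma;z,u)}=|j(\gamma;z,u)|^{-2}\,\overline{j(\gamma;z,u)}=j(\gamma;z,u)^{-1}.
\end{equation*}
Hence $\gamma$ sends the frame $v_{3}$ to $j(\gamma;z,u)^{-1}v_{3}$, so it sends the dual frame $d\zeta_{3}$ to $j(\gamma;z,u)\,d\zeta_{3}$; equivalently, a function $\phi$ on $\frak{X}$ for which $\phi\cdot d\zeta_{3}$ descends to a section over $X_{\Gamma}$ satisfies $\phi(\gamma(z,u))=j(\gamma;z,u)\,\phi(z,u)$. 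Thus the automorphy factor attached to $d\zeta_{3}$ is $j(\gamma;z,u)=a_{3}z+b_{3}u+c_{3}$.

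The hard part is exactly this last conjugation bookkeeping: the sign $J_{x}=-i$ on $W_{x}$ produces an $\overline{j(\gamma;z,u)}$ that must be seen to cancel against the factor $|j(\gamma;z,u)|^{-2}$ coming from the norm relation of the preceding Lemma, leaving the \emph{holomorphic} factor $j(\gamma;z,u)^{-1}$ on $\mathcal{W}$ (and its reciprocal on $\mathcal{L}_{an}$); getting a spurious conjugate here is the easy trap. Everything else is the formal transport of frames and of the tautological pairing along the isomorphism $T$ of the moving lattice model, together with the fact --- recorded when $T$ was constructed --- that $T$ intertwines $\iota'$ with $\iota$, so that $\partial/\partial\zeta_{3}$ really lands in $\mathcal{W}$ and not merely in $\mathcal{V}$.
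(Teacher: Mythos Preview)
Your proof is correct and follows essentially the same route as the paper: transport $v_{3}$ to the moving lattice model to identify it with the dual of $d\zeta_{3}$ (giving (ii), hence (i)), then compute $\gamma_{*}v_{3}/v_{3}\circ\gamma$ in the ambient $V_{\Bbb{R}}$, invoke the norm relation $\lambda(z,u)=\lambda(\gamma(z,u))\,|j(\gamma;z,u)|^{2}$, and conjugate once to pass from the standard complex structure to $J_{\gamma(x)}$ on $W_{\gamma(x)}$. The only cosmetic difference is that the paper deduces (i) from (ii) rather than arguing it directly, and phrases the conjugation step as ``the complex structure is reversed on $W_{x}$''; your explicit tracking of the $\bar c$ is the same observation.
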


\begin{proof}
Since, by construction, $d\zeta _{3}$ is a nowhere vanishing holomorphic
section of $\mathcal{L}$ (over $\frak{X}$), (i) follows from (ii). To prove
(ii) we transfer $v_{3}(z,u)$ to the moving lattice model and get $%
^{t}(0,0,1),$ which is the dual vector to $d\zeta _{3}.$ To prove (iii) we
compute in $V_{\Bbb{R}}$ (with the original complex structure!) 
\begin{equation}
\frac{\gamma _{*}v_{3}(z,u)}{v_{3}(\gamma (z,u))}=\frac{\lambda (\gamma
(z,u))}{\lambda (z,u)}j(\gamma ;z,u)=\overline{j(\gamma ;z,u)}^{-1},
\end{equation}
and recall that since $W_{\gamma (z,u)}$ is precisely the line where the
complex structure in $(V_{\Bbb{R}},J_{\gamma (z,u)})$ has been reversed, in $%
(V_{\Bbb{R}},J_{\gamma (z,u)})$ we have 
\begin{equation}
\frac{\gamma _{*}v_{3}(z,u)}{v_{3}(\gamma (z,u))}=j(\gamma ;z,u)^{-1}.
\end{equation}
Dualizing, we get ($x=(z,u)$) 
\begin{equation}
\frac{(\gamma ^{-1})^{*}d\zeta _{3}|_{x}}{d\zeta _{3}|_{\gamma (x)}}%
=j(\gamma ,x).
\end{equation}
This concludes the proof.
\end{proof}

Consider next the plane bundle $\mathcal{P}_{an}.$ As we will only be
interested in scalar-valued modular forms, we do not compute its
matrix-valued factor of automorphy (but see [Sh2]). It is important to know,
however, that the line bundle $\det \mathcal{P}_{an}$ gives nothing new.

\begin{proposition}
There is an isomorphism of analytic line bundles over $X_{\Gamma },$%
\begin{equation}
\det \mathcal{P}_{an}\simeq \mathcal{L}_{an}.
\end{equation}
Moreover, $d\zeta _{1}\wedge d\zeta _{2}$ is a nowhere vanishing holomorphic
section of $\det \mathcal{P}_{an}$ over $\frak{X}$, and the factor of
automorphy corresponding to it is $j(\gamma ;z,u).$
\end{proposition}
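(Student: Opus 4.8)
The plan is to imitate the treatment of $d\zeta _{3}$ in the preceding proposition, working in the ``moving lattice model'' of \secref{moving lattice}. First, in that model $d\zeta _{1},d\zeta _{2},d\zeta _{3}$ is a holomorphic frame of $\omega _{\mathcal{A}/\frak{X}}$, and since $\iota ^{\prime }(a)=\mathrm{diag}(a,a,\bar{a})$ the forms $d\zeta _{1},d\zeta _{2}$ span the $\Sigma $-isotypic piece $\mathcal{P}_{an}=\omega _{\mathcal{A}}(\Sigma )$, exactly as $d\zeta _{3}$ spans $\mathcal{L}_{an}=\omega _{\mathcal{A}}(\bar{\Sigma })$. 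Hence $d\zeta _{1}\wedge d\zeta _{2}$ is a nowhere vanishing holomorphic section of $\det \mathcal{P}_{an}=\bigwedge ^{2}\omega _{\mathcal{A}}(\Sigma )$ over $\frak{X}$, and the only real work is to compute its factor of automorphy and then deduce the isomorphism.

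Let $v^{(1)},v^{(2)},v^{(3)}$ denote the column vectors occurring in the explicit formula for $T_{x}$, so $v^{(3)}={}^{t}(z,u,1)$ and $T_{x}(\zeta )=\lambda (z,u)^{-1}\{-\zeta _{1}v^{(1)}-\zeta _{2}v^{(2)}+\bar{\zeta}_{3}v^{(3)}\}$. Then $T_{x}(e_{i})=-\lambda ^{-1}v^{(i)}$ for $i=1,2$, so $-\lambda ^{-1}v^{(1)},-\lambda ^{-1}v^{(2)}$ is the holomorphic frame of $\mathcal{V}(\Sigma )=Lie(\mathcal{A}/\frak{X})(\Sigma )=\mathcal{P}_{an}^{\vee }$ dual to $d\zeta _{1},d\zeta _{2}$, and therefore $\sigma (z,u):=\lambda (z,u)^{-2}\,v^{(1)}\wedge v^{(2)}$ is the holomorphic frame of $(\det \mathcal{P}_{an})^{\vee }$ dual to $d\zeta _{1}\wedge d\zeta _{2}$. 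The computational heart of the matter is the determinant identity
\[
\det \big(v^{(1)}(z,u)\mid v^{(2)}(z,u)\mid v^{(3)}(z,u)\big)=\delta \,\lambda (z,u)^{2},
\]
a one-line $3\times 3$ computation (subtract the third column from the second, using $\bar{z}-z+\delta u\bar{u}=-\delta \lambda (z,u)$); equivalently $v^{(1)}\wedge v^{(2)}\wedge v^{(3)}=\delta \lambda ^{2}\,e_{1}\wedge e_{2}\wedge e_{3}$ in $\bigwedge ^{3}V_{\Bbb{R}}$. Now take $\gamma \in \Gamma \subset SU_{\infty }$: since $\gamma $ preserves the hermitian form with $\mu (\gamma )=1$ it carries $W_{x}^{\perp }$ to $W_{\gamma x}^{\perp }$ and $W_{x}$ to $W_{\gamma x}$; moreover $\gamma v^{(3)}(x)=j(\gamma ;z,u)\,v^{(3)}(\gamma x)$, and $\gamma $ acts on $\bigwedge ^{3}V_{\Bbb{R}}$ by $\det (\gamma )=1$. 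Writing $\gamma _{*}(v^{(1)}\wedge v^{(2)})(x)=c(\gamma ,x)\,(v^{(1)}\wedge v^{(2)})(\gamma x)$ (legitimate, since $\gamma v^{(1)}(x),\gamma v^{(2)}(x)\in W_{\gamma x}^{\perp }$) and applying $\gamma $ to the displayed identity, one finds $\delta \lambda (x)^{2}=c(\gamma ,x)\,j(\gamma ;z,u)\,\delta \lambda (\gamma x)^{2}$, whence
\[
\frac{\gamma _{*}\sigma (x)}{\sigma (\gamma x)}=\lambda (x)^{-2}\,c(\gamma ,x)\,\lambda (\gamma x)^{2}=j(\gamma ;z,u)^{-1},
\]
the non-holomorphic $\lambda $-factors cancelling. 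Dualizing, the factor of automorphy of $d\zeta _{1}\wedge d\zeta _{2}$ is precisely $j(\gamma ;z,u)$ — the same as that of $d\zeta _{3}$.

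Once the factors of automorphy have been matched, $f\,d\zeta _{3}\mapsto f\,d\zeta _{1}\wedge d\zeta _{2}$ is a $\Gamma $-equivariant isomorphism of line bundles over $\frak{X}$, and it descends to the asserted isomorphism $\det \mathcal{P}_{an}\simeq \mathcal{L}_{an}$ over $X_{\Gamma }$. (Alternatively, the bare existence of such an isomorphism is immediate from (\ref{detP}): since $\Gamma \subset SU_{\infty }$, the local system underlying $\det H_{dR}^{1}(\mathcal{A}/X_{\Gamma })(\Sigma )$ has monodromy of determinant $\Sigma (\det \gamma )=1$, hence is analytically trivial, and $\mathcal{L}(\rho )\simeq \mathcal{L}$ as line bundles; but this route does not by itself locate the factor of automorphy.) I expect the only obstacle to be bookkeeping: carefully tracking the two complex structures on $V_{\Bbb{R}}$ (the fixed one and $J_{x}$) when one transfers $d\zeta _{1}\wedge d\zeta _{2}$ to the $v^{(i)}$'s and checks the determinant identity — the same subtlety that forced the proof of the preceding proposition to pass through $V_{\Bbb{R}}$ with its original complex structure.
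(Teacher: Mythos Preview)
Your proposal is correct and follows essentially the same route as the paper: pass to the dual frame in $\bigwedge^{2}W_{x}^{\perp}$, use the determinant identity $v^{(1)}\wedge v^{(2)}\wedge v^{(3)}=\delta\lambda^{2}\,e_{1}\wedge e_{2}\wedge e_{3}$ together with $\det\gamma=1$ and the transformation law $\lambda(x)=\lambda(\gamma x)\,|j|^{2}$, and then dualize. Your normalization with the unnormalized $v^{(i)}$ (rather than the paper's $v_{i}=-\lambda^{-1}v^{(i)}$) is a cosmetic variant; since $\sigma=\lambda^{-2}v^{(1)}\wedge v^{(2)}=v_{1}\wedge v_{2}$ the two computations are literally the same, and your observation that on $W_{x}^{\perp}$ the original and $J_{x}$ complex structures agree lets you sidestep the conjugation bookkeeping the paper has to track through $v_{3}$.
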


\begin{proof}
Since a holomorphic line bundle on $X_{\Gamma }=\Gamma \backslash \frak{X}$
is determined, up to an isomorphism, by its factor of automorphy, and $%
j(\gamma ;z,u)$ is the factor of automorphy of $\mathcal{L}_{an}$
corresponding to $d\zeta _{3},$ it is enough to prove the second statement.
Let $\mathcal{U}=\mathcal{V}(\Sigma )$ be the plane bundle dual to $\mathcal{%
P}_{an}.$ Let 
\begin{equation}
v_{1}(z,u)=-\lambda (z,u)^{-1}\left( 
\begin{array}{l}
\bar{u}z \\ 
(z-\bar{z})/\delta \\ 
\bar{u}
\end{array}
\right)
\end{equation}
and 
\begin{equation}
v_{2}(z,u)=-\lambda (z,u)^{-1}\left( 
\begin{array}{l}
\bar{z}+\delta u\bar{u} \\ 
u \\ 
1
\end{array}
\right)
\end{equation}
(considered as vectors in $(V_{\Bbb{R}},J_{x})=\mathcal{V}_{x}$). As we have
seen in (\ref{moving lattice model}), these two vector fields are sections
of $\mathcal{U}$ and at each point $x\in \frak{X}$ form a basis dual to $%
d\zeta _{1}$ and $d\zeta _{2}.$ It follows that they are holomorphic
sections, and that $v_{1}\wedge v_{2}$ is the basis dual to $d\zeta
_{1}\wedge d\zeta _{2}.$ We must show that the factor of automorphy
corresponding to $v_{1}\wedge v_{2}$ is $j(\gamma ;z,u)^{-1},$ i.e. that 
\begin{equation}
\frac{\gamma _{*}(v_{1}\wedge v_{2}(z,u))}{v_{1}\wedge v_{2}(\gamma (z,u))}%
=j(\gamma ;z,u)^{-1}.
\end{equation}
Working in $V_{\Bbb{R}}=\Bbb{C}^{3}$ (with the original complex structure) 
\begin{eqnarray}
\frac{\gamma _{*}(v_{1}\wedge v_{2}(z,u))}{v_{1}\wedge v_{2}(\gamma (z,u))}%
\cdot \frac{1}{\overline{j(\gamma ;z,u)}} &=&  \notag \\
\frac{\gamma _{*}(v_{1}\wedge v_{2}(z,u))}{v_{1}\wedge v_{2}(\gamma (z,u))}%
\cdot \frac{\gamma _{*}v_{3}(z,u)}{v_{3}(\gamma (z,u))} &=&\frac{\gamma
_{*}(v_{1}\wedge v_{2}\wedge v_{3}(z,u))}{v_{1}\wedge v_{2}\wedge
v_{3}(\gamma (z,u))}.
\end{eqnarray}
But 
\begin{equation}
v_{1}\wedge v_{2}\wedge v_{3}(z,u)=\delta \lambda (z,u)^{-1}e_{1}\wedge
e_{2}\wedge e_{3},
\end{equation}
because 
\begin{equation}
\det \left( 
\begin{array}{lll}
\bar{u}z & \bar{z}+\delta u\bar{u} & z \\ 
(z-\bar{z})/\delta & u & u \\ 
\bar{u} & 1 & 1
\end{array}
\right) =\delta \lambda (z,u)^{2}.
\end{equation}
As $\det (\gamma )=1,$ this gives 
\begin{equation}
\frac{\gamma _{*}(v_{1}\wedge v_{2}(z,u))}{v_{1}\wedge v_{2}(\gamma (z,u))}%
\cdot \frac{1}{\overline{j(\gamma ;z,u)}}=\frac{\lambda (\gamma (z,u))}{%
\lambda (z,u)}=\frac{1}{j(\gamma ;z,u)\overline{j(\gamma ;z,u)}},
\end{equation}
and the proof is complete.
\end{proof}

\subsubsection{The relation $\det \mathcal{P}\simeq \mathcal{L}$ over $\bar{S%
}_{\mathcal{K}}$}

The isomorphism between $\det \mathcal{P}$ and $\mathcal{L}$ is in fact
algebraic, and even extends to the generic fiber $\bar{S}_{\mathcal{K}}$ of
the smooth compactification.

\begin{proposition}
\label{detP = L}One has $\det \mathcal{P}\simeq \mathcal{L}$ over $\bar{S}_{%
\mathcal{K}}.$
\end{proposition}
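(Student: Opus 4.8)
The plan is to prove the isomorphism first analytically over the smooth compactification $\bar S_{\mathbb{C}}$, and then descend it to the algebraic and arithmetic level by GAGA and flat base change. Set $\mathcal N=(\det\mathcal P)\otimes\mathcal L^{-1}$, a line bundle already defined over $R_0$; we must show $\mathcal N\simeq\mathcal O$ over $\bar S_{\mathcal K}$. To begin, over each connected component $X_{\Gamma_j}\simeq S_{\Gamma_j}$ of $S_{\mathbb{C}}$ the bundle $\mathcal N_{an}$ is analytically trivial: by the computations of \S\ref{automorphy factors} the section $d\zeta_1\wedge d\zeta_2$ of $\det\mathcal P_{an}$ and the section $d\zeta_3$ of $\mathcal L_{an}$ carry the \emph{same} factor of automorphy $j(\gamma;z,u)$, so $d\zeta_1\wedge d\zeta_2\otimes(d\zeta_3)^{-1}$ is a $\Gamma_j$-invariant, nowhere vanishing section of $\mathcal N_{an}$ on $\frak X$, hence descends to a trivializing section over $X_{\Gamma_j}$. (Alternatively one could start from \eqref{detP}: since $\Gamma_j\subset SU$, the $\Sigma$-isotypic local system $R^1f_*\mathbb{C}(\Sigma)$ has monodromy in $SL_3$, so $\det H^1_{dR}(\mathcal A/S)(\Sigma)$ is analytically trivial over $S_{\mathbb{C}}$ — but that route still faces the boundary issue below.)

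The essential step is to extend this trivialization across the cuspidal divisor. By \lemref{section invariance} the sections $d\zeta_1$ and $d\zeta_3$ are $\Gamma_{cusp}$-invariant and $d\zeta_2$ is $\Gamma_{cusp}$-invariant modulo $\langle d\zeta_1\rangle$, so $d\zeta_1\wedge d\zeta_2$ and $d\zeta_3$ descend to holomorphic sections on a tubular neighbourhood $\mathcal T_R\simeq\Gamma_{cusp}\backslash\Omega_R\cup E$ of each cuspidal component $E$ in $\bar X_{\Gamma_j}$ (Proposition~\ref{disk bundle}). Moreover, the explicit semi-abelian degeneration of \S\ref{degeneration} shows that along $E$ these are still nowhere vanishing generators of $\det\mathcal P$ and of $\mathcal L$ respectively (the semi-abelian fibre $\mathcal G_u$ has $\omega(\Sigma)$ spanned by $d\zeta_1,d\zeta_2$ and $\omega(\bar\Sigma)$ spanned by $d\zeta_3$). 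Hence $d\zeta_1\wedge d\zeta_2\otimes(d\zeta_3)^{-1}$ extends to a nowhere vanishing section of $\mathcal N_{an}$ on $\mathcal T_R$, and it agrees on the punctured overlap with the trivialization already found on $X_{\Gamma_j}$. Gluing over all cusps and all $j$ yields $\mathcal N_{an}\simeq\mathcal O$ on the smooth projective surface $\bar S_{\mathbb{C}}$.

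To finish, since $\bar S$ is projective over $R_0$, $\bar S_{\mathbb{C}}$ is smooth and projective, and Serre's GAGA upgrades $\mathcal N_{an}\simeq\mathcal O$ to $\mathcal N\simeq\mathcal O_{\bar S_{\mathbb{C}}}$ as algebraic line bundles. It remains to descend from $\mathbb{C}$ to $\mathcal K$. Each connected component of the smooth proper $\mathcal K$-scheme $\bar S_{\mathcal K}$ is integral with field of constants a finite extension of $\mathcal K$; by flat base change of cohomology along $\mathcal K\hookrightarrow\mathbb{C}$ and the triviality of $\mathcal N_{\mathbb{C}}$, both $H^0(\bar S_{\mathcal K},\mathcal N)$ and $H^0(\bar S_{\mathcal K},\mathcal N^{-1})$ are nonzero. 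On an integral proper scheme the product of a nonzero section of a line bundle with a nonzero section of its inverse is a nonzero constant, hence a unit; so such a section is nowhere vanishing, and $\mathcal N$ is trivial on each component. Therefore $\det\mathcal P\simeq\mathcal L$ over $\bar S_{\mathcal K}$.

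I expect the main obstacle to be the middle step: the analytic (or de Rham) description of $\mathcal P$ and $\mathcal L$ on the \emph{open} surface does not by itself control these bundles over the boundary, and one genuinely needs the precise semi-abelian degeneration of \S\ref{degeneration}, together with the invariance statement \lemref{section invariance}, to see that $d\zeta_1\wedge d\zeta_2$ and $d\zeta_3$ persist as nowhere vanishing generators of $\det\mathcal P$ and $\mathcal L$ along the cuspidal components. The GAGA-and-descent conclusion is routine.
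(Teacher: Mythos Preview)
Your proof is correct and follows essentially the same route as the paper: trivialize $(\det\mathcal P)\otimes\mathcal L^{-1}$ analytically on each $X_{\Gamma_j}$ via the section $(d\zeta_1\wedge d\zeta_2)\otimes d\zeta_3^{-1}$ (same automorphy factor), extend across the cusps using the $\Gamma_{cusp}$-invariance, and invoke GAGA. The only cosmetic difference is in the descent from $\mathbb{C}$ to $\mathcal K$: the paper simply cites the injection $Pic(\bar S_{\mathcal K})\hookrightarrow Pic(\bar S_{\mathbb C})$, whereas you spell out an equivalent argument via nonvanishing of $H^0$ for $\mathcal N$ and $\mathcal N^{-1}$.
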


\begin{proof}
Since $Pic(\bar{S}_{\mathcal{K}})\subset Pic(\bar{S}_{\Bbb{C}})$ it is
enough to prove the proposition over $\Bbb{C}.$ By GAGA, it is enough to
establish the triviality of $\det \mathcal{P}\otimes \mathcal{L}^{-1}$ in
the analytic category. For each connected component $X_{\Gamma }$ of $S_{%
\Bbb{C}}$, the section $(d\zeta _{1}\wedge d\zeta _{2})\otimes d\zeta
_{3}^{-1}$ descends from $\frak{X}$ to $X_{\Gamma },$ because $d\zeta
_{1}\wedge d\zeta _{2}$ and $d\zeta _{3}$ have the same factor of automorphy 
$j(\gamma ,x)$ ($\gamma \in \Gamma ,$ $x\in \frak{X}$). This section is
nowhere vanishing on $X_{\Gamma },$ and extends to a nowhere vanishing
section on $\bar{X}_{\Gamma },$ trivializing $\det \mathcal{P}\otimes 
\mathcal{L}^{-1}$. In fact, if $c$ is the standard cusp, $d\zeta _{1}\wedge
d\zeta _{2}$ and $d\zeta _{3}$ are already well-defined and nowhere
vanishing sections of $\det \mathcal{P}$ and $\mathcal{L}$ in the
neighborhood 
\begin{equation}
\overline{\Gamma _{cusp}\backslash \Omega _{R}}=(\Gamma _{cusp}\backslash
\Omega _{R})\cup E_{c}
\end{equation}
of $E_{c}$ (see \ref{complex smooth compactification}). This is a
consequence of the fact that $j(\gamma ,x)=1$ for $\gamma \in \Gamma
_{cusp}. $

An alternative proof is to use Theorem 4.8 of [Ha]. In our case it gives a 
\emph{functor} $\mathcal{V}\mapsto [\mathcal{V]}$ from the category of $%
\mathbf{G}(\Bbb{C})$-equivariant vector bundles on the compact dual $\Bbb{P}%
_{\Bbb{C}}^{2}$ of $Sh_{K}$ to the category of vector bundles with $\mathbf{G%
}(\Bbb{A}_{f})$-action on the inverse system of Shimura varieties $Sh_{K}.$
Here $\Bbb{P}_{\Bbb{C}}^{2}=\mathbf{G}(\Bbb{C})/\mathbf{H}(\Bbb{C}),$ where $%
\mathbf{H}(\Bbb{C})$ is the parabolic group stabilizing the line $\Bbb{%
C\cdot }$ $^{t}(\delta /2,0,1)$ in $\mathbf{G}(\Bbb{C})=GL_{3}(\Bbb{C}%
)\times \Bbb{C}^{\times },$ and the irreducible $\mathcal{V}$ are associated
with highest weight representations of the Levi factor $\mathbf{L}(\Bbb{C})$
of $\mathbf{H}(\Bbb{C}).$ It is straightforward to check that $\det \mathcal{%
P}$ and $\mathcal{L}$ are associated with the same character of $\mathbf{L}(%
\Bbb{C}),$ up to a twist by a character of $\mathbf{G}(\Bbb{C})$, which
affects the $\mathbf{G}(\Bbb{A}_{f})$-action (hence the normalization of
Hecke operators), but not the structure of the line bundles themselves. The
functoriality of Harris' construction implies that $\det \mathcal{P}$ and $%
\mathcal{L}$ are isomorphic also algebraically.
\end{proof}

We de not know if $\det \mathcal{P}$ and $\mathcal{L}$ are isomorphic as
algebraic line bundles over $S.$ This would be equivalent, by (\ref{detP}),
to the statement that for every PEL structure $(A,\lambda ,\iota ,\alpha
)\in \mathcal{M}(R),$ for any $R_{0}$-algebra $R,$ $\det
(H_{dR}^{1}(A/R)(\Sigma ))$ is the trivial line bundle on $Spec(R)$. To our
regret, we have not been able to establish this, although a similar
statement in the ``Siegel case'', namely that for any principally polarized
abelian scheme $(A,\lambda )$ over $R,$ $\det H_{dR}^{1}(A/R)$ is trivial,
follows at once from the Hodge filtration (\ref{Hodge}). Our result,
however, suffices to guarantee the following corollary, which is all that we
will be using in the sequel.

\begin{corollary}
For any characteristic $p$ geometric point $Spec(k)\rightarrow Spec(R_{0}),$
we have $\det \mathcal{P}\simeq \mathcal{L}$ on $\bar{S}_{k}$. A similar
statement holds for morphisms $SpecW(k)\rightarrow Spec(R_{0}).$
\end{corollary}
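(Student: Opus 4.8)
The plan is to spread the isomorphism of Proposition~\ref{detP = L} out from the generic fibre, exploiting the regularity of the integral model. Set $\mathcal{N}=\det\mathcal{P}\otimes\mathcal{L}^{-1}$, a line bundle on $\bar{S}$ (both factors being defined over $R_{0}$); by Proposition~\ref{detP = L} its restriction to $\bar{S}_{\mathcal{K}}$ is trivial. Given a geometric point $Spec(k)\rightarrow Spec(R_{0})$ of characteristic $p$, the field $k$ is algebraically closed, hence perfect, so by Hensel's lemma ($R_{0}$ being \'{e}tale over $\Bbb{Z}$ away from $2d_{\mathcal{K}}N$, in particular at $p$) the morphism factors through $Spec(W(k))\rightarrow Spec(R_{0})$, and $\bar{S}_{k}$ is the closed fibre of $\bar{S}_{W(k)}=\bar{S}\times_{R_{0}}W(k)$. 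It therefore suffices (and this at the same time gives the statement over $W(k)$) to prove that $\mathcal{N}$ is trivial on $\bar{S}_{W(k)}$; and since triviality of a line bundle can be checked on connected components, it is enough to prove it on each connected component $W$ of $\bar{S}_{W(k)}$.

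Such a $W$ is smooth and projective over the complete discrete valuation ring $W(k)$, hence regular, and is proper and flat over $Spec(W(k))$. Its closed fibre $W_{k}$ is connected: being the closed fibre of a proper scheme over the henselian local ring $W(k)$, it is, by the Stein factorization (whose base is then a local ring with residue field $k$), a geometric fibre of a morphism with connected fibres. As $W_{k}$ is moreover smooth over the algebraically closed field $k$, it is normal and connected, hence irreducible; and it is precisely the complement of the dense open generic fibre $W_{\mathrm{gen}}=W\backslash W_{k}$. Finally $W_{\mathrm{gen}}$ is an open subscheme of $\bar{S}_{\mathcal{K}}\times_{\mathcal{K}}Frac(W(k))$, so $\mathcal{N}|_{W_{\mathrm{gen}}}$ is trivial by Proposition~\ref{detP = L}.

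Now one invokes that on the regular scheme $W$ there is an identification $\mathrm{Pic}(W)=\mathrm{Cl}(W)$ together with an exact sequence $\Bbb{Z}\cdot[W_{k}]\rightarrow\mathrm{Cl}(W)\rightarrow\mathrm{Cl}(W_{\mathrm{gen}})\rightarrow 0$, the left-hand group being generated by the single codimension-one component $W_{k}$ of $W\backslash W_{\mathrm{gen}}$. Since $[\mathcal{N}|_{W}]$ dies in $\mathrm{Cl}(W_{\mathrm{gen}})$, we get $\mathcal{N}|_{W}\simeq\mathcal{O}_{W}(m\,W_{k})$ for some $m\in\Bbb{Z}$; but $W_{k}$ is the divisor of (the pull-back of) a uniformizer of $W(k)$, so $\mathcal{O}_{W}(m\,W_{k})$ is trivial. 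Hence $\mathcal{N}|_{W}$ is trivial, which completes the argument.

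The crux, and the only point where care is needed, is the irreducibility of the special fibre $W_{k}$: this is exactly what forces every divisor class on $W$ that is vertical over $Spec(W(k))$ to be an integer multiple of the whole special fibre, and so to be principal. The preliminary passage to a connected component of $\bar{S}_{W(k)}$ is what makes this available, since $\bar{S}$ has several (Galois-conjugate) geometric components and $\bar{S}_{\kappa}$ need not itself be irreducible; on a reducible special fibre a vertical class that is trivial on the generic fibre need not be trivial, so the naive version of the argument would break down there.
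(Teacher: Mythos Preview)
Your proof is correct and follows essentially the same approach as the paper's: both use the regularity of the integral model to write $\det\mathcal{P}\otimes\mathcal{L}^{-1}$ as $\mathcal{O}(D)$ with $D$ supported in vertical fibres, and then exploit that connected components of the geometric special fibre are irreducible (smoothness) together with the principality of full fibres. The only organizational difference is that you first base-change to $W(k)$ and pass to a connected component $W$, so that the special fibre is a single prime divisor and the excision sequence $\Bbb{Z}\cdot[W_k]\to\mathrm{Cl}(W)\to\mathrm{Cl}(W_{\mathrm{gen}})\to 0$ is immediate, whereas the paper stays on $\bar{S}/R_0$ and instead ``modifies $D$'' by subtracting a multiple of the full fibre over $p$ to make it disjoint from the chosen component $Z$; the underlying mechanism is identical.
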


\begin{proof}
Since $\bar{S}$ is a regular scheme, $\det \mathcal{P}\otimes \mathcal{L}%
^{-1}\simeq \mathcal{O}(D)$ for a Weil divisor $D$ supported on vertical
fibers over $R_{0}.$ Since any connected component $Z$ of $\bar{S}_{k}$ is
irreducible, we can modify $D$ so that $D$ and $Z$ are disjoint, showing
that $\det \mathcal{P}\otimes \mathcal{L}^{-1}|_{Z}$ is trivial. The second
claim is proved similarly.
\end{proof}

\subsubsection{Modular forms}

Let $R$ be an $R_{0}$-algebra. A \emph{modular form} of weight $k\ge 0$ and
level $N\ge 3$ defined over $R$ is an element of the finite $R$-module 
\begin{equation}
M_{k}(N,R)=H^{0}(\bar{S}_{R},\mathcal{L}^{k}).
\end{equation}
We usually omit the subscript $R,$ remembering that $\bar{S}$ is now to be
considered over $R.$ The well-known Koecher principle says that $H^{0}(\bar{S%
},\mathcal{L}^{k})=H^{0}(S,\mathcal{L}^{k})$. See [Bel], Section 2.2, for an
arithmetic proof valid integrally over any $R_{0}$-algebra $R$. A \emph{cusp}
\emph{form }is an element of the space 
\begin{equation}
M_{k}^{0}(N,R)=H^{0}(\bar{S},\mathcal{L}^{k}\otimes \mathcal{O}(C)^{\vee }).
\end{equation}
As we shall see below (\emph{cf }Corollary \ref{KS}), if $k\ge 3,$ there is
an isomorphism $\mathcal{L}^{k}\otimes \mathcal{O}(C)^{\vee }\simeq \Omega _{%
\bar{S}}^{2}\otimes \mathcal{L}^{k-3}.$ In particular, cusp forms of weight
3 are ``the same'' as holomorphic 2-forms on $\bar{S}$.

An alternative definition (\`{a} la Katz) of a modular form of weight $k$
and level $N$ defined over $R$, is as a ``rule'' $f$ which assigns to every $%
R$-scheme $T,$ and every $\underline{A}=(A,\lambda ,\iota ,\alpha )\in 
\mathcal{M}(T),$ together with a nowhere vanishing section $\omega \in
H^{0}(T,\omega _{A/T}(\bar{\Sigma})),$ an element $f(\underline{A},\omega
)\in H^{0}(T,\mathcal{O}_{T})$ satisfying

\begin{itemize}
\item  $f(\underline{A},\lambda \omega )=\lambda ^{-k}f(\underline{A},\omega
)$ for every $\lambda \in H^{0}(T,\mathcal{O}_{T})^{\times }$

\item  The ``rule'' $f$ is compatible with base change $T^{\prime }/T.$
\end{itemize}

Indeed, if $f$ is an element of $M_{k}(N,R),$ then given such an $\underline{%
A}$ and $\omega ,$ the universal property of $S$ produces a unique morphism $%
\varphi :T\rightarrow S$ over $R,$ $\varphi ^{*}\mathcal{A}=A,$ and we may
let $f(\underline{A},\omega )=\varphi ^{*}f/\omega ^{k}.$ Conversely, given
such a rule $f$ we may cover $S$ by Zariski open sets $T$ where $\mathcal{L}$
is trivialized, and then the sections $f(\mathcal{A}_{T},\omega _{T})\omega
_{T}^{k}$ ($\omega _{T}$ a trivializing section over $T$) glue to give $f\in
M_{k}(N,R).$ While viewing $f$ as a ``rule'' rather than a section is a
matter of language, it is sometimes more convenient to use this language.

Let $R\rightarrow R^{\prime }$ be a homomorphism of $R_{0}$-algebras. Then
Bella\"{i}che proved the following theorem ([Bel], 1.1.5).

\begin{theorem}
If $k\ge 3$ (resp. $k\ge 6$) then $M_{k}^{0}(N,R)$ (resp. $M_{k}(N,R)$) is a
locally free finite $R$-module, and the base-change homomorphism 
\begin{equation}
R^{\prime }\otimes M_{k}^{0}(N,R)\simeq M_{k}^{0}(N,R^{\prime })
\end{equation}
is an isomorphism (resp. base change for $M_{k}(N,R)$).
\end{theorem}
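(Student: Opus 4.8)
The plan is to recast the statement as an instance of Grothendieck's cohomology-and-base-change theorem for the structure morphism $\pi\colon\bar{S}\to\mathrm{Spec}(R_{0})$, which is proper and smooth of relative dimension $2$. Write $\mathcal{F}_{k}=\mathcal{L}^{k}$ in the modular case and $\mathcal{F}_{k}=\mathcal{L}^{k}\otimes\mathcal{O}(C)^{\vee}$ in the cuspidal case; each is a line bundle, hence flat over $R_{0}$. It suffices to show that $R^{i}\pi_{*}\mathcal{F}_{k}$ is a locally free $R_{0}$-module whose formation commutes with arbitrary base change: granting this, $H^{0}(\bar{S}_{R_{0}},\mathcal{F}_{k})$ is locally free, base change along $R_{0}\to R'$ gives the asserted isomorphism, and the case of an arbitrary $R\to R'$ follows by transitivity (so one need not assume $R$ Noetherian). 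Since $R_{0}$ is a regular, hence reduced and connected, domain, Grauert's theorem reduces the problem to showing that $s\mapsto h^{i}(\bar{S}_{\bar{s}},\mathcal{F}_{k,\bar{s}})$ is constant on $\mathrm{Spec}(R_{0})$ for every $i$, where $\bar{S}_{\bar{s}}$ denotes the smooth projective geometric fibre at $s$. Because $\mathcal{F}_{k}$ is $\pi$-flat, $\chi(\bar{S}_{\bar{s}},\mathcal{F}_{k,\bar{s}})$ is already constant, so it is enough to control $h^{0}$ and $h^{2}$.

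For $h^{2}$ I would combine Serre duality on the smooth projective surface $\bar{S}_{\bar{s}}$ with the Kodaira--Spencer identification $\Omega^{2}_{\bar{S}}\simeq\mathcal{L}^{3}\otimes\mathcal{O}(C)^{\vee}$ of Corollary~\ref{KS}: one obtains $h^{2}(\bar{S}_{\bar{s}},\mathcal{L}^{k}_{\bar{s}})=h^{0}(\bar{S}_{\bar{s}},(\mathcal{L}^{3-k}\otimes\mathcal{O}(C)^{\vee})_{\bar{s}})$ and $h^{2}(\bar{S}_{\bar{s}},(\mathcal{L}^{k}\otimes\mathcal{O}(C)^{\vee})_{\bar{s}})=h^{0}(\bar{S}_{\bar{s}},\mathcal{L}^{3-k}_{\bar{s}})$. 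To evaluate these I pass to the Baily--Borel contraction $p\colon\bar{S}\to S^{*}$. Since $\mathcal{L}$ is trivial in a neighbourhood of each cuspidal component (as recorded in the proof of Proposition~\ref{detP = L}), it descends along $p$ to a line bundle $\mathcal{M}=p_{*}\mathcal{L}$ on $S^{*}$, which is ample by Baily--Borel; by the projection formula the right-hand sides above become $h^{0}$ of a (possibly $\mathfrak{m}$-twisted) strictly negative power of $\mathcal{M}$ on the positive-dimensional projective surface $S^{*}_{\bar{s}}$, hence vanish as soon as $k\ge 4$. For the one remaining weight $k=3$ in the cuspidal case, Corollary~\ref{KS} identifies $M^{0}_{3}$ with $H^{0}(\bar{S}_{\bar{s}},\Omega^{2})$, whose dimension is the geometric genus and therefore constant. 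This is precisely what forces the stated thresholds.

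For $h^{0}$ I would again push forward along $p$: the projection formula gives $h^{0}(\bar{S}_{\bar{s}},\mathcal{L}^{k}_{\bar{s}})=h^{0}(S^{*}_{\bar{s}},\mathcal{M}^{k})$ and $h^{0}(\bar{S}_{\bar{s}},(\mathcal{L}^{k}\otimes\mathcal{O}(C)^{\vee})_{\bar{s}})=h^{0}(S^{*}_{\bar{s}},\mathcal{M}^{k}\otimes\mathfrak{m})$, with $\mathfrak{m}$ the ideal sheaf of the finitely many cusps. For $k$ in the asserted range one invokes an \emph{effective} Serre vanishing on the polarized surface $(S^{*}_{\bar{s}},\mathcal{M})$ to see that the higher cohomology of $\mathcal{M}^{k}$ and of $\mathcal{M}^{k}\otimes\mathfrak{m}$ vanishes; the $h^{0}$'s are then Euler characteristics, i.e. polynomials in $k$ whose coefficients are intersection numbers on $S^{*}_{\bar{s}}$, quantities that do not depend on $s$. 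Together with the constancy of $\chi$ and of $h^{2}$ this also yields the constancy of $h^{1}$, and the proof is complete. The whole scheme is Bella\"{i}che's ([Bel], 1.1.5).

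I expect the genuine difficulty to be the \emph{quantitative} input, namely pinning down the exact bounds $k\ge 6$ (resp.\ $k\ge 3$). This forces one to understand the cohomology of low powers of $\mathcal{M}$ on $S^{*}$, and hence the non-rational cusp singularities of $S^{*}$ --- equivalently the sheaves $R^{1}p_{*}\mathcal{O}_{\bar{S}}$ and $R^{1}p_{*}\mathcal{O}(C)^{\vee}$, which are skyscrapers supported at the cusps and are exactly what makes $H^{1}(\bar{S}_{\bar{s}},\mathcal{F}_{k,\bar{s}})$ fail to vanish in general. One has to verify that their lengths, hence their contribution to $h^{1}$, are the same in every fibre, using the explicit analytic description of the cusps and of a neighbourhood of $C$ recalled in Section~\ref{complex smooth compactification} and Section~\ref{universal semi abelian}. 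A secondary point is that, since we work integrally over $R_{0}$ and in particular in characteristic $p$, no Kodaira- or Kawamata--Viehweg-type vanishing is available; but all the positivity used comes from the ample bundle $\mathcal{M}$ on $S^{*}$, so only Serre vanishing --- valid in every characteristic --- and finite-length computations of skyscraper cohomology are needed.
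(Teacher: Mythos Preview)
The paper does not supply its own proof of this theorem; it simply attributes the result to Bella\"{i}che's thesis ([Bel], 1.1.5) and remarks that the argument there, written for weights divisible by $3$, extends to all $k$. So there is no internal argument to compare against, and your sketch has to be judged on its own terms.

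Your overall architecture is the standard one and almost certainly what Bella\"{i}che does: reduce to constancy of the fibre dimensions $h^{i}(\bar{S}_{\bar s},\mathcal{F}_{k,\bar s})$ via Grauert, dispatch $h^{2}$ by Serre duality and the identification of the canonical bundle, and then infer $h^{1}$ from $\chi$. Two points deserve a little more care. First, the identity $\Omega^{2}_{\bar S}\simeq\mathcal{L}^{3}\otimes\mathcal{O}(C)^{\vee}$ is \emph{not} established integrally over $R_{0}$ in this paper: Proposition~\ref{canonical class} is stated over $\mathcal{K}_{N}$, and the Corollary to Proposition~\ref{detP = L} gives it only on individual geometric fibres in positive characteristic. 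Since your argument uses it only fibre-by-fibre, that suffices, but you should not phrase it as an isomorphism of line bundles on $\bar S/R_{0}$. Second, the phrase ``effective Serre vanishing on $(S^{*}_{\bar s},\mathcal{M})$'' is not something you can cite as a black box, and you are right to flag it as the real content. What actually replaces it is precisely the computation you mention at the end: the Leray spectral sequence for $p:\bar S\to S^{*}$ reduces the discrepancy between cohomology on $\bar S$ and on $S^{*}$ to the skyscraper sheaves $R^{1}p_{*}\mathcal{O}_{\bar S}$ and $R^{1}p_{*}\mathcal{O}(C)^{\vee}$ at the cusps, whose lengths are governed by the self-intersections $E_{j}^{2}$ and are therefore constant along the family. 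Combined with ampleness of $\mathcal{M}$ on $S^{*}$ (part of the arithmetic minimal-compactification package, valid in all residue characteristics), this is what pins down the thresholds $k\ge 3$ and $k\ge 6$; there is no need for, and no available, effective Kodaira-type vanishing in mixed characteristic.
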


Bella\"{i}che considers only weights divisible by 3, but his proofs
generalize to all $k$ (\emph{cf} remark on the bottom of p.43 in [Bel]).

Over $\Bbb{C},$ pulling back to $\frak{X}$ and using the trivialization of $%
\mathcal{L}$ given by the nowehere vanishing section $2\pi i\cdot d\zeta
_{3} $, a modular form of weight $k$ is a collection $(f_{j})_{1\le j\le m}$
of holomorphic functions on $\frak{X}$ satisfying 
\begin{equation}
f_{j}(\gamma (z,u))=j(\gamma ;z,u)^{k}f_{j}(z,u)\,\,\,\,\forall \gamma \in
\Gamma _{j}
\end{equation}
(the Koecher principle means that no condition has to be imposed at the
cusps).

\subsection{The Kodaira Spencer isomorphism}

Let $\pi :A\rightarrow S$ be an abelian scheme of relative dimension 3, as
in the Picard moduli problem. The Gauss-Manin connection 
\begin{equation}
\nabla :H_{dR}^{1}(A/S)\rightarrow H_{dR}^{1}(A/S)\otimes _{\mathcal{O}%
_{S}}\Omega _{S}^{1}
\end{equation}
defines the Kodaira-Spencer map 
\begin{equation}
KS\in Hom_{\mathcal{O}_{S}}(\omega _{A}\otimes _{\mathcal{O}_{S}}\omega
_{A^{t}},\Omega _{S}^{1})
\end{equation}
as the composition of the maps 
\begin{eqnarray}
\omega _{A} &=&H^{0}(A,\Omega _{A/S}^{1})\hookrightarrow H_{dR}^{1}(A/S)%
\overset{\nabla }{\rightarrow }H_{dR}^{1}(A/S)\otimes _{\mathcal{O}%
_{S}}\Omega _{S}^{1}  \notag \\
&\twoheadrightarrow &R^{1}\pi _{*}\mathcal{O}_{A}\otimes _{\mathcal{O}%
_{S}}\Omega _{S}^{1}\simeq \omega _{A^{t}}^{\vee }\otimes _{\mathcal{O}%
_{S}}\Omega _{S}^{1},
\end{eqnarray}
and finally using $Hom(L,M^{\vee }\otimes N)=Hom(L\otimes M,N).$ Recall that
if $A$ is endowed with an $\mathcal{O}_{\mathcal{K}}$ action via $\iota $
then the induced action of $a\in \mathcal{O}_{K}$ on $A^{t}$ is induced from
the action on $Pic(A),$ taking a line bundle $\mathcal{M}$ to $\iota (a)^{*}%
\mathcal{M}$. As the polarization $\lambda :A\rightarrow A^{t}$ is $\mathcal{%
O}_{S}$-linear but satisfies $\lambda \circ \iota (a)=\iota (a^{\rho })\circ
\lambda ,$ it follows that the induced $\mathcal{O}_{\mathcal{K}}$ action on 
$A^{t}$ is of type $(1,2)$, hence $\omega _{A^{t}}^{\vee }$ is of type $%
(1,2).$

\begin{lemma}
The map $KS$ induces maps 
\begin{eqnarray}
KS(\Sigma ):\omega _{A}(\Sigma )\rightarrow \omega _{A^{t}}^{\vee }(\Sigma
)\otimes _{\mathcal{O}_{S}}\Omega _{S}^{1}  \notag \\
KS(\bar{\Sigma}):\omega _{A}(\bar{\Sigma})\rightarrow \omega _{A^{t}}^{\vee
}(\bar{\Sigma})\otimes _{\mathcal{O}_{S}}\Omega _{S}^{1}
\end{eqnarray}
hence maps, denoted by the same symbols, 
\begin{eqnarray}
KS(\Sigma ):\omega _{A}(\Sigma )\otimes _{\mathcal{O}_{S}}\omega
_{A^{t}}(\Sigma )\rightarrow \Omega _{S}^{1}  \notag \\
KS(\bar{\Sigma}):\omega _{A}(\bar{\Sigma})\otimes _{\mathcal{O}_{S}}\omega
_{A^{t}}(\bar{\Sigma})\rightarrow \Omega _{S}^{1}.
\end{eqnarray}
The CM-type-reversing isomorphism $\lambda ^{*}:\omega _{A^{t}}\rightarrow
\omega _{A}$ induced by the principal polarization satisfies 
\begin{equation}
KS(\Sigma )(\lambda ^{*}x\otimes y)=KS(\bar{\Sigma})(\lambda ^{*}y\otimes x)
\end{equation}
for all $x\in \omega _{A^{t}}(\bar{\Sigma})$ and $y\in \omega
_{A^{t}}(\Sigma ).$
\end{lemma}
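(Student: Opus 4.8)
The plan is to prove the two assertions in turn: first that $KS$ is ``block diagonal'' for the type decomposition (which produces $KS(\Sigma)$ and $KS(\bar{\Sigma})$ and identifies them with the restrictions of $KS$), a formal consequence of the $\mathcal{O}_{\mathcal{K}}$-linearity of the Gauss--Manin connection; and then the symmetry identity, which is the classical symmetry of the Kodaira--Spencer form of a polarized abelian scheme, transported through $\lambda^{*}$.

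For the first part I would use that $\nabla$ is functorial in the abelian scheme, so that every endomorphism acts horizontally on $H_{dR}^{1}(A/S)$; applied to $\iota(a)$, this says $\nabla$ commutes with the $\mathcal{O}_{\mathcal{K}}$-action, i.e. $\nabla$ is $\mathcal{O}_{\mathcal{K}}$-linear. Hence $\nabla$ preserves the type decomposition of $H_{dR}^{1}(A/S)$, and so does the projection onto $R^{1}\pi_{*}\mathcal{O}_{A}=\omega_{A^{t}}^{\vee}$. Combined with $\omega_{A}=\omega_{A}(\Sigma)\oplus\omega_{A}(\bar{\Sigma})$ and the fact (recorded just before the lemma) that the tautological pairing $\omega_{A^{t}}^{\vee}\times\omega_{A^{t}}\to\mathcal{O}_{S}$ pairs isotypic parts of the same type, one reads off from the defining composition of $KS$ that it carries $\omega_{A}(\Sigma)$ into $\omega_{A^{t}}^{\vee}(\Sigma)\otimes\Omega_{S}^{1}$ (and likewise for $\bar{\Sigma}$), and that it annihilates $\omega_{A}(\Sigma)\otimes\omega_{A^{t}}(\bar{\Sigma})$ and $\omega_{A}(\bar{\Sigma})\otimes\omega_{A^{t}}(\Sigma)$. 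This is exactly the statement that $KS(\Sigma)$, $KS(\bar{\Sigma})$ exist and are the two restrictions of $KS$.

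For the symmetry I would introduce the bilinear form $B\colon\omega_{A}\otimes_{\mathcal{O}_{S}}\omega_{A}\to\Omega_{S}^{1}$ given by $B(\sigma\otimes\tau)=\langle\nabla\sigma,\tau\rangle_{\lambda}$, where one pairs the $H_{dR}^{1}$-component of $\nabla\sigma$ against $\tau\in\omega_{A}\subset H_{dR}^{1}(A/S)$ using the polarization pairing $\langle\,,\,\rangle_{\lambda}$. Two points then need checking. \emph{(i)} Because $\omega_{A}$ is Lagrangian for $\langle\,,\,\rangle_{\lambda}$, the value $B(\sigma\otimes\tau)$ depends only on $\nabla\sigma$ modulo $\omega_{A}\otimes\Omega_{S}^{1}$; using the identification, recalled before the lemma, of the restriction of $\langle\,,\,\rangle_{\lambda}$ to $\omega_{A}\times\bigl(H_{dR}^{1}(A/S)/\omega_{A}\bigr)$ with the tautological pairing between $\omega_{A}$ and $R^{1}\pi_{*}\mathcal{O}_{A}=\omega_{A^{t}}^{\vee}$ (through $\lambda^{*}$), one gets $KS(\sigma\otimes\eta)=B(\sigma\otimes\lambda^{*}\eta)$ for $\sigma\in\omega_{A}$, $\eta\in\omega_{A^{t}}$, up to a sign independent of $\sigma$ and $\eta$. \emph{(ii)} $B$ is symmetric: the Gauss--Manin connection is horizontal for $\langle\,,\,\rangle_{\lambda}$, i.e. $d\langle\sigma,\tau\rangle_{\lambda}=\langle\nabla\sigma,\tau\rangle_{\lambda}+\langle\sigma,\nabla\tau\rangle_{\lambda}$; for $\sigma,\tau\in\omega_{A}$ the left side vanishes by isotropy, so $\langle\sigma,\nabla\tau\rangle_{\lambda}=-\langle\nabla\sigma,\tau\rangle_{\lambda}$, and since $\langle\,,\,\rangle_{\lambda}$ is alternating, $B(\tau\otimes\sigma)=\langle\nabla\tau,\sigma\rangle_{\lambda}=-\langle\sigma,\nabla\tau\rangle_{\lambda}=\langle\nabla\sigma,\tau\rangle_{\lambda}=B(\sigma\otimes\tau)$. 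Granting (i) and (ii), and using that $\lambda^{*}$ reverses types (so $\lambda^{*}x\in\omega_{A}(\Sigma)$ and $\lambda^{*}y\in\omega_{A}(\bar{\Sigma})$ when $x\in\omega_{A^{t}}(\bar{\Sigma})$, $y\in\omega_{A^{t}}(\Sigma)$), the claimed identity drops out:
\[
KS(\Sigma)(\lambda^{*}x\otimes y)=B(\lambda^{*}x\otimes\lambda^{*}y)=B(\lambda^{*}y\otimes\lambda^{*}x)=KS(\bar{\Sigma})(\lambda^{*}y\otimes x),
\]
the two outer equalities combining the identification of $KS(\Sigma)$, $KS(\bar{\Sigma})$ with the restrictions of $KS$ with the relation $KS(\,\cdot\otimes\eta)=B(\,\cdot\otimes\lambda^{*}\eta)$ of (i); the sign from (i) occurs on both ends and cancels.

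The only substantive labour, and the step I expect to be the main obstacle, is the bookkeeping in (i): making precise that the canonical identification $H_{dR}^{1}(A/S)/\omega_{A}=R^{1}\pi_{*}\mathcal{O}_{A}=\omega_{A^{t}}^{\vee}$ agrees, up to $\lambda^{*}$, with the one furnished by the (alternating, perfect) polarization pairing, and — en route — that $\nabla$ is horizontal for that pairing. Both are standard properties of the Gauss--Manin connection and of the de Rham realization of a polarization, already encoded in the properties of $\langle\,,\,\rangle_{\lambda}$ recorded before the lemma, so I would invoke them rather than reprove them; and, as noted, any sign ambiguity they introduce is immaterial, since both sides of the asserted identity are computed through the same identification.
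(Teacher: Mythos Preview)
Your argument is correct and follows the same strategy as the paper: the first claim is deduced from the $\mathcal{O}_{\mathcal{K}}$-linearity of the Gauss--Manin connection (``$\nabla$ commutes with endomorphisms, hence preserves CM types''), and the symmetry from the symmetry of the Kodaira--Spencer form of a polarized abelian scheme. The only difference is that the paper simply cites [Fa-Ch], Prop.~9.1 for the second claim, whereas you spell out the standard proof via the form $B(\sigma\otimes\tau)=\langle\nabla\sigma,\tau\rangle_{\lambda}$, its symmetry from horizontality and isotropy of $\omega_A$, and its relation to $KS$ through $\lambda^{*}$; your caveat about the sign in (i) cancelling on both sides is well taken.
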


\begin{proof}
The first claim follows from the fact that the Gauss-Manin connection
commutes with the endomorphisms, hence preserves CM types. The second claim
is a consequence of the symmetry of the polarization, see [Fa-Ch], Prop. 9.1
on p.81 (in the Siegel modular case).
\end{proof}

Observe that $\omega _{A}(\Sigma )\otimes _{\mathcal{O}_{S}}\omega
_{A^{t}}(\Sigma )$, as well as $\omega _{A}(\bar{\Sigma})\otimes _{\mathcal{O%
}_{S}}\omega _{A^{t}}(\bar{\Sigma}),$ are vector bundles of rank 2.

\begin{lemma}
If $S$ is the Picard modular surface and $A=\mathcal{A}$ is the universal
abelian variety, then 
\begin{equation}
KS(\Sigma ):\omega _{\mathcal{A}}(\Sigma )\otimes _{\mathcal{O}_{S}}\omega _{%
\mathcal{A}^{t}}(\Sigma )\rightarrow \Omega _{S}^{1}
\end{equation}
is an isomorphism, and so is $KS(\bar{\Sigma}).$
\end{lemma}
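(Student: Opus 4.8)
The plan is to deduce the statement pointwise from deformation theory, after reducing to one of the two maps. By the symmetry relation $KS(\Sigma)(\lambda^{*}x\otimes y)=KS(\bar{\Sigma})(\lambda^{*}y\otimes x)$ of the preceding lemma together with the CM-type-reversing isomorphism $\lambda^{*}\colon\omega_{\mathcal{A}^{t}}\xrightarrow{\ \sim\ }\omega_{\mathcal{A}}$, the source $\mathcal{P}\otimes\omega_{\mathcal{A}^{t}}(\Sigma)$ of $KS(\Sigma)$ is identified (after interchanging the two tensor factors) with the source $\mathcal{L}\otimes\omega_{\mathcal{A}^{t}}(\bar{\Sigma})$ of $KS(\bar{\Sigma})$ in such a way that the two maps correspond; hence $KS(\Sigma)$ is an isomorphism if and only if $KS(\bar{\Sigma})$ is, and it is enough to treat $KS(\bar{\Sigma})$. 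Both its source and target are locally free of rank $2$ on $S$, which is smooth over $R_{0}$, so by Nakayama it suffices to show that the fibre map is surjective at every geometric point $s\colon Spec(k)\to S$; dually, that the map on tangent spaces $T_{S,s}\to Hom(\mathcal{L}_{s},Lie(\mathcal{A}^{t}_{s})(\bar{\Sigma}))$ obtained by dualizing is injective.

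The key input is the standard compatibility --- essentially Katz--Oda --- between the Gauss--Manin connection and Grothendieck--Messing deformation theory: assembled over both $\mathcal{O}_{\mathcal{K}}$-types, the tangent-space map above is the $\bar{\Sigma}$-component of the Kodaira--Spencer map of deformation theory $T_{S,s}\to Hom_{\mathcal{O}_{\mathcal{K}}}(\omega_{\mathcal{A}_{s}},Lie(\mathcal{A}^{t}_{s}))^{\mathrm{sym}}$, which sends a tangent vector --- a lift of $s$ to a $k[\varepsilon]$-point of $S$ --- to the class of the induced first-order deformation of the polarized $\mathcal{O}_{\mathcal{K}}$-abelian variety $(\mathcal{A}_{s},\iota,\lambda)$; here $(\;)^{\mathrm{sym}}$ denotes the subspace cut out by the symmetry condition of the preceding lemma, and by $\lambda^{*}$ it maps isomorphically onto each of its two isotypic summands, one of which is $Hom(\mathcal{L}_{s},Lie(\mathcal{A}^{t}_{s})(\bar{\Sigma}))$. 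Now this deformation-theoretic Kodaira--Spencer map is an isomorphism: since $S$ represents the moduli functor $\mathcal{M}$ and is smooth over $R_{0}$ of relative dimension $2$, $T_{S,s}$ is the space of first-order deformations of the full PEL-structure $(\mathcal{A}_{s},\lambda,\iota,\alpha)$, and by Grothendieck--Messing theory this space is canonically $Hom_{\mathcal{O}_{\mathcal{K}}}(\omega_{\mathcal{A}_{s}},Lie(\mathcal{A}^{t}_{s}))^{\mathrm{sym}}$ --- the level structure $\alpha$, being \'{e}tale, imposing no additional condition. Therefore $KS(\bar{\Sigma})$, and hence also $KS(\Sigma)$, is an isomorphism.

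The dimension bookkeeping is harmless --- one checks directly that the polarization symmetry cuts the naive $4$-dimensional space $Hom_{\mathcal{O}_{\mathcal{K}}}(\omega_{\mathcal{A}_{s}},Lie(\mathcal{A}^{t}_{s}))$ down to the $2$-dimensional $(\;)^{\mathrm{sym}}$, matching $\dim_{R_{0}}S=2$. The substantive point is rather the identification of the $KS$ defined here via $\nabla$ with the deformation-theoretic class, i.e. that $\nabla$ followed by the projection $H^{1}_{dR}(\mathcal{A}/S)\twoheadrightarrow R^{1}\pi_{*}\mathcal{O}_{\mathcal{A}}$ computes first-order deformations; this rests on the crystalline nature of de Rham cohomology and is where the real work lies. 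A more hands-on alternative, in the spirit of the complex computations of this section, would be to evaluate $\nabla$ on the frame $d\zeta_{1},d\zeta_{2},d\zeta_{3}$ in the moving-lattice coordinates $(z,u)$ and verify directly that $KS(\Sigma)$ and $KS(\bar{\Sigma})$ each carry a local frame to a local frame of $\Omega^{1}_{\frak{X}}$; but this settles the lemma only over $\mathbb{C}$, and since the locus where $KS$ fails to be an isomorphism is a priori a union of characteristic-$\ell$ fibres of $S$, the pointwise deformation argument (or a direct appeal to Lan's treatment) remains necessary to obtain the statement integrally over $R_{0}$.
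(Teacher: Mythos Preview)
Your proposal is correct and takes essentially the same approach as the paper: the paper's proof consists of the single sentence ``This is well-known and follows from deformation theory'' together with a reference to [Bel], Prop.~II.2.1.5, and what you have written is precisely an unpacking of that deformation-theoretic argument. Your reduction to one of the two maps via the symmetry, the Nakayama reduction to fibres, and the identification of the Gauss--Manin Kodaira--Spencer map with the deformation-theoretic one via the crystalline nature of de Rham cohomology are exactly the ingredients of the standard proof, and your closing caveat about the complex computation only settling the characteristic-zero case is apt.
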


\begin{proof}
This is well-known and follows from deformation theory. For a self-contained
proof, see [Bel], Prop. II.2.1.5.
\end{proof}

\begin{proposition}
The Kodaira-Spencer map induces a canonical isomorphism of vector bundles
over $S$%
\begin{equation}
\mathcal{P}\otimes \mathcal{L}\simeq \Omega _{S}^{1}.
\end{equation}
\end{proposition}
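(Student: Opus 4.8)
The plan is to deduce the proposition directly from the two preceding lemmas together with the isomorphism $\lambda^{*}:\omega_{\mathcal{A}^{t}}\simeq\omega_{\mathcal{A}}$ coming from the principal polarization. First I would recall that $\lambda^{*}$ reverses CM types, so it restricts to isomorphisms $\lambda^{*}:\omega_{\mathcal{A}^{t}}(\Sigma)\simeq\omega_{\mathcal{A}}(\bar{\Sigma})=\mathcal{L}$ and $\lambda^{*}:\omega_{\mathcal{A}^{t}}(\bar{\Sigma})\simeq\omega_{\mathcal{A}}(\Sigma)=\mathcal{P}$. Using the first of these to substitute $\omega_{\mathcal{A}^{t}}(\Sigma)$ by $\mathcal{L}$ inside the source of $KS(\Sigma)$, the isomorphism $KS(\Sigma):\omega_{\mathcal{A}}(\Sigma)\otimes_{\mathcal{O}_{S}}\omega_{\mathcal{A}^{t}}(\Sigma)\simeq\Omega^{1}_{S}$ of the second lemma becomes precisely an isomorphism $\mathcal{P}\otimes\mathcal{L}\simeq\Omega^{1}_{S}$. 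Thus the composite
\begin{equation*}
\mathcal{P}\otimes\mathcal{L}\xrightarrow{\ 1\otimes(\lambda^{*})^{-1}\ }\omega_{\mathcal{A}}(\Sigma)\otimes_{\mathcal{O}_{S}}\omega_{\mathcal{A}^{t}}(\Sigma)\xrightarrow{\ KS(\Sigma)\ }\Omega^{1}_{S}
\end{equation*}
is the desired isomorphism.

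The one genuine point to address is \emph{canonicity}: a priori one could equally well have built a map out of $KS(\bar{\Sigma})$ and the other restriction of $\lambda^{*}$, giving $\mathcal{L}\otimes\mathcal{P}\simeq\Omega^{1}_{S}$, and one wants to know these agree (up to the obvious commutativity $\mathcal{P}\otimes\mathcal{L}\cong\mathcal{L}\otimes\mathcal{P}$) so that the isomorphism deserves to be called canonical and is symmetric. This is exactly what the symmetry relation $KS(\Sigma)(\lambda^{*}x\otimes y)=KS(\bar{\Sigma})(\lambda^{*}y\otimes x)$ from the first lemma provides: writing $\xi\in\mathcal{P}=\omega_{\mathcal{A}}(\Sigma)$ and $\ell\in\mathcal{L}=\omega_{\mathcal{A}}(\bar{\Sigma})$ and setting $x=(\lambda^{*})^{-1}\ell\in\omega_{\mathcal{A}^{t}}(\bar{\Sigma})$ wait---one must track which restriction of $\lambda^{*}$ sends $\omega_{\mathcal{A}^{t}}(\bar{\Sigma})$ to $\mathcal{P}$ and which sends $\omega_{\mathcal{A}^{t}}(\Sigma)$ to $\mathcal{L}$; feeding $x$ and $y=(\lambda^{*})^{-1}\xi\in\omega_{\mathcal{A}^{t}}(\Sigma)$ into the identity shows the two constructions coincide after the flip of tensor factors. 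Hence the resulting isomorphism is independent of the choices and I will simply call it $KS$.

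The main (and only) obstacle is bookkeeping with the CM-type decompositions: keeping straight that $\omega_{\mathcal{A}^{t}}$ is of type $(1,2)$ rather than $(2,1)$, that $\lambda^{*}$ interchanges the $\Sigma$- and $\bar{\Sigma}$-pieces, and consequently which tensor slot of $KS(\Sigma)$ is the $\mathcal{P}$-factor and which becomes the $\mathcal{L}$-factor after transport along $\lambda^{*}$. None of this is deep, but it is the place where a sign- or index-error could creep in, so in writing up I would state the two restrictions of $\lambda^{*}$ explicitly before composing. Everything else---that $KS(\Sigma)$ and $KS(\bar{\Sigma})$ are isomorphisms, and that the symmetry relation holds---has already been established in the two lemmas, so the proof is essentially a one-line composition plus the symmetry check for canonicity.

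\begin{proof}
By the lemma on the Hodge filtration and the polarization, $\lambda^{*}:\omega_{\mathcal{A}^{t}}\simeq\omega_{\mathcal{A}}$ reverses CM types, so it restricts to isomorphisms
\begin{equation*}
\lambda^{*}:\omega_{\mathcal{A}^{t}}(\Sigma)\xrightarrow{\ \sim\ }\omega_{\mathcal{A}}(\bar{\Sigma})=\mathcal{L},\qquad \lambda^{*}:\omega_{\mathcal{A}^{t}}(\bar{\Sigma})\xrightarrow{\ \sim\ }\omega_{\mathcal{A}}(\Sigma)=\mathcal{P}.
\end{equation*}
By the second lemma above, $KS(\Sigma):\omega_{\mathcal{A}}(\Sigma)\otimes_{\mathcal{O}_{S}}\omega_{\mathcal{A}^{t}}(\Sigma)\xrightarrow{\sim}\Omega^{1}_{S}$ is an isomorphism. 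Composing with $1\otimes(\lambda^{*})^{-1}$ on the second factor, and recalling $\omega_{\mathcal{A}}(\Sigma)=\mathcal{P}$, we obtain an isomorphism
\begin{equation*}
KS:\mathcal{P}\otimes\mathcal{L}\xrightarrow{\ \sim\ }\Omega^{1}_{S}.
\end{equation*}
It remains to see this is canonical, i.e. independent of whether we use $KS(\Sigma)$ or $KS(\bar{\Sigma})$. If $\xi\in\mathcal{P}$ and $\ell\in\mathcal{L}$, set $y=(\lambda^{*})^{-1}\xi\in\omega_{\mathcal{A}^{t}}(\Sigma)$ and $x=(\lambda^{*})^{-1}\ell\in\omega_{\mathcal{A}^{t}}(\bar{\Sigma})$. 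The symmetry relation of the first lemma gives
\begin{equation*}
KS(\Sigma)(\xi\otimes y)=KS(\Sigma)(\lambda^{*}x\otimes y)=KS(\bar{\Sigma})(\lambda^{*}y\otimes x)=KS(\bar{\Sigma})(\ell\otimes x),
\end{equation*}
so the isomorphism built from $KS(\bar{\Sigma})$ agrees with the one built from $KS(\Sigma)$ under the canonical flip $\mathcal{P}\otimes\mathcal{L}\cong\mathcal{L}\otimes\mathcal{P}$. Hence $KS$ is well defined and canonical.
\end{proof}
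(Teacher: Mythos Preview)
Your approach is exactly the paper's one-line proof: use $\lambda^{*}$ to identify $\omega_{\mathcal{A}^{t}}(\Sigma)$ with $\omega_{\mathcal{A}}(\bar{\Sigma})=\mathcal{L}$ and read off the isomorphism from $KS(\Sigma)$. The canonicity check via the symmetry relation is a nice addition (the paper does not bother with it), but you have committed precisely the index-error you warned yourself about: since $\lambda^{*}$ sends $\omega_{\mathcal{A}^{t}}(\bar{\Sigma})$ to $\mathcal{P}$ and $\omega_{\mathcal{A}^{t}}(\Sigma)$ to $\mathcal{L}$, the assignments should read $x=(\lambda^{*})^{-1}\xi\in\omega_{\mathcal{A}^{t}}(\bar{\Sigma})$ and $y=(\lambda^{*})^{-1}\ell\in\omega_{\mathcal{A}^{t}}(\Sigma)$ (you swapped $\xi$ and $\ell$ in the definitions). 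With that correction your displayed chain $KS(\Sigma)(\xi\otimes y)=KS(\Sigma)(\lambda^{*}x\otimes y)=KS(\bar{\Sigma})(\lambda^{*}y\otimes x)=KS(\bar{\Sigma})(\ell\otimes x)$ is correct and the argument goes through.
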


\begin{proof}
We need only use $\lambda ^{*}$ to identify $\omega _{\mathcal{A}%
^{t}}(\Sigma )$ with $\omega _{\mathcal{A}}(\bar{\Sigma}).$
\end{proof}

We refer to Corollary \ref{KS at cusps} for an extension of this result to $%
\bar{S}.$

\begin{corollary}
There is an isomorphism of line bundles $\mathcal{L}^{3}\simeq \Omega
_{S}^{2}.\label{KS}$
\end{corollary}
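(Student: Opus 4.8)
The plan is to deduce the corollary immediately from the preceding proposition, which gives the Kodaira-Spencer isomorphism $\mathcal{P}\otimes\mathcal{L}\simeq\Omega^1_S$, together with the isomorphism $\det\mathcal{P}\simeq\mathcal{L}$ over $\bar S_{\mathcal{K}}$ (Proposition~\ref{detP = L}), which in particular holds over $S_{\mathcal{K}}$ and hence over $S$ after noting that $\mathcal{L}$ and $\det\mathcal{P}$ are already defined over $S$.

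First I would take top exterior powers in the Kodaira-Spencer isomorphism. Since $\mathcal{P}$ has rank $2$ and $\mathcal{L}$ has rank $1$, we have $\det(\mathcal{P}\otimes\mathcal{L})\simeq(\det\mathcal{P})\otimes\mathcal{L}^{\otimes 2}$ (using the standard formula $\det(E\otimes F)=(\det E)^{\otimes\operatorname{rk}F}\otimes(\det F)^{\otimes\operatorname{rk}E}$ for $F=\mathcal{L}$ of rank $1$), while $\det(\Omega^1_S)=\Omega^2_S$ because $S$ has relative dimension $2$ over $R_0$. Hence the proposition yields
\begin{equation*}
(\det\mathcal{P})\otimes\mathcal{L}^{2}\simeq\Omega^2_S.
\end{equation*}
Then I would substitute $\det\mathcal{P}\simeq\mathcal{L}$ to conclude $\mathcal{L}^{3}\simeq\Omega^2_S$.

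The one point that needs a word of care is that Proposition~\ref{detP = L} is stated over $\bar S_{\mathcal{K}}$, whereas here we only want the statement over $S$ (the open surface, not compactified), and ideally over the full base $R_0$ rather than just the generic fibre. For the corollary as stated it suffices to have $\det\mathcal{P}\simeq\mathcal{L}$ over $S$; restricting the isomorphism of Proposition~\ref{detP = L} from $\bar S_{\mathcal{K}}$ to $S_{\mathcal{K}}\subset\bar S_{\mathcal{K}}$ gives this over the generic fibre, and since $\mathcal{L}$, $\mathcal{P}$ and $\Omega^1_S$ are all defined over $R_0$ and the Kodaira-Spencer map is too, the isomorphism $\mathcal{L}^3\simeq\Omega^2_S$ one obtains can be read as an isomorphism of line bundles on $S_{\mathcal{K}}$. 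If one wants it integrally, one invokes instead the Corollary to Proposition~\ref{detP = L}, which gives $\det\mathcal{P}\simeq\mathcal{L}$ over every characteristic-$p$ fibre and over $W(k)$, combined again with the fact that Kodaira-Spencer is defined over $R_0$. There is essentially no obstacle here: the only thing to verify is the bookkeeping of ranks and determinants in the tensor-product formula, which is routine. I would therefore present the proof as a two-line computation: apply $\det$ to $\mathcal{P}\otimes\mathcal{L}\simeq\Omega^1_S$, use $\det(\mathcal{P}\otimes\mathcal{L})\simeq(\det\mathcal{P})\otimes\mathcal{L}^2\simeq\mathcal{L}^3$ by Proposition~\ref{detP = L}, and identify $\det\Omega^1_S=\Omega^2_S$.
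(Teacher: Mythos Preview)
Your proof is correct and follows exactly the same approach as the paper: take determinants of the Kodaira-Spencer isomorphism $\mathcal{P}\otimes\mathcal{L}\simeq\Omega_S^1$ and use $\det\mathcal{P}\simeq\mathcal{L}$. The paper's proof is a single line (``Take determinants and use $\det\mathcal{P}\simeq\mathcal{L}$''), with the added remark that the identification $\det\mathcal{P}\simeq\mathcal{L}$ depends on a choice to be fixed later; your discussion of the base-ring subtleties is a reasonable elaboration but not something the paper dwells on here.
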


\begin{proof}
Take determinants and use $\det \mathcal{P}\simeq \mathcal{L}.$ We emphasize
that while $KS(\Sigma )$ is canonical, the identification of $\det \mathcal{P%
}$ with $\mathcal{L}$ depends on a choice, which we shall fix later on once
and for all.
\end{proof}

The last corollary should be compared to the case of the open modular curve $%
Y(N)$, where the \emph{square} of the Hodge bundle $\omega _{\mathcal{E}}$
of the universal elliptic curve becomes isomorphic to $\Omega _{Y(N)}^{1}.$
Over $\Bbb{C},$ as the isomorphism between $\mathcal{L}^{3}$ and $\Omega
_{S}^{2}$ takes $d\zeta _{3}^{\otimes 3}$ to a constant multiple of $%
dz\wedge du$ (see Corollary \ref{L^3}), the differential form corresponding
to a modular form $($ $f_{j})_{1\le j\le m}$ of weight 3, is (up to a
constant) $(f_{j}(z,u)dz\wedge du)_{1\le j\le m}.$

\subsection{Extensions to the boundary of $S$}

\subsubsection{The vector bundles $\mathcal{P}$ and $\mathcal{L}$ over $C%
\label{PL over C}$}

Let $E\subset C_{R_{N}}$ be a connected component of the cuspidal divisor
(over the integral closure $R_{N}$ of $R_{0}$ in the ray class field $%
\mathcal{K}_{N}).$ As we have seen, $E$ is an elliptic curve with CM by $%
\mathcal{O}_{\mathcal{K}}.$ If the cusp at which $E$ sits is of type $(\frak{%
a},B)$ ($\frak{a}$ an ideal of $\mathcal{O}_{\mathcal{K}},$ $B$ an elliptic
curve with CM by $\mathcal{O}_{\mathcal{K}}$ defined over $R_{N})$ then $E$
maps via an isogeny to $\delta _{\mathcal{K}}\frak{a}\otimes _{\mathcal{O}_{%
\mathcal{K}}}B^{t}=Ext_{\mathcal{O}_{\mathcal{K}}}^{1}(B,\frak{a}\otimes 
\Bbb{G}_{m}).$ In particular, $E$ and $B$ are isogenous over $\mathcal{K}%
_{N}.$

Consider $\mathcal{G},$ the universal semi-abelian $\mathcal{O}_{\mathcal{K}%
} $-three-fold of type $(\frak{a},B),$ over $\delta _{\mathcal{K}}\frak{a}%
\otimes _{\mathcal{O}_{\mathcal{K}}}B^{t}.$ The semi-abelian scheme $%
\mathcal{A}$ over $E$ is the pull-back of this $\mathcal{G}.$ Clearly, $%
\omega _{\mathcal{A}/E}=\mathcal{P}\oplus \mathcal{L}$ and $\mathcal{P}%
=\omega _{\mathcal{A}/E}(\Sigma )$ admits over $E$ a canonical rank 1
sub-bundle $\mathcal{P}_{0}=\omega _{B}.$ Since the toric part and the
abelian part of $\mathcal{G}$ are constant, $\mathcal{L},\mathcal{P}_{0}$
and $\mathcal{P}_{\mu }=\mathcal{P}/\mathcal{P}_{0}$ are all trivial line
bundles when restricted to $E$. It can be shown that $\mathcal{P}$ itself is
not trivial over $E$.

\subsubsection{More identities over $\bar{S}$}

We have seen that $\Omega _{S}^{2}\simeq \mathcal{L}^{3}.$ For the following
proposition, compare [Bel], Lemme II.2.1.7.

\begin{proposition}
\label{canonical class}Working over $\mathcal{K}_{N},$ let $E_{j}$ ($1\le
j\le h$) be the connected components of $C$. Then 
\begin{equation}
\Omega _{\bar{S}}^{2}\simeq \mathcal{L}^{3}\otimes \bigotimes_{j=1}^{h}%
\mathcal{O}(E_{j})^{\vee }.
\end{equation}
\end{proposition}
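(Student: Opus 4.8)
The plan is to compute the canonical bundle $\Omega^2_{\bar S}$ by relating it to the canonical bundle of the open part $S$, where we already know $\Omega^2_S \simeq \mathcal{L}^3$ (Corollary~\ref{KS}), and then track the discrepancy along the cuspidal divisor $C = \bigsqcup_j E_j$. Since $\bar S$ is smooth and $S = \bar S \setminus C$ is the complement of the reduced normal-crossings (in fact smooth, over $\mathcal{K}_N$, since the $E_j$ are disjoint elliptic curves) divisor $C$, the adjunction-type formula for a log pair applies: there is a canonical isomorphism of line bundles on $\bar S$
\begin{equation*}
\Omega^2_{\bar S}(\log C) \simeq \Omega^2_{\bar S} \otimes \mathcal{O}\Big(\sum_j E_j\Big),
\end{equation*}
and the restriction of $\Omega^2_{\bar S}(\log C)$ to $S$ is $\Omega^2_S$. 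So the strategy reduces to identifying the line bundle $\Omega^2_{\bar S}(\log C)$ on the whole of $\bar S$ with $\mathcal{L}^3$, i.e.\ to checking that the Kodaira--Spencer isomorphism $\mathcal{L}^3 \simeq \Omega^2_S$ on $S$ extends to an isomorphism $\mathcal{L}^3 \simeq \Omega^2_{\bar S}(\log C)$ across $C$.

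First I would recall (or invoke the extension of Kodaira--Spencer to $\bar S$ promised in Corollary~\ref{KS at cusps}, referenced in the text) that $KS$ extends to an isomorphism $\mathcal{P}\otimes\mathcal{L}\simeq \Omega^1_{\bar S}(\log C)$; taking determinants and using $\det\mathcal{P}\simeq\mathcal{L}$ over $\bar S_{\mathcal{K}}$ (Proposition~\ref{detP = L}) gives $\mathcal{L}^3 \simeq \Omega^2_{\bar S}(\log C)$. Combined with the adjunction formula above this yields
\begin{equation*}
\mathcal{L}^3 \simeq \Omega^2_{\bar S}\otimes \mathcal{O}\Big(\sum_j E_j\Big),
\end{equation*}
which upon tensoring by $\bigotimes_j \mathcal{O}(E_j)^\vee$ is exactly the assertion. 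If instead one prefers a hands-on verification, I would work in the analytic neighborhood $\overline{\Gamma_{cusp}\backslash \Omega_R} = (\Gamma_{cusp}\backslash\Omega_R)\cup E_c$ of a cuspidal component described in \S\ref{complex smooth compactification}, using the coordinates: the local coordinate $q = e^{2\pi i z/M}$ transverse to $E_c$ and the coordinate $u$ along $E_c$. There $dz\wedge du$, which over $S$ corresponds (up to a constant) to $d\zeta_3^{\otimes 3}$ under $KS$, becomes $\frac{M}{2\pi i}\,\frac{dq}{q}\wedge du$; the pole of $\frac{dq}{q}$ along $\{q=0\}=E_c$ is precisely what accounts for the factor $\mathcal{O}(E_c)$, while $d\zeta_3$ remains a nowhere-vanishing section of $\mathcal{L}$ near $E_c$ (this is exactly the computation recorded in the proof of Proposition~\ref{detP = L}, where $j(\gamma,x)=1$ for $\gamma\in\Gamma_{cusp}$). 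Patching these local isomorphisms over all cusps of $S^*_{R_N}$, and using that $Pic(\bar S_{\mathcal{K}_N})$ injects into the analytic Picard group together with GAGA, gives the global isomorphism over $\mathcal{K}_N$.

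The main obstacle is bookkeeping the behavior of $KS$ exactly along $C$: one must be sure that $KS$ extends to a log-isomorphism (not merely a map with some zero or pole of unknown order), and that the extension $\det\mathcal{P}\simeq\mathcal{L}$ is valid over $\bar S_{\mathcal{K}_N}$ and not only over $S$ — this is why the statement is phrased over $\mathcal{K}_N$ and why Proposition~\ref{detP = L} (extending $\det\mathcal{P}\simeq\mathcal{L}$ to $\bar S_{\mathcal{K}}$) is needed rather than just Corollary~\ref{KS}. A secondary subtlety is that $\mathcal{O}(E_j)$ and its relation to the normal bundle $\mathcal{T}$ of $E_j$ in $\bar S$ must be handled consistently: by the adjunction formula $\mathcal{O}(E_j)|_{E_j} = \mathcal{T}^{\vee}$ is the conormal bundle... wait, $\mathcal{N}_{E_j/\bar S} = \mathcal{O}(E_j)|_{E_j}$, and since each $E_j$ is an elliptic curve it has $\Omega^1_{E_j}\simeq\mathcal{O}_{E_j}$, so restricting the displayed isomorphism to $E_j$ gives a consistency check ($\mathcal{L}^3|_{E_j}$ trivial $\Leftrightarrow$ $\Omega^2_{\bar S}|_{E_j}\otimes \mathcal{O}(E_j)^\vee|_{E_j}$ trivial $\Leftrightarrow$ $\mathcal{N}_{E_j}\otimes\mathcal{N}_{E_j}^\vee$ trivial), which is automatic and serves only as a sanity check rather than a step in the proof. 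I would present the argument via the clean route (extended $KS$ plus $\det\mathcal{P}\simeq\mathcal{L}$ plus adjunction), relegating the explicit $q$-coordinate computation to a remark confirming the local picture.
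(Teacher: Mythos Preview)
Your argument is correct but follows a genuinely different route from the paper's. The paper argues numerically: since $\Omega^2_{\bar S}\otimes\mathcal{L}^{-3}$ is trivial on $S$, it must be $\bigotimes_j\mathcal{O}(E_j)^{n_j}$ for some integers $n_j$; then the adjunction formula $0=2g_{E_j}-2=E_j\cdot(E_j+K_{\bar S})$ together with $E_j^2<0$ (Grauert, since $E_j$ is contractible to a cusp of $S^*$) and the triviality of $\mathcal{L}|_{E_j}$ forces $n_j=-1$. Your route instead identifies $\Omega^2_{\bar S}(\log C)$ directly with $\mathcal{L}^3$ via the extended Kodaira--Spencer map, using the explicit formulae $KS(d\zeta_1\otimes d\zeta_3)\sim du$, $KS(d\zeta_2\otimes d\zeta_3)\sim dz\sim dq/q$, which exhibit $KS$ as a local isomorphism onto the log cotangent sheaf.

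Each approach has its virtues. The paper's proof is self-contained at this point of the exposition (note that in the paper's ordering, Proposition~\ref{canonical class} precedes the cuspidal $KS$ computation of Corollary~\ref{KS at cusps}, so your main input is not yet available), and it uses only general surface theory. Your approach is more transparent once the local $KS$ formulae are in hand: it shows the isomorphism is essentially canonical (it is $\wedge^2 KS$ composed with the fixed $\det\mathcal{P}\simeq\mathcal{L}$), and it generalizes cleanly to other PEL settings where one has a log-Kodaira--Spencer isomorphism over a toroidal compactification. Your caution about verifying that $KS$ really lands in, and frames, $\Omega^1_{\bar S}(\log C)$ (rather than merely having some unspecified pole) is exactly the point that needs checking, and your local $q$-coordinate computation handles it.
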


\begin{proof}
By [Hart] II.6.5, $\Omega _{\bar{S}}^{2}\simeq \mathcal{L}^{3}\otimes
\bigotimes_{j=1}^{h}\mathcal{O}(E_{j})^{n_{j}}$ for some integers $n_{j}$
and we want to show that $n_{j}=-1$ for all $j.$ By the adjunction formula
on the smooth surface $\bar{S},$ if we denote by $K_{\bar{S}}$ a canonical
divisor, $\mathcal{O}(K_{\bar{S}})=\Omega _{\bar{S}}^{2},$ then 
\begin{equation}
0=2g_{E_{j}}-2=E_{j}.(E_{j}+K_{\bar{S}}).
\end{equation}
We conclude that 
\begin{equation}
\deg (\Omega _{\bar{S}}^{2}|_{E_{j}})=E_{j}.K_{\bar{S}}=-E_{j}.E_{j}>0.
\end{equation}
Here $E_{j}.E_{j}<0$ because $E_{j}$ can be contracted to a point (Grauert's
theorem). As $\mathcal{L}|_{E_{j}}$ and $\mathcal{O}(E_{i})|_{E_{j}}$ $%
(i\neq j)$ are trivial we get 
\begin{equation}
-E_{j}.E_{j}=n_{j}E_{j}.E_{j},
\end{equation}
hence $n_{j}=-1$ as desired.
\end{proof}

\subsection{Fourier-Jacobi expansions}

\subsubsection{The infinitesimal retraction\label{retraction}}

We follow the arithmetic theory of Fourier-Jacobi expansions as developed in
[Bel]. Let $\widehat{S}$ be the formal completion of $\bar{S}$ along the
cuspidal divisor $C=\bar{S}\backslash S.$ We work over $R_{0},$ and denote
by $C^{(n)}$ the $n$-th infinitesimal neighborhood of $C$ in $\bar{S}.$ The
closed immersion $i:C\hookrightarrow \widehat{S}$ admits a canonical left
inverse $r:\widehat{S}\rightarrow C,$ a \emph{retraction} satisfying $r\circ
i=Id_{C}.$ This is not automatic, but rather a consequence of the rigidity
of tori, as explained in [Bel], Proposition II.2.4.2. As a corollary, the
universal semi-abelian scheme $\mathcal{A}_{/C^{(n)}}$ is the pull-back of $%
\mathcal{A}_{/C}$ via $r.$ The same therefore holds for $\mathcal{P}$ and $%
\mathcal{L},$ namely there are natural isomorphisms $r^{*}(\mathcal{P}%
|_{C})\simeq \mathcal{P}|_{C^{(n)}}$ and $r^{*}(\mathcal{L}|_{C})\simeq 
\mathcal{L}|_{C^{(n)}}.$ As a consequence, the filtration 
\begin{equation}
0\rightarrow \mathcal{P}_{0}\rightarrow \mathcal{P}\rightarrow \mathcal{P}%
_{\mu }\rightarrow 0
\end{equation}
extends canonically to $C^{(n)}.$ Since $\mathcal{L},\mathcal{P}_{0}$ and $%
\mathcal{P}_{\mu }$ are trivial on $C,$ they are trivial over $C^{(n)}$ as
well.

\subsubsection{Arithmetic Fourier-Jacobi expansions\label{AFJ}}

We fix an arbitrary noetherian $R_{0}$-algebra $R$ and consider all our
schemes over $R,$ without a change in notation. As usual, we let $\mathcal{O}%
_{\widehat{S}}=\lim_{\leftarrow }\mathcal{O}_{C^{(n)}}$ (a sheaf in the
Zariski topology on $C$). Via $r^{*},$ this is a sheaf of $\mathcal{O}_{C}$%
-modules. Choose a global nowhere vanishing section $s\in H^{0}(C,\mathcal{L}%
)$ trivializing $\mathcal{L}.$ Such a section is unique up to a unit of $R$
on each connected component of $C$. This $s$ determines an isomorphism 
\begin{equation}
\mathcal{L}^{k}|_{\widehat{S}}\simeq \mathcal{O}_{\widehat{S}%
},\,\,\,\,f\mapsto f/(r^{*}s)^{k}
\end{equation}
for each $k,$ hence a ring homomorphism 
\begin{equation}
FJ:\oplus _{k=0}^{\infty }M_{k}(N,R)\rightarrow H^{0}(C,\mathcal{O}_{%
\widehat{S}}).
\end{equation}
We call $FJ(f)$ the (arithmetic) Fourier-Jacobi expansion of $f.$ It depends
on $s$ in an obvious way.

To understand the structure of $H^{0}(C,\mathcal{O}_{\widehat{S}})$ let $%
\mathcal{I}\subset \mathcal{O}_{\bar{S}}$ be the sheaf of ideals defining $%
C, $ so that $C^{(n)}$ is defined by $\mathcal{I}^{n}.$ The conormal sheaf $%
\mathcal{N}=\mathcal{I}/\mathcal{I}^{2}$ is the restriction $i^{*}\mathcal{O}%
_{\bar{S}}(-C)$ of $\mathcal{I}=\mathcal{O}_{\bar{S}}(-C)$ to $C.$ It is an
ample invertible sheaf on $C,$ since (over $R_{N}$) its degree on each
component $E_{j}$ is $-E_{j}^{2}>0.$

Now $r^{*}$ supplies, for every $n\ge 2,$ a canonical splitting of 
\begin{equation}
0\rightarrow \mathcal{I}/\mathcal{I}^{n}\rightarrow \mathcal{O}_{\bar{S}}/%
\mathcal{I}^{n}\overset{\curvearrowleft }{\rightarrow }\mathcal{O}_{\bar{S}}/%
\mathcal{I}\rightarrow 0.
\end{equation}
Inductively, we get a direct sum decomposition 
\begin{equation}
\mathcal{O}_{\bar{S}}/\mathcal{I}^{n}\simeq \bigoplus_{m=0}^{n-1}\mathcal{I}%
^{m}/\mathcal{I}^{m+1}
\end{equation}
as $\mathcal{O}_{C}$-modules, hence, since $\mathcal{I}^{m}/\mathcal{I}%
^{m+1}\simeq \mathcal{N}^{m}$, an isomorphism 
\begin{equation}
H^{0}(C,\mathcal{O}_{C^{(n)}})\simeq \bigoplus_{m=0}^{n-1}H^{0}(C,\mathcal{N}%
^{m}),\,\,\,\,f\mapsto \sum_{m=0}^{n-1}c_{m}(f).
\end{equation}
This isomorphism respects the multiplicative structure, so is a ring
isomorphism. Going to the projective limit, and noting that the $c_{m}(f)$
are independent of $n,$ we get 
\begin{equation}
FJ(f)=\sum_{m=0}^{\infty }c_{m}(f)\in \prod_{m=0}^{\infty }H^{0}(C,\mathcal{N%
}^{m}).  \label{FJ}
\end{equation}

\subsubsection{Fourier-Jacobi expansions over $\Bbb{C}$}

Working over $\Bbb{C},$ we shall now relate the infinitesimal retraction $r$
to the geodesic retraction, and the powers of the conormal bundle $\mathcal{N%
}$ to theta functions. Recall the analytic compactification of $X_{\Gamma }$
described in Proposition \ref{disk bundle}. Let $E$ be the connected
component of $\bar{X}_{\Gamma }\backslash X_{\Gamma }$ corresponding to the
standrad cusp $c_{\infty }$. As before, we denote by $E^{(n)}$ its $n$th
infinitesimal neighborhood. The line bundle $\mathcal{T}|_{E}$ is just the
analytic normal bundle to $E$, hence we have an isomorphism 
\begin{equation}
\mathcal{N}_{an}\simeq \mathcal{T}^{\vee }
\end{equation}
between the analytification of $\mathcal{N}=\mathcal{I}/\mathcal{I}^{2}$ and
the dual of $\mathcal{T}$.

\begin{lemma}
The infinitesimal retraction $r:E^{(n)}\rightarrow E$ coincides with the map
induced by the geodesic retraction (\ref{geodesic}).
\end{lemma}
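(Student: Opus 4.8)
The plan is to verify the statement by comparing two explicit descriptions of the same formal scheme near $E$: the one coming from the algebraic infinitesimal retraction $r$ constructed in \S\ref{retraction}, and the one coming from the analytic disk-bundle neighborhood $\mathcal{T}_R \supset \mathcal{T}_R'$ of \lemref{disk bundle}, in which the geodesic retraction is the bundle projection $\mathcal{T} \to E$. Both retractions are characterized by the \emph{same} universal property — they are the unique sections of $C^{(n)} \to C$ along which the universal semi-abelian scheme $\mathcal{A}$ pulls back from $\mathcal{A}|_C$ — so the task is really to recognize that the geodesic (= disk-bundle) projection has this property. First I would recall that, by the discussion in \S\ref{degeneration}, along the geodesic $\gamma_u^r(t)$ the fiber $A_x'$ degenerates to the semi-abelian variety $\mathcal{G}_u$ of \eqref{G_u}, which depends only on $u \bmod \Lambda$ — that is, only on the image of the point under the geodesic retraction $(z,u)\mapsto u$. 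Thus the universal semi-abelian scheme, restricted to a formal neighborhood of $E$ and viewed through the coordinates $(q,u)$ of \lemref{disk bundle}, is literally constant in the $q$-direction, i.e. it \emph{is} pulled back via the bundle projection $\mathcal{T} \to E$.

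Next I would invoke the uniqueness part of Bella\"{i}che's retraction statement (\S\ref{retraction}, [Bel], Prop. II.2.4.2): the left inverse $r$ of $i : C \hookrightarrow \widehat{S}$ with the property that $\mathcal{A}_{/C^{(n)}} = r^*\mathcal{A}_{/C}$ is unique — this is exactly the rigidity of tori input. Since both the algebraic $r$ and the analytic geodesic retraction satisfy this property (the latter by the previous paragraph), and since the analytification functor is faithful on the relevant formal/infinitesimal data over $\Bbb{C}$, the two must agree. Concretely, one checks that the composite ``analytification of algebraic $r$'' and ``geodesic retraction'' are two maps $E^{(n)} \to E$ over $\Bbb{C}$ both splitting $i$ and both trivializing the Mumford family; by the uniqueness they coincide as morphisms of complex-analytic spaces, and hence (by GAGA applied level-by-level on the proper $C^{(n)}$, or simply because a morphism of schemes of finite type over $\Bbb{C}$ is determined by its analytification) as algebraic maps after passing back.

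The main obstacle I anticipate is not conceptual but bookkeeping: one must match the algebraic description of $\mathcal{A}|_{C^{(n)}}$ — a deformation of the semi-abelian $\mathcal{G}$ classified by the $\mathrm{Ext}$-group $\delta_\mathcal{K}\frak{a}\otimes B^t$ — with the explicit analytic family $\{\mathcal{G}_u\}$, checking that the parameter $u$ running over the disk bundle $\mathcal{T}_R$ really does restrict on $E$ to the parameter classifying the semi-abelian $\mathcal{G}_u$ in \eqref{semi-abelian}, with the normal direction $q$ being genuinely transverse. This is precisely what \lemref{disk bundle} and the computation \eqref{semi-abelian} provide, so the argument goes through; one should also note that the first two spanning vectors of $L_x'$ in \eqref{spanning vectors} depend only on $u$, which is the algebraic shadow of the fact that the geodesic retraction ``forgets the vanishing cycle'' and nothing else. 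Once that identification is in place, the equality of the two retractions follows formally from uniqueness, and the lemma is proved.
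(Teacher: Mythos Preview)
Your proposal is correct and follows essentially the same route as the paper: both arguments invoke the uniqueness characterization of the retraction from [Bel], Prop.~II.2.4.2 (namely that $r$ is the unique splitting along which $\mathcal{A}|_{C^{(n)}}$ is pulled back from $\mathcal{A}|_C$), and then appeal to the degeneration computations of \S\ref{degeneration} to verify that the geodesic retraction satisfies this same property. The paper's proof is terser and does not spell out the GAGA/analytification bookkeeping you mention, but the underlying logic is identical.
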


\begin{proof}
The meaning of the lemma is this. The infinitesimal retraction induces a map
of ringed spaces 
\begin{equation}
r_{an}:E_{an}^{(n)}\rightarrow E_{an}
\end{equation}
where $E_{an}$ is the analytic space associated to $E$ with its sheaf of
analytic functions $\mathcal{O}_{E}^{hol}$, and $E_{an}^{(n)}$ is the same
topological space with the sheaf $\mathcal{O}_{\bar{S}}^{hol}/\mathcal{I}%
_{an}^{n}.$ The geodesic retraction (sending $(z,u)$ to $u\mod \Lambda $%
) is an analytic map $r_{geo}:E_{an}(\varepsilon )\rightarrow E_{an},$ where 
$E_{an}(\varepsilon )$ is our notation for some tubular neighborhood of $%
E_{an}$ in $\bar{S}_{an}.$ On the other hand, there is a canonical map $can$
of ringed spaces from $E_{an}^{(n)}$ to $E_{an}(\varepsilon ).$ We claim
that these three maps satisfy $r_{geo}\circ can=r_{an}.$

To prove the lemma, note that the infinitesimal retraction $%
r:E^{(n)}\rightarrow E$ is uniquely characterized by the fact that the $%
\mathcal{O}_{\mathcal{K}}$-semi-abelian variety $\mathcal{A}_{x}=x^{*}%
\mathcal{A}$ at any point $x:Spec(R)\rightarrow E^{(n)}$ is equal to $%
\mathcal{A}_{r\circ x}$ (an equality respecting the PEL structures). See
[Bel], II.2.4.2. The computations of Section \ref{degeneration} show that
the same is true for the infinitesimal retraction obtained from the geodesic
retraction. We conclude that the two retractions agree on the level of
``truncated Taylor expansions''.
\end{proof}

Consider now a modular form of weight $k$ and level $N$ over $\Bbb{C},$ $%
f\in M_{k}(N,\Bbb{C}).$ Using the trivialization of $\mathcal{L}_{an}$ over
the symmetric space $\frak{X}$ given by $2\pi i\cdot d\zeta _{3}$ as
discussed in Section \ref{automorphy factors}, we identify $f$ with a
collection of \emph{functions} $f_{j}$ on $\frak{X},$ transforming under $%
\Gamma _{j}$ according to the automorphy factor $j(\gamma ;z,u)^{k}.$ As
usual we look at $\Gamma =\Gamma _{1}$ only, and at the expansion of $%
f=f_{1} $ at the standard cusp $c_{\infty },$ the other cusps being in
principle similar. On the arithmetic FJ expansion side this means that we
concentrate on one connected component $E$ of $C,$ which lies on the
connected component of $S_{\Bbb{C}}$ corresponding to $g_{1}=1.$ It also
means that as the section $s$ used to trivialize $\mathcal{L}$ along $E,$ we
must use a section that, analytically, coincides with $2\pi i\cdot d\zeta
_{3}.$

Pulling back the sheaf $\mathcal{N}_{an}$ from $E=\Bbb{C}/\Lambda $ to $\Bbb{%
C},$ it is clear that $q=q(z)=e^{2\pi iz/M}$ maps, at each $u\in \Bbb{C},$
to a generator of $\mathcal{T}^{\vee }=\mathcal{N}_{an}=\mathcal{I}_{an}/%
\mathcal{I}_{an}^{2}$, and we denote by $q^{m}$ the corresponding generator
of $\mathcal{N}_{an}^{m}=\mathcal{I}_{an}^{m}/\mathcal{I}_{an}^{m+1}.$ If 
\begin{equation}
f(z,u)=\sum_{m=0}^{\infty }\theta _{m}(u)e^{2\pi imz/M}=\sum_{m=0}^{\infty
}\theta _{m}(u)q^{m}
\end{equation}
is the complex analytic Fourier expansion of $f$ at a neighborhood of $%
c_{\infty },$ then $c_{m}(z,u)=\theta _{m}(u)q^{m}\in H^{0}(E,\mathcal{N}%
_{an}^{m})$ is just the restriction of the section denoted above by $%
c_{m}(f) $ to $E.$ The functions $\theta _{m}$ are classical elliptic theta
functions (for the lattice $\Lambda $).

\subsection{The Gauss-Manin connection in a neighborhood of a cusp\label%
{KS at cusp}}

\subsubsection{A computation of $\nabla $ in the complex model}

We shall now compute the Gauss-Manin connection in the complex model near
the standard cusp $c_{\infty }.$ Recall that we use the coordinates $%
(z,u,\zeta _{1},\zeta _{2},\zeta _{3})$ as in Section \ref{moving lattice}.
Here $d\zeta _{1}$ and $d\zeta _{2}$ form a basis for $\mathcal{P}$ and $%
d\zeta _{3}$ for $\mathcal{L}.$ The same coordinates served to define also
the semi-abelian variety $\mathcal{G}_{u}$ (denoted also $\mathcal{A}_{u}$)
over the cuspidal component $E$ at $c_{\infty },$ \emph{cf} Section \ref
{degeneration}. As explained there (\ref{semi-abelian}), the projection to
the abelian part is given by the coordinate $\zeta _{1}$ (modulo $\mathcal{O}%
_{\mathcal{K}}$), so $d\zeta _{1}$ is a basis for the sub-line-bundle of $%
\omega _{\mathcal{A}/E}$ coming from the abelian part, which was denoted $%
\mathcal{P}_{0}.$ In Section \ref{retraction} above it was explained how to
extend the filtration $\mathcal{P}_{0}\subset \mathcal{P}$ canonically to
the formal neighborhood $\widehat{S}$ of $E$ using the retraction $r,$ by
pulling back from the boundary. It was also noted that complex analytically,
the retraction $r$ is the germ of the geodesic retraction introduced
earlier. From the analytic description of the degeneration of $\mathcal{A}%
_{(z,u)}$ along a geodesic, it becomes clear that $\mathcal{P}_{0}=r^{*}(%
\mathcal{P}_{0}|_{E})$ is just the line bundle $\mathcal{O}_{\widehat{S}%
}\cdot d\zeta _{1}\subset \omega _{\mathcal{A}/\widehat{S}}.$ It follows
that $\mathcal{P}_{\mu }=\mathcal{O}_{\widehat{S}}\cdot d\zeta _{2}\mod %
\mathcal{P}_{0}.$

We shall now pull back these vector bundles to $\frak{X}$, and compute the
Gauss Manin connection $\nabla $ complex analytically on $\omega _{\mathcal{A%
}/\frak{X}}$. We write $\mathcal{P}_{0}=\mathcal{O}_{\frak{X}}\cdot d\zeta
_{1}$ for $\mathcal{P}_{0,an}$ etc. dropping the decoration \emph{an. }%
Recalling that $\mathcal{O}_{\mathcal{K}}=\Bbb{Z}\oplus \Bbb{Z}\omega _{%
\mathcal{K}}$ we let 
\begin{equation}
\alpha _{1}=\left( 
\begin{array}{l}
0 \\ 
1 \\ 
1
\end{array}
\right) ,\alpha _{2}=\left( 
\begin{array}{l}
1 \\ 
0 \\ 
u
\end{array}
\right) ,\alpha _{3}=\left( 
\begin{array}{l}
u \\ 
-z/\delta \\ 
z/\delta
\end{array}
\right)
\end{equation}
and 
\begin{equation}
\alpha _{1}^{\prime }=\iota ^{\prime }(\omega _{\mathcal{K}})\alpha
_{1}=\left( 
\begin{array}{l}
0 \\ 
\omega _{\mathcal{K}} \\ 
\bar{\omega}_{\mathcal{K}}
\end{array}
\right) ,\alpha _{2}^{\prime }=\iota ^{\prime }(\omega _{\mathcal{K}})\alpha
_{2}=\left( 
\begin{array}{l}
\omega _{\mathcal{K}} \\ 
0 \\ 
\bar{\omega}_{\mathcal{K}}u
\end{array}
\right) ,
\end{equation}
\begin{equation*}
\alpha _{3}^{\prime }=\iota ^{\prime }(\omega _{\mathcal{K}})\alpha
_{3}=\left( 
\begin{array}{l}
\omega _{\mathcal{K}}u \\ 
-\omega _{\mathcal{K}}z/\delta \\ 
\bar{\omega}_{\mathcal{K}}z/\delta
\end{array}
\right) .
\end{equation*}
These 6 vectors span $L_{(z,u)}^{\prime }$ over $\Bbb{Z}.$ Let $\beta
_{1},\dots ,\beta _{3}^{\prime }$ be the dual basis to $\{\alpha _{1},\dots
,\alpha _{3}^{\prime }\}$ in $H_{dR}^{1}(\mathcal{A}/\mathcal{O}_{\frak{X}%
}), $ i.e. $\int_{\alpha _{1}}\beta _{1}=1$ etc. As the periods of the $%
\beta _{i}$'s along the integral homology are constant, the $\beta $-basis
is horizontal for the Gauss-Manin connection. The first coordinate of the $%
\alpha _{i}$ and $\alpha _{i}^{\prime }$ gives us 
\begin{equation}
d\zeta _{1}=0\cdot \beta _{1}+1\cdot \beta _{2}+u\cdot \beta _{3}+0\cdot
\beta _{1}^{\prime }+\omega _{\mathcal{K}}\cdot \beta _{2}^{\prime }+\omega
_{\mathcal{K}}u\cdot \beta _{3}^{\prime },
\end{equation}
and we find that 
\begin{equation}
\nabla (d\zeta _{1})=(\beta _{3}+\omega _{\mathcal{K}}\beta _{3}^{\prime
})\otimes du.
\end{equation}
Similarly, we find 
\begin{eqnarray}
\nabla (d\zeta _{2}) &=&-\delta ^{-1}(\beta _{3}+\omega _{\mathcal{K}}\beta
_{3}^{\prime })\otimes dz \\
\nabla (d\zeta _{3}) &=&(\beta _{2}+\bar{\omega}_{\mathcal{K}}\beta
_{2}^{\prime })\otimes du+\delta ^{-1}(\beta _{3}+\bar{\omega}_{\mathcal{K}%
}\beta _{3}^{\prime })\otimes dz.  \notag
\end{eqnarray}

\subsubsection{A computation of $KS$ in the complex model}

We go on to compute the Kodaira-Spencer map on $\mathcal{P},$ i.e. the map
denoted $KS(\Sigma ).$ For that we have to take $\nabla (d\zeta _{1})$ and $%
\nabla (d\zeta _{2})$ and project them to $R^{1}\pi _{*}\mathcal{O}_{%
\mathcal{A}}(\Sigma )\otimes \Omega _{\frak{X}}^{1}.$ We then pair the
result, using the polarization form $\left\langle ,\right\rangle _{\lambda }$
on $H_{dR}^{1}(\mathcal{A})$ (reflecting the isomorphism 
\begin{equation}
R^{1}\pi _{*}\mathcal{O}_{\mathcal{A}}(\Sigma )=Lie(\mathcal{A}^{t})(\Sigma
)=\omega _{\mathcal{A}^{t}}^{\vee }(\Sigma )\simeq \mathcal{L}^{\vee }(\rho )
\end{equation}
coming from $\lambda $), with $d\zeta _{3}.$

To perform the computation we need two lemmas.

\begin{lemma}
The Riemann form on $L_{x}^{\prime }$, associated to the polarization $%
\lambda ,$ is given in the basis $\alpha _{1},\alpha _{2},\alpha _{3},\alpha
_{1}^{\prime },\alpha _{2}^{\prime },\alpha _{3}^{\prime }$ by the matrix 
\begin{equation}
J=\left( 
\begin{array}{llllll}
&  &  &  &  & 1 \\ 
&  &  &  & -1 &  \\ 
&  &  & 1 &  &  \\ 
&  & -1 &  &  &  \\ 
& 1 &  &  &  &  \\ 
-1 &  &  &  &  & 
\end{array}
\right) .
\end{equation}
\end{lemma}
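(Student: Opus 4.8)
The plan is to transport the question to the fixed model $A_{x}=(V_{\Bbb{R}},J_{x})/L$ via the isomorphism $T_{x}:A_{x}^{\prime }=\Bbb{C}^{3}/L_{x}^{\prime }\overset{\sim}{\rightarrow }A_{x}$ of the ``moving lattice'' model (see (\ref{moving lattice model})), where the Riemann form is visible by construction. Recall that $T_{x}$ is $\Bbb{C}$-linear, intertwines the $\iota ^{\prime }$-action of $\mathcal{O}_{\mathcal{K}}$ on $\Bbb{C}^{3}$ with the tautological action $\iota $ of $\mathcal{O}_{\mathcal{K}}$ on $V=\mathcal{K}^{3}$ (scalar multiplication), and carries the three $\mathcal{O}_{\mathcal{K}}$-generators $\alpha _{1},-\alpha _{2},\alpha _{3}$ of $L_{x}^{\prime }$, which are exactly the spanning vectors of (\ref{spanning vectors}), to the standard $\mathcal{O}_{\mathcal{K}}$-generators $\delta e_{1},e_{2},e_{3}$ of $L=L_{0}$. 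Hence
\begin{equation*}
T_{x}(\alpha _{1})=\delta e_{1},\qquad T_{x}(\alpha _{2})=-e_{2},\qquad T_{x}(\alpha _{3})=e_{3},\qquad T_{x}(\alpha _{i}^{\prime })=\omega _{\mathcal{K}}\cdot T_{x}(\alpha _{i})\quad(i=1,2,3).
\end{equation*}
Under $T_{x}$ the polarization $\lambda $ restricted to the fiber $A_{x}^{\prime }$ corresponds to the principal polarization of $A_{x}$ attached to the polarization form $\langle u,v\rangle =\mathrm{Im}_{\delta }((u,v))$ on $L=H_{1}(A_{x},\Bbb{Z})$. Thus the Riemann form attached to $\lambda $ on $L_{x}^{\prime }=H_{1}(A_{x}^{\prime },\Bbb{Z})$ is $(v,w)\mapsto \langle T_{x}v,T_{x}w\rangle $, and it remains to compute the Gram matrix of $\langle \,\cdot \,,\,\cdot \,\rangle $ on the $\Bbb{Z}$-basis $\{\delta e_{1},-e_{2},e_{3},\omega _{\mathcal{K}}\delta e_{1},-\omega _{\mathcal{K}}e_{2},\omega _{\mathcal{K}}e_{3}\}$ of $L$.

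This last step is a mechanical $6\times 6$ computation. By sesquilinearity of $(\,\cdot \,,\,\cdot \,)$, each entry of the Gram matrix is $\mathrm{Im}_{\delta }$ of $\bar{a}b\,(e_{i},e_{j})$, where $ae_{i}$ and $be_{j}$ are the two basis vectors in question and $(e_{i},e_{j})$ is an entry of the hermitian matrix defining $(\,\cdot \,,\,\cdot \,)$ on $V$ — hence $(e_{i},e_{j})=0$ unless $(i,j)\in \{(1,3),(3,1),(2,2)\}$, with $(e_{1},e_{3})=\delta ^{-1}=-(e_{3},e_{1})$ and $(e_{2},e_{2})=1$. So the only possibly nonzero entries pair a $\delta e_{1}$-vector with an $e_{3}$-vector, or pair two $e_{2}$-vectors; evaluating $\mathrm{Im}_{\delta }$ and using $\bar{\delta}=-\delta $ together with $\omega _{\mathcal{K}}-\bar{\omega}_{\mathcal{K}}=\delta $ (the latter valid in both cases of the definition of $\omega _{\mathcal{K}}$), one finds that every entry lies in $\{0,\pm 1\}$ and that the matrix obtained is the antidiagonal matrix $J$. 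The sign twist $T_{x}(\alpha _{2})=-e_{2}$ is harmless: it cancels in the $\alpha _{2}$--$\alpha _{2}^{\prime }$ pairing, and every pairing of an $\alpha _{2}$-vector with a non-$e_{2}$-vector vanishes in any case.

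Nothing here is deep; the two points that need a little care are (i) fixing once and for all the sign convention that attaches a Riemann form to a principal polarization, which is precisely what singles out $J$ rather than $-J$; and (ii) noting that the Gram matrix is manifestly independent of $(z,u)\in \frak{X}$ — as it must be, since the Riemann form is locally constant on the local system $R^{1}\pi _{*}\Bbb{Z}$, and as is transparent here because $T_{x}$ was designed so that $L_{x}^{\prime }\rightarrow L$ is constant on generators. One could instead argue more algebraically, using that $\langle \,\cdot \,,\,\cdot \,\rangle _{\lambda }$ on $H_{dR}^{1}(\mathcal{A})$ restricts to the standard symplectic pairing on the Betti lattice, but the Betti computation above is the most transparent.
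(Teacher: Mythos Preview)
Your proposal is correct and follows exactly the paper's approach: transport the Riemann form from $L$ to $L_{x}^{\prime}$ via the isomorphism $T_{x}$, use that the Riemann form on $L$ is $\langle\,,\,\rangle=\mathrm{Im}_{\delta}(\,,\,)$, and evaluate on the explicit images $T_{x}(\alpha_{i})$, $T_{x}(\alpha_{i}^{\prime})$. Your identification $T_{x}(\alpha_{2})=-e_{2}$ (from the sign flip between the $\alpha_{2}$ of the Gauss--Manin computation and the second spanning vector of (\ref{spanning vectors})) and the identity $\omega_{\mathcal{K}}-\bar{\omega}_{\mathcal{K}}=\delta$ are exactly the two ingredients needed, and your remark about the sign convention in (i) is well taken.
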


\begin{proof}
This is an easy computation using the transition map $T$ between $L$ and $%
L_{x}^{\prime }$ and the fact that on $L$ the Riemann form is the
alternating form $\left\langle ,\right\rangle =\text{Im}_{\delta }(,).$
\end{proof}

For the formulation of the next lemma recall that if $A$ is a complex
abelian variety, a polarization $\lambda :A\rightarrow A^{t}$ induces an
alternating form $\left\langle ,\right\rangle _{\lambda }$ on $H_{dR}^{1}(A)$
as well as a Riemann form on the integral homology $H_{1}(A,\Bbb{Z}).$ We
compare the two.

\begin{lemma}
Let $(A,\lambda )$ be a principally polarized complex abelian variety. If $%
\alpha _{1},\dots ,\alpha _{2g}$ is a symplectic basis for $H_{1}(A,\Bbb{Z})$
in which the associated Riemann form is given by a matrix $J$, and $\beta
_{1},\dots ,\beta _{2g}$ is the dual basis of $H_{dR}^{1}(A),$ then the
matrix of the bilinear form $\left\langle ,\right\rangle _{\lambda }$ on $%
H_{dR}^{1}(A)$ in the basis $\beta _{1},\dots ,\beta _{2g}$ is $(2\pi
i)^{-1}J.$
\end{lemma}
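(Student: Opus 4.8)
The two forms in play are the Riemann form $E$ on $H_{1}(A,\Bbb{Z})$ attached to $\lambda$ (with Gram matrix $J$ in the basis $\alpha_{1},\dots,\alpha_{2g}$, by the previous lemma) and the polarization pairing $\left\langle ,\right\rangle_{\lambda}$ on $H_{dR}^{1}(A)=H^{1}(A,\Bbb{C})=\mathrm{Hom}_{\Bbb{Z}}(H_{1}(A,\Bbb{Z}),\Bbb{C})=H_{1}(A,\Bbb{C})^{\vee}$. The statement is a comparison of normalizations, and the factor $(2\pi i)^{-1}$ is the Tate--twist discrepancy: the polarization of the weight--one Hodge structure $H^{1}(A)$ is valued in $\Bbb{Z}(-1)$, which the Betti--de Rham comparison realizes inside $\Bbb{C}$ as $(2\pi i)^{-1}\Bbb{Z}$. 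The plan is to route the computation through the dual abelian variety $A^{t}$, where the normalizations are transparent, and then to finish with a short piece of linear algebra.

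First I recall that the exponential sequence $0\to\Bbb{Z}(1)\to\mathcal{O}_{A}\to\mathcal{O}_{A}^{\times}\to 0$ together with $A^{t}=\mathrm{Pic}^{0}(A)=H^{1}(A,\mathcal{O}_{A})/H^{1}(A,\Bbb{Z}(1))$ gives a canonical identification $H_{1}(A^{t},\Bbb{Z})=\mathrm{Hom}(H_{1}(A,\Bbb{Z}),\Bbb{Z}(1))$, the target of the Weil pairing. By definition, ``$\lambda$ corresponds to the Riemann form $E$'' means that $\lambda_{*}:H_{1}(A,\Bbb{Z})\to H_{1}(A^{t},\Bbb{Z})$, read through this identification, is $v\mapsto 2\pi i\cdot E(v,\cdot)$ (equivalently, the Weil formula $e_{\lambda}(\tfrac{v}{n},\tfrac{w}{n})=\exp(\tfrac{2\pi i}{n}E(v,w))$). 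Dualizing, and using the autoduality $A^{tt}=A$ and the symmetry $\lambda^{t}=\lambda$, the pairing $\left\langle ,\right\rangle_{\lambda}$ --- which may be described via $\lambda$ and the canonical perfect pairing $H_{dR}^{1}(A)\times H_{dR}^{1}(A^{t})\to\Bbb{C}$ attached to the Poincar\'e bundle, and which makes $\omega_{\mathcal{A}}$ isotropic --- becomes $(2\pi i)^{-1}$ times the contragredient of $E$ on $H_{1}(A,\Bbb{C})^{\vee}$. On the dual basis $\beta_{1},\dots,\beta_{2g}$ the contragredient of a nondegenerate alternating form with Gram matrix $J$ has Gram matrix $(J^{-1})^{t}$; since $\alpha_{1},\dots,\alpha_{2g}$ is a symplectic basis of a principally polarized abelian variety, $J^{2}=-I$ (and for the explicit $J$ of the previous lemma this is visible directly: $J$ carries $(v_{1},\dots,v_{6})$ to $(v_{6},-v_{5},v_{4},-v_{3},v_{2},-v_{1})$, which squares to $-\mathrm{id}$). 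Hence $(J^{-1})^{t}=-J^{-1}=J$, and $\left\langle ,\right\rangle_{\lambda}$ has Gram matrix $(2\pi i)^{-1}J$.

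The step needing genuine care --- and the one I expect to be the main obstacle --- is the normalization bookkeeping: one must verify that the powers of $2\pi i$ contributed by the exponential sequence, by the Weil--pairing formula, and by the Poincar\'e/Serre--duality trace (a residue, hence carrying $\oint\tfrac{dz}{z}=2\pi i$) combine to exactly $(2\pi i)^{-1}$ rather than some other power, and that the global sign emerges as stated rather than its negative. I would pin down the conventions against the case $g=1$, where $\left\langle x,y\right\rangle_{\lambda}=\pm(2\pi i)^{-1}\int_{A}x\wedge y$ while $\int_{A}\beta_{1}\wedge\beta_{2}=\pm1$, so that the matrix of $\left\langle ,\right\rangle_{\lambda}$ is $(2\pi i)^{-1}J$ on the nose; there the $2\pi i$ is manifestly the period distinguishing the residue normalization of the Serre--duality trace map $H^{1}(A,\Omega_{A}^{1})\to\Bbb{C}$ from geometric integration.
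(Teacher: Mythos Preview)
Your proposal is correct, and your route is genuinely different from the paper's. The paper's proof is extremely brief: it simply says ``these are essentially Riemann's bilinear relations'' and writes down the classical cup-product formula
\[
\xi\cup\eta=\frac{1}{2\pi i}\sum_{i=1}^{g}\left(\int_{\alpha_i}\xi\int_{\alpha_{i+g}}\eta-\int_{\alpha_i}\eta\int_{\alpha_{i+g}}\xi\right)
\]
for the Jacobian of a curve in a Darboux basis, leaving the reader to recognise that this \emph{is} the statement $\langle\beta_k,\beta_l\rangle_\lambda=(2\pi i)^{-1}J_{kl}$. You instead route the argument through $A^t$: the exponential sequence gives $H_1(A^t,\Bbb{Z})=H^1(A,\Bbb{Z})(1)$, the Weil-pairing normalisation pins $\lambda_*$ to $2\pi i\cdot E$, and dualising through the canonical Poincar\'e pairing lands you on $(2\pi i)^{-1}$ times the contragredient of $E$, with Gram matrix $(2\pi i)^{-1}(J^{-1})^t=(2\pi i)^{-1}(-J^{-1})$. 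Your final step $-J^{-1}=J$ via $J^2=-I$ is not a cosmetic simplification but the actual hinge: the Gram matrices of the Riemann form on $H_1$ and of $\langle,\rangle_\lambda$ on $H^1$ transform contragrediently under change of basis, so the identity ``matrix of $\langle,\rangle_\lambda$ equals $(2\pi i)^{-1}$ times matrix of $E$'' can only hold in bases where $J$ is its own negative inverse. The paper's Darboux $J$ and the specific anti-diagonal $J$ of the preceding lemma both satisfy this, and the paper never needs more.

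What each approach buys: the paper's one-liner is shorter and anchors the statement in a formula every reader has seen; your argument is more structural, makes transparent that the $(2\pi i)^{-1}$ is a Tate twist rather than an accident of integration, and in isolating the condition $J^2=-I$ you have in fact clarified the precise hypothesis under which the lemma holds in the stated form. Your caveat about pinning down signs and powers of $2\pi i$ against the $g=1$ case is exactly right and is the honest way to close the argument.
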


\begin{proof}
These are essentially Riemann's bilinear relations. For example, if $A$ is
the Jacobian of a curve $\mathcal{C}$ and the basis $\alpha _{1},\dots
,\alpha _{2g}$ has the standard intersection matrix 
\begin{equation}
J=\left( 
\begin{array}{ll}
0 & I \\ 
-I & 0
\end{array}
\right)
\end{equation}
then the lemma follows from the well-known formula for the cup product ($\xi
,\eta $ being differentials of the second kind on $\mathcal{C}$) 
\begin{equation}
\xi \cup \eta =\frac{1}{2\pi i}\sum_{i=1}^{g}\left( \int_{\alpha _{i}}\xi
\int_{\alpha _{i+g}}\eta -\int_{\alpha _{i}}\eta \int_{\alpha _{i+g}}\xi
\right) .
\end{equation}
\end{proof}

Using the two lemmas we get 
\begin{eqnarray}
KS(d\zeta _{1}\otimes d\zeta _{3}) &=&\left\langle \beta _{3}+\omega _{%
\mathcal{K}}\beta _{3}^{\prime },d\zeta _{3}\right\rangle _{\lambda }\cdot du
\\
&=& 
\begin{array}{c}
\langle \beta _{3}+\omega _{\mathcal{K}}\beta _{3}^{\prime },\,\,\beta
_{1}+u\beta _{2}+z\delta ^{-1}\beta _{3}+ \\ 
\bar{\omega}_{\mathcal{K}}\beta _{1}^{\prime }+\bar{\omega}_{\mathcal{K}%
}u\beta _{2}^{\prime }+\bar{\omega}_{\mathcal{K}}z\delta ^{-1}\beta
_{3}^{\prime }\rangle _{\lambda }\cdot du
\end{array}
\notag \\
&=&-\delta (2\pi i)^{-1}du.  \notag
\end{eqnarray}
Similarly, 
\begin{eqnarray}
KS(d\zeta _{2}\otimes d\zeta _{3}) &=&\left\langle -\delta ^{-1}(\beta
_{3}+\omega _{\mathcal{K}}\beta _{3}^{\prime }),d\zeta _{3}\right\rangle
_{\lambda }\cdot dz \\
&=&(2\pi i)^{-1}dz.  \notag
\end{eqnarray}
We summarize.

\begin{proposition}
Let $z,u,\zeta _{1},\zeta _{2},\zeta _{3}$ be the standard coordinates in a
neighborhood of the cusp $c_{\infty }.$ Then, complex analytically, the
Kodaira-Spencer isomorphism 
\begin{equation}
KS(\Sigma ):\mathcal{P}\otimes \mathcal{L}\simeq \Omega _{\frak{X}}^{1}
\end{equation}
is given by the formulae 
\begin{equation}
KS(d\zeta _{1}\otimes d\zeta _{3})=-\delta (2\pi i)^{-1}du,\,\,\,KS(d\zeta
_{2}\otimes d\zeta _{3})=(2\pi i)^{-1}dz.  \label{KS formula}
\end{equation}
\end{proposition}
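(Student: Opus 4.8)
The plan is to extract both formulae directly from the computation of $\nabla$ carried out above, combined with the two lemmas relating the polarization form on integral homology and on de Rham cohomology. I would begin by unwinding the definition of $KS(\Sigma)$ in the case at hand. Given local sections $\omega$ of $\mathcal{P}=\omega_{\mathcal{A}}(\Sigma)$ and $\omega'$ of $\mathcal{L}=\omega_{\mathcal{A}}(\bar{\Sigma})$, one applies $\nabla$ to $\omega$, obtaining an element of $H_{dR}^1(\mathcal{A}/\frak{X})(\Sigma)\otimes\Omega_{\frak{X}}^1$, projects it modulo $\mathcal{P}$ into $R^1\pi_*\mathcal{O}_{\mathcal{A}}(\Sigma)\otimes\Omega_{\frak{X}}^1=\omega_{\mathcal{A}^t}^{\vee}(\Sigma)\otimes\Omega_{\frak{X}}^1$, uses the principal polarization to identify the latter with $\mathcal{L}^{\vee}(\rho)\otimes\Omega_{\frak{X}}^1$, and pairs with $\omega'$. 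Because $\omega_{\mathcal{A}}$ is isotropic under the polarization pairing $\langle\,,\,\rangle_{\lambda}$ on $H_{dR}^1(\mathcal{A}/\frak{X})$, and because the induced pairing $\mathcal{L}\times\mathcal{L}^{\vee}(\rho)\to\mathcal{O}$ is the tautological one, projecting modulo $\mathcal{P}$ before pairing against $\omega'\in\omega_{\mathcal{A}}$ is immaterial, and the whole procedure collapses to
\[
KS(\Sigma)(\omega\otimes\omega')=\langle\nabla(\omega),\omega'\rangle_{\lambda},
\]
where $\langle\,,\,\rangle_{\lambda}$ is read as a pairing valued in $\Omega_{\frak{X}}^1$ through the $\Omega_{\frak{X}}^1$-factor of $\nabla(\omega)$.

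I would then substitute the data already in hand. The computation of the Gauss--Manin connection gives $\nabla(d\zeta_1)=(\beta_3+\omega_{\mathcal{K}}\beta_3')\otimes du$ and $\nabla(d\zeta_2)=-\delta^{-1}(\beta_3+\omega_{\mathcal{K}}\beta_3')\otimes dz$, while reading off the third coordinates of $\alpha_1,\alpha_2,\alpha_3,\alpha_1',\alpha_2',\alpha_3'$ yields
\[
d\zeta_3=\beta_1+u\beta_2+z\delta^{-1}\beta_3+\bar{\omega}_{\mathcal{K}}\beta_1'+\bar{\omega}_{\mathcal{K}}u\beta_2'+\bar{\omega}_{\mathcal{K}}z\delta^{-1}\beta_3'
\]
in the horizontal basis $\{\beta_1,\dots,\beta_3'\}$ dual to $\{\alpha_1,\dots,\alpha_3'\}$. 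The first lemma identifies the Riemann form on $L_x'$ with the matrix $J$ in the $\alpha$-basis, and the second lemma says that in the dual $\beta$-basis the form $\langle\,,\,\rangle_{\lambda}$ has matrix $(2\pi i)^{-1}J$. In the pairing $\langle\beta_3+\omega_{\mathcal{K}}\beta_3',d\zeta_3\rangle_{\lambda}$ only the entries $\langle\beta_3,\beta_1'\rangle_{\lambda}$ and $\langle\beta_3',\beta_1\rangle_{\lambda}$ survive, giving $(2\pi i)^{-1}(\bar{\omega}_{\mathcal{K}}-\omega_{\mathcal{K}})$; since $\omega_{\mathcal{K}}-\bar{\omega}_{\mathcal{K}}=\delta$ in both of the cases $d_{\mathcal{K}}\equiv 1$ and $d_{\mathcal{K}}\equiv 2,3\bmod 4$ (immediate from $\delta=\sqrt{D_{\mathcal{K}}}$ and the definition of $\omega_{\mathcal{K}}$), this equals $-\delta(2\pi i)^{-1}$, whence $KS(d\zeta_1\otimes d\zeta_3)=-\delta(2\pi i)^{-1}du$. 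The second formula follows identically: $\nabla(d\zeta_2)$ differs from $\nabla(d\zeta_1)$ by the scalar $-\delta^{-1}$ and the replacement of $du$ by $dz$, so $KS(d\zeta_2\otimes d\zeta_3)=-\delta^{-1}\cdot(-\delta(2\pi i)^{-1})\,dz=(2\pi i)^{-1}dz$. Since $KS(\Sigma)$ is already known to be an isomorphism and $\{d\zeta_1\otimes d\zeta_3,\,d\zeta_2\otimes d\zeta_3\}$, $\{du,dz\}$ are frames of $\mathcal{P}\otimes\mathcal{L}$ and $\Omega_{\frak{X}}^1$ in a neighborhood of $c_{\infty}$, these two identities describe $KS(\Sigma)$ completely.

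The only real difficulty I anticipate is bookkeeping rather than anything conceptual: one must fix unambiguously the isomorphism $\omega_{\mathcal{A}^t}^{\vee}(\Sigma)\simeq\mathcal{L}^{\vee}(\rho)$ coming from $\lambda$ and track the factor $2\pi i$ throughout --- indeed its appearance is precisely why $\mathcal{L}$ was trivialized over $\frak{X}$ by $2\pi i\cdot d\zeta_3$ rather than by $d\zeta_3$ --- so that the signs in the two final formulae come out consistent with the normalization $\langle\,,\,\rangle_{\lambda}=(2\pi i)^{-1}J$ furnished by Riemann's bilinear relations.
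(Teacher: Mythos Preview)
Your proposal is correct and follows essentially the same approach as the paper: compute $\nabla(d\zeta_i)$ in the horizontal $\beta$-basis, invoke the two lemmas to identify $\langle\,,\,\rangle_\lambda$ with $(2\pi i)^{-1}J$ in that basis, and pair against the expansion of $d\zeta_3$. Your added remarks---that isotropy of $\omega_{\mathcal{A}}$ makes the projection modulo $\mathcal{P}$ immaterial, and the explicit verification that $\omega_{\mathcal{K}}-\bar\omega_{\mathcal{K}}=\delta$ in both cases---merely spell out steps the paper leaves implicit.
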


\begin{corollary}
\label{KS at cusps}The Kodaira-Spencer isomorphism $\mathcal{P}\otimes 
\mathcal{L}\simeq \Omega _{S}^{1}$ extends meromorphically over $\bar{S}.$
Moreover, in a formal neighborhood $\widehat{S}$ of $C,$ its restriction to
the line sub-bundle $\mathcal{P}_{0}\otimes \mathcal{L}$ is holomorphic, and
on any direct complement of $\mathcal{P}_{0}\otimes \mathcal{L}$ in $%
\mathcal{P}\otimes \mathcal{L}$ it has a simple pole along $C.$
\end{corollary}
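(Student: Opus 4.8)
The plan is to read off both halves of the statement from the explicit description of $KS(\Sigma)$ obtained in the preceding Proposition, after passing to a local coordinate that cuts out the boundary. Concretely I would work analytically near the standard cusp $c_\infty$ and introduce the coordinate $q=q(z)=e^{2\pi i z/M}$ of (\ref{width}), which together with $u$ is a system of local analytic coordinates on $\bar S$ in which the cuspidal component $E$ is the divisor $\{q=0\}$. Since $dq=(2\pi i/M)\,q\,dz$, the form $du$ is holomorphic and nowhere vanishing while $dz=(M/2\pi i)\,dq/q$ has a simple pole along $E$, and $\{du,\,dz\}$ is an $\mathcal{O}_{\bar S}$-basis of $\Omega^1_{\bar S}(\log C)$ in this neighbourhood. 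Plugging this into the formulae
\begin{equation}
KS(d\zeta_1\otimes d\zeta_3)=-\delta(2\pi i)^{-1}\,du,\qquad KS(d\zeta_2\otimes d\zeta_3)=(2\pi i)^{-1}\,dz=\frac{M}{(2\pi i)^2}\,\frac{dq}{q}
\end{equation}
of the Proposition, and recalling that $d\zeta_1\otimes d\zeta_3,\ d\zeta_2\otimes d\zeta_3$ is an $\mathcal{O}_{\bar S}$-basis of $\mathcal{P}\otimes\mathcal{L}$ near $E$, one sees at once that $KS(\Sigma)$ extends across $E$ to an $\mathcal{O}_{\bar S}$-linear isomorphism $\mathcal{P}\otimes\mathcal{L}\simeq\Omega^1_{\bar S}(\log C)$, hence to a meromorphic map into $\Omega^1_{\bar S}(C)$ with at most a simple pole along $C$; the same computation applies verbatim at every cusp.

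For the refined dichotomy I would use the description of $\mathcal{P}_0$ recalled in \S\ref{KS at cusp}: in the formal neighbourhood $\widehat S$ one has $\mathcal{P}_0=\mathcal{O}_{\widehat S}\cdot d\zeta_1$ inside $\omega_{\mathcal{A}/\widehat S}$, so $\mathcal{P}_0\otimes\mathcal{L}=\mathcal{O}_{\widehat S}\cdot(d\zeta_1\otimes d\zeta_3)$, and the first formula shows that $KS$ carries it isomorphically onto $\mathcal{O}_{\widehat S}\cdot du\subset\Omega^1_{\bar S}$; thus $KS|_{\mathcal{P}_0\otimes\mathcal{L}}$ is holomorphic. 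Any direct complement $\mathcal{Q}$ of $\mathcal{P}_0\otimes\mathcal{L}$ in $\mathcal{P}\otimes\mathcal{L}$ is a line bundle projecting isomorphically onto $\mathcal{P}_\mu\otimes\mathcal{L}$, and a local generator of $\mathcal{Q}$ has the shape $d\zeta_2\otimes d\zeta_3+g\,(d\zeta_1\otimes d\zeta_3)$ with $g$ holomorphic; its image under $KS$ is $(2\pi i)^{-1}dz$ plus a holomorphic multiple of $du$, i.e. a unit multiple of $dq/q$ modulo holomorphic forms, which has a genuine, non-removable simple pole along $C$. This proves the statement over $\mathbb{C}$, and hence algebraically over $\mathbb{C}$ by GAGA.

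It remains to descend from $\mathbb{C}$ to $\bar S$ over $R_0$, and this is the step I expect to require the real care, since the Proposition we rely on is proved complex-analytically. The quick route is that $KS(\Sigma)$, the sub-bundle $\mathcal{P}_0\subset\mathcal{P}$ over $\widehat S$, and all the line bundles involved are defined over $R_0$, so --- exactly as in the proof of Corollary~\ref{KS}, using that $\bar S$ is flat over $R_0$ with geometrically irreducible cuspidal components --- the identities of coherent sheaves just established on the (dense) characteristic-zero fibre propagate to all of $\bar S$. A more robust substitute, which I would spell out if the flatness argument felt too formal, is to invoke Deligne's canonical logarithmic extension of $(H^1_{dR}(\mathcal{A}/S),\nabla)$ across the semi-abelian degeneration (available over $R_0$ by [Fa-Ch]), which forces $\nabla$, hence $KS$, to take values in $\Omega^1_{\bar S}(\log C)$, and then to compute the residue of $\nabla$ along $C$: for a toric-rank-one semistable degeneration of a principally polarized abelian threefold the logarithm $N$ of the local monodromy satisfies $N^2=0$, the image of $N$ is the ``toric'' line in $H^1_{dR}$ and $\ker(N)$ contains the canonical lift of $H^1_{dR}(B)$, so $N$ annihilates $\omega_B=\mathcal{P}_0$ but not the toric cotangent line. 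That residue computation --- which over $\mathbb{C}$ is nothing but the content of the formulae (\ref{KS formula}) --- is the only genuinely non-formal ingredient, and it yields the holomorphy on $\mathcal{P}_0\otimes\mathcal{L}$ and the honest simple pole on the complement in every characteristic.
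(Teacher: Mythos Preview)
Your argument is correct and follows essentially the same approach as the paper: the paper's proof is a three-line appeal to the facts that $d\zeta_1\otimes d\zeta_3$ and $d\zeta_2\otimes d\zeta_3$ form a basis of $\mathcal{P}\otimes\mathcal{L}$ at the boundary with $d\zeta_1\otimes d\zeta_3$ spanning $\mathcal{P}_0\otimes\mathcal{L}$, while $du$ is holomorphic and $dz$ has a simple pole there --- exactly your first two paragraphs with the $q$-coordinate made explicit. Your third paragraph on descent to $R_0$ is not part of the paper's proof of this corollary; the paper treats that transfer separately in the subsection ``Transferring the results to the algebraic category,'' using flatness of the relevant bundles over $R_0$ in the same spirit you sketch.
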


\begin{proof}
As we have seen, $d\zeta _{1}\otimes d\zeta _{3}$ and $d\zeta _{2}\otimes
d\zeta _{3}$ define a basis of $\mathcal{P}\otimes \mathcal{L}$ at the
boundary, with $d\zeta _{1}\otimes d\zeta _{3}$ spanning the line sub-bundle 
$\mathcal{P}_{0}\otimes \mathcal{L}.$ On the other hand $du$ is holomorphic
there, while $dz$ has a simple pole along the boundary.
\end{proof}

\begin{corollary}
\label{complex kappa}The induced map 
\begin{equation}
\psi :\Omega _{\frak{X}}^{1}\rightarrow \mathcal{P}_{\mu }\otimes \mathcal{L}
\end{equation}
($\mathcal{P}_{\mu }=\mathcal{P}/\mathcal{P}_{0}$) obtained by inverting the
isomorphism $KS(\Sigma )$ and dividing $\mathcal{P}$ by $\mathcal{P}_{0}$
kills $du$ and maps $dz$ to $2\pi i\cdot d\zeta _{2}\otimes d\zeta _{3}.$
\end{corollary}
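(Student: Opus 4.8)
The plan is to read $\psi$ off directly from the formula for $KS(\Sigma)$ established in the preceding Proposition, so the argument is essentially bookkeeping. First I would recall that in the complex-analytic (equivalently, formal) neighborhood of $c_{\infty}$ the sections $d\zeta_1\otimes d\zeta_3$ and $d\zeta_2\otimes d\zeta_3$ form a basis of $\mathcal{P}\otimes\mathcal{L}$, with $d\zeta_1\otimes d\zeta_3$ spanning the line sub-bundle $\mathcal{P}_0\otimes\mathcal{L}$; this was already recorded in the proof of Corollary \ref{KS at cusps}, and the identification of $\mathcal{P}_0$ as $\mathcal{O}_{\widehat{S}}\cdot d\zeta_1$ (the part of $\omega_{\mathcal{A}}$ coming from the abelian part of the degenerate fiber) is explained at the start of Section \ref{KS at cusp}. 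Since $KS(\Sigma)$ is an isomorphism, I can invert the two relations
\begin{equation*}
KS(d\zeta_1\otimes d\zeta_3)=-\delta(2\pi i)^{-1}\,du,\qquad KS(d\zeta_2\otimes d\zeta_3)=(2\pi i)^{-1}\,dz
\end{equation*}
to get $KS(\Sigma)^{-1}(du)=-\delta^{-1}(2\pi i)\,d\zeta_1\otimes d\zeta_3$ and $KS(\Sigma)^{-1}(dz)=2\pi i\,d\zeta_2\otimes d\zeta_3$.

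Next I would compose with the surjection $\mathcal{P}\otimes\mathcal{L}\twoheadrightarrow\mathcal{P}_{\mu}\otimes\mathcal{L}$ induced by $\mathcal{P}\to\mathcal{P}/\mathcal{P}_0=\mathcal{P}_{\mu}$. Because $d\zeta_1\otimes d\zeta_3$ generates $\mathcal{P}_0\otimes\mathcal{L}$, it maps to $0$, whence $\psi(du)=0$. The section $d\zeta_2\otimes d\zeta_3$ maps to the generator of $\mathcal{P}_{\mu}\otimes\mathcal{L}$ which we continue to denote $d\zeta_2\otimes d\zeta_3$ (that is, $d\zeta_2$ now read modulo $\langle d\zeta_1\rangle$, which is legitimate by Lemma \ref{section invariance}), so $\psi(dz)=2\pi i\cdot d\zeta_2\otimes d\zeta_3$.

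Finally I would observe that $\psi$ is well defined on all of $\Omega^1_{\mathfrak{X}}$ and not merely near the cusp, since it is by definition the composite $(\mathrm{pr}\otimes 1)\circ KS(\Sigma)^{-1}$ of the everywhere-defined isomorphism $KS(\Sigma)^{-1}$ with the projection $\mathrm{pr}\colon\mathcal{P}\to\mathcal{P}_{\mu}$; the displayed formulas simply describe it in the chosen coordinates around $c_{\infty}$. There is no real obstacle here — the only point requiring care is the bookkeeping of which basis vector spans $\mathcal{P}_0$ (namely $d\zeta_1$), and that has already been pinned down in Section \ref{KS at cusp}.
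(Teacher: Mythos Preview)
Your proof is correct and follows exactly the same approach as the paper: invert the two displayed $KS(\Sigma)$ formulae from the preceding Proposition and then kill the $d\zeta_1\otimes d\zeta_3$ term by projecting modulo $\mathcal{P}_0\otimes\mathcal{L}$, using that $d\zeta_1$ spans $\mathcal{P}_0$. The paper's own proof is a single sentence (``As we have seen, $d\zeta_1$ is a basis for $\mathcal{P}_0$''); you have simply written out the bookkeeping in full.
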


\begin{proof}
As we have seen, $d\zeta _{1}$ is a basis for $\mathcal{P}_{0}.$
\end{proof}

\begin{corollary}
\label{L^3}The isomorphism $\mathcal{L}^{3}\simeq \Omega _{S}^{2}$ maps $%
d\zeta _{3}^{\otimes 3}$ to a constant multiple of $dz\wedge du.$
\end{corollary}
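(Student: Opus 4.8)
The plan is to combine the two instances of the Kodaira-Spencer map with the explicit formula for $\det\mathcal{P}\simeq\mathcal{L}$ given analytically by the section $d\zeta_1\wedge d\zeta_2\mapsto d\zeta_3$, and then read off the image of $d\zeta_3^{\otimes 3}$ by taking wedge products of the formulae $(\ref{KS formula})$.

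First I would recall that the isomorphism $\mathcal{L}^3\simeq\Omega_S^2$ of Corollary~\ref{KS} is obtained by taking the top exterior power of $KS(\Sigma):\mathcal{P}\otimes\mathcal{L}\simeq\Omega_S^1$ and identifying $\det\mathcal{P}$ with $\mathcal{L}$. Concretely, $KS(\Sigma)$ induces an isomorphism $\det\bigl(\mathcal{P}\otimes\mathcal{L}\bigr)=\det\mathcal{P}\otimes\mathcal{L}^{2}\simeq\det\Omega_S^1=\Omega_S^2$, and the chosen identification $\det\mathcal{P}\simeq\mathcal{L}$ sends $d\zeta_1\wedge d\zeta_2$ to $d\zeta_3$ (this is exactly the analytic trivialization used in the proof of Proposition~\ref{detP = L}). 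Thus $d\zeta_3^{\otimes 3}$ on the left corresponds, under $\det\mathcal{P}\otimes\mathcal{L}^2\simeq\mathcal{L}^3$, to $(d\zeta_1\wedge d\zeta_2)\otimes d\zeta_3^{\otimes 2}$.

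Next I would apply $\det KS(\Sigma)$ to $(d\zeta_1\wedge d\zeta_2)\otimes(d\zeta_3\otimes d\zeta_3)$, viewing this as the wedge of $d\zeta_1\otimes d\zeta_3$ and $d\zeta_2\otimes d\zeta_3$ inside $\wedge^2(\mathcal{P}\otimes\mathcal{L})$. By $(\ref{KS formula})$,
\begin{equation*}
KS(d\zeta_1\otimes d\zeta_3)\wedge KS(d\zeta_2\otimes d\zeta_3)=\bigl(-\delta(2\pi i)^{-1}du\bigr)\wedge\bigl((2\pi i)^{-1}dz\bigr)=\delta(2\pi i)^{-2}\,dz\wedge du,
\end{equation*}
so $d\zeta_3^{\otimes 3}$ maps to $\delta_{\mathcal{K}}(2\pi i)^{-2}\,dz\wedge du$, a nonzero constant times $dz\wedge du$, as claimed.

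There is essentially no obstacle here: the statement is a bookkeeping consequence of the two preceding displays, the only subtlety being that the formula is computed in a neighborhood of the cusp $c_\infty$ (where the coordinates $z,u,\zeta_1,\zeta_2,\zeta_3$ live and where $d\zeta_1\wedge d\zeta_2$ and $d\zeta_3$ are genuine nowhere-vanishing sections, cf.\ the proof of Proposition~\ref{detP = L}), but since all sheaves involved are line bundles on the irreducible variety $\bar S_{\mathbb C}$ and the two sections $d\zeta_3^{\otimes 3}$ and $dz\wedge du$ are globally defined and nowhere vanishing on a neighborhood of $E_{c_\infty}$, the equality of one being a constant multiple of the other propagates to all of $S$. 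The only point requiring minimal care is to make sure the orientation/ordering convention in the wedge $d\zeta_1\wedge d\zeta_2$ matches the one fixed in the identification $\det\mathcal{P}\simeq\mathcal{L}$; reversing it merely changes the constant by a sign, which does not affect the statement.
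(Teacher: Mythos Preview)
Your proof is correct and follows essentially the same approach as the paper: use the identification $\det\mathcal{P}\simeq\mathcal{L}$ (which carries $d\zeta_1\wedge d\zeta_2$ to a constant multiple of $d\zeta_3$) together with the explicit Kodaira--Spencer formulae (\ref{KS formula}), then take the wedge. The paper's proof is the same one-line argument; your only addition is making the constant explicit, though note that the precise constant depends on the normalization of $\det\mathcal{P}\simeq\mathcal{L}$ (which in the paper involves the period $\Omega_B$, not just the analytic section $\sigma_{an}$), so the isomorphism need not send $d\zeta_1\wedge d\zeta_2$ to exactly $d\zeta_3$---but as you observe, this does not affect the statement.
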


\begin{proof}
The isomorphism $\det \mathcal{P}\simeq \mathcal{L}$ carries $d\zeta
_{1}\wedge d\zeta _{2}$ to a constant multiple of $d\zeta _{3},$ so the
corollary follows from (\ref{KS formula}).
\end{proof}

\subsubsection{Transferring the results to the algebraic category}

The computations leading to (\ref{KS formula}) of course descend (still in
the analytic category) to $S_{\Bbb{C}},$ because they are local in nature.
They then hold \emph{a fortiori} in the formal completion $\widehat{S}_{\Bbb{%
C}}$ along the cuspidal component $E$. Recall that the sections $d\zeta
_{1},d\zeta _{3}$ and $d\zeta _{2}\mod \left\langle d\zeta
_{1}\right\rangle $ (respectively $du$ and $dz\mod \left\langle
du\right\rangle $) are well-defined in $\widehat{S}_{\Bbb{C}},$ because as
global sections defined over $\frak{X}$ they are invariant under $\Gamma
_{cusp}$ (see Lemma \ref{section invariance}). But the Gauss-Manin and
Kodaira-Spencer maps are defined algebraically on $S,$ and both $\Omega _{%
\widehat{S}}^{1}$ and $\omega _{\mathcal{A}/\widehat{S}}$ are flat over $%
R_{0},$ so from the validity of the formulae over $\Bbb{C}$ we deduce their
validity in $\widehat{S}$ over $R_{0},$ provided we identify the
differential forms figuring in them (suitably normalized) with elements of $%
\Omega _{\widehat{S}}^{1}$ and $\omega _{\mathcal{A}/\widehat{S}}$ defined
over $R_{0}.$ In particular, they hold in the characteristic $p$ fiber as
well.

From the relation 
\begin{equation}
\frac{dq}{q}=\frac{2\pi i}{M}dz
\end{equation}
we deduce that the map $\psi $ has a simple zero along the cuspidal divisor.

Finally, although we have done all the computations at one specific cusp, it
is clear that similar computations hold at any other cusp.

\subsection{Fields of rationality\label{Fields of rationality}}

\subsubsection{Rationality of local sections of $\mathcal{P}$ and $\mathcal{L%
}$}

We have compared the arithmetic surface $S$ with the complex analytic
surfaces $\Gamma _{j}\backslash \frak{X}$ ($1\le j\le m$), and the
compactifications of these two models. We have also compared the universal
semi-abelian scheme $\mathcal{A}$ and the automorphic vector bundles $%
\mathcal{P}$ and $\mathcal{L}$ in both models. In this section we want to
compare the local parameters obtained from the two presentations, and settle
the question of rationality. For simplicity, we shall work rationally and
not integrally, which is all we need. In order to work integrally one would
have to study degeneration and periods of abelian varieties integrally,
which is more delicate, see [La1], Ch.I, Sections 3,4.

We shall need to look at local parameters at the cusps, and as the cusps are
defined only over $\mathcal{K}_{N},$ we shall work with $S_{\mathcal{K}_{N}}$
instead of $S_{\mathcal{K}}.$ With a little more care, working with Galois
orbits of cusps, we could probably prove rationality over $\mathcal{K},$ but
for our purpose $\mathcal{K}_{N}$ is good enough.

If $\xi $ and $\eta $ belong to a $\mathcal{K}_{N}$-module, we write $\xi
\sim \eta $ to mean that $\eta =c\xi $ for some $c\in \mathcal{K}%
_{N}^{\times }.$

We begin with the vector bundles $\mathcal{P}$ and $\mathcal{L}.$ Over $\Bbb{%
C}$ they yield analytic vector bundles $\mathcal{P}_{an}$ and $\mathcal{L}%
_{an}$ on each $X_{\Gamma _{j}}$ ($1\le j\le m$). Assume for the rest of
this section that $j=1$ and write $\Gamma =\Gamma _{1}.$ Similar results
will hold for every $j.$ The vector bundles $\mathcal{P}$ and $\mathcal{L}$
are trivialized over the unit ball $\frak{X}$ by means of the nowhere
vanishing sections $d\zeta _{3}\in H^{0}(\frak{X},\mathcal{L}_{an})$ and $%
d\zeta _{1},d\zeta _{2}\in H^{0}(\frak{X},\mathcal{P}_{an}).$ These sections
do not descend to $X_{\Gamma },$ but 
\begin{equation}
\sigma _{an}=(d\zeta _{1}\wedge d\zeta _{2})\otimes d\zeta _{3}^{-1}\in
H^{0}(X_{\Gamma },\det \mathcal{P}\otimes \mathcal{L}^{-1})
\end{equation}
does, as the factors of automorphy of $d\zeta _{1}\wedge d\zeta _{2}$ and $%
d\zeta _{3}$ are the same (\emph{cf }Section \ref{automorphy factors}).
Furthermore, this factor of automorphy (i.e. $j(\gamma ;z,u)$) is trivial on 
$\Gamma _{cusp},$ the stabilizer of $c_{\infty }$ in $\Gamma ,$ so $d\zeta
_{1}\wedge d\zeta _{2}$ and $d\zeta _{3}$ define sections of $\det \mathcal{P%
}$ and $\mathcal{L}$ on $\widehat{S}_{\Bbb{C}},$ the formal completion of $%
\bar{S}_{\Bbb{C}}$ along the cuspidal divisor $E_{c}=p^{-1}(c_{\infty
})\subset \bar{S}_{\Bbb{C}}.$ The same also holds for $d\zeta _{1}$ and $%
d\zeta _{2}\mod \left\langle d\zeta _{1}\right\rangle $ individually
(Lemma \ref{section invariance}). Along $E_{c},\mathcal{P}$ has a canonical
filtration 
\begin{equation}
0\rightarrow \mathcal{P}_{0}\rightarrow \mathcal{P}\rightarrow \mathcal{P}%
_{\mu }\rightarrow 0
\end{equation}
and $d\zeta _{1}$ is a generator of $\mathcal{P}_{0}.$ (Compare (\ref
{semi-abelian extension}) and (\ref{complex extension}) and note that the
projection to $\Bbb{C}/\mathcal{O}_{\mathcal{K}}=B(\Bbb{C})$ is via the
coordinate $\zeta _{1},$ so $d\zeta _{1}$ is a generator of $\mathcal{P}%
_{0}|_{E_{c}}=\omega _{B}$.) As we have shown in Section \ref{retraction},
this filtration extends to the formal neighborhood $\widehat{S}_{\Bbb{C}}$
of $E_{c}.$ The vector bundles $\mathcal{P}$ and $\mathcal{L},$ as well as
the filtration on $\mathcal{P},$ are defined over $\mathcal{K}_{N}$ . It
makes sense therefore to ask if certain sections are $\mathcal{K}_{N}$%
-rational. Recall that the cusp $c_{\infty }$ is of type $(\mathcal{O}_{%
\mathcal{K}},\mathcal{O}_{\mathcal{K}}).$

\begin{proposition}
(i) $2\pi i\cdot d\zeta _{3}\in H^{0}(\widehat{S}_{\mathcal{K}_{N}},\mathcal{%
L})$. In other words, this section is $\mathcal{K}_{N}$-rational.

(ii) Similarly $2\pi i\cdot d\zeta _{2}$ projects (modulo $\mathcal{P}_{0}$)
to a $\mathcal{K}_{N}$-rational section of $\mathcal{P}_{\mu }.$

(iii) Let $B$ be the elliptic curve over $\mathcal{K}_{N}$ associated with
the cusp $c_{\infty }$ as in Section \ref{universal semi abelian}. Let $%
\Omega _{B}\in \Bbb{C}^{\times }$ be a period of a basis $\omega $ of $%
\omega _{B}=H^{0}(B,\Omega _{B/\mathcal{K}_{N}}^{1})$ (i.e. the lattice of
periods of $\omega $ is $\Omega _{B}\cdot \mathcal{O}_{\mathcal{K}}$). This $%
\Omega _{B}$ is well-defined up to an element of $\mathcal{K}_{N}^{\times }.$
Then $\Omega _{B}\cdot d\zeta _{1}\in H^{0}(\widehat{S}_{\mathcal{K}_{N}},%
\mathcal{P}_{0})$ is $\mathcal{K}_{N}$-rational.
\end{proposition}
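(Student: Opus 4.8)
The plan is to reduce all three claims to the cuspidal component $E=E_{c_\infty}\subset\bar{S}$, where the universal semi-abelian scheme has an honest torus-by-abelian-variety structure defined over $\mathcal{K}_N$, and then to read off the rationality of $d\zeta_1,d\zeta_2,d\zeta_3$ from the two canonical rational structures available there: the canonical $\Bbb{Z}$-structure $\Bbb{Z}\cdot\frac{dt}{t}$ on the cotangent space at the origin of $\Bbb{G}_m$, and the algebraic cotangent space $\omega_B=H^0(B,\Omega^1_{B/\mathcal{K}_N})$ of the CM elliptic curve $B$. The factor $2\pi i$ enters only through the analytic uniformization, via $e(x)=e^{2\pi ix}$ and $dq/q=2\pi i\,dz$.

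For the reduction: by \secref{retraction} the retraction $r\colon\widehat{S}\to C$ and the isomorphisms $r^*(\mathcal{L}|_C)\simeq\mathcal{L}|_{\widehat{S}}$, $r^*(\mathcal{P}|_C)\simeq\mathcal{P}|_{\widehat{S}}$, $r^*(\mathcal{P}_0|_C)\simeq\mathcal{P}_0|_{\widehat{S}}$ are defined over $R_0$, so a section of $\mathcal{L}$, $\mathcal{P}_\mu$ or $\mathcal{P}_0$ over $\widehat{S}_{\mathcal{K}_N}$ is, via $r^*$, the same datum as such a section over $E_{\mathcal{K}_N}$; and, by the analytic identification of $r$ with the geodesic retraction together with the degeneration computations of \secref{degeneration}, the sections $d\zeta_1$, $d\zeta_3$ and $d\zeta_2\bmod\langle d\zeta_1\rangle$ are themselves pulled back from $E$ by $r$. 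Hence it suffices to prove the three statements along $E$. Over $E$ one has an exact sequence $0\to T\to\mathcal{A}_{/E}\to B\to0$ of $\mathcal{O}_{\mathcal{K}}$-group schemes over $\mathcal{K}_N$, with $T=\mathcal{O}_{\mathcal{K}}\otimes\Bbb{G}_m$ and $B$ the CM elliptic curve of \secref{universal semi abelian} (see $(\ref{complex extension})$). Taking cotangent spaces at the origin yields $0\to\omega_B\to\omega_{\mathcal{A}/E}\to\omega_T\to0$; applying the idempotents $e_\Sigma,e_{\bar{\Sigma}}$ (defined over $R_0$) and using that $\omega_B$ is of type $(1,0)$, we obtain $\mathcal{L}|_E\simeq\omega_T(\bar{\Sigma})$, $\mathcal{P}_0|_E=\omega_B$, and $\mathcal{P}_\mu|_E\simeq\omega_T(\Sigma)$.

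Parts (i) and (ii) now follow from the observation that $\omega_{\Bbb{G}_m}$ is canonically $\Bbb{Z}\cdot\frac{dt}{t}$ and $T_{\mathcal{K}_N}$ is split: thus $\omega_T$ has a canonical $\Bbb{Z}$-structure, and $\omega_T(\Sigma)$, $\omega_T(\bar{\Sigma})$ are $\mathcal{K}_N$-rational lines generated by the two isotypic components of $\frac{dt}{t}$. It remains to identify those generators with $2\pi i\,d\zeta_2$ and $2\pi i\,d\zeta_3$, which is exactly what the uniformization $(\ref{semi-abelian})$ gives: the map $\mathcal{G}_u\simeq\{\Bbb{C}\oplus(\mathcal{O}_{\mathcal{K}}\otimes\Bbb{C}^\times)\}/L_u$ is built from $e_{\mathcal{O}_{\mathcal{K}}}$, i.e. from $x\mapsto e^{2\pi ix}$ in each linear coordinate, under which $\frac{dt}{t}$ pulls back to $2\pi i\,d\zeta$; since $\zeta_2$ is the $\Sigma$-coordinate and $\zeta_3$ the $\bar{\Sigma}$-coordinate on the torus factor $\mathcal{O}_{\mathcal{K}}\otimes\Bbb{C}$, we get that $2\pi i\,d\zeta_3$ generates $\omega_T(\bar{\Sigma})=\mathcal{L}|_E$ rationally and $2\pi i\,d\zeta_2\bmod\langle d\zeta_1\rangle$ generates $\mathcal{P}_\mu|_E$ rationally. (Note $d\zeta_2$ itself need not be $\mathcal{K}_N$-rational in $\mathcal{P}$, consistent with the non-triviality of $\mathcal{P}|_E$ recorded in \secref{PL over C}.) Pulling back by $r^*$ yields (i) and (ii).

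For (iii): on $E$ one has $\mathcal{P}_0|_E=\omega_B=H^0(B,\Omega^1_{B/\mathcal{K}_N})$; fix a rational basis $\omega$ and write $\omega=c\cdot d\zeta_1$ in the uniformization $B(\Bbb{C})=\Bbb{C}/\mathcal{O}_{\mathcal{K}}$ of $(\ref{complex extension})$, where $d\zeta_1$ is the standard invariant differential, whose period lattice is $\mathcal{O}_{\mathcal{K}}$. Then the period lattice of $\omega$ is $c\cdot\mathcal{O}_{\mathcal{K}}$, so by definition $\Omega_B$ equals $c$ up to $\mathcal{O}_{\mathcal{K}}^\times$, hence $\Omega_B\cdot d\zeta_1\sim c\cdot d\zeta_1=\omega$ is $\mathcal{K}_N$-rational; applying $r^*$ gives the statement over $\widehat{S}_{\mathcal{K}_N}$. \textbf{The main obstacle} is the matching, along $E$, of the complex-analytic sections $d\zeta_i$ from the moving-lattice model with the canonical algebraic generators of $\omega_T$ and $\omega_B$ over $\mathcal{K}_N$: this is what forces the normalizations by $2\pi i$, and it rests on the compatibility of the geodesic and infinitesimal retractions and on the explicit degeneration of $\mathcal{A}$ along a geodesic from \secref{degeneration}. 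All the rest is formal, given the canonical $\Bbb{Z}$- and $\mathcal{K}_N$-rational structures on the cotangent bundles of a torus and of a CM elliptic curve.
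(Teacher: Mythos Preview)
Your proof is correct and follows essentially the same approach as the paper: reduce to the cuspidal component $E$, identify $\mathcal{L}|_E$ and $\mathcal{P}_\mu|_E$ with the $\bar{\Sigma}$- and $\Sigma$-isotypic pieces of the cotangent space of the torus $\mathcal{O}_{\mathcal{K}}\otimes\Bbb{G}_m$ (defined over $\mathcal{K}$), match the canonical generator $dt/t$ with $2\pi i\,d\zeta_j$ via the exponential $e_{\mathcal{O}_{\mathcal{K}}}$, and handle $\mathcal{P}_0|_E=\omega_B$ by the period comparison. Your explicit invocation of the retraction $r$ to pass from $E$ to $\widehat{S}$ is a welcome clarification that the paper leaves implicit, but the substance of the argument is the same.
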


\begin{proof}
Let $E$ be the component of $C_{\mathcal{K}_{N}}$ which over $\Bbb{C}$
becomes $E_{c}.$ Let $\mathcal{G}$ be the universal semi-abelian scheme over 
$E.$ Then $\mathcal{G}$ is a semi-abelian scheme which is an extension of $%
B\times _{\mathcal{K}_{N}}E$ by the torus $(\mathcal{O}_{\mathcal{K}}\otimes 
\Bbb{G}_{m,\mathcal{K}_{N}})\times _{\mathcal{K}_{N}}E.$ At any point $u\in
E(\Bbb{C})$ we have the analytic model $\mathcal{G}_{u}$ (\ref{semi-abelian}%
) for the fiber of $\mathcal{G}$ at $u,$ but the abelian part and the toric
part are constant. Over $E$ the line bundle $\mathcal{P}_{0}$ is (by
definition) $\omega _{B\times E/E}.$ As the lattice of periods of a suitable 
$\mathcal{K}_{N}$-rational differential is $\Omega _{B}\cdot \mathcal{O}_{%
\mathcal{K}},$ while the lattice of periods of $d\zeta _{1}$ is $\mathcal{O}%
_{\mathcal{K}},$ part (iii) follows. For parts (i) and (ii) observe that the
toric part of $\mathcal{G}$ is in fact defined over $\mathcal{K}$, and that $%
e_{\mathcal{O}_{\mathcal{K}}}^{*}$ maps the cotangent space of $\mathcal{O}_{%
\mathcal{K}}\otimes \Bbb{G}_{m,\mathcal{K}}$ isomorphically to the $\mathcal{%
K}$-span of $2\pi id\zeta _{2}$ and $2\pi id\zeta _{3}$.
\end{proof}

\begin{corollary}
\label{rationality}$\Omega _{B}\cdot \sigma _{an}$ is a nowhere vanishing
global section of $\det \mathcal{P}\otimes \mathcal{L}^{-1}$ over $S_{\Gamma
},$ rational over $\mathcal{K}_{N}.$
\end{corollary}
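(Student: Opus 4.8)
The plan is to reduce the statement to the preceding proposition, which already carries all the period‑theoretic content: the corollary asserts, in essence, that $\mathcal{K}_N$‑rationality of a section of $\mathcal{F}:=\det\mathcal{P}\otimes\mathcal{L}^{-1}$ can be detected in a formal neighbourhood of the cuspidal divisor. First I would recall, from the proof of Proposition \ref{detP = L}, that $\sigma_{an}=(d\zeta_1\wedge d\zeta_2)\otimes d\zeta_3^{-1}$ is not merely a section over $X_\Gamma$ but extends to a nowhere‑vanishing global section of $\mathcal{F}$ over the compact complex surface $\bar X_\Gamma$, trivializing $\mathcal{F}$ there; in particular $\Omega_B\cdot\sigma_{an}\in H^0(\bar X_\Gamma,\mathcal{F})$, and its nowhere‑vanishing is automatic. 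Thus only the $\mathcal{K}_N$‑rationality is at issue.

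Next I would compute the restriction of $\Omega_B\cdot\sigma_{an}$ to the formal completion $\widehat S_{\mathcal{K}_N}$ of $\bar S_{\mathcal{K}_N}$ along the cuspidal component $E=E_{c_\infty}$. By Section \ref{retraction} the filtration $0\to\mathcal{P}_0\to\mathcal{P}\to\mathcal{P}_\mu\to 0$ is defined over $\widehat S_{\mathcal{K}_N}$, inducing $\det\mathcal{P}\simeq\mathcal{P}_0\otimes\mathcal{P}_\mu$, under which $d\zeta_1\wedge d\zeta_2$ corresponds to $d\zeta_1\otimes\overline{d\zeta_2}$, where $\overline{d\zeta_2}$ is the image of $d\zeta_2$ in $\mathcal{P}_\mu$ and $d\zeta_1$ generates $\mathcal{P}_0$. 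Hence
\[
\Omega_B\cdot\sigma_{an}=\bigl(\Omega_B\,d\zeta_1\bigr)\otimes\bigl(2\pi i\,\overline{d\zeta_2}\bigr)\otimes\bigl(2\pi i\,d\zeta_3\bigr)^{-1}\qquad\text{on }\widehat S,
\]
the two factors of $2\pi i$ cancelling. By parts (iii), (ii) and (i) of the proposition above, the three tensor factors are $\mathcal{K}_N$‑rational sections of $\mathcal{P}_0$, $\mathcal{P}_\mu$ and $\mathcal{L}^{-1}$ over $\widehat S_{\mathcal{K}_N}$ — the last one nowhere vanishing — so $\Omega_B\cdot\sigma_{an}$ is a $\mathcal{K}_N$‑rational section of $\mathcal{F}$ over $\widehat S_{\mathcal{K}_N}$.

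Finally I would propagate this to a global statement. Since $\bar S$ is proper and $\mathcal{F}$ coherent, $W_0:=H^0(\bar S_{\mathcal{K}_N},\mathcal{F})$ is finite‑dimensional over $\mathcal{K}_N$ and $W_0\otimes_{\mathcal{K}_N}\mathbb{C}\xrightarrow{\sim}H^0(\bar S_\mathbb{C},\mathcal{F})=:W$; restriction to $\widehat S$ is injective on global sections (a section vanishing on $\widehat S$ vanishes on a Zariski‑dense open of each component) and is defined over $\mathcal{K}_N$, hence carries $W_0$ into $V_0:=H^0(\widehat S_{\mathcal{K}_N},\mathcal{F})\subseteq H^0(\widehat S_\mathbb{C},\mathcal{F})$. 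It is then elementary linear algebra over $\mathcal{K}_N\hookrightarrow\mathbb{C}$ that a section $w\in W$ whose restriction to $\widehat S$ lies in $V_0$ must lie in $W_0$ (write $w$ in a $\mathcal{K}_N$‑basis $e_i$ of $W_0$; the $\rho(e_i)$ are $\mathcal{K}_N$‑linearly independent in $V_0$, complete them to a $\mathcal{K}_N$‑basis of $V_0$, and read off coordinates). Applying this to $w=\Omega_B\cdot\sigma_{an}$ gives $\Omega_B\cdot\sigma_{an}\in W_0$, and restricting from $\bar S_{\mathcal{K}_N}$ to $S_{\mathcal{K}_N}=S_\Gamma$ yields the corollary. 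The only genuinely non‑formal ingredient is the preceding proposition itself; the point requiring care here is the descent bookkeeping — that $\mathcal{K}_N$‑rationality checked on the merely formal neighbourhood of $C$ spreads to all of $\bar S$ — which is exactly where properness of $\bar S$ and injectivity of formal restriction are used.
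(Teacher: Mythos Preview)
Your proof is correct and follows essentially the same strategy as the paper: establish nowhere-vanishing over $\bar X_\Gamma$ from the proof of Proposition~\ref{detP = L}, verify $\mathcal{K}_N$-rationality on the formal neighbourhood of the standard cuspidal component via the preceding proposition, and then propagate to a global statement. The paper compresses your final descent step into the single remark that, $\bar X_\Gamma$ being connected, the field of definition can be checked at one cusp (implicitly using GAGA and injectivity of formal restriction), whereas you spell out the linear-algebra bookkeeping explicitly; your intermediate decomposition via $\det\mathcal{P}\simeq\mathcal{P}_0\otimes\mathcal{P}_\mu$ is likewise just a more explicit version of what the paper leaves to the reader.
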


\begin{proof}
Recall that we denote by $S_{\Gamma }$ the connected component of $S_{%
\mathcal{K}_{N}}$ whose associated analytic space is the complex manifold $%
X_{\Gamma }.$ We have seen that as an analytic section $\Omega _{B}\cdot
\sigma _{an}$ descends to $X_{\Gamma }$ and extends to the smooth
compactification $\bar{X}_{\Gamma }.$ By GAGA, it is algebraic. Since $\bar{X%
}_{\Gamma }$ is connected, to check its field of definition, it is enough to
consider it at one of the cusps. By the Proposition, its restriction to the
formal neighborhood of $E_{c}$ ($c=c_{\infty }$) is defined over $\mathcal{K}%
_{N}.$
\end{proof}

The complex periods $\Omega _{B}$ (and their powers) appear as the
transcendental parts of special values of $L$-functions associated with
Grossencharacters of $\mathcal{K}$. They are therefore instrumental in the
construction of $p$-adic $L$-functions on $\mathcal{K}.$ We expect them to
appear in the $p$-adic interpolation of holomorphic Eisenstein series on the
group $\mathbf{G},$ much as powers of $2\pi i$ (values of $\zeta (2k)$)
appear in the $p$-adic interpolation of Eisenstein series on $GL_{2}(\Bbb{Q}%
).$

\subsubsection{Rationality of local parameters at the cusps}

We keep the assumptions and the notation of the previous section.
Analytically, neighborhoods of $E_{c_{\infty }}$ were described in Section 
\ref{complex smooth compactification} with the aid of the parameters $(z,u).$
Let $\widehat{S}$ denote the formal completion of $\bar{S}_{\mathcal{K}_{N}}$
along $E.$ Let $r:\widehat{S}\rightarrow E$ be the infinitesimal retraction
discussed in Section \ref{retraction}. If $i:E\hookrightarrow \widehat{S}$
is the closed embedding then $r\circ i=Id_{E}.$ If $\mathcal{I}$ is the
sheaf of definition of $E,$ then $\mathcal{N}=\mathcal{I}/\mathcal{I}^{2}$
is the conormal bundle to $E,$ hence its analytification is the dual of the
line bundle $\mathcal{T},$

\begin{equation}
\mathcal{N}_{an}=\mathcal{T}^{\vee }.
\end{equation}

Consider $r^{*}\mathcal{N}$ on $\widehat{S}.$ The retraction allows us to
split the exact sequence 
\begin{equation}
0\rightarrow \mathcal{N}\rightarrow i^{*}\Omega _{\widehat{S}%
}^{1}\rightarrow \Omega _{E}^{1}\rightarrow 0
\end{equation}
using $\Omega _{E}^{1}=i^{*}r^{*}\Omega _{E}^{1}\subset i^{*}\Omega _{%
\widehat{S}}^{1}$. Thus $i^{*}\Omega _{\widehat{S}}^{1}=\mathcal{N}\times
\Omega _{E}^{1}.$ The map $i\circ r:\widehat{S}\rightarrow \widehat{S}$
induces a sheaf homomorphism $r^{*}i^{*}\Omega _{\widehat{S}}^{1}\rightarrow
\Omega _{\widehat{S}}^{1},$ which becomes the identity if we restrict it to $%
E$ (i.e. follow it with $i^{*}$). By Nakayama's lemma, it is an isomorphism.
It follows that 
\begin{equation}
\Omega _{\widehat{S}}^{1}=r^{*}i^{*}\Omega _{\widehat{S}}^{1}=r^{*}\mathcal{N%
}\times r^{*}\Omega _{E}^{1}.
\end{equation}

Let $x\in E$ and represent it by $u\in \Bbb{C}$ (modulo $\Lambda $). Then $%
q=e^{2\pi iz/M}$, where $M$ is the width of the cusp (\ref{width}), is a
local analytic parameter on a classical neighborhood $U_{x}$ of $x$ which
vanishes to first order along $E$. Note that $q$ depends on the choice of $u$
(see Remark below). It follows that $dq,$ the image of $q$ in $\mathcal{I}%
_{an}/\mathcal{I}_{an}^{2},$ is a basis of $\mathcal{N}_{an}$ (on $U_{x}\cap
E$). But 
\begin{equation}
2\pi i\cdot dz=M\frac{dq}{q}
\end{equation}
($\mod \left\langle du\right\rangle $) is independent of $u$ (see (\ref
{unipotent action})), so represents a global meromorphic section of $r^{*}%
\mathcal{N}_{an},$ with a simple pole along $E\subset \widehat{S}_{\Bbb{C}}.$
By GAGA, this section is (meromorphic) algebraic.

\begin{proposition}
(i) The section $2\pi i\cdot dz$ $\mod \left\langle du\right\rangle $
is $\mathcal{K}_{N}$-rational, i.e. it is the analytification of a section
of $r^{*}\mathcal{N}.$ (ii) The section $\Omega _{B}\cdot du$ is $\mathcal{K}%
_{N}$-rational, i.e. belongs to $H^{0}(E,\Omega _{E/\mathcal{K}_{N}}^{1}).$
\end{proposition}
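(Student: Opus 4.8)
The plan is to deduce both parts from the preceding Proposition --- which records the $\mathcal{K}_{N}$-rationality of $2\pi i\cdot d\zeta _{3}$ (a section of $\mathcal{L}$), of $2\pi i\cdot d\zeta _{2}\bmod\mathcal{P}_{0}$ (a section of $\mathcal{P}_{\mu }$), and of $\Omega _{B}\cdot d\zeta _{1}$ (a section of $\mathcal{P}_{0}$) on $\widehat{S}_{\mathcal{K}_{N}}$ --- by transporting these statements through the Kodaira--Spencer isomorphism $KS(\Sigma )$, which is algebraic and defined over $R_{0}$, hence over $\mathcal{K}_{N}$, and whose explicit form near $c_{\infty }$ is recorded in (\ref{KS formula}), Corollary~\ref{complex kappa} and Corollary~\ref{KS at cusps}. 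Since $\delta =\delta _{\mathcal{K}}$ and the width $M$ lie in $\mathcal{K}_{N}^{\times }$, I will freely discard such scalar factors.

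For part (i) I would start from Corollary~\ref{complex kappa}: the (algebraic, $\mathcal{K}_{N}$-rational) map $\psi :\Omega _{\widehat{S}}^{1}\rightarrow \mathcal{P}_{\mu }\otimes\mathcal{L}$ obtained by inverting $KS(\Sigma )$ and projecting $\mathcal{P}\rightarrow\mathcal{P}_{\mu }$ kills $du$ and sends $dz$ to $2\pi i\cdot d\zeta _{2}\otimes d\zeta _{3}$; therefore $\psi (2\pi i\cdot dz)=(2\pi i\cdot d\zeta _{2})\otimes(2\pi i\cdot d\zeta _{3})$ in $\mathcal{P}_{\mu }\otimes\mathcal{L}$, which is $\mathcal{K}_{N}$-rational by parts (i) and (ii) of the preceding Proposition. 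Because $\psi$ annihilates $du$, a generator of $r^{*}\Omega _{E}^{1}\subset\Omega _{\widehat{S}}^{1}$, it factors through $r^{*}\mathcal{N}=\Omega _{\widehat{S}}^{1}/r^{*}\Omega _{E}^{1}$, and there (using the simple zero of $\psi$ along $C$ noted after (\ref{KS formula})) it induces a $\mathcal{K}_{N}$-rational isomorphism $r^{*}\mathcal{N}\overset{\sim }{\rightarrow }(\mathcal{P}_{\mu }\otimes\mathcal{L})(-C)$. Chasing $2\pi i\cdot dz$ --- a section of $r^{*}\mathcal{N}$ with a simple pole along $E$, since $2\pi i\cdot dz=M\,dq/q$ --- back through this isomorphism then exhibits $2\pi i\cdot dz\bmod\langle du\rangle$ as $\mathcal{K}_{N}$-rational.

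For part (ii) I would instead use the first formula of (\ref{KS formula}), $KS(\Sigma )(d\zeta _{1}\otimes d\zeta _{3})=-\delta(2\pi i)^{-1}du$, which after multiplication by $\Omega _{B}$ becomes $\Omega _{B}\cdot du=-\delta ^{-1}\,KS(\Sigma )\bigl((\Omega _{B}d\zeta _{1})\otimes(2\pi i\cdot d\zeta _{3})\bigr)$. By Corollary~\ref{KS at cusps}, the restriction of $KS(\Sigma )$ to the sub-bundle $\mathcal{P}_{0}\otimes\mathcal{L}$ is holomorphic on $\widehat{S}$, and by the same formula it takes values in $r^{*}\Omega _{E}^{1}$; it is moreover defined over $R_{0}$. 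Since $\Omega _{B}d\zeta _{1}$ and $2\pi i\cdot d\zeta _{3}$ are $\mathcal{K}_{N}$-rational by parts (iii) and (i) of the preceding Proposition, the right-hand side is a $\mathcal{K}_{N}$-rational section of $r^{*}\Omega _{E}^{1}$; applying $i^{*}$ along $i:E\hookrightarrow\widehat{S}$, so that $r^{*}\Omega _{E}^{1}$ is identified with $\Omega _{E}^{1}$, gives $\Omega _{B}\cdot du\in H^{0}(E,\Omega _{E/\mathcal{K}_{N}}^{1})$, up to the factor $-\delta\in\mathcal{K}_{N}^{\times }$. (Alternatively, $E$ and $B$ are isogenous over $\mathcal{K}_{N}$, so their periods agree modulo $\mathcal{K}_{N}^{\times }$, while $du$ is $\Omega _{E}^{-1}$ times the analytification of a $\mathcal{K}_{N}$-rational invariant differential on $E=\Bbb{C}/N\mathcal{O}_{\mathcal{K}}$; this yields (ii) directly.)

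There is no conceptually hard step here: the Proposition is essentially a corollary of the preceding one together with the cuspidal Kodaira--Spencer computation. The only point requiring care --- the main obstacle --- is the bookkeeping needed to check that every sheaf isomorphism invoked is genuinely defined over $\mathcal{K}_{N}$: the meromorphic extension of $KS(\Sigma )$ to $\widehat{S}$, the induced identification $r^{*}\mathcal{N}\simeq(\mathcal{P}_{\mu }\otimes\mathcal{L})(-C)$, and the splitting $\Omega _{\widehat{S}}^{1}=r^{*}\mathcal{N}\oplus r^{*}\Omega _{E}^{1}$. This holds because $KS$ is defined over $R_{0}$, the retraction $r$ and the filtration $\mathcal{P}_{0}\subset\mathcal{P}$ are defined over $\mathcal{K}_{N}$, and the cuspidal divisor $C$ is $\mathcal{K}_{N}$-rational; one must also keep careful track of the simple pole of $dz$ (equivalently of $dq/q$) along the boundary throughout.
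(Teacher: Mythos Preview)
Your proposal is correct and follows essentially the same route as the paper's own proof: both parts are deduced from the preceding Proposition by transporting its $\mathcal{K}_{N}$-rational sections of $\mathcal{P}_{0}$, $\mathcal{P}_{\mu}$, and $\mathcal{L}$ through the algebraically defined Kodaira--Spencer map, with the same handling of the simple pole along $C$ (the paper clears it by multiplying by a local equation $h$ of $E$, which is the local-coordinate version of your twist by $\mathcal{O}(-C)$), and the paper likewise records the alternative period argument for part (ii). The only cosmetic difference is that the paper phrases part (i) using $KS(\Sigma)$ in the forward direction on $\widehat{\mathcal{I}}\otimes\mathcal{P}_{\mu}\otimes\mathcal{L}\to r^{*}\mathcal{N}$, whereas you use the inverse $\psi$; these are the same isomorphism read in opposite directions.
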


\begin{proof}
The proof relies on the Kodaira-Spencer isomorphism $KS(\Sigma )$ (\ref{KS
formula}), which is a $\mathcal{K}_{N}$-rational (even $\mathcal{K}$%
-rational) algebraic isomorphism between $\mathcal{P}\otimes \mathcal{L}$
and $\Omega _{S}^{1}.$ As we have shown, it extends to a meromorphic
homomorphism from $\mathcal{P}\otimes \mathcal{L}$ to $\Omega _{\bar{S}}^{1}$
over $\bar{S}.$ Over $\widehat{S}$ it induces an isomorphism of $\mathcal{P}%
_{0}\otimes \mathcal{L}$ onto $r^{*}\Omega _{E}^{1}\subset \Omega _{\widehat{%
S}}^{1}$ carrying the $\mathcal{K}_{N}$-rational section $\Omega _{B}d\zeta
_{1}\otimes 2\pi id\zeta _{3}$ to $-\Omega _{B}\delta \cdot du$, proving
part (ii) of the proposition. It also carries $2\pi id\zeta _{2}\otimes 2\pi
id\zeta _{3}$ to $2\pi idz,$ but the latter is only meromorphic. We may
summarize the situation over $\widehat{S}$ by the following commutative
diagram with exact rows: 
\begin{equation}
\begin{array}{lllllllll}
0 & \rightarrow & \widehat{\mathcal{I}}\otimes \mathcal{P}_{0}\otimes 
\mathcal{L} & \rightarrow & \widehat{\mathcal{I}}\otimes \mathcal{P}\otimes 
\mathcal{L} & \rightarrow & \widehat{\mathcal{I}}\otimes \mathcal{P}_{\mu
}\otimes \mathcal{L} & \rightarrow & 0 \\ 
&  & \downarrow &  & \downarrow KS(\Sigma ) &  & \downarrow &  &  \\ 
0 & \rightarrow & r^{*}\Omega _{E}^{1} & \rightarrow & \Omega _{\widehat{S}%
}^{1} & \rightarrow & r^{*}\mathcal{N} & \rightarrow & 0
\end{array}
.
\end{equation}
Let $h$ be a $\mathcal{K}_{N}$-rational local equation of $E,$ i.e. a $%
\mathcal{K}_{N}$-rational section of $\mathcal{I}$ in some Zariski open $U\ $%
intersecting $E$ non-trivially, vanishing to first order along $E\cap U.$
The differential $\eta =h\cdot (2\pi idz)$ is regular on $U,$ and to prove
that it is $\mathcal{K}_{N}$-rational we may restrict it to $\widehat{S}$
and check rationality there. But in $\widehat{S}$ we have a $\mathcal{K}_{N}$%
-rational product decomposition $\Omega _{\widehat{S}}^{1}=r^{*}\mathcal{N}%
\times r^{*}\Omega _{E}^{1}$ and the projection of $\eta $ to the second
factor is 0, so it is enough to prove rationality of its projection to $r^{*}%
\mathcal{N}.$ This projection is the image, under $KS(\Sigma ),$ of $h\cdot
(2\pi id\zeta _{2}\otimes 2\pi id\zeta _{3}\mod \mathcal{P}_{0}\otimes 
\mathcal{L}),$ so our assertion follows from parts (i) and (ii) of the
previous proposition. This proves that $\eta ,$ hence $h^{-1}\eta =2\pi idz$
is a $\mathcal{K}_{N}$-rational differential. An alternative proof of part
(ii) is to note that $E$ is isogenous over $\mathcal{K}_{N}$ to $B,$ so up
to a $\mathcal{K}_{N}$-multiple has the same period.
\end{proof}

\begin{remark}
The parameter $q$ is not a well-defined parameter at $x$, and depends not
only on $x,$ but also on the point $u$ used to uniformize it. If we change $%
u $ to $u+s$ ($s\in \Lambda $) then $q$ is multiplied by the factor $e^{2\pi
i\delta \bar{s}(u+s/2)/M},$ so although $\mathcal{O}_{\bar{S}_{\Bbb{C}%
},x}^{hol}\subset \widehat{\mathcal{O}}_{\bar{S}_{\Bbb{C}},x}$ and analytic
parameters may be considered as formal parameters, the question whether $q$
itself is $\mathcal{K}_{N}$-rational is not well-defined (in sharp contrast
to the case of modular curves!).
\end{remark}

\subsubsection{Normalizing the isomorphism $\det \mathcal{P}\simeq \mathcal{L%
}$}

Let us fix a nowhere vanishing section 
\begin{equation}
\sigma \in H^{0}(S_{\mathcal{K}},\det \mathcal{P}\otimes \mathcal{L}^{-1}).
\end{equation}
This section is determined up to $\mathcal{K}^{\times }.$ From now on we
shall use this section to identify $\det \mathcal{P}$ with $\mathcal{L}$
whenever such an identification is needed. From Corollary \ref{rationality}
we deduce that when we base change to $\Bbb{C},$ on each connected component 
$X_{\Gamma }$%
\begin{equation}
\sigma \sim \Omega _{B}\cdot \sigma _{an}.
\end{equation}

\section{Picard modular schemes modulo an inert prime}

\subsection{The stratification}

\subsubsection{The three strata}

Let $p$ be a rational prime which is inert in $\mathcal{K}$ and relatively
prime to $2N.$ Then $\kappa _{0}=R_{0}/pR_{0}$ is isomorphic to $\Bbb{F}%
_{p^{2}}.$ We fix an algebraic closure $\kappa $ of $\kappa _{0}$ and
consider the characteristic $p$ fiber 
\begin{equation}
\bar{S}_{\kappa }=\bar{S}\times _{R_{0}}\kappa .
\end{equation}
\emph{Unless otherwise specified}, in this section we let $S$ and $\bar{S}$
denote the characterstic $p$ fibers $S_{\kappa }$ and $\bar{S}_{\kappa }.$
We also use the abbreviation $\omega _{\mathcal{A}}$ for $\omega _{\mathcal{A%
}/\bar{S}}$ etc.

Recall that an abelian variety over an algebraically closed field of
characteristic $p$ is called \emph{supersingular} if the Newton polygon of
its $p$-divisible group has a constant slope $1/2.$ It is called \emph{%
superspecial }if it is isomorphic to a product of supersingular elliptic
curves. The following theorem combines various results proved in [Bu-We],
[V] and [We]. See also [dS-G], Theorem 2.1.

\begin{theorem}
\label{Vollaard}(i) There exists a closed reduced $1$-dimensional subscheme $%
S_{ss}\subset \bar{S}$, disjoint from the cuspidal divisor (i.e. contained
in $S$), which is uniquely characterized by the fact that for any geometric
point $x$ of $S,$ the abelian variety $\mathcal{A}_{x}$ is supersingular if
and only if $x$ lies on $S_{ss}$. The scheme $S_{ss}$ is defined over $%
\kappa _{0}.$

(ii) Let $S_{ssp}$ be the singular locus of $S_{ss}.$ Then $x$ lies in $%
S_{ssp}$ if and only if $\mathcal{A}_{x}$ is superspecial. If $x\in S_{ssp}$
then 
\begin{equation}
\widehat{\mathcal{O}}_{S_{ss},x}\simeq \kappa%
[[u,v]]/(u^{p+1}+v^{p+1}).
\end{equation}

(iii) Assume that $N$ is large enough (depending on $p$). Then the
irreducible components of $S_{ss}$ are nonsingular, and in fact are all
isomorphic to the Fermat curve $\mathcal{C}_{p}$ given by the equation 
\begin{equation}
x^{p+1}+y^{p+1}+z^{p+1}=0.
\end{equation}
There are $p^{3}+1$ points of $S_{ssp}$ on each irreducible component, and
through each such point pass $p+1$ irreducible components. Any two
irreducible components are either disjoint or intersect transversally at a
unique point.

(iv) Without the assumption of $N$ being large (but under $N\ge 3$ as usual)
the irreducible components of $S_{ss}$ may have multiple intersections with
each other, including self-intersections. Their normalizations are
nevertheless still isomorphic to $\mathcal{C}_{p}.$
\end{theorem}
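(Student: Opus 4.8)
The plan is to assemble the statement from three circles of ideas: first define $S_{ss}$ and prove (i); then use Grothendieck-Messing and Dieudonn\'{e} theory to pin down the local structure in (ii), which simultaneously forces $S_{ss}$ to be reduced of pure dimension one; and finally invoke the Rapoport-Zink uniformization of the basic locus, together with Vollaard's explicit analysis of the Rapoport-Zink space attached to $\mathbf{G}$ at an inert prime, to obtain the global picture in (iii)--(iv). For (i) I would take $S_{ss}$ to be the zero scheme of the Hasse invariant $h_{\bar{\Sigma}}=V_{\mathcal{P}}^{(p)}\circ V_{\mathcal{L}}\in H^{0}(\bar{S}_{\kappa},\mathcal{L}^{p^{2}-1})$. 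Since $h_{\bar{\Sigma}}$ is manufactured from the Verschiebung of the universal semi-abelian scheme it descends to $\kappa_{0}$, so $S_{ss}$ is defined over $\kappa_{0}$. It is disjoint from the cuspidal divisor $C$: over $C$ the bundle $\mathcal{L}=\omega_{\mathcal{A}}(\bar{\Sigma})$ is the $\bar{\Sigma}$-part of $\omega$ of the \emph{torus} in $\mathcal{A}$ (the abelian part $B$ has CM type $\Sigma$, so contributes nothing to $\mathcal{L}$), and Verschiebung is an isomorphism on a torus; hence both factors of $h_{\bar{\Sigma}}$ are isomorphisms along $C$ and $h_{\bar{\Sigma}}$ is nowhere vanishing there. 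That the support of $S_{ss}$ is precisely the locus where the Newton polygon has constant slope $1/2$ is the standard relation between $h_{\bar{\Sigma}}$ and the $\mu$-ordinary locus (see [dS-G]): off $S_{ss}$ the Verschiebung maps produce a multiplicative $p$-divisible subgroup of the correct type, forcing the $\mu$-ordinary polygon, while on $S_{ss}$ they are too degenerate for any slope-$0$ part to survive.

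For (ii), fix a geometric point $x\in S$ with $\mathcal{A}_{x}$ supersingular. Since $\bar{S}$ is smooth of relative dimension $2$ over $R_{0}$ we have $\widehat{\mathcal{O}}_{\bar{S}_{\kappa},x}\simeq\kappa[[t_{1},t_{2}]]$, and Grothendieck-Messing theory makes the universal deformation of the Dieudonn\'{e} crystal of $\mathcal{A}_{x}$ over this ring completely explicit, so that the vanishing of the induced Hasse invariant gives a local equation $h$ for $S_{ss}$ at $x$. If $x\in S_{gss}$, exactly one of $V_{\mathcal{P}},V_{\mathcal{L}}$ has rank $1$, and a direct computation gives $dh(x)\neq 0$: thus $S_{ss}$ is a smooth curve near $x$, which already establishes that $S_{ss}$ is reduced and of pure dimension one over the dense open $S_{gss}$. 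If $x\in S_{ssp}$, i.e.\ $\mathcal{A}_{x}$ is, with its $\mathcal{O}_{\mathcal{K}}$-action, a product of supersingular elliptic curves and $V_{\mathcal{P}}=V_{\mathcal{L}}=0$ at $x$, the Dieudonn\'{e} module is completely explicit and the Grothendieck-Messing deformation is given by an explicit matrix; computing the locus over $\kappa[[t_{1},t_{2}]]$ on which both isoclinic slope-$1/2$ pieces persist, one finds after a change of coordinates that $S_{ss}$ is cut out by $t_{1}^{p+1}+t_{2}^{p+1}$. Over $\kappa$ this factors as $\prod_{\zeta^{p+1}=-1}(t_{1}+\zeta t_{2})$, which yields the displayed isomorphism $\widehat{\mathcal{O}}_{S_{ss},x}\simeq\kappa[[u,v]]/(u^{p+1}+v^{p+1})$, shows $S_{ss}$ is reduced at $x$ as well (hence everywhere), exhibits $p+1$ smooth, pairwise transverse branches through each superspecial point, and identifies $S_{ssp}$ with the singular locus of $S_{ss}$.

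For (iii) and (iv), observe that the supersingular locus is the \emph{basic} Newton stratum (inertness of $p$ is what forces it to be basic and nonempty), so the Rapoport-Zink uniformization theorem identifies the formal completion of $\bar{S}_{\kappa}$ along $S_{ss}$ with a double coset $I(\Bbb{Q})\backslash\bigl(\widehat{\mathcal{N}}\times\mathbf{G}(\Bbb{A}_{f}^{p})/K^{p}\bigr)$, where $\widehat{\mathcal{N}}$ is the Rapoport-Zink formal scheme for $\mathbf{G}$ at $p$ and $I$ is the inner form of $\mathbf{G}$ which is definite at $\infty$ and isomorphic to $\mathbf{G}$ at all finite places (the group of self-quasi-isogenies of the basic $p$-divisible group). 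Vollaard's theorem [V], refining [Bu-We], determines $\mathcal{N}_{\mathrm{red}}$ completely: its irreducible components are indexed by the vertices of one type in the Bruhat-Tits tree of $\mathbf{SU}(\Bbb{Q}_{p})$; each is isomorphic over $\kappa$ to the Fermat (Hermitian) curve $\mathcal{C}_{p}$, realised as a Deligne-Lusztig variety for the finite unitary group $U_{3}(\Bbb{F}_{p})$; two components meet if and only if the vertices are adjacent, in exactly one point; each component carries $p^{3}+1$ such points, namely its $\Bbb{F}_{p^{2}}$-rational points, through each of which pass $p+1$ components; and the set of these intersection points is exactly the superspecial locus of $\mathcal{N}_{\mathrm{red}}$. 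Transporting this through the uniformization, the components of $S_{ss}$ are the images of those of $\mathcal{N}_{\mathrm{red}}$, so their normalisations are $\mathcal{C}_{p}$ (this proves the last assertion of (iv) for every $N\ge 3$), and the superspecial locus matches the one computed in (ii) on both sides. When $N$ is large enough, $K^{p}$ is small enough that $I(\Bbb{Q})$ acts freely on the set of components and injectively on each of them, so the global incidence pattern is exactly that of $\mathcal{N}_{\mathrm{red}}$, giving (iii); for general $N\ge 3$ the action of $I(\Bbb{Q})$ may glue distinct branches at a point or identify a component with itself, which is precisely the multiple and self-intersections permitted in (iv).

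The main obstacle is the input of the third step: Vollaard's explicit determination of $\mathcal{N}_{\mathrm{red}}$. This is genuinely deep -- it rests on a Bruhat-Tits stratification of the set of Dieudonn\'{e} lattices in the isocrystal of the basic $p$-divisible group, the identification of the closed strata with Deligne-Lusztig varieties for $U_{3}$ over $\Bbb{F}_{p}$, the recognition of the top-dimensional strata as Hermitian curves, and a careful reading of the combinatorics off the building; it also uses crucially that $p$ is inert, so that the relevant group and its tree have the right shape. The remaining work is more routine but still must be checked: that the uniformization theorem applies in the present compactified PEL situation (in particular that $S_{ss}$ is disjoint from $C$, already handled in (i)), that ``superspecial $=$ singular'' matches on the two sides of the uniformization, and that ``$N$ large enough'' indeed makes the relevant congruence subgroup of $I$ neat, so that no new singularities are introduced \emph{on} the irreducible components themselves.
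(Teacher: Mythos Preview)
The paper does not prove this theorem; it is stated with the attribution ``The following theorem combines various results proved in [Bu-We], [V] and [We]. See also [dS-G], Theorem 2.1.'' Your proposal is essentially a correct outline of how those references establish the result, and your three-step plan (Hasse invariant for (i), local Dieudonn\'e/deformation theory for (ii), Rapoport--Zink uniformization plus Vollaard's Bruhat--Tits analysis for (iii)--(iv)) is the right architecture.

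There is, however, one concrete error in your argument for (ii). You write that at a point $x\in S_{gss}$ ``exactly one of $V_{\mathcal{P}},V_{\mathcal{L}}$ has rank $1$.'' This is false: as the paper records in Section~2.2, over $S_{gss}$ \emph{both} $V_{\mathcal{P}}$ and $V_{\mathcal{L}}$ have rank $1$, and at superspecial points both vanish. The reason $h_{\bar\Sigma}=V_{\mathcal{P}}^{(p)}\circ V_{\mathcal{L}}$ vanishes on $S_{gss}$ is not a rank drop of either factor, but rather that the image $V_{\mathcal{L}}(\mathcal{L})$ coincides with $\mathcal{P}_{0}^{(p)}=\ker V_{\mathcal{P}}^{(p)}$ there (whereas over $\bar S_\mu$ these two line sub-bundles of $\mathcal{P}^{(p)}$ are complementary). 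Your conclusion $dh(x)\neq 0$ is still correct---indeed the paper later records, as Theorem~\ref{Hasse invariant}, that $h_{\bar\Sigma}$ has a \emph{simple} zero along $S_{ss}$---but the mechanism you give is wrong and the ``direct computation'' you allude to would not go through as stated. The honest argument computes the first-order deformation of the condition $\mathrm{Im}(V_{\mathcal{L}})\subset\ker(V_{\mathcal{P}}^{(p)})$ using the explicit Dieudonn\'e module at a gss point (the ``braid of length 3'' $\bar B(3)$ of [Bu-We]); this is exactly what is done later in the paper (Section~\ref{PD deformation} and Lemma~\ref{ssg_equation}) for a closely related purpose.

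A minor organisational point: defining $S_{ss}$ as the zero scheme of $h_{\bar\Sigma}$ is slightly circular relative to the paper's logic, where $S_{ss}$ is first defined set-theoretically via the Newton polygon and only afterwards identified with $\mathrm{div}(h_{\bar\Sigma})$. Your route is fine, but then the reducedness of the zero scheme is not automatic and really does rest on the local computation in (ii), as you note.
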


We call $\bar{S}_{\mu }=\bar{S}\backslash S_{ss}$ (or $S_{\mu }=\bar{S}_{\mu
}\cap S$) the $\mu $\emph{-ordinary} or \emph{generic} locus, $%
S_{gss}=S_{ss}\backslash S_{ssp}$ the \emph{general supersingular locus},
and $S_{ssp}$ the \emph{superspecial} \emph{locus}. Then $\bar{S}=\bar{S}%
_{\mu }\cup S_{gss}\cup S_{ssp}$ is a stratification.

\subsubsection{The $p$-divisible group\label{p-div}}

Let $x:Spec(k)\rightarrow S$ ($k$ an algebraically closed field) be a
geometric point of $S,$ $\mathcal{A}_{x}$ the corresponding fiber of $%
\mathcal{A}$, and $\mathcal{A}_{x}(p)$ its $p$-divisible group. Let $\frak{G}
$ be the $p$-divisible group of a supersingular elliptic curve over $k$ (the
group denoted by $G_{1,1}$ in the Manin-Dieudonn\'{e} classification). The
following theorem can be deduced from [Bu-We] and [V].

\begin{theorem}
(i) If $x\in S_{\mu }$ then 
\begin{equation}
\mathcal{A}_{x}(p)\simeq (\mathcal{O}_{\mathcal{K}}\otimes \mu _{p^{\infty
}})\times \frak{G}\times (\mathcal{O}_{\mathcal{K}}\otimes \Bbb{Q}_{p}/\Bbb{Z%
}_{p}).
\end{equation}

(ii) If $x\in S_{ss}$ then $\mathcal{A}_{x}(p)$ is isogenous to $\frak{G}%
^{3},$ and $x\in S_{ssp}$ if and only if the two groups are isomorphic.
\end{theorem}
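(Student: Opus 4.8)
The plan is to translate the statement into the classification of $p$-divisible groups over $k$ carrying an action of $\mathcal{O}:=\mathcal{O}_{\mathcal{K}}\otimes \Bbb{Z}_{p}$ and a compatible principal polarization, and then to read off the two assertions from Dieudonn\'{e} theory. Since $p$ is inert, $\mathcal{O}$ is the ring of integers of the unramified quadratic extension $\mathcal{K}_{p}/\Bbb{Q}_{p}$; in particular its different is trivial, and the Frobenius of its residue field $\Bbb{F}_{p^{2}}$ interchanges the two embeddings $\Sigma$ and $\bar{\Sigma}.$ Consequently, on the covariant Dieudonn\'{e} module $M$ of $\mathcal{A}_{x}(p)$, free of rank $6$ over $W(k)$, the type decomposition $M=M(\Sigma )\oplus M(\bar{\Sigma})$ has both summands free of rank $3$, the operators $F$ and $V$ interchange the two summands, the polarization puts $M(\Sigma )$ and $M(\bar{\Sigma})$ in perfect duality, and the signature $(2,1)$ of $\omega _{\mathcal{A}}$ is encoded as the conditions $\dim _{k}\omega _{\mathcal{A}_{x}}(\Sigma )=2$, $\dim _{k}\omega _{\mathcal{A}_{x}}(\bar{\Sigma})=1$ on the Hodge filtration $\omega _{\mathcal{A}_{x}}\subset M/pM.$

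Part (ii) is then mostly bookkeeping. By the Manin--Dieudonn\'{e} classification the only $p$-divisible group over $k$ of height $6$ and dimension $3$ all of whose Newton slopes are $1/2$ is isogenous to $\frak{G}^{3}$; hence $x\in S_{ss}$, i.e. $\mathcal{A}_{x}$ supersingular, is exactly the condition that $\mathcal{A}_{x}(p)$ be isogenous to $\frak{G}^{3}.$ The stronger statement $\mathcal{A}_{x}(p)\simeq \frak{G}^{3}$ is equivalent to $\mathcal{A}_{x}$ being superspecial: one implication is trivial, and for the converse one invokes Oort's criterion that a $g$-dimensional abelian variety $A/k$ is superspecial precisely when its $a$-number $a(A)=\dim _{k}Hom(\alpha _{p},A)$ equals $g$, together with $a(\frak{G}^{3})=3.$ Since $x\in S_{ssp}$ if and only if $\mathcal{A}_{x}$ is superspecial, by \thmref{Vollaard}(ii), assertion (ii) follows.

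For (i), the first step is to pin down the Newton polygon of $\mathcal{A}_{x}(p)$ for $x\in S_{\mu}.$ Because $F$ interchanges $M(\Sigma )$ and $M(\bar{\Sigma}),$ the $F$-isocrystal of $\mathcal{A}_{x}(p)$ is built from the rank-$3$ $\sigma ^{2}$-isocrystal $(M(\Sigma )[1/p],F^{2})$, so every Newton multiplicity of $\mathcal{A}_{x}(p)$ is even and every slope is half of a slope of that rank-$3$ isocrystal; the polarization forces the polygon to be symmetric about $1/2$; and $x\notin S_{ss},$ by \thmref{Vollaard}(i), excludes the supersingular polygon. A short inspection of the rank-$3$ case, using that a slope $m/n$ (in lowest terms) of an isocrystal occurs with multiplicity divisible by $n$, then leaves only the slopes $0,0,\tfrac{1}{2},\tfrac{1}{2},1,1$, the $\mu$-ordinary Newton polygon (cf. [Bu-We], [V], [dS-G], and [Hi1], Lemma 8.10). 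Now pass to Grothendieck's slope filtration of $\mathcal{A}_{x}(p)$: its graded pieces are isoclinic, of slopes $1,\tfrac{1}{2},0$ and heights $2,2,2$. The slope-$0$ piece is \'{e}tale of $\Bbb{Z}_{p}$-height $2$ with $\mathcal{O}$-action, hence a free $\mathcal{O}$-module of rank $1$, namely $\mathcal{O}_{\mathcal{K}}\otimes \Bbb{Q}_{p}/\Bbb{Z}_{p}$; by Cartier duality the slope-$1$ piece is $\mathcal{O}_{\mathcal{K}}\otimes \mu _{p^{\infty}}$; and the isoclinic slope-$1/2$ piece of height $2$ is $\frak{G}$, with $\mathcal{O}$ acting through $\Sigma$ on $Lie(\frak{G})$, the signature $(2,1)$ determining which graded piece is which. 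It remains to split the filtration $\mathcal{O}$-equivariantly: over the perfect field $k$ the connected-\'{e}tale sequence of $\mathcal{A}_{x}(p)$ splits canonically, hence compatibly with $\mathcal{O}$, peeling off the factor $\mathcal{O}_{\mathcal{K}}\otimes \Bbb{Q}_{p}/\Bbb{Z}_{p}$ and leaving the connected part, an extension of $\frak{G}$ by $\mathcal{O}_{\mathcal{K}}\otimes \mu _{p^{\infty}}.$ Applying Cartier duality, splitting the connected-\'{e}tale sequence of that dual, and dualizing back gives $\mathcal{A}_{x}(p)^{0}\simeq \frak{G}\times (\mathcal{O}_{\mathcal{K}}\otimes \mu _{p^{\infty}})$, which proves (i).

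The step I expect to be the genuine obstacle is the determination of the Newton polygon on $S_{\mu}$: either running the combinatorial argument above with due care for the polarization and the $\mathcal{O}$-action, or extracting the $\mu$-ordinary polygon cleanly from [Bu-We] and [V]. Once that is in hand the passage from isogeny to honest isomorphism is formal, using only the splitting of the connected-\'{e}tale sequence over the perfect field $k$, applied to $\mathcal{A}_{x}(p)$ and to the Cartier dual of its connected part, both of which respect the $\mathcal{O}_{\mathcal{K}}$-action by functoriality.
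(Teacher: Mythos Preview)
The paper does not actually prove this theorem: it states only that it ``can be deduced from [Bu-We] and [V]'' and moves on. The closest the paper comes to an argument is later, in the proof of \lemref{Braid}, where it cites [Bu-We], Proposition~3.6, for the explicit list of unitary Dieudonn\'{e} modules $\bar{B}(2)\oplus\bar{S}$, $\bar{B}(3)$, $\bar{B}(1)\oplus\bar{S}^{2}$ on the three strata; unpacking that list is precisely the content of the present theorem.

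Your argument is correct and supplies a self-contained version of that unpacking. A few remarks. First, your worry that pinning down the Newton polygon on $S_{\mu}$ is ``the genuine obstacle'' is overstated: the four constraints you list (even multiplicities from the $\mathcal{O}$-action, symmetry about $\tfrac{1}{2}$ from the polarization, the denominator--divides--multiplicity constraint from Dieudonn\'{e}--Manin, and exclusion of the supersingular polygon) already force the multiset of slopes to be $\{0,0,\tfrac{1}{2},\tfrac{1}{2},1,1\}$ by an elementary count, with no further input from [Bu-We] or [V] needed. Second, the double use of the connected-\'{e}tale splitting (once on $\mathcal{A}_{x}(p)$, once on the Cartier dual of its connected part) is exactly the standard device for promoting the slope filtration to a product over a perfect field, and the $\mathcal{O}$-equivariance is automatic by functoriality, as you say. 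Third, in part (ii) the appeal to Oort's $a$-number criterion is the efficient way to get the nontrivial implication $\mathcal{A}_{x}(p)\simeq\frak{G}^{3}\Rightarrow\mathcal{A}_{x}$ superspecial.

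In short, you have written the proof the paper omits, and your route through the Newton polygon plus connected-\'{e}tale splittings is arguably more transparent than quoting the Dieudonn\'{e}-module classification from [Bu-We] wholesale.
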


While the $p$-divisible group of a $\mu $-ordinary geometric fiber actually
splits as a product of its multiplicative, local-local and \'{e}tale parts,
over the whole of $S_{\mu }$ we only get a filtration 
\begin{equation}
0\subset Fil^{2}\mathcal{A}(p)\subset Fil^{1}\mathcal{A}(p)\subset Fil^{0}%
\mathcal{A}(p)=\mathcal{A}(p)
\end{equation}
by $\mathcal{O}_{\mathcal{K}}$-stable $p$-divisible groups. Here $%
gr^{2}=Fil^{2}$ is of multiplicative type, $gr^{1}=Fil^{1}/Fil^{2}$ is a
local-local group and $gr^{0}=Fil^{0}/Fil^{1}$ is \'{e}tale, each of height $%
2$ ($\mathcal{O}_{\mathcal{K}}$-height 1).

\subsection{New relations between $\mathcal{P}$ and $\mathcal{L}$ in
characteristic $p$}

For proofs and more details on this sub-section see [dS-G], Section 2.2.

\subsubsection{The line bundles $\mathcal{P}_{0}$ and $\mathcal{P}_{\mu }$
over $\bar{S}_{\mu }$}

Consider the universal semi-abelian variety $\mathcal{A}$ over the Zariski
open set $\bar{S}_{\mu }.$ Over the cuspidal divisor $C=\bar{S}\backslash S,$
$\mathcal{P}=\omega _{\mathcal{A}}(\Sigma )$ admits a canonical filtration 
\begin{equation}
0\rightarrow \mathcal{P}_{0}\rightarrow \mathcal{P}\rightarrow \mathcal{P}%
_{\mu }\rightarrow 0  \label{Fil on P}
\end{equation}
where $\mathcal{P}_{0}$ is the cotangent space to the abelian part of $%
\mathcal{A},$ and $\mathcal{P}_{\mu }$ is the $\Sigma $-component of the
cotangent space to the toric part of $\mathcal{A}.$ This filtration exists
already in characteristic 0, but when we reduce the Picard surface modulo $p$
it extends to the whole of $\bar{S}_{\mu }.$ Over the non-cuspidal part $%
S_{\mu }$ we may set 
\begin{equation}
\mathcal{P}_{0}=\ker \left( \omega _{\mathcal{A}[p]^{0}}\rightarrow \omega _{%
\mathcal{A}[p]^{\mu }}\right)
\end{equation}
where $\mathcal{A}[p]^{\mu }$ is the $p$-torsion in $\mathcal{A}(p)^{\mu
}=Fil^{2}\mathcal{A}(p).$ Then $\mathcal{P}_{\mu }$ is identified with $%
\omega _{\mathcal{A}[p]^{\mu }}(\Sigma ).$

\subsubsection{Frobenius and Verschiebung}

Let $\mathcal{A}^{(p)}=\mathcal{A}\times _{\bar{S},\Phi }\bar{S}$ be the
base change of $\mathcal{A}$ with respect to the absolute Frobenius morphism 
$\Phi $ of degree $p$ of $\bar{S}.$ The \emph{relative Frobenius} is an $%
\mathcal{O}_{\bar{S}}$-linear isogeny $Frob_{\mathcal{A}}:\mathcal{A}%
\rightarrow \mathcal{A}^{(p)},$ characterized by the fact that $pr_{1}\circ
Frob_{\mathcal{A}}$ is the absolute Frobenius morphism of $\mathcal{A}.$
Over $S$ (but not over the boundary $C$) we have the dual abelian scheme $%
\mathcal{A}^{t}$, and the \emph{Verschiebung }$Ver_{\mathcal{A}}:\mathcal{A}%
^{(p)}\rightarrow \mathcal{A}$ is the $\mathcal{O}_{S}$-linear isogeny which
is dual to $Frob_{\mathcal{A}^{t}}:\mathcal{A}^{t}\rightarrow (\mathcal{A}%
^{t})^{(p)}.$

We clearly have $\omega _{\mathcal{A}^{(p)}}=\omega _{\mathcal{A}}^{(p)}$,
and we let 
\begin{equation}
F:\omega _{\mathcal{A}}^{(p)}\rightarrow \omega _{\mathcal{A}%
},\,\,\,V:\omega _{\mathcal{A}}\rightarrow \omega _{\mathcal{A}}^{(p)}
\end{equation}
be the $\mathcal{O}_{\bar{S}}$-linear maps of vector bundles induced by the
isogenies $Frob_{\mathcal{A}}$ and $Ver_{\mathcal{A}}$ on the cotangent
spaces. We refer to [dS-G] for a discussion how to define $V$ over the whole
of $\bar{S},$ despite the fact that $Ver_{\mathcal{A}}$ is only defined over 
$S$.

Taking $\Sigma $-components we get the map 
\begin{equation}
V_{\mathcal{P}}:\mathcal{P}=\omega _{\mathcal{A}}(\Sigma )\rightarrow \omega
_{\mathcal{A}}^{(p)}(\Sigma )=\omega _{\mathcal{A}}(\bar{\Sigma})^{(p)}=%
\mathcal{L}^{(p)},
\end{equation}
and taking the $\bar{\Sigma}$-component we similarly get 
\begin{equation}
V_{\mathcal{L}}:\mathcal{L}\rightarrow \mathcal{P}^{(p)}.
\end{equation}

\begin{proposition}
Over $\bar{S}_{\mu }$ both $V_{\mathcal{P}}$ and $V_{\mathcal{L}}$ are of
rank 1, 
\begin{equation}
\mathcal{P}_{0}=\ker V_{\mathcal{P}}
\end{equation}
and the image of $V_{\mathcal{L}}$ is a direct sum complement to $\mathcal{P}%
_{0}^{(p)}:$%
\begin{equation}
\mathcal{P}^{(p)}=\mathcal{P}_{0}^{(p)}\oplus V(\mathcal{L}).
\end{equation}
\end{proposition}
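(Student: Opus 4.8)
The plan is to read all three assertions off the structure of the $p$-divisible group $\mathcal{A}(p)$ over $\bar{S}_{\mu}$ recalled in \secref{p-div}, combined with the elementary behaviour of Verschiebung on multiplicative groups (where it is an isomorphism) and on supersingular elliptic curves (where it is zero on cotangent spaces); the structural input over all of $\bar{S}_{\mu}$, including the cuspidal divisor $C$, is supplied by [dS-G], Section 2.2. I would begin with the rank statements, which follow at once from the Hasse invariant $h_{\bar{\Sigma}}=V_{\mathcal{P}}^{(p)}\circ V_{\mathcal{L}}$: since its divisor $S_{ss}$ is disjoint from $\bar{S}_{\mu}$, it is nowhere vanishing there, hence $V_{\mathcal{L}}$ is fibrewise non-zero, so --- being a map out of a line bundle --- fibrewise injective and a sub-bundle inclusion of rank $1$; and $V_{\mathcal{P}}^{(p)}$ restricted to $V_{\mathcal{L}}(\mathcal{L})$ is already an isomorphism onto the line bundle $\mathcal{L}^{(p^{2})}$, whence $V_{\mathcal{P}}^{(p)}$, and therefore $V_{\mathcal{P}}$ (rank being preserved under the base change $\Phi$, which carries $\bar{S}_{\mu}$ onto itself since it preserves Newton polygons), is fibrewise surjective of rank $1$. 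In particular $\ker V_{\mathcal{P}}$ is a sub-line-bundle of $\mathcal{P}$.

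Next I would analyse $\omega_{\mathcal{A}}$ via the slope filtration $0\subset Fil^{2}\subset Fil^{1}\subset\mathcal{A}(p)$, with graded pieces multiplicative ($gr^{2}=Fil^{2}$, the toric part), local-local ($gr^{1}\simeq\frak{G}$ fibrewise), and étale ($gr^{0}$; over $C$ the étale part is absent, which only simplifies matters). Passing to cotangent spaces and using $\omega_{gr^{0}}=0$, $\omega_{\mathcal{A}}$ carries a two-step filtration with sub $\omega_{gr^{1}}=\omega_{\frak{G}}$ (rank $1$) and quotient $\omega_{Fil^{2}}$ (rank $2$, of type $(1,1)$, being the cotangent space of an $\mathcal{O}_{\mathcal{K}}$-torus); comparison with the global type $(2,1)$ of $\omega_{\mathcal{A}}$ forces $\omega_{\frak{G}}$ to be of type $\Sigma$, so that $\mathcal{P}_{0}=\omega_{\frak{G}}$, $\mathcal{P}_{\mu}=\omega_{Fil^{2}}(\Sigma)$ and $\mathcal{L}=\omega_{Fil^{2}}(\bar{\Sigma})$. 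Since the absolute Frobenius $\Phi$ interchanges $e_{\Sigma}$ and $e_{\bar{\Sigma}}$, the same analysis applied to $\mathcal{A}^{(p)}$, which is of type $(1,2)$, makes $\omega_{gr^{1}(\mathcal{A}^{(p)})}$ of type $\bar{\Sigma}$, so $\mathcal{L}^{(p)}=\omega_{Fil^{2}(\mathcal{A}^{(p)})}(\Sigma)$ while $\mathcal{P}^{(p)}=\omega_{\mathcal{A}^{(p)}}(\bar{\Sigma})$ has the sub-line-bundle $\mathcal{P}_{0}^{(p)}=\omega_{gr^{1}(\mathcal{A}^{(p)})}$ and quotient $\omega_{Fil^{2}(\mathcal{A}^{(p)})}(\bar{\Sigma})$.

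Now I would push $V$ (the map on cotangent spaces induced by Verschiebung) through this picture. Verschiebung is a homomorphism of $p$-divisible groups commuting with the $\mathcal{O}_{\mathcal{K}}$-action, so $V$ respects the slope filtration and the type decomposition; on the multiplicative graded piece it is an isomorphism, hence $V$ induces an isomorphism $\omega_{Fil^{2}(\mathcal{A})}\overset{\sim}{\rightarrow}\omega_{Fil^{2}(\mathcal{A}^{(p)})}$ on the quotients, while on $gr^{1}\simeq\frak{G}$ the induced map on cotangent spaces is the Verschiebung of a supersingular curve, hence zero. Thus $\ker V=\omega_{\frak{G}}$ and $V(\omega_{\mathcal{A}})$ is a direct complement to $\omega_{gr^{1}(\mathcal{A}^{(p)})}$ in $\omega_{\mathcal{A}^{(p)}}$. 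Taking $\Sigma$-parts yields $\ker V_{\mathcal{P}}=\omega_{\frak{G}}=\mathcal{P}_{0}$ (with $V_{\mathcal{P}}$ onto $\mathcal{L}^{(p)}$, as $\omega_{gr^{1}(\mathcal{A}^{(p)})}$ has no $\Sigma$-part), and taking $\bar{\Sigma}$-parts yields $V_{\mathcal{L}}$ injective with $V(\mathcal{L})$ a complement to $\mathcal{P}_{0}^{(p)}$, i.e. $\mathcal{P}^{(p)}=\mathcal{P}_{0}^{(p)}\oplus V(\mathcal{L})$.

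The step that should demand real care is making the two facts about Verschiebung precise: that it is \emph{strictly} compatible with the slope filtration, and that the map it induces on the local-local graded piece is genuinely zero on cotangent spaces, so that $V_{\mathcal{P}}$ kills \emph{all} of $\mathcal{P}_{0}$ and not merely its image in some associated graded --- together with the bookkeeping of $\mathcal{O}_{\mathcal{K}}$-types through the Frobenius twist ($\Sigma\leftrightarrow\bar{\Sigma}$). Both are contained in the Dieudonné-theoretic description of $\mathcal{A}(p)$ over $\bar{S}_{\mu}$ worked out in [dS-G], Section 2.2, which I would cite for this input.
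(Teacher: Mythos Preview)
Your argument is correct and is the natural one. The paper does not supply its own proof here --- the entire subsection opens with ``For proofs and more details on this sub-section see [dS-G], Section 2.2'' --- so there is no in-paper argument to compare against; your slope-filtration analysis (Verschiebung an isomorphism on the multiplicative graded piece, zero on the supersingular local-local piece $\frak{G}$, together with the $\Sigma/\bar{\Sigma}$ bookkeeping under the Frobenius twist) is exactly the shape of argument one expects in [dS-G].

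One remark on logical ordering: your opening paragraph invokes the identification $\mathrm{div}(h_{\bar{\Sigma}})=S_{ss}$, which in the paper is \thmref{Hasse invariant} and is stated \emph{after} the proposition you are proving (indeed the Hasse invariant is only defined afterwards). This is not a genuine circularity, because your second and third paragraphs re-derive the rank assertions directly from the slope filtration without any appeal to $h_{\bar{\Sigma}}$; but it would be cleaner to drop the first paragraph altogether, or to present the non-vanishing of $h_{\bar{\Sigma}}$ on $\bar{S}_{\mu}$ as a \emph{consequence} of your filtration analysis rather than as an independent input. The substantive content --- that $V$ kills $\omega_{\frak{G}}=\mathcal{P}_{0}$ since $\frak{G}$ is fibrewise supersingular, that $V$ is an isomorphism on the multiplicative quotient $\omega_{Fil^{2}}$, and that this forces $V(\omega_{\mathcal{A}})$ to split off $\omega_{\frak{G}}^{(p)}$ --- is all contained in your later paragraphs and stands on its own. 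Your closing caveat about strict compatibility of $V$ with the filtration and the fibrewise-versus-global isomorphism $gr^{1}\simeq\frak{G}$ is well placed; since $\bar{S}_{\mu}$ is reduced, vanishing of a map of locally free sheaves can be checked on geometric fibres, which handles the second point.
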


Recall that over any base scheme in characteristic $p,$ and for any \emph{%
line} bundle $\mathcal{M},$ its base change $\mathcal{M}^{(p)}$ under the
absolute Frobenius, is canonically isomorphic to its $p$th power $\mathcal{M}%
^{p}.$

\begin{corollary}
Over $\bar{S}_{\mu }$, $\mathcal{P}_{\mu }\simeq \mathcal{L}^{p},\,\,%
\mathcal{P}_{0}\simeq \mathcal{L}^{1-p},\,\,$and $\mathcal{L}^{p^{2}}\simeq 
\mathcal{L}$. For $k\ge 1$ odd, $\mathcal{P}^{(p^{k})}\simeq \mathcal{L}%
^{p-1}\oplus \mathcal{L}.$ For $k\ge 2$ even, $\mathcal{P}^{(p^{k})}\simeq 
\mathcal{L}^{1-p}\oplus \mathcal{L}^{p},$ but for $k=0$ we only have an
exact sequence 
\begin{equation}
0\rightarrow \mathcal{L}^{1-p}\rightarrow \mathcal{P}\rightarrow \mathcal{L}%
^{p}\rightarrow 0.
\end{equation}
\end{corollary}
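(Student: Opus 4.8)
The plan is to derive everything from the Proposition immediately preceding the statement, the isomorphism $\det\mathcal{P}\simeq\mathcal{L}$ over $\bar S_\kappa$ established above, and the canonical identification $\mathcal{M}^{(p)}\simeq\mathcal{M}^{p}$ for a line bundle $\mathcal{M}$ in characteristic $p$.

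First I would pin down the two line bundles $\mathcal{P}_\mu$ and $\mathcal{P}_0$. By the Proposition, over $\bar S_\mu$ the bundle map $V_{\mathcal{P}}\colon\mathcal{P}\to\mathcal{L}^{(p)}\simeq\mathcal{L}^{p}$ is of rank $1$ with kernel the line subbundle $\mathcal{P}_0$; since a fiberwise rank $1$ map into a line bundle is fiberwise surjective, $V_{\mathcal{P}}$ is surjective and induces an isomorphism $\mathcal{P}_\mu=\mathcal{P}/\mathcal{P}_0\simeq\mathcal{L}^{p}$. Taking determinants in $(\ref{Fil on P})$ and invoking $\det\mathcal{P}\simeq\mathcal{L}$ (valid on $\bar S_\kappa$, hence on $\bar S_\mu$) gives $\mathcal{P}_0\simeq\det\mathcal{P}\otimes\mathcal{P}_\mu^{-1}\simeq\mathcal{L}^{1-p}$. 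For the identity $\mathcal{L}^{p^{2}}\simeq\mathcal{L}$ over $\bar S_\mu$ I would use that the Hasse invariant $h_{\bar\Sigma}\in H^{0}(\bar S_\kappa,\mathcal{L}^{p^{2}-1})$ is nowhere vanishing away from $S_{ss}$, so $\mathcal{L}^{p^{2}-1}$ is trivial on $\bar S_\mu$; equivalently, $V_{\mathcal{L}}\colon\mathcal{L}\to\mathcal{P}^{(p)}$ is fiberwise injective with image the direct complement $V(\mathcal{L})$ of $\mathcal{P}_0^{(p)}$, whence $\mathcal{L}\simeq V(\mathcal{L})\simeq\mathcal{P}^{(p)}/\mathcal{P}_0^{(p)}\simeq\mathcal{P}_\mu^{(p)}\simeq(\mathcal{L}^{p})^{(p)}\simeq\mathcal{L}^{p^{2}}$.

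It then remains to compute the Frobenius twists $\mathcal{P}^{(p^{k})}$. For $k=0$ the assertion is just $(\ref{Fil on P})$ rewritten through the identifications above, and there is nothing more to prove (the sequence need not split). For $k=1$, I would pull back the decomposition $\mathcal{P}^{(p)}=\mathcal{P}_0^{(p)}\oplus V(\mathcal{L})$ of the Proposition: here $V(\mathcal{L})\simeq\mathcal{L}$, while $\mathcal{P}_0^{(p)}\simeq(\mathcal{L}^{1-p})^{p}=\mathcal{L}^{p-p^{2}}\simeq\mathcal{L}^{p-1}$ by $\mathcal{L}^{p^{2}}\simeq\mathcal{L}$, so $\mathcal{P}^{(p)}\simeq\mathcal{L}^{p-1}\oplus\mathcal{L}$. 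The general case is an immediate induction on $k$: applying $(\,\cdot\,)^{(p)}$ carries $\mathcal{L}^{p-1}\oplus\mathcal{L}$ to $\mathcal{L}^{p^{2}-p}\oplus\mathcal{L}^{p}\simeq\mathcal{L}^{1-p}\oplus\mathcal{L}^{p}$, and carries $\mathcal{L}^{1-p}\oplus\mathcal{L}^{p}$ to $\mathcal{L}^{p-p^{2}}\oplus\mathcal{L}^{p^{2}}\simeq\mathcal{L}^{p-1}\oplus\mathcal{L}$, each step using $\mathcal{L}^{p^{2}}\simeq\mathcal{L}$; this yields the two stated forms according to the parity of $k$.

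The whole argument is bookkeeping once the Proposition is available, so I do not expect a genuine obstacle; the only points needing some care are the passage from ``$V_{\mathcal{P}}$ has rank $1$'' to ``$V_{\mathcal{P}}$ is surjective onto $\mathcal{L}^{p}$'' (a fiberwise-surjective map of locally free sheaves is surjective, with locally free kernel), and keeping the canonical identification $\mathcal{M}^{(p)}\simeq\mathcal{M}^{p}$ for line bundles distinct from the non-canonical $\mathcal{L}^{p^{2}}\simeq\mathcal{L}$, which holds only after restriction to $\bar S_\mu$ --- consistent with the fact that all isomorphisms in the statement are asserted over $\bar S_\mu$ rather than over $\bar S$.
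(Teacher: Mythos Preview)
Your argument is correct and is precisely the bookkeeping the paper has in mind; the corollary is stated without proof there, immediately after the Proposition and the remark that $\mathcal{M}^{(p)}\simeq\mathcal{M}^{p}$ for line bundles, so your derivation using $V_{\mathcal{P}}$, $V_{\mathcal{L}}$, the splitting $\mathcal{P}^{(p)}=\mathcal{P}_0^{(p)}\oplus V(\mathcal{L})$, and $\det\mathcal{P}\simeq\mathcal{L}$ is exactly the intended one. One minor remark: your first justification of $\mathcal{L}^{p^2}\simeq\mathcal{L}$ via the Hasse invariant cites a result proved slightly later in the paper, but your second justification via $V_{\mathcal{L}}$ is self-contained from the Proposition alone and suffices.
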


\begin{corollary}
Over $\bar{S}_{\mu },$ $\mathcal{L}^{p^{2}-1},\mathcal{P}_{\mu }^{p^{2}-1}$
and $\mathcal{P}_{0}^{p+1}$ are trivial line bundles.
\end{corollary}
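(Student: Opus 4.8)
The plan is to deduce all three triviality statements from the previous corollary together with the fact, recorded in Theorem~\ref{Vollaard}, that the divisor of the Hasse invariant $h_{\bar\Sigma}\in H^{0}(\bar S_{\kappa},\mathcal L^{p^{2}-1})$ is exactly the supersingular locus $S_{ss}$. First I would observe that, since $\bar S_{\mu}=\bar S_{\kappa}\setminus S_{ss}$ and $\operatorname{div}(h_{\bar\Sigma})=S_{ss}$, the restriction of $h_{\bar\Sigma}$ to $\bar S_{\mu}$ is a global section with no zeros; hence it trivializes $\mathcal L^{p^{2}-1}$ over $\bar S_{\mu}$. Equivalently, this is just a restatement of the isomorphism $\mathcal L^{p^{2}}\simeq\mathcal L$ already obtained in the previous corollary, which came from the same source: over $\bar S_{\mu}$ the maps $V_{\mathcal P}$ and $V_{\mathcal L}$ are both of rank $1$, so $h_{\bar\Sigma}=V_{\mathcal P}^{(p)}\circ V_{\mathcal L}\colon\mathcal L\to\mathcal L^{(p^{2})}$ is an isomorphism of line bundles, and $\mathcal L^{(p^{2})}\simeq\mathcal L^{p^{2}}$ canonically.

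Next I would feed the identifications $\mathcal P_{\mu}\simeq\mathcal L^{p}$ and $\mathcal P_{0}\simeq\mathcal L^{1-p}$ over $\bar S_{\mu}$, supplied by the same previous corollary, into this. Raising the first to the power $p^{2}-1$ gives $\mathcal P_{\mu}^{p^{2}-1}\simeq\mathcal L^{p(p^{2}-1)}\simeq(\mathcal L^{p^{2}-1})^{\otimes p}$, which is trivial because $\mathcal L^{p^{2}-1}$ is. Raising the second to the power $p+1$ gives $\mathcal P_{0}^{p+1}\simeq\mathcal L^{(1-p)(p+1)}=\mathcal L^{1-p^{2}}\simeq(\mathcal L^{p^{2}-1})^{\vee}$, again trivial. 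This exhausts the three assertions.

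I do not expect any real obstacle here: the content has already been put in place, namely the description of $S_{ss}$ as the vanishing locus of $h_{\bar\Sigma}$ (hence the triviality of $\mathcal L^{p^{2}-1}$ away from $S_{ss}$) and the computation of $\mathcal P_{\mu}$ and $\mathcal P_{0}$ as powers of $\mathcal L$ over $\bar S_{\mu}$. The only point worth a word of care is the canonical identification $\mathcal M^{(p)}\simeq\mathcal M^{\otimes p}$ for a line bundle $\mathcal M$ in characteristic $p$, used implicitly when passing between $\mathcal L^{(p^{2})}$ and $\mathcal L^{p^{2}}$; this was already noted before the statement. If desired, one could also record the explicit trivializing sections — $h_{\bar\Sigma}$ for $\mathcal L^{p^{2}-1}$, and its images under the isomorphisms above for $\mathcal P_{\mu}^{p^{2}-1}$ and $\mathcal P_{0}^{p+1}$ — but this would add nothing beyond bookkeeping.
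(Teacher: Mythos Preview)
Your argument is correct and matches the paper's intended route: the corollary is stated without proof precisely because it is immediate from the preceding corollary's identifications $\mathcal{L}^{p^{2}}\simeq\mathcal{L}$, $\mathcal{P}_{\mu}\simeq\mathcal{L}^{p}$, and $\mathcal{P}_{0}\simeq\mathcal{L}^{1-p}$ over $\bar{S}_{\mu}$, which is exactly what you use. One small remark: the Hasse invariant $h_{\bar\Sigma}$ and the computation of its divisor (Theorem~\ref{Hasse invariant}, not Theorem~\ref{Vollaard}) appear in the paper only \emph{after} this corollary, so your first paragraph is a forward reference; but since you immediately reduce it to $\mathcal{L}^{p^{2}}\simeq\mathcal{L}$ from the previous corollary, the logic stands on its own.
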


\subsubsection{Extending the filtration on $\mathcal{P}$ over $S_{gss}$}

In order to determine to what extent the filtration on $\mathcal{P}$ and the
relation between $\mathcal{L}$ and the two graded pieces of the filtration
extend into the supersingular locus, we have to employ Dieudonn\'{e} theory.
The following is proved in [dS-G].

\begin{proposition}
(i) Let $\mathcal{P}_{0}=\ker V_{\mathcal{P}}$ (this agrees with what was
denoted by $\mathcal{P}_{0}$ over $\bar{S}_{\mu }$). Then outside $S_{ssp},$ 
$V(\mathcal{P})=\mathcal{L}^{(p)}$ and $\mathcal{P}_{0}$ is a rank 1
submodule.

(ii) Let $\mathcal{P}_{\mu }=\mathcal{P}/\mathcal{P}_{0}.$ Then outside $%
S_{ssp}$ we have $\mathcal{P}_{\mu }\simeq \mathcal{L}^{p}$ and $\mathcal{P}%
_{0}\simeq \mathcal{L}^{1-p}$.
\end{proposition}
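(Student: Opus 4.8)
The plan is to isolate the one substantive input — that $V_{\mathcal{P}}\colon\mathcal{P}\to\mathcal{L}^{(p)}$ is fibrewise of rank $1$ at every geometric point of $\bar S\setminus S_{ssp}$ — and then to obtain all the stated isomorphisms by formal manipulations with line bundles. Over $\bar S_{\mu}$ the rank-$1$ property, together with the identifications $\mathcal{P}_{0}=\ker V_{\mathcal{P}}$ and $\mathcal{P}_{\mu}\simeq\mathcal{L}^{p}$, $\mathcal{P}_{0}\simeq\mathcal{L}^{1-p}$, is already recorded above; so the genuinely new content lives entirely along the general supersingular curve $S_{gss}=S_{ss}\setminus S_{ssp}$.

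First I would reduce the rank-$1$ claim to two sub-claims about the zero locus $Z$ of $V_{\mathcal{P}}$, viewed as a section of $\mathcal{P}^{\vee}\otimes\mathcal{L}^{(p)}$, inside $\bar S\setminus S_{ssp}$: that $Z$ contains no irreducible component of $S_{ss}$, and that in fact $Z\cap S_{gss}=\emptyset$. The first is a divisor estimate. Recall that $h_{\bar\Sigma}$ factors as the composite of $V_{\mathcal{L}}\colon\mathcal{L}\to\mathcal{P}^{(p)}$ with $V_{\mathcal{P}}^{(p)}\colon\mathcal{P}^{(p)}\to\mathcal{L}^{(p^{2})}$. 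Since pulling back by the absolute Frobenius multiplies orders of vanishing along divisors by $p$, if $V_{\mathcal{P}}$ vanished identically along an irreducible component $\Gamma_{0}\subset S_{ss}$, say to order $m\ge 1$, then $V_{\mathcal{P}}^{(p)}$ would vanish to order $pm$ along $\Gamma_{0}$, hence so would $h_{\bar\Sigma}$; but $pm\ge 2$ contradicts the fact that $\operatorname{div}(h_{\bar\Sigma})=S_{ss}$ is reduced (\thmref{Vollaard}). So $Z$ is finite, contained in $S_{ss}$, and disjoint from $\bar S_{\mu}$.

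The remaining point — that $Z$ contains no isolated points of $S_{gss}$ either, i.e.\ $Z\subseteq S_{ssp}$ — is the real obstacle, and here I would argue locally through Dieudonn\'e theory. At a geometric point $x$, $V_{\mathcal{P}}(x)$ is the restriction of Verschiebung on $H^{1}_{\mathrm{dR}}(\mathcal{A}_{x})$ to $\omega_{\mathcal{A}_{x}}(\Sigma)$; superspeciality of $\mathcal{A}_{x}$ is equivalent to Verschiebung annihilating all of $\omega_{\mathcal{A}_{x}}$, and what must be shown is that on the general supersingular locus the $\Sigma$-isotypic part of $\ker(V|_{\omega_{\mathcal{A}_{x}}})$ is exactly $1$-dimensional (equivalently, by the Frobenius--Verschiebung adjunction under the polarization pairing, that the same holds for $V_{\mathcal{L}}$ on the $\bar\Sigma$-part). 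This is extracted from the explicit description of the supersingular $p$-divisible groups along $S_{gss}$ — the same Dieudonn\'e-module analysis of B\"ultel--Wedhorn and Vollaard that identifies the components of $S_{ss}$ with Fermat curves of degree $p+1$ — and is carried out in [dS-G]; I would simply quote that computation.

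Granting the rank-$1$ statement on $U=\bar S\setminus S_{ssp}$, the rest is formal. Over $U$ the map $V_{\mathcal{P}}\colon\mathcal{P}\to\mathcal{L}^{(p)}$ is a surjection of vector bundles, so $V(\mathcal{P})=\mathcal{L}^{(p)}$, and $\mathcal{P}_{0}=\ker V_{\mathcal{P}}$, being the kernel of a surjection of locally free sheaves, is a rank-$1$ sub-bundle; on $\bar S_{\mu}$ this is by construction the $\mathcal{P}_{0}$ defined earlier, which gives (i). The induced isomorphism $\mathcal{P}_{\mu}=\mathcal{P}/\mathcal{P}_{0}\simeq V(\mathcal{P})=\mathcal{L}^{(p)}\simeq\mathcal{L}^{p}$ uses only the canonical identification $\mathcal{M}^{(p)}\simeq\mathcal{M}^{p}$ for line bundles in characteristic $p$. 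Finally, taking determinants in $0\to\mathcal{P}_{0}\to\mathcal{P}\to\mathcal{P}_{\mu}\to 0$ gives $\det\mathcal{P}\simeq\mathcal{P}_{0}\otimes\mathcal{P}_{\mu}$, and since $\det\mathcal{P}\simeq\mathcal{L}$ on $\bar S_{\kappa}$ (hence on $U$) we obtain $\mathcal{P}_{0}\simeq\det\mathcal{P}\otimes\mathcal{P}_{\mu}^{\vee}\simeq\mathcal{L}\otimes\mathcal{L}^{-p}=\mathcal{L}^{1-p}$, which is (ii).
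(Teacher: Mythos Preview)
Your argument is correct and matches the paper's approach: the paper simply defers this proposition to [dS-G], where the key input is precisely the pointwise Dieudonn\'e-module computation on $S_{gss}$ that you invoke in your third paragraph (it is exactly Lemma~\ref{Braid} in this paper, from which one reads off that $Ve_{1}=0$, $Ve_{2}=f_{3}^{(p)}\neq 0$), after which the formal deductions in your last paragraph are the right ones. Your second paragraph's divisor argument via $h_{\bar\Sigma}$ is correct but redundant --- once the Dieudonn\'e computation gives rank~$1$ at every geometric point of $S_{gss}$, you already have $Z\cap S_{gss}=\varnothing$ without passing through $\operatorname{div}(h_{\bar\Sigma})$; note also that the reduced-divisor statement is Theorem~\ref{Hasse invariant} (which appears \emph{after} this proposition and is itself proved in [dS-G] by the same Dieudonn\'e methods), not \thmref{Vollaard}.
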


For $V_{\mathcal{L}}$ we similarly get the following.

\begin{proposition}
Outside $S_{ssp}$, $V_{\mathcal{L}}$ maps $\mathcal{L}$ injectively onto a
sub-line-bundle of $\mathcal{P}^{(p)}.$
\end{proposition}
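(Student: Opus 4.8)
The plan is to argue in terms of Dieudonné theory, exactly as in the proof of the companion Proposition on $V_{\mathcal{P}}$, since the two statements are dual to each other under the polarization. First I would recall that over $S$ the Verschiebung $Ver_{\mathcal{A}}$ is dual to $Frob_{\mathcal{A}^{t}}$, and that $\mathcal{A}^{t}$ again carries an action of $\mathcal{O}_{\mathcal{K}}$, but with the CM types exchanged (this is the ``$\rho$'' phenomenon used throughout Section~1.7). Hence the map $V_{\mathcal{L}}:\mathcal{L}=\omega_{\mathcal{A}}(\bar\Sigma)\to\mathcal{P}^{(p)}=\omega_{\mathcal{A}}(\Sigma)^{(p)}$ is, up to the polarization isomorphism $\lambda^{*}:\omega_{\mathcal{A}^{t}}\simeq\omega_{\mathcal{A}}$, the same as the $\bar\Sigma$-component of the Frobenius map $F$ on $\omega_{\mathcal{A}^{t}}$, or equivalently the dual of $V_{\mathcal{P}}$ acting on the dual objects. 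This reduces the claim to a rank computation for a single $\mathcal{O}_{\mathcal{K}}$-linear Dieudonné module at each geometric point of $S_{gss}$, together with the observation that an injective map of invertible sheaves at every point of an open set is automatically an injection of sheaves with locally free cokernel on that open set.

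The key steps, in order, are: (1) identify, for a geometric point $x\in S_{gss}$, the Dieudonné module $\mathbb{D}(\mathcal{A}_{x}(p))$ as an $\mathcal{O}_{\mathcal{K}}\otimes W(\kappa)$-module with its $F$ and $V$, using Vollaard's description (Theorem~\ref{Vollaard}); since $x\notin S_{ssp}$ the group is supersingular but \emph{not} superspecial, so $F$ and $V$ do not both annihilate the Hodge filtration. (2) Decompose everything into $\Sigma$- and $\bar\Sigma$-parts using the idempotents $e_{\Sigma},e_{\bar\Sigma}$; the $\bar\Sigma$-part of $\omega$ is one-dimensional ($=\mathcal{L}_{x}$), and one checks directly from the Dieudonné module that $V$ restricted to it is nonzero, i.e.\ $V_{\mathcal{L}}$ is injective at $x$. (3) Since $\mathcal{L}$ is a line bundle and $\mathcal{P}^{(p)}$ is locally free of rank~$2$, injectivity of the stalk map at every point of the (reduced, hence dense) open set $\bar S_{\kappa}\setminus S_{ssp}\supset S_{\mu}\cup S_{gss}$, combined with the already-known statement over $\bar S_{\mu}$, shows that $V_{\mathcal{L}}$ is a nowhere-zero section of $\mathcal{L}^{\vee}\otimes\mathcal{P}^{(p)}$ there; a nowhere-zero map out of a line bundle has locally free cokernel, so the image is a sub-line-bundle.

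Step (2) — checking that the $\bar\Sigma$-component of $V$ is nonzero at a general supersingular point — is the place that requires genuine input rather than formal nonsense, and it is where I expect the main obstacle to lie. The point is precisely that superspeciality is characterized by the \emph{simultaneous} vanishing of $V_{\mathcal{P}}$ \emph{and} $V_{\mathcal{L}}$ (as stated in the introduction, ``at $x\in S_{ssp}$ the maps $V_{\mathcal{P}}$ and $V_{\mathcal{L}}$ vanish, but over $S_{gss}$ they are both of rank $1$''), so one must show that at a non-superspecial supersingular point at least $V_{\mathcal{L}}$ (not just $V_{\mathcal{P}}$) survives. Concretely I would exhibit the Dieudonné module as generated by a single element over $\mathcal{O}_{\mathcal{K}}\otimes W$ on which $F^{2}=p\cdot(\text{unit})$, read off the Hodge filtration $V\mathbb{D}\subset\mathbb{D}$, and verify that $e_{\bar\Sigma}$ does not kill $V$ on the cotangent space — this is a finite explicit check once the right normal form for the module is chosen (the one used in [dS-G, §2.2] and in Vollaard's work). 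Everything else in the argument is formal. I would refer to [dS-G, Section~2.2] for the computation of the Dieudonné modules and record only the conclusion here.
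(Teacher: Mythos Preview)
Your proposal is correct and is essentially the approach taken in the paper: the paper gives no proof here beyond the blanket reference ``For proofs and more details on this sub-section see [dS-G], Section 2.2'' at the head of the subsection, and the Dieudonn\'e module computation you outline in steps (1)--(3) is precisely what is carried out there (and is recorded explicitly later in the paper as Lemma~\ref{Braid}, where $V f_{3}=e_{1}^{(p)}\neq 0$ gives the nonvanishing of $V_{\mathcal{L}}$ at a gss point). One small remark: the duality/polarization detour in your opening paragraph is unnecessary and slightly misleading --- $V_{\mathcal{L}}$ and $V_{\mathcal{P}}$ are simply the two type-components of the same map $V$ on $\omega_{\mathcal{A}}$, not duals of each other, and your actual steps (1)--(3) correctly proceed by a direct fiberwise check rather than by any reduction to $V_{\mathcal{P}}$.
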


At a superspecial point, both $V_{\mathcal{P}}$ and $V_{\mathcal{L}}$ vanish.

\subsubsection{The Hasse invariant}

As we have just seen, the fact that $V_{\mathcal{P}}$ and $V_{\mathcal{L}}$
are both of rank $1$ ``extends'' across the general supersingular locus $%
S_{gss}.$ However, while $\text{Im}(V_{\mathcal{L}})$ and $\ker (V_{\mathcal{%
P}}^{(p)})=\mathcal{P}_{0}^{(p)}$ made up a frame of $\mathcal{P}$ over $%
\bar{S}_{\mu },$ over $S_{gss}$ these two line bundles coincide. To state a
more precise result, we make the following definition.

\begin{definition}
The \emph{Hasse invariant} is 
\begin{equation}
h_{\bar{\Sigma}}=V_{\mathcal{P}}^{(p)}\circ V_{\mathcal{L}}\in Hom(\mathcal{L%
},\mathcal{L}^{(p^{2})}).
\end{equation}
\end{definition}

As $\mathcal{L}^{(p^{2})}\simeq \mathcal{L}^{p^{2}},$ the Hasse invariant is
a global section of $\mathcal{L}^{p^{2}-1}$, i.e. a modular form of weight $%
p^{2}-1$ over $\kappa ,$%
\begin{equation}
h_{\bar{\Sigma}}\in M_{p^{2}-1}(N,\kappa ).
\end{equation}

It turns out that $h_{\bar{\Sigma}}$ has a \emph{simple} zero along the
supersingular locus $S_{ss}$. Once again, this requires a little computation
with Dieudonn\'{e} modules. Equivalently, we have the following theorem.

\begin{theorem}
\label{Hasse invariant}The divisor of $h_{\bar{\Sigma}}$ is $S_{ss}$ (with
its reduced subscheme structure).
\end{theorem}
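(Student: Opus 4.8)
The plan is to work locally on $\bar S_\kappa$ and compute the divisor of $h_{\bar\Sigma}$ using Dieudonn\'e theory. The statement being a statement about the vanishing scheme of a section of a line bundle, and $S_{ss}$ being reduced of dimension $1$, it suffices to check two things: first, that $h_{\bar\Sigma}$ vanishes exactly on the supersingular locus as a set, and second, that at a generic (i.e. non-superspecial) point of each component of $S_{ss}$ the vanishing is to order exactly $1$. The set-theoretic statement is essentially already in hand: by the propositions of \S2.2, over $\bar S_\mu$ both $V_{\mathcal P}$ and $V_{\mathcal L}$ are of rank $1$ and $V_{\mathcal L}(\mathcal L)$ is a complement to $\mathcal P_0^{(p)}=\ker V_{\mathcal P}^{(p)}$, so the composite $V_{\mathcal P}^{(p)}\circ V_{\mathcal L}$ is an isomorphism there, whence $h_{\bar\Sigma}$ is nowhere zero on $\bar S_\mu$; conversely over $S_{gss}$ the image of $V_{\mathcal L}$ lands inside $\mathcal P_0^{(p)}=\ker V_{\mathcal P}^{(p)}$ (the two line bundles coincide there), so $h_{\bar\Sigma}$ vanishes on $S_{gss}$, and by continuity/reducedness on all of $S_{ss}$. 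Thus as sets $\mathrm{div}(h_{\bar\Sigma})=S_{ss}$.

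Next I would pin down the multiplicity. Choose a geometric point $x\in S_{gss}$ and a formal/\'etale coordinate chart; since $x\notin S_{ssp}$, the local structure of $S_{ss}$ near $x$ is that of a smooth curve, so it is enough to exhibit a regular function $t$ cutting out $S_{ss}$ locally and show $h_{\bar\Sigma}$ equals a unit times $t$. For this I would pass to the Dieudonn\'e module $\mathbb D(\mathcal A_x(p))$ over the complete local ring $\widehat{\mathcal O}_{\bar S,x}$, decompose it under the $\mathcal O_{\mathcal K}\otimes\mathbb Z_p=\mathbb Z_{p^2}$-action into its two eigencomponents, and write $F$ and $V$ in a suitable basis adapted to the Hodge filtration $\omega_{\mathcal A}\subset\mathbb D$. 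The maps $V_{\mathcal P}$ and $V_{\mathcal L}$ are the components of $V$ on cotangent spaces; in the chosen basis $V_{\mathcal L}$ is multiplication by a function, and $V_{\mathcal P}^{(p)}$ by another, and the supersingular condition (Newton polygon of constant slope $1/2$) translates into the vanishing of a single entry whose order of vanishing I would compute directly from the deformation parameters. The point is that the deformation space of the relevant Dieudonn\'e module along the supersingular stratum is one-dimensional transverse to $S_{ss}$, and $h_{\bar\Sigma}$ is exactly the coordinate measuring that transverse direction to first order — this is the computation referenced as being carried out in [dS-G].

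The main obstacle I expect is the honest Dieudonn\'e-module bookkeeping at a general supersingular point: one must set up explicit coordinates on the local deformation space, identify the supersingular locus inside it by the slope condition, track the $\mathcal O_{\mathcal K}$-isotypic decomposition through $F$ and $V$, and verify that the entry computing $h_{\bar\Sigma}=V_{\mathcal P}^{(p)}\circ V_{\mathcal L}$ is a uniformizer rather than vanishing to higher order. (One subtlety: the Frobenius twists mean $V_{\mathcal P}^{(p)}$ is the base change of $V_{\mathcal P}$ along $\Phi$, so one is composing a function with the $p$-power map before multiplying; one must check this does not artificially raise the order of vanishing, which it does not because $V_{\mathcal L}$ already contributes the simple zero.) Once that local computation is done, the global conclusion is immediate: $h_{\bar\Sigma}$ is a section of $\mathcal L^{p^2-1}$ whose zero divisor is supported on $S_{ss}$ with multiplicity $1$ at the generic point of every component, hence $\mathrm{div}(h_{\bar\Sigma})=S_{ss}$ as a reduced divisor. (Alternatively, one can sidestep the explicit local analysis by a degree count: restrict $\mathcal L^{p^2-1}$ to a well-chosen test curve transverse to $S_{ss}$, compute its degree and compare with the intersection number of that curve with $S_{ss}$, using Vollaard's description of $S_{ss}$ in Theorem \ref{Vollaard}; but the Dieudonn\'e computation is cleaner and is the route I would follow.)
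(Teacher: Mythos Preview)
Your proposal is correct and follows essentially the same route as the paper: reduce to checking simple vanishing at a general supersingular point and carry out the Dieudonn\'e-module computation there. The paper itself does not prove the theorem inline (it refers to [dS-G]), but the very computation you describe appears later in \S\ref{Theta on ss}: in the basis of Lemma~\ref{Braid}, with deformation parameters $u,v$ as in \S\ref{PD deformation}, one has $V_{\mathcal L}(f_3-uf_1-vf_2)=e_1^{(p)}-ue_2^{(p)}$ and $V_{\mathcal P}^{(p)}$ kills $e_1^{(p)}$ but not $e_2^{(p)}$, so $h_{\bar\Sigma}$ is a unit times $u$, which is exactly the local equation of $S_{gss}$ (Lemma~\ref{ssg_equation}).

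One small correction to your parenthetical: it is not that $V_{\mathcal L}$ itself ``contributes the simple zero'' in the sense of vanishing---both $V_{\mathcal L}$ and $V_{\mathcal P}^{(p)}$ remain of rank $1$ over $S_{gss}$. Rather, the image of $V_{\mathcal L}$ moves linearly in $u$ inside $\mathcal P^{(p)}$ and coincides with $\ker V_{\mathcal P}^{(p)}$ precisely at $u=0$; the composite then picks out the $u$-coefficient. Your instinct about the Frobenius twist is right but harmless for the reason you suspect: over the first-order thickening $R=\kappa[u,v]/(u^2,uv,v^2)$ the absolute Frobenius factors through $\kappa$, so $V_{\mathcal P}^{(p)}$ is constant there and introduces no extra vanishing.
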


\subsection{The open Igusa surfaces}

\subsubsection{The Igusa scheme}

Let $N\ge 3$ as always, and let $\mathcal{M}$ be the moduli problem of
Section \ref{Moduli}. Let $n\ge 1$ and consider the following moduli problem
on $\kappa _{0}$-algebras:

\begin{itemize}
\item  $\mathcal{M}_{Ig(p^{n})}(R)$ is the set of isomorphism classes of
pairs $(\underline{A},\varepsilon )$ where $\underline{A}\in \mathcal{M}(R)$
and 
\begin{equation}
\varepsilon :\delta _{\mathcal{K}}^{-1}\mathcal{O}_{\mathcal{K}}\otimes \mu
_{p^{n}}\hookrightarrow A[p^{n}]
\end{equation}
is a closed immersion of $\mathcal{O}_{\mathcal{K}}$-group schemes over $R.$
\end{itemize}

It is clear that if $(\underline{A},\varepsilon )\in \mathcal{M}%
_{Ig(p^{n})}(R)$ then $A$ is fiber-by-fiber $\mu $-ordinary and therefore $%
\underline{A}\in \mathcal{M}(R)$ defines an $R$-point of $S_{\mu }.$ The
image of $\varepsilon $ is then $A[p^{n}]^{\mu }$, the maximal
subgroup-scheme of $A[p^{n}]$ of multiplicative type. It is also clear that
the functor $R\rightsquigarrow \mathcal{M}_{Ig(p^{n})}(R)$ is relatively
representable over $\mathcal{M},$ and therefore as $N\ge 3$ and $\mathcal{M}$
is representable, this functor is also representable by a scheme $Ig_{\mu
}(p^{n})$ which maps to $S_{\mu }.$ See [Ka-Ma] for the notion of relative
representability. We call $Ig_{\mu }(p^{n})$ the \emph{Igusa scheme of level 
}$p^{n}.$

\begin{proposition}
The morphism $\tau :Ig_{\mu }(p^{n})\rightarrow S_{\mu }$ is finite and
\'{e}tale, with the Galois group $\Delta (p^{n})=(\mathcal{O}_{\mathcal{K}%
}/p^{n}\mathcal{O}_{\mathcal{K}})^{\times }$ acting as a group of deck
transformations.
\end{proposition}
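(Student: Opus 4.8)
The plan is to realise $\tau\colon Ig_{\mu}(p^{n})\to S_{\mu}$ as a torsor, for the \'etale topology on $S_{\mu}$, under the finite constant group scheme $\Delta(p^{n})=(\mathcal{O}_{\mathcal{K}}/p^{n}\mathcal{O}_{\mathcal{K}})^{\times}$. Once that is in hand, everything follows formally: a torsor under a finite \'etale group scheme is finite and \'etale, both properties being local for the \'etale topology on the target, over which the torsor becomes trivial; it is surjective because it is \'etale-locally non-empty; and it is Galois with group $\Delta(p^{n})$ because $\Delta(p^{n})$ acts simply transitively on every geometric fibre. The geometric action is the tautological one, $(\underline A,\varepsilon)\mapsto(\underline A,\varepsilon\circ a)$ for $a\in\Delta(p^{n})$, and this is the group of deck transformations claimed in the statement.

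First I would reinterpret the moduli datum. The source $\delta_{\mathcal{K}}^{-1}\mathcal{O}_{\mathcal{K}}\otimes\mu_{p^{n}}$ of $\varepsilon$ is a finite flat $\mathcal{O}_{\mathcal{K}}$-group scheme of multiplicative type of order $p^{2n}$. Since an $R$-point of $\mathcal{M}_{Ig(p^{n})}$ is fibrewise $\mu$-ordinary, the image of the closed immersion $\varepsilon$ is a subgroup scheme of $A[p^{n}]$ of multiplicative type, hence is contained in the maximal such, namely $A[p^{n}]^{\mu}=Fil^{2}\mathcal{A}(p)[p^{n}]$ of Section~\ref{p-div}; comparing orders (both $p^{2n}$) and using that a closed immersion of finite flat commutative group schemes of equal rank is an isomorphism --- the fppf quotient being finite flat of rank one, hence trivial --- one sees $\varepsilon$ is an isomorphism onto $A[p^{n}]^{\mu}$. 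Thus $Ig_{\mu}(p^{n})$ represents the functor $R\mapsto\mathrm{Isom}_{\mathcal{O}_{\mathcal{K}}\text{-}\mathrm{gp}/R}\bigl(\delta_{\mathcal{K}}^{-1}\mathcal{O}_{\mathcal{K}}\otimes\mu_{p^{n}},\,\mathcal{A}[p^{n}]^{\mu}\bigr)$ on $S_{\mu}$, an $\mathcal{O}_{\mathcal{K}}$-equivariant $\mathrm{Isom}$-scheme, affine over $S_{\mu}$, carrying the right action of $\Delta(p^{n})=\mathrm{Aut}_{\mathcal{O}_{\mathcal{K}}\text{-}\mathrm{gp}}(\delta_{\mathcal{K}}^{-1}\mathcal{O}_{\mathcal{K}}\otimes\mu_{p^{n}})$ by pre-composition (the latter identification being the constant group $\Delta(p^{n})$ by the rigidity of diagonalisable group schemes, applied to the character group).

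The substantive point is then the \'etale-local triviality of $\mathcal{A}[p^{n}]^{\mu}$ as an $\mathcal{O}_{\mathcal{K}}$-group scheme over $S_{\mu}$. By the structure of the $p$-divisible group over $S_{\mu}$ (Section~\ref{p-div}, following [dS-G], [Bu-We], [V]), $\mathcal{A}[p^{n}]^{\mu}$ is finite flat of multiplicative type with its $\mathcal{O}_{\mathcal{K}}$-action; its Cartier dual is therefore finite \'etale, and is a locally free $\mathcal{O}_{\mathcal{K}}/p^{n}\mathcal{O}_{\mathcal{K}}$-module sheaf of rank one --- here inertness of $p$ enters, making $\mathcal{O}_{\mathcal{K}}/p^{n}\mathcal{O}_{\mathcal{K}}$ local so that the order $p^{2n}$ forces rank one. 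A finite \'etale group scheme becomes constant after an \'etale surjective base change, over which a locally free rank-one $\mathcal{O}_{\mathcal{K}}/p^{n}\mathcal{O}_{\mathcal{K}}$-module becomes free; dualising back, $\mathcal{A}[p^{n}]^{\mu}\cong\delta_{\mathcal{K}}^{-1}\mathcal{O}_{\mathcal{K}}\otimes\mu_{p^{n}}$ \'etale-locally on $S_{\mu}$. Hence the $\mathrm{Isom}$-functor is \'etale-locally non-empty, and since for any two $\mathcal{O}_{\mathcal{K}}$-isomorphisms $\varepsilon_{1},\varepsilon_{2}$ there is a unique $a\in\Delta(p^{n})$ with $\varepsilon_{2}\circ a=\varepsilon_{1}$, it is a $\Delta(p^{n})$-torsor for the \'etale topology, and the conclusion follows as in the first paragraph.

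I expect the main obstacle to be making this \'etale-local triviality precise, i.e.\ pinning down the $\mathcal{O}_{\mathcal{K}}$-module structure of $\mathcal{A}[p^{n}]^{\mu}$ and of its Cartier dual \emph{relatively} over $S_{\mu}$, rather than merely over geometric points --- this is essentially contained in the cited $p$-divisible group analysis but has to be stated in families. An alternative route, which sidesteps it for finiteness and \'etaleness, is the infinitesimal criterion: relative representability is already known and $\tau$ is of finite type, so it suffices to show that for a square-zero extension $R'\twoheadrightarrow R$ of $\kappa_{0}$-algebras and a lift $\underline A'\in\mathcal{M}(R')$ of $\underline A$, the trivialisation $\varepsilon$ lifts uniquely; since $\varepsilon$ identifies its source with the multiplicative-type group $\mathcal{A}[p^{n}]^{\mu}$, the infinitesimal rigidity of groups of multiplicative type (no infinitesimal deformations, no infinitesimal automorphisms) forces existence and uniqueness. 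One would still invoke the torsor picture, or a direct fibre count, for surjectivity and for the identification of the deck group.
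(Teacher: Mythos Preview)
Your proposal is correct and follows essentially the same approach as the paper: both identify $Ig_{\mu}(p^{n})$ with the $\mathrm{Isom}$-scheme of $\mathcal{O}_{\mathcal{K}}$-trivialisations of $\mathcal{A}[p^{n}]^{\mu}$, note that this is \'etale-locally isomorphic to $\delta_{\mathcal{K}}^{-1}\mathcal{O}_{\mathcal{K}}\otimes\mu_{p^{n}}$, and conclude the torsor property from the identification $\Delta(p^{n})=\mathrm{Aut}_{\mathcal{O}_{\mathcal{K}}}(\delta_{\mathcal{K}}^{-1}\mathcal{O}_{\mathcal{K}}\otimes\mu_{p^{n}})$. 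The paper's proof is very terse and simply asserts the \'etale-local triviality, whereas you supply a concrete mechanism for it via Cartier duality (reducing to a rank-one \'etale $\mathcal{O}_{\mathcal{K}}/p^{n}$-module sheaf, using that $p$ is inert); you also fill in the step that $\varepsilon$ is necessarily onto $A[p^{n}]^{\mu}$, which the paper leaves implicit.
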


\begin{proof}
Every $\mu $-ordinary abelian variety has a \emph{unique} finite flat $%
\mathcal{O}_{\mathcal{K}}$-subgroup scheme of multiplicative type $%
A[p^{n}]^{\mu }$ of rank $p^{2n}.$ Such a subgroup scheme is, locally in the
\'{e}tale topology, isomorphic to $\delta _{\mathcal{K}}^{-1}\mathcal{O}_{%
\mathcal{K}}\otimes \mu _{p^{n}},$ and any two isomorphisms differ by a
unique automorphism of $\delta _{\mathcal{K}}^{-1}\mathcal{O}_{\mathcal{K}%
}\otimes \mu _{p^{n}}.$ But $\Delta (p^{n})=Aut_{\mathcal{O}_{\mathcal{K}%
}}(\delta _{\mathcal{K}}^{-1}\mathcal{O}_{\mathcal{K}}\otimes \mu _{p^{n}}).$
If we let $\gamma \in \Delta (p^{n})$ act on the pair $(\underline{A}%
,\varepsilon )$ via 
\begin{equation}
\gamma ((\underline{A},\varepsilon ))=(\underline{A},\varepsilon \circ
\gamma ^{-1})  \label{gamma action}
\end{equation}
$\Delta (p^{n})$ becomes a group of deck transformation and the proof is
complete.
\end{proof}

\subsubsection{A compactification over the cusps}

The proof of the following proposition mimics the construction of $\bar{S}.$
We omit it.

\begin{proposition}
Let $\overline{Ig}_{\mu }(p^{n})$ be the normalization of $\bar{S}_{\mu }=%
\bar{S}\backslash S_{ss}$ in $Ig_{\mu }(p^{n}).$ Then $\overline{Ig}_{\mu
}(p^{n})\rightarrow \bar{S}_{\mu }$ is finite \'{e}tale and the action of $%
\Delta (p^{n})$ extends to it. The boundary $\overline{Ig}_{\mu
}(p^{n})\backslash Ig_{\mu }(p^{n})$ is non-canonically identified with $%
\Delta (p^{n})\times C.$
\end{proposition}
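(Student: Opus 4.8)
The plan is to realize $\overline{Ig}_\mu(p^n)$ as an $\mathcal O_{\mathcal K}$-linear $\underline{\mathrm{Isom}}$-scheme over $\bar S_\mu$ and to check that it is an étale torsor under $\Delta(p^n)$. Write $\mathcal H_n\subset\mathcal A[p^n]$ for the maximal finite flat subgroup scheme of multiplicative type, with its induced $\mathcal O_{\mathcal K}$-action. Over $S_\mu$ this is $\mathcal A[p^n]^\mu=Fil^2\mathcal A(p)[p^n]$ of rank $p^{2n}$ (Section \ref{p-div}), and for a point $(\underline A,\varepsilon)$ of $Ig_\mu(p^n)$ the image of $\varepsilon$ is a multiplicative $\mathcal O_{\mathcal K}$-subgroup of rank $p^{2n}$, hence equals $\mathcal H_n$; therefore, over $S_\mu$,
$$Ig_\mu(p^n)=\underline{\mathrm{Isom}}_{\mathcal O_{\mathcal K}}\bigl(\delta_{\mathcal K}^{-1}\mathcal O_{\mathcal K}\otimes\mu_{p^n},\,\mathcal H_n\bigr).$$

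First I would extend $\mathcal H_n$ over the cuspidal divisor $C$. By \thmref{Vollaard}(i), $S_{ss}$ is disjoint from $C$, so $C\subset\bar S_\mu$ and $\bar S_\mu=S_\mu\cup C$. Along $C$ the universal semi-abelian scheme sits in an extension $0\to T\to\mathcal A\to B\to 0$ with $T=\mathfrak a\otimes\Bbb{G}_m$ an $\mathcal O_{\mathcal K}$-torus and $B$ an elliptic curve with CM by $\mathcal O_{\mathcal K}$; since $p$ is inert, $B$ is supersingular, so $B[p^n]$ has no subgroup of multiplicative type and $T[p^n]=\mathfrak a\otimes\mu_{p^n}$ is the maximal multiplicative $\mathcal O_{\mathcal K}$-subgroup of $\mathcal A[p^n]$ over $C$. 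By the retraction $r:\widehat S\to C$ of Section \ref{retraction} (whose existence rests on rigidity of tori), $\mathcal A$ over the formal neighbourhood $\widehat S$ of $C$ is pulled back from $\mathcal A|_C$, hence so is its maximal multiplicative subgroup; using uniqueness of the maximal multiplicative subgroup to compare with $\mathcal A[p^n]^\mu$ on $S_\mu$, these glue to a finite flat multiplicative $\mathcal O_{\mathcal K}$-subgroup scheme $\mathcal H_n\subset\mathcal A[p^n]$ of constant rank $p^{2n}$ over all of $\bar S_\mu$, with $\mathcal H_n|_C=\mathfrak a\otimes\mu_{p^n}$.

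Next I would show $\mathcal J:=\underline{\mathrm{Isom}}_{\mathcal O_{\mathcal K}}(\delta_{\mathcal K}^{-1}\mathcal O_{\mathcal K}\otimes\mu_{p^n},\mathcal H_n)$ is finite étale over $\bar S_\mu$. Passing to Cartier duals turns $\mathcal H_n$ into a finite étale $\mathcal O_{\mathcal K}$-group scheme whose underlying $\mathcal O_{\mathcal K}/p^n\mathcal O_{\mathcal K}$-module is, étale-locally, free of rank $1$ (over $C$ it is $\mathfrak a/p^n\mathfrak a$, which is free of rank $1$ since $\mathcal O_{\mathcal K}/p^n\mathcal O_{\mathcal K}$ is local, $p$ being inert, and over $S_\mu$ the group $gr^2\mathcal A(p)$ has $\mathcal O_{\mathcal K}$-height $1$). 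Hence $\mathcal H_n$ is an étale form of $\delta_{\mathcal K}^{-1}\mathcal O_{\mathcal K}\otimes\mu_{p^n}$, so $\mathcal J$ is a torsor under $\underline{\mathrm{Aut}}_{\mathcal O_{\mathcal K}}(\delta_{\mathcal K}^{-1}\mathcal O_{\mathcal K}\otimes\mu_{p^n})=(\mathcal O_{\mathcal K}/p^n\mathcal O_{\mathcal K})^{\times}=\Delta(p^n)$, in particular finite étale. Since $\bar S_\mu$ is smooth, $\mathcal J$ is normal; being finite over $\bar S_\mu$ and restricting to $Ig_\mu(p^n)$ over the schematically dense open $S_\mu$, it coincides with the normalization $\overline{Ig}_\mu(p^n)$. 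The $\Delta(p^n)$-action $\gamma\cdot\phi=\phi\circ\gamma^{-1}$ on $\mathcal J$ extends the action \eqref{gamma action} on $Ig_\mu(p^n)$.

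Finally, the boundary $\overline{Ig}_\mu(p^n)\setminus Ig_\mu(p^n)$ is the preimage $\mathcal J|_C$ of $C$. Over $C$ one has $\mathcal H_n|_C=\mathfrak a\otimes\mu_{p^n}$, and any choice of $\mathcal O_{\mathcal K}$-module isomorphisms $\mathfrak a/p^n\mathfrak a\simeq\mathcal O_{\mathcal K}/p^n\mathcal O_{\mathcal K}\simeq\delta_{\mathcal K}^{-1}\mathcal O_{\mathcal K}/p^n\delta_{\mathcal K}^{-1}\mathcal O_{\mathcal K}$ trivializes the torsor $\mathcal J|_C$ globally, giving a (non-canonical) identification $\mathcal J|_C\simeq\Delta(p^n)\times C$. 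The main obstacle is precisely the step along $C$: one must know that the maximal multiplicative subgroup extends across the cusps as a finite flat group of the correct rank $p^{2n}$, which is exactly where the degeneration picture and the rigidity-of-tori retraction $r$ enter — and this is why the argument mimics the construction of $\bar S$.
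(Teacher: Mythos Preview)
Your proposal is correct. The paper in fact omits this proof entirely, saying only that it ``mimics the construction of $\bar{S}$''; your realization of $\overline{Ig}_\mu(p^n)$ as the $\underline{\mathrm{Isom}}_{\mathcal O_{\mathcal K}}$-scheme for $\mathcal H_n$ is a natural way to make that remark precise, and you have correctly isolated the one substantive step---extending the multiplicative part $\mathcal H_n$ across $C$, using that the abelian part $B$ has supersingular reduction at the inert prime $p$ (so the rank stays $p^{2n}$) together with the retraction $r$ of Section~\ref{retraction}.
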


We define similarly $Ig_{\mu }^{*},$ and note that it is finite \'{e}tale
over $S_{\mu }^{*}.$

\begin{proposition}
Let $\mathcal{A}$ denote the pull-back of the universal semi-abelian variety
from $\bar{S}_{\mu }$ to $\overline{Ig}_{\mu }(p^{n}).$ Then $\mathcal{A}$
is equipped with a canonical Igusa level structure 
\begin{equation}
\varepsilon :\delta _{\mathcal{K}}^{-1}\mathcal{O}_{\mathcal{K}}\otimes \mu
_{p^{n}}\simeq \mathcal{A}[p^{n}]^{\mu }.
\end{equation}
Over $C$ and after base change to $R_{N}/pR_{N}$ the toric part of $\mathcal{%
A}$ is locally Zariski of the form $\frak{a}\otimes \Bbb{G}_{m}$ and $%
\varepsilon $ is then an $\mathcal{O}_{\mathcal{K}}$-linear isomorphism
between $\delta _{\mathcal{K}}^{-1}\mathcal{O}_{\mathcal{K}}\otimes \mu
_{p^{n}}$ and $\frak{a}\otimes \mu _{p^{n}}.$
\end{proposition}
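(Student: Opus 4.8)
The plan is to produce the Igusa level structure $\varepsilon$ first over the open part $Ig_{\mu}(p^{n})$, where it is essentially tautological, and then extend it across the boundary by means of the normalization property. Over $Ig_{\mu}(p^{n})$ the universal pair $(\mathcal{A},\varepsilon)$ is carried along by the representability of the moduli functor $\mathcal{M}_{Ig(p^{n})}$: by definition a point of $Ig_{\mu}(p^{n})$ is a pair consisting of an object of $\mathcal{M}$ and a closed immersion $\delta_{\mathcal{K}}^{-1}\mathcal{O}_{\mathcal{K}}\otimes\mu_{p^{n}}\hookrightarrow A[p^{n}]$, and since $\tau$ is finite \'etale this immersion must land in $\mathcal{A}[p^{n}]^{\mu}$ (the maximal subgroup scheme of multiplicative type, which exists and is of the expected rank $p^{2n}$ on the $\mu$-ordinary locus). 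Comparing ranks, $\varepsilon$ is an isomorphism onto $\mathcal{A}[p^{n}]^{\mu}$. So over $Ig_{\mu}(p^{n})$ there is nothing to prove.

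Next I would extend $\varepsilon$ over the cuspidal boundary of $\overline{Ig}_{\mu}(p^{n})$. Here the key points are that $\overline{Ig}_{\mu}(p^{n})\to\bar{S}_{\mu}$ is finite \'etale (previous proposition), that $\overline{Ig}_{\mu}(p^{n})$ is normal, and that on the semi-abelian scheme $\mathcal{A}$ over $\bar{S}_{\mu}$ the $p^{n}$-torsion of the multiplicative type locus still makes sense: the toric part $T\subset\mathcal{A}$ is an $\mathcal{O}_{\mathcal{K}}$-torus, $\mathcal{A}[p^{n}]^{\mu}$ is $T[p^{n}]$, and this is a finite flat $\mathcal{O}_{\mathcal{K}}$-group scheme of multiplicative type of rank $p^{2n}$ over all of $\bar{S}_{\mu}$. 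Thus both $\delta_{\mathcal{K}}^{-1}\mathcal{O}_{\mathcal{K}}\otimes\mu_{p^{n}}$ and $\mathcal{A}[p^{n}]^{\mu}$ are finite flat group schemes over the normal base $\overline{Ig}_{\mu}(p^{n})$, they agree over the dense open $Ig_{\mu}(p^{n})$, and a homomorphism between finite flat group schemes over a normal scheme that is an isomorphism on a dense open extends uniquely (the graph of $\varepsilon$ is a closed subscheme of the fiber product whose closure is again finite flat over the base, hence equal to the closure on a dense open, by normality). This gives the canonical isomorphism $\varepsilon:\delta_{\mathcal{K}}^{-1}\mathcal{O}_{\mathcal{K}}\otimes\mu_{p^{n}}\simeq\mathcal{A}[p^{n}]^{\mu}$ over all of $\overline{Ig}_{\mu}(p^{n})$.

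Finally, for the statement over $C$: after base change to $R_{N}/pR_{N}$ the cuspidal divisor is a disjoint union of components on each of which the toric part of $\mathcal{A}$ is, Zariski-locally, of the form $\mathfrak{a}\otimes\mathbb{G}_{m}$ for the ideal $\mathfrak{a}$ attached to the cusp (Section~\ref{universal semi abelian}). Then $\mathcal{A}[p^{n}]^{\mu}$ restricted to that component is $\mathfrak{a}\otimes\mu_{p^{n}}$, and $\varepsilon$ becomes the asserted $\mathcal{O}_{\mathcal{K}}$-linear isomorphism $\delta_{\mathcal{K}}^{-1}\mathcal{O}_{\mathcal{K}}\otimes\mu_{p^{n}}\simeq\mathfrak{a}\otimes\mu_{p^{n}}$ — noting that $\delta_{\mathcal{K}}^{-1}\mathcal{O}_{\mathcal{K}}$ and $\mathfrak{a}$ are locally isomorphic as $\mathcal{O}_{\mathcal{K}}$-modules, so such isomorphisms of the associated $\mu_{p^{n}}$-group schemes exist Zariski-locally. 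The main obstacle, and the only place requiring genuine care, is the unique extension of $\varepsilon$ across the boundary: one must check that $\mathcal{A}[p^{n}]^{\mu}$ really is finite flat of the right rank over the \emph{compactified} base (using that the toric part degenerates in a controlled way and that the filtration of $\mathcal{A}(p)$ by $\mathcal{O}_{\mathcal{K}}$-stable $p$-divisible groups persists over the boundary), and then invoke normality of $\overline{Ig}_{\mu}(p^{n})$ to close up the isomorphism. All remaining verifications are formal.
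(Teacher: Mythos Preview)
The paper does not actually supply a proof of this proposition; it is stated immediately after the preceding proposition (whose proof is explicitly omitted as ``mimicking the construction of $\bar{S}$'') and is evidently meant to be a direct consequence of that construction. Your outline is correct in spirit and fills in what the authors leave implicit.

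One point can be sharpened. Your extension argument across the boundary, phrased in terms of taking the closure of the graph of $\varepsilon$ inside a fiber product and invoking normality, is a bit roundabout and the claim ``a homomorphism between finite flat group schemes over a normal scheme that is an isomorphism on a dense open extends uniquely'' is not literally true without further hypotheses. A cleaner route is to observe that once one knows $\mathcal{A}[p^{n}]^{\mu}$ is finite flat of multiplicative type over all of $\bar{S}_{\mu}$ (which you correctly identify as the genuine content), the Isom-scheme
\[
\underline{\mathrm{Isom}}_{\mathcal{O}_{\mathcal{K}}}\bigl(\delta_{\mathcal{K}}^{-1}\mathcal{O}_{\mathcal{K}}\otimes\mu_{p^{n}},\,\mathcal{A}[p^{n}]^{\mu}\bigr)
\]
is itself a finite \'etale $\Delta(p^{n})$-torsor over $\bar{S}_{\mu}$, and its restriction to $S_{\mu}$ is by definition $Ig_{\mu}(p^{n})$. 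Since $\overline{Ig}_{\mu}(p^{n})$ is the normalization of $\bar{S}_{\mu}$ in $Ig_{\mu}(p^{n})$, and the Isom-scheme is finite \'etale (hence normal) with the same restriction, the universal property of normalization forces them to coincide. The tautological section of the Isom-scheme over itself is then exactly your $\varepsilon$. This avoids any ad hoc closure argument and makes the r\^ole of the previous proposition transparent. Your treatment of the behaviour over $C$ is fine as written.
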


\subsubsection{A trivialization of $\mathcal{L}$ over the Igusa surface}

From now on we focus on $\overline{Ig}_{\mu }=\overline{Ig}_{\mu }(p)$
although similar results hold when $n>1,$ and would be instrumental in the
study of $p$-adic modular forms. The vector bundle $\omega _{\mathcal{A}}$
pulls back to a similar vector bundle over $\overline{Ig}_{\mu }.$ But there 
\begin{equation}
\omega _{\mathcal{A}}^{\mu }:=\omega _{\mathcal{A}[p]^{\mu }}
\end{equation}
is a rank 2 quotient bundle stable under $\mathcal{O}_{\mathcal{K}}$ (of
type $(1,1)$), and the isomorphism $\varepsilon $ induces an isomorphism 
\begin{equation}
\varepsilon ^{*}:\omega _{\mathcal{A}}^{\mu }\simeq \omega _{\delta _{%
\mathcal{K}}^{-1}\mathcal{O}_{\mathcal{K}}\otimes \mu _{p}}.
\end{equation}
Now $Lie(\delta _{\mathcal{K}}^{-1}\mathcal{O}_{\mathcal{K}}\otimes \mu
_{p})=\delta _{\mathcal{K}}^{-1}\mathcal{O}_{\mathcal{K}}\otimes Lie(\mu
_{p})=\delta _{\mathcal{K}}^{-1}\mathcal{O}_{\mathcal{K}}\otimes Lie(\Bbb{G}%
_{m})$ and by duality 
\begin{equation}
\omega _{\delta _{\mathcal{K}}^{-1}\mathcal{O}_{\mathcal{K}}\otimes \mu
_{p}}=\mathcal{O}_{\mathcal{K}}\otimes \omega _{\Bbb{G}_{m}},
\end{equation}
with $1\otimes dT/T$ as a generator (if $T$ is the parameter of $\Bbb{G}_{m}$%
). Here we have used the fact that the $\Bbb{Z}$-dual of $\delta _{\mathcal{K%
}}^{-1}\mathcal{O}_{\mathcal{K}}$ is $\mathcal{O}_{\mathcal{K}}$ via the
trace pairing. This is the constant vector bundle $\mathcal{O}_{\mathcal{K}%
}\otimes R=R(\Sigma )\oplus R(\bar{\Sigma}).$

\begin{proposition}
The line bundles $\mathcal{L},$ $\mathcal{P}_{0}$ and $\mathcal{P}_{\mu }$
are trivial over $\overline{Ig}_{\mu }.$
\end{proposition}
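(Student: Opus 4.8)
The plan is to deduce everything from the canonical Igusa structure $\varepsilon:\delta_{\mathcal{K}}^{-1}\mathcal{O}_{\mathcal{K}}\otimes\mu_{p}\simeq\mathcal{A}[p]^{\mu}$ over $\overline{Ig}_{\mu}$, which, as just observed, induces an isomorphism $\varepsilon^{*}:\omega_{\mathcal{A}}^{\mu}=\omega_{\mathcal{A}[p]^{\mu}}\simeq\mathcal{O}_{\mathcal{K}}\otimes\mathcal{O}_{\overline{Ig}_{\mu}}$ with generator $1\otimes dT/T$. Since the right-hand side is the constant rank-$2$ bundle $\mathcal{O}_{\overline{Ig}_{\mu}}(\Sigma)\oplus\mathcal{O}_{\overline{Ig}_{\mu}}(\bar{\Sigma})$, its two isotypic summands $\omega_{\mathcal{A}}^{\mu}(\Sigma)$ and $\omega_{\mathcal{A}}^{\mu}(\bar{\Sigma})$ are canonically trivial line bundles over $\overline{Ig}_{\mu}$. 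Everything else is a matter of identifying $\mathcal{L}$, $\mathcal{P}_{\mu}$ and $\mathcal{P}_{0}$ with these, or with tensor combinations of them.

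For $\mathcal{L}$ and $\mathcal{P}_{\mu}$ I would use the $\mathcal{O}_{\mathcal{K}}$-equivariant surjection of cotangent spaces at the origin $\omega_{\mathcal{A}}\twoheadrightarrow\omega_{\mathcal{A}[p]^{\mu}}=\omega_{\mathcal{A}}^{\mu}$ induced by the closed immersion $\mathcal{A}[p]^{\mu}\hookrightarrow\mathcal{A}$ (the source being locally free of rank $3$, the target of rank $2$). Splitting off isotypic components, an exact operation, gives surjections $\mathcal{L}=\omega_{\mathcal{A}}(\bar{\Sigma})\twoheadrightarrow\omega_{\mathcal{A}}^{\mu}(\bar{\Sigma})$ and $\mathcal{P}=\omega_{\mathcal{A}}(\Sigma)\twoheadrightarrow\omega_{\mathcal{A}}^{\mu}(\Sigma)$. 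The first is a surjection of invertible sheaves, hence an isomorphism, so $\mathcal{L}$ is trivial over $\overline{Ig}_{\mu}$. For the second, the construction of the filtration on $\mathcal{P}$ over $\bar{S}_{\mu}$ (where $\mathcal{P}_{0}=\ker(\omega_{\mathcal{A}[p]^{0}}\to\omega_{\mathcal{A}[p]^{\mu}})$ and $\mathcal{P}_{\mu}$ is identified with $\omega_{\mathcal{A}[p]^{\mu}}(\Sigma)$) shows this surjection is exactly the quotient map $\mathcal{P}\twoheadrightarrow\mathcal{P}_{\mu}$; pulling back along the finite \'etale $\tau:\overline{Ig}_{\mu}\to\bar{S}_{\mu}$ gives $\mathcal{P}_{\mu}\simeq\omega_{\mathcal{A}}^{\mu}(\Sigma)$, which is trivial.

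For $\mathcal{P}_{0}$ I would invoke the exact sequence $0\to\mathcal{P}_{0}\to\mathcal{P}\to\mathcal{P}_{\mu}\to 0$ of (\ref{Fil on P}), which yields $\det\mathcal{P}\simeq\mathcal{P}_{0}\otimes\mathcal{P}_{\mu}$, together with the corollary asserting $\det\mathcal{P}\simeq\mathcal{L}$ over $\bar{S}_{\kappa}$ (hence over $\overline{Ig}_{\mu}$ after pull-back). Combining, $\mathcal{P}_{0}\simeq\mathcal{L}\otimes\mathcal{P}_{\mu}^{\vee}$, a tensor product of trivial line bundles, hence trivial. Alternatively one may cite the corollary $\mathcal{P}_{0}\simeq\mathcal{L}^{1-p}$ over $\bar{S}_{\mu}$ directly and pull it back along $\tau$.

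I do not expect a genuine obstacle here; the two points needing a line of care are (i) upgrading the type-$\bar{\Sigma}$ surjection $\mathcal{L}\twoheadrightarrow\omega_{\mathcal{A}}^{\mu}(\bar{\Sigma})$ to an isomorphism, which uses only that a surjection of invertible sheaves is an isomorphism, and (ii) checking that the identifications $\mathcal{P}_{\mu}=\omega_{\mathcal{A}}^{\mu}(\Sigma)$ and $\det\mathcal{P}\simeq\mathcal{L}$ invoked above are valid over the whole of $\bar{S}_{\mu}$ (resp. $\bar{S}_{\kappa}$), including the cuspidal boundary, so that they remain valid on the compactified Igusa surface $\overline{Ig}_{\mu}$.
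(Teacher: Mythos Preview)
Your proof is correct and follows essentially the same line as the paper's: use $\varepsilon^{*}$ to trivialize $\omega_{\mathcal{A}}^{\mu}$, identify $\mathcal{L}=\omega_{\mathcal{A}}^{\mu}(\bar{\Sigma})$ and $\mathcal{P}_{\mu}=\omega_{\mathcal{A}}^{\mu}(\Sigma)$, and then deduce the triviality of $\mathcal{P}_{0}$ from $\mathcal{P}_{0}\otimes\mathcal{P}_{\mu}=\det\mathcal{P}\simeq\mathcal{L}$. The paper's proof is terser (it treats the identifications $\mathcal{L}=\omega_{\mathcal{A}}^{\mu}(\bar{\Sigma})$ and $\mathcal{P}_{\mu}=\omega_{\mathcal{A}}^{\mu}(\Sigma)$ as already established rather than rederiving them via surjections of cotangent spaces), but the content is the same.
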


\begin{proof}
Use $\varepsilon ^{*}$ as an isomorphism between vector bundles and note
that $\mathcal{L}=\omega _{\mathcal{A}}^{\mu }(\bar{\Sigma})$ and $\mathcal{P%
}_{\mu }=\omega _{\mathcal{A}}^{\mu }(\Sigma ).$ The relation $\mathcal{P}%
_{0}\otimes \mathcal{P}_{\mu }=\det \mathcal{P}\simeq \mathcal{L}$ implies
the triviality of $\mathcal{P}_{0}$ as well.
\end{proof}

Note that the trivialization of $\mathcal{L}$ and $\mathcal{P}_{\mu }$ is
canonical, because it uses only the tautological map $\varepsilon $ which
exists over the Igusa scheme. The trivialization of $\mathcal{P}_{0}$ on the
other hand depends on how we realize the isomorphism $\det \mathcal{P}\simeq 
\mathcal{L}.$

We can now give an alternative proof to the fact that $\mathcal{L}^{p^{2}-1}$
and $\mathcal{P}_{\mu }^{p^{2}-1}$ are trivial on $\bar{S}_{\mu }.$ Denote
by $\mathcal{O}_{Ig}$ the structure sheaf of $\overline{Ig}_{\mu }.$ By the
projection formula, $\tau _{*}(\tau ^{*}\mathcal{L)}\simeq \mathcal{L}%
\otimes \tau _{*}\mathcal{O}_{Ig}$. Taking determinants we get 
\begin{equation}
\det \tau _{*}(\tau ^{*}\mathcal{L)}\simeq \mathcal{L}^{p^{2}-1}\otimes \det
\tau _{*}\mathcal{O}_{Ig}.
\end{equation}
As $\tau ^{*}\mathcal{L}\simeq \mathcal{O}_{Ig}$, we get that $\mathcal{L}%
^{p^{2}-1}\simeq \mathcal{O}_{\bar{S}}$. The same argument works for $%
\mathcal{P}_{\mu }$ and for $\mathcal{P}_{0}.$ The fact that $\mathcal{P}%
_{0}^{p+1}$ is already trivial could be deduced by a similar argument had we
worked out an analogue of $Ig(p)$ classifying \emph{symplectic isomorphisms}
of $\frak{G}[p]$ with $gr^{1}A[p].$ The role of $\Delta (p)$ for such a
moduli space would be assumed by 
\begin{equation}
\Delta ^{1}(p)=\ker (N:(\mathcal{O}_{\mathcal{K}}/p\mathcal{O}_{\mathcal{K}%
})^{\times }\rightarrow \Bbb{F}_{p}^{\times }),
\end{equation}
which is a group of order $p+1.$ We do not go any further in this direction
here.

\subsection{Compactification of the Igusa surface along the supersingular
locus}

\subsubsection{Extracting a $p^{2}-1$ root from $h_{\bar{\Sigma}}$ over $%
\overline{Ig}_{\mu }$}

Let $a$ be the canonical nowhere vanishing section of $\mathcal{L}$ over $%
\overline{Ig}_{\mu }$ which is sent to $e_{\bar{\Sigma}}\cdot (1\otimes
dT/T) $ under the trivialization 
\begin{equation}
\varepsilon ^{*}:\mathcal{L}=\omega _{\mathcal{A}}^{\mu }(\bar{\Sigma}%
)\simeq (\mathcal{O}_{\mathcal{K}}\otimes \omega _{\Bbb{G}_{m}})(\bar{\Sigma}%
)=R(\bar{\Sigma}).
\end{equation}
Here $R$ is any $R_{0}/pR_{0}$-algebra over which we choose to work. In
other words, $a=(\varepsilon ^{*})^{-1}(e_{\bar{\Sigma}}\cdot 1\otimes
dT/T). $ Dually, $a$ is the homomorphism from $Lie(\mathcal{A})(\bar{\Sigma}%
) $ to $\delta _{\mathcal{K}}^{-1}\otimes Lie(\Bbb{G}_{m})(\bar{\Sigma})$
arising from $\varepsilon ^{-1}.$ Let $a(k)=a^{\otimes k}\in H^{0}(\overline{%
Ig}_{\mu },\mathcal{L}^{k}).$

\begin{proposition}
\label{Root of Hasse}(i) Let $\gamma \in \Delta (p)=\mathcal{(O}_{\mathcal{K}%
}/p\mathcal{O}_{\mathcal{K}})^{\times }.$ Then $\Delta (p)$ acts on $H^{0}(%
\overline{Ig}_{\mu },\mathcal{L)}$ and 
\begin{equation}
\gamma ^{*}a=\bar{\Sigma}(\gamma )^{-1}\cdot a.
\end{equation}
(ii) The section $a$ is a $p^{2}-1$ root of the Hasse invariant over $%
\overline{Ig}_{\mu }$, i.e. 
\begin{equation}
a(p^{2}-1)=h_{\bar{\Sigma}}.
\end{equation}
\end{proposition}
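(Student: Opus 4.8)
The plan is to prove the two parts separately; (i) is a short unwinding of the deck-transformation action on the tautological section $a$, and (ii) identifies the pullback of $h_{\bar{\Sigma}}$ to $\overline{Ig}_{\mu}$ with the completely explicit cotangent map of the iterated Verschiebung of the constant group scheme $\delta_{\mathcal{K}}^{-1}\mathcal{O}_{\mathcal{K}}\otimes\mu_{p}$. For (i): $\Delta(p)$ acts on $H^{0}(\overline{Ig}_{\mu},\mathcal{L})$ because $\mathcal{L}$ there is the pullback $\tau^{*}\mathcal{L}$ and $\Delta(p)$ acts by deck transformations of $\tau$, so $\gamma^{*}\mathcal{L}$ is canonically $\mathcal{L}$ for each $\gamma$. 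By $(\ref{gamma action})$ the effect of $\gamma$ on the universal object is to replace $\varepsilon$ by $\varepsilon\circ\gamma^{-1}$, hence to replace the trivialization $\varepsilon^{*}\colon\mathcal{L}=\omega_{\mathcal{A}}^{\mu}(\bar{\Sigma})\simeq R(\bar{\Sigma})$ by its twist through the cotangent action of the automorphism ``multiplication by $\gamma$'' of $\delta_{\mathcal{K}}^{-1}\mathcal{O}_{\mathcal{K}}\otimes\mu_{p}$. Under the trace-pairing identification $\omega_{\delta_{\mathcal{K}}^{-1}\mathcal{O}_{\mathcal{K}}\otimes\mu_{p}}=\mathcal{O}_{\mathcal{K}}\otimes\omega_{\Bbb{G}_{m}}$ that automorphism acts as multiplication by $\gamma$ on the $\mathcal{O}_{\mathcal{K}}$-factor, hence as the scalar $\bar{\Sigma}(\gamma)$ on the $\bar{\Sigma}$-isotypic line $R(\bar{\Sigma})$ in which $e_{\bar{\Sigma}}\cdot(1\otimes dT/T)$ lies. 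Since $a=(\varepsilon^{*})^{-1}\bigl(e_{\bar{\Sigma}}\cdot(1\otimes dT/T)\bigr)$, tracing this through gives $\gamma^{*}a=\bar{\Sigma}(\gamma)^{-1}a$, the exponent $-1$ being dictated by the inversion built into $(\ref{gamma action})$ and the convention making $\gamma\mapsto\gamma^{*}$ a left action on sections; the same computation on the $\Sigma$-line records the analogous law for the canonical generator of $\mathcal{P}_{\mu}$.

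For (ii), both $a(p^{2}-1)$ and the pullback of $h_{\bar{\Sigma}}$ are nowhere-vanishing sections of $\mathcal{L}^{p^{2}-1}$ over $\overline{Ig}_{\mu}$ (the latter since the divisor of $h_{\bar{\Sigma}}$ is $S_{ss}$, which has been removed, by \thmref{Hasse invariant}), so it suffices to show they agree; as $\overline{Ig}_{\mu}$ is normal with $Ig_{\mu}$ dense, one may check this over $Ig_{\mu}$. The key point is that $V_{\mathcal{P}}$ and $V_{\mathcal{L}}$ come from the Verschiebung isogeny of $\mathcal{A}$, which is functorial, so the canonical Igusa structure $\varepsilon\colon\delta_{\mathcal{K}}^{-1}\mathcal{O}_{\mathcal{K}}\otimes\mu_{p}\simeq\mathcal{A}[p]^{\mu}$ carries the Verschiebung of $\mathcal{A}[p]^{\mu}$ to that of the constant group $\delta_{\mathcal{K}}^{-1}\mathcal{O}_{\mathcal{K}}\otimes\mu_{p}$. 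Using that $\mathcal{P}_{0}=\ker V_{\mathcal{P}}$ over $Ig_{\mu}$ and that the restriction $\omega_{\mathcal{A}}\to\omega_{\mathcal{A}}^{\mu}$ is an isomorphism on $\bar{\Sigma}$-components, a chase of the definition $h_{\bar{\Sigma}}=V_{\mathcal{P}}^{(p)}\circ V_{\mathcal{L}}$ shows that $h_{\bar{\Sigma}}\colon\mathcal{L}\to\mathcal{L}^{(p^{2})}$ is exactly the $\bar{\Sigma}$-component of the cotangent map of the twofold iterated Verschiebung $Ver_{\mathcal{A}}\circ Ver_{\mathcal{A}^{(p)}}$, which factors through $\omega_{\mathcal{A}}^{\mu}$ and so — via $\varepsilon^{*}$ — is computed on $\omega_{\delta_{\mathcal{K}}^{-1}\mathcal{O}_{\mathcal{K}}\otimes\mu_{p}}=\mathcal{O}_{\mathcal{K}}\otimes\omega_{\Bbb{G}_{m}}$.

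The remaining computation is transparent: $Ver_{\mu_{p}}$ is the canonical isomorphism $\mu_{p}^{(p)}\simeq\mu_{p}$, so its twofold iterate is the canonical isomorphism of $\delta_{\mathcal{K}}^{-1}\mathcal{O}_{\mathcal{K}}\otimes\mu_{p}$ with its second Frobenius twist, whose cotangent map $\omega\to\omega^{(p^{2})}$ is $\xi\mapsto\xi^{(p^{2})}$. Since $(\,\cdot\,)^{(p^{2})}$ is an even power of Frobenius it fixes the $\Bbb{F}_{p^{2}}$-data, so $e_{\bar{\Sigma}}\cdot(1\otimes dT/T)$ goes to $\bigl(e_{\bar{\Sigma}}\cdot(1\otimes dT/T)\bigr)^{(p^{2})}$; transporting back through $\varepsilon^{*}$, and through the canonical isomorphism $\mathcal{L}^{(p^{2})}\simeq\mathcal{L}^{p^{2}}$ for line bundles in characteristic $p$ (under which $a^{(p^{2})}$ corresponds to $a^{\otimes p^{2}}$), this reads $h_{\bar{\Sigma}}(a)=a^{\otimes p^{2}}$, i.e.\ $h_{\bar{\Sigma}}=a^{\otimes(p^{2}-1)}=a(p^{2}-1)$.

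The main work is the bookkeeping in (ii). One has to keep straight how the Frobenius twist $(\,\cdot\,)^{(p)}$ interchanges the two $\mathcal{O}_{\mathcal{K}}$-types — because $p$ is inert, Frobenius is the nontrivial automorphism of $\kappa_{0}=\Bbb{F}_{p^{2}}$ and hence swaps the idempotents $e_{\Sigma}$, $e_{\bar{\Sigma}}$ — so that $V_{\mathcal{P}}$ genuinely lands in $\mathcal{L}^{(p)}=\omega_{\mathcal{A}}(\bar{\Sigma})^{(p)}$ and the composite $V_{\mathcal{P}}^{(p)}\circ V_{\mathcal{L}}$ closes up only after two steps; and one has to invoke from [dS-G] that $V_{\mathcal{P}}$, $V_{\mathcal{L}}$ and the filtration on $\mathcal{P}$ behave over all of $\overline{Ig}_{\mu}$, including across the cuspidal boundary, as at $\mu$-ordinary interior points, so that the reduction to the constant group $\delta_{\mathcal{K}}^{-1}\mathcal{O}_{\mathcal{K}}\otimes\mu_{p}$ is legitimate there. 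Once these identifications are in place, the Dieudonn\'{e}/Verschiebung computation is routine.
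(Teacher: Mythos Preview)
Your proposal is correct and follows essentially the same approach as the paper's proof. For (i) both you and the paper trace the action of $\gamma$ through the trivialization $\varepsilon^{*}$, observing it acts as $\bar{\Sigma}(\gamma)$ on the $\bar{\Sigma}$-isotypic line and dualizing; for (ii) both reduce to the constant group $\delta_{\mathcal{K}}^{-1}\mathcal{O}_{\mathcal{K}}\otimes\mu_{p}$ via $\varepsilon$ and use that Verschiebung on $\Bbb{G}_{m}$ (equivalently on $\mu_{p}$) induces the identity on (co)tangent spaces---the paper packages this as a commutative square on the Lie-algebra side built from $Ver_{\Bbb{G}_{m}}=1$, while you phrase it dually on cotangent spaces, but the content is the same.
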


\begin{proof}
(i) This part is a restatement of the action of $\Delta (p).$ At two points
of $Ig_{\mu }(R)$ lying over the same point of $S_{\mu }(R)$ and differing
by the action of $\gamma \in \Delta (p),$ the canonical embeddings 
\begin{equation}
\delta _{\mathcal{K}}^{-1}\otimes \mu _{p}\hookrightarrow A[p]
\end{equation}
differ by $\iota (\gamma )$ (\ref{gamma action}). The induced
trivializations of $Lie(A)(\bar{\Sigma})$ differ by $\bar{\Sigma}(\gamma )$
and by duality we get (i).

(ii) Since over any $\Bbb{F}_{p}$-base, $Ver_{\Bbb{G}_{m}}=1$, we have a
commutative diagram 
\begin{equation}
\begin{array}{lll}
Lie(\mathcal{A})(\bar{\Sigma})^{(p^{2})} & \overset{V_{*}^{2}}{\rightarrow }
& Lie(\mathcal{A})(\bar{\Sigma}) \\ 
\downarrow a^{(p^{2})} &  & \downarrow a \\ 
\delta _{\mathcal{K}}^{-1}\otimes Lie(\Bbb{G}_{m})(\bar{\Sigma}) & = & 
\delta _{\mathcal{K}}^{-1}\otimes Lie(\Bbb{G}_{m})(\bar{\Sigma})
\end{array}
.
\end{equation}
Using the isomorphism $Lie(\mathcal{A})(\bar{\Sigma})^{(p^{2})}\simeq Lie(%
\mathcal{A})(\bar{\Sigma})^{p^{2}}$ we get the commutative diagram 
\begin{equation}
\begin{array}{lll}
Lie(\mathcal{A})(\bar{\Sigma})^{p^{2}} & \overset{h_{\bar{\Sigma}}}{%
\rightarrow } & Lie(\mathcal{A})(\bar{\Sigma}) \\ 
\downarrow a(p^{2}) &  & \downarrow a \\ 
\delta _{\mathcal{K}}^{-1}\otimes Lie(\Bbb{G}_{m})(\bar{\Sigma}) & = & 
\delta _{\mathcal{K}}^{-1}\otimes Lie(\Bbb{G}_{m})(\bar{\Sigma})
\end{array}
,
\end{equation}
from which we deduce that $h_{\bar{\Sigma}}=a(p^{2}-1)$.
\end{proof}

\subsubsection{The compactification $\overline{Ig}$ of $\overline{Ig}_{\mu }$%
}

In this section we follow the method outlined in [An-Go, Sections 6-9] and
[Gor] for Hilbert modular varieties. Quite generally, let $L\rightarrow X$
be a line bundle associated with an invertible sheaf $\mathcal{L}$ on a
scheme $X$. Write $L^{n}$ for the line bundle $L^{\otimes n}$ over $X$. Let $%
s:X\rightarrow L^{n}$ be a section. Consider the fiber product 
\begin{equation}
Y=L\times _{L^{n}}X
\end{equation}
where the two maps to $L^{n}$ are $\lambda \mapsto \lambda ^{n}$ and $s.$
Let $p:Y\overset{pr_{2}}{\rightarrow }X$ be the projection which factors
also as $Y\overset{pr_{1}}{\rightarrow }L\rightarrow L^{n}\rightarrow X$
(since $X\overset{s}{\rightarrow }L^{n}\rightarrow X$ is the identity).
Consider 
\begin{equation}
p^{*}L=L\times _{X}(L\times _{L^{n}}X).
\end{equation}
This line bundle on $Y$ has a tautological section $t:Y\rightarrow p^{*}L,$%
\begin{equation}
t:y=(\lambda ,x)\mapsto (\lambda ,y)=(\lambda ,(\lambda ,x))
\end{equation}
Here $s(x)=\lambda ^{n}$ and 
\begin{equation}
t^{n}(y)=(\lambda ^{n},y)=(s(x),y)=p^{*}s(y)
\end{equation}
so $t$ is an $n$th root of $p^{*}s.$ Moreover, $Y$ has the \emph{universal
property} with respect to extracting $n$th roots from $s$: If $%
p_{1}:Y_{1}\rightarrow X$, and $t_{1}\in \Gamma (Y_{1},p_{1}^{*}L)$ is such
that $t_{1}^{n}=p_{1}^{*}s,$ then there exists a unique morphism $%
h:Y_{1}\rightarrow Y$ covering the two maps to $X$ such that $t_{1}=h^{*}t.$

The map $L\rightarrow L^{n}$ is \emph{finite flat of degree }$n$ and if $n$
is invertible on the base, finite \'{e}tale away from the zero section.
Indeed, locally on $X$ it is the map $\Bbb{A}^{1}\times X\rightarrow \Bbb{A}%
^{1}\times X$ which is just raising to $n$th power in the first coordinate.
By base-change, it follows that the same is true for the map $p:Y\rightarrow
X:$ this map is finite flat of degree $n$ and \'{e}tale away from the
vanishing locus of the section $s$ (assuming $n$ is invertible). We remark
that if $L$ is the trivial line bundle, we recover usual Kummer theory.

Applying this in our example with $n=p^{2}-1$ we define the \emph{complete
Igusa surface of level} $p,$ $\overline{Ig}=\overline{Ig}(p)$ as 
\begin{equation}
\overline{Ig}=\mathcal{L}\times _{\mathcal{L}^{p^{2}-1}}\bar{S}
\end{equation}
where the map $\overline{S}\rightarrow \mathcal{L}^{p^{2}-1}$ is $h_{\bar{%
\Sigma}}.$ From the universal property and part (ii) of Proposition \ref
{Root of Hasse} we get a map of $\bar{S}$-schemes 
\begin{equation}
\overline{Ig}_{\mu }\rightarrow \overline{Ig}.
\end{equation}
This map is an isomorphism over $\bar{S}_{\mu }$ because both schemes are
\'{e}tale torsors for $\Delta (p)=(\mathcal{O}_{\mathcal{K}}/p\mathcal{O}_{%
\mathcal{K}})^{\times }$ and the map respects the action of this group. We
summarize the discussion in the following theorem (for the last point,
consult [Mu2], Proposition 2, p.198).

\begin{theorem}
\label{Igusa}The morphism $\tau :\overline{Ig}\rightarrow \bar{S}$ satisfies
the following properties:

(i) It is finite flat of degree $p^{2}-1,$ \'{e}tale over $\bar{S}_{\mu },$
totally ramified over $S_{ss}.$

(ii) $\Delta (p)$ acts on $\overline{Ig}$ as a group of deck transformations
and the quotient is $\bar{S}.$

(iii) Let $s_{0}\in S_{gss}(\Bbb{\bar{F}}_{p}).$ Then there exist local
parameters $u,v$ at $s_{0}$ such that $\widehat{\mathcal{O}}_{S,s_{0}}=\Bbb{%
\bar{F}}_{p}[[u,v]]$, $S_{gss}\subset S$ is formally defined by $u=0,$ and
if $\tilde{s}_{0}\in Ig$ maps to $s_{0}$ under $\tau ,$ then $\widehat{%
\mathcal{O}}_{Ig,\tilde{s}_{0}}=\Bbb{\bar{F}}_{p}[[w,v]]$ where $%
w^{p^{2}-1}=u.$ In particular, $Ig$ is regular in codimension 1.

(iv) Let $s_{0}\in S_{ssp}(\Bbb{\bar{F}}_{p}).$ Then there exist local
parameters $u,v$ at $s_{0}$ such that $\widehat{\mathcal{O}}_{S,s_{0}}=\Bbb{%
\bar{F}}_{p}[[u,v]]$, $S_{ss}\subset S$ is formally defined at $s_{0}$ by $%
u^{p+1}+v^{p+1}=0,$ and if $\tilde{s}_{0}\in Ig$ maps to $s_{0}$ under $\tau
,$ then 
\begin{equation}
\widehat{\mathcal{O}}_{Ig,\tilde{s}_{0}}=\Bbb{\bar{F}}%
_{p}[[w,u,v]]/(w^{p^{2}-1}-u^{p+1}-v^{p+1})
\end{equation}
In particular, $\tilde{s}_{0}$ is a normal singularity of $Ig.$
\end{theorem}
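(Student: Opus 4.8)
The plan is to read off all four parts from the general Kummer-type fibre-product construction recalled just above the statement, applied with $n=p^{2}-1$, base $X=\bar{S}$, and section $h_{\bar{\Sigma}}\in H^{0}(\bar{S},\mathcal{L}^{p^{2}-1})$, feeding in only three facts already available: Theorem~\ref{Hasse invariant} (the divisor of $h_{\bar{\Sigma}}$ is the reduced subscheme $S_{ss}$), Theorem~\ref{Vollaard} (the local structure of $S$ and $S_{ss}$ at supersingular points), and Proposition~\ref{Root of Hasse} (the canonical $(p^{2}-1)$-st root $a$ of $h_{\bar{\Sigma}}$ over $\overline{Ig}_{\mu}$, with $\gamma^{*}a=\bar{\Sigma}(\gamma)^{-1}a$). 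For (i): since $p\nmid p^{2}-1$, the map $\mathcal{L}\to\mathcal{L}^{p^{2}-1}$, $\lambda\mapsto\lambda^{p^{2}-1}$, is finite flat of degree $p^{2}-1$ and étale off the zero section, as shown in the general discussion; base-changing along $h_{\bar{\Sigma}}$ shows $\tau$ is finite flat of degree $p^{2}-1$ and étale over $\{h_{\bar{\Sigma}}\neq 0\}$, which is $\bar{S}_{\mu}$ by Theorem~\ref{Hasse invariant}. Over a geometric point $x$ of $S_{ss}$ the fibre is $Spec\,\kappa(x)[t]/(t^{p^{2}-1})$, a single non-reduced point, so $\tau$ is totally ramified over $S_{ss}$.

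For (iii) and (iv), fix $s_{0}\in S_{ss}$. As $\bar{S}_{\kappa}$ is smooth over $\kappa$, $\widehat{\mathcal{O}}_{S,s_{0}}\cong\kappa[[u,v]]$, and by Theorem~\ref{Vollaard} the regular system of parameters $u,v$ may be chosen so that $S_{ss}$ is cut out near $s_{0}$ by $u$ when $s_{0}\in S_{gss}$ and by $u^{p+1}+v^{p+1}$ when $s_{0}\in S_{ssp}$. Since $S$ is regular and, by Theorem~\ref{Hasse invariant}, $\mathrm{div}(h_{\bar{\Sigma}})=S_{ss}$ with multiplicity one, $h_{\bar{\Sigma}}$ generates the same ideal, so $h_{\bar{\Sigma}}=\epsilon\cdot u$ (resp. $\epsilon\cdot(u^{p+1}+v^{p+1})$) for a unit $\epsilon\in\widehat{\mathcal{O}}_{S,s_{0}}^{\times}$. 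Because $p^{2}-1$ is invertible and $\kappa$ is algebraically closed, Hensel's lemma in $\kappa[[u,v]]$ yields a unit $\eta$ with $\eta^{p^{2}-1}=\epsilon$; as $\tau$ is finite and, by (i), totally ramified over $s_{0}$, there is a single point $\tilde{s}_{0}$ over $s_{0}$ and $\widehat{\mathcal{O}}_{Ig,\tilde{s}_{0}}=\widehat{\mathcal{O}}_{S,s_{0}}[w]/(w^{p^{2}-1}-h_{\bar{\Sigma}})$. Replacing $w$ by $w/\eta$ brings this to $\kappa[[w,v]]$ (with $u=w^{p^{2}-1}$), which is regular, in case $s_{0}\in S_{gss}$, and to $\kappa[[w,u,v]]/(w^{p^{2}-1}-u^{p+1}-v^{p+1})$ in case $s_{0}\in S_{ssp}$. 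In the latter case the three partials $-w^{p^{2}-2}$, $-u^{p}$, $-v^{p}$ (computed in characteristic $p$, using $p\geq 3$) vanish simultaneously only at $w=u=v=0$, so this hypersurface has an isolated singularity; being a hypersurface it is Cohen--Macaulay, hence normal by Serre's criterion (this is [Mu2], Prop.~2, p.~198). Consequently $\overline{Ig}$ is regular over $\bar{S}_{\mu}$ (étale over a regular scheme) and over $S_{gss}$, has only finitely many normal singular points over $S_{ssp}$, hence is regular in codimension one; being in addition Cohen--Macaulay (finite flat over the regular scheme $\bar{S}$) it is normal.

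For (ii): Proposition~\ref{Root of Hasse}(ii) and the universal property of the fibre product produce a morphism of $\bar{S}$-schemes $\overline{Ig}_{\mu}\to\overline{Ig}$ over $\bar{S}_{\mu}$. Over $\bar{S}_{\mu}$, where $h_{\bar{\Sigma}}$ is nowhere zero, $\overline{Ig}|_{\bar{S}_{\mu}}$ is the $\mu_{p^{2}-1}$-torsor of $(p^{2}-1)$-st roots of $h_{\bar{\Sigma}}$, while $\overline{Ig}_{\mu}$ is a $\Delta(p)$-torsor; identifying $\Delta(p)=(\mathcal{O}_{\mathcal{K}}/p\mathcal{O}_{\mathcal{K}})^{\times}\cong\Bbb{F}_{p^{2}}^{\times}$ with $\mu_{p^{2}-1}(\kappa)$ via $\bar{\Sigma}$, Proposition~\ref{Root of Hasse}(i) says exactly that the above morphism intertwines the two torsor structures, hence is an isomorphism. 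Thus $\overline{Ig}$ restricts to $\overline{Ig}_{\mu}$ over $\bar{S}_{\mu}$ together with its $\Delta(p)$-action; since $\overline{Ig}$ is normal and $\bar{S}_{\mu}$ dense, the $\Delta(p)$-action extends uniquely (an $\bar{S}$-automorphism of the function field of $\overline{Ig}_{\mu}$ lifts to the normalization), and $\overline{Ig}$ is the normalization of $\bar{S}$ in $\overline{Ig}_{\mu}$. Finally $\overline{Ig}/\Delta(p)$ is finite over $\bar{S}$ and birational to $\bar{S}$ (over $\bar{S}_{\mu}$ it is the quotient of a torsor, namely $\bar{S}_{\mu}$), so by normality of $\bar{S}$ it equals $\bar{S}$.

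The genuinely substantive inputs — the exact divisor of $h_{\bar{\Sigma}}$ and the local shape of $S_{ss}$ at superspecial points — are Theorems~\ref{Hasse invariant} and \ref{Vollaard}, which we take as given. Granting these, the only real work is the local Hensel step absorbing the unit $\epsilon$ into a $(p^{2}-1)$-st power and, in (ii), matching the $\Delta(p)$-action with scaling of $(p^{2}-1)$-st roots through $\bar{\Sigma}$ (and the accompanying bookkeeping of signs/inverses); I expect that equivariance check in (ii) to be the most delicate point. The normality of the superspecial singularity follows at once from Serre's criterion once the defining equation and its isolated singular locus are in hand, so it is not really an obstacle beyond the citation of [Mu2].
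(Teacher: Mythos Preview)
Your proposal is correct and follows essentially the same approach as the paper: the theorem is stated as a summary of the preceding Kummer-type fibre-product construction, with the normality at superspecial points deferred to [Mu2], Prop.~2, p.~198. You have simply made explicit the steps the paper leaves implicit (the Hensel absorption of the unit, the Jacobian check at the superspecial singularity, and the extension of the $\Delta(p)$-action via normality), all of which are routine once the inputs from Theorems~\ref{Hasse invariant} and~\ref{Vollaard} and Proposition~\ref{Root of Hasse} are in hand.
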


\subsubsection{\label{Ig irreducibility}Irreducibility of $Ig$}

So far we have avoided the delicate question of whether $\overline{Ig}$ is
``relatively irreducible'', i.e. whether $\tau ^{-1}(T)$ is irreducible if $%
T\subset \bar{S}$ is an irreducible (equivalently, connected) component.
Using an idea of Katz, and following the approach taken by Ribet in [Ri],
the irreducibility of $\tau ^{-1}(T)$ could be proven for any level $p^{n}$
if we could prove the following:

\begin{itemize}
\item  Let $q=p^{2}.$ For any $r$ sufficiently large and for any $\gamma \in
(\mathcal{O}_{\mathcal{K}}/p^{n}\mathcal{O}_{\mathcal{K}})^{\times }$ there
exists a $\mu $-ordinary abelian variety with PEL structure $\underline{A}%
\in S_{\mu }(\Bbb{F}_{q^{r}})$ such that the image of $Gal(\Bbb{\bar{F}}_{q}/%
\Bbb{F}_{q^{r}})$ in 
\begin{equation}
Aut\left( Isom_{\Bbb{\bar{F}}_{q}}(\delta _{\mathcal{K}}^{-1}\otimes \mu
_{p^{n}},A[p^{n}]^{\mu })\right) =(\mathcal{O}_{\mathcal{K}}/p^{n}\mathcal{O}%
_{\mathcal{K}})^{\times }
\end{equation}
contains $\gamma .$
\end{itemize}

See also the discussion in \ref{Irreducibility again}. Instead, we shall
give a different argument valid for the case $n=1.$

\begin{proposition}
\label{irreducibility}The morphism $\tau :\overline{Ig}\rightarrow \bar{S}$
induces a bijection on irreducible components.
\end{proposition}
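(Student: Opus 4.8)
The plan is to show that $\tau^{-1}(T)$ is connected for each connected component $T$ of $\bar S$, which combined with the fact that $\tau$ is finite flat of degree $p^2-1$ (Theorem \ref{Igusa}(i)) and étale over $\bar S_\mu$ gives the bijection on irreducible components: surjectivity is clear, and connectedness of each fibre rules out $T$ having more than one preimage component. Since $\overline{Ig}$ is normal outside the finitely many superspecial points (Theorem \ref{Igusa}(iii)) and normal even there (Theorem \ref{Igusa}(iv)), the irreducible components of $\tau^{-1}(T)$ coincide with its connected components, so it suffices to prove $\tau^{-1}(T)$ is connected. Moreover $\overline{Ig}$ is the normalization of $\bar S$ in $\overline{Ig}_\mu$, so it is enough to work over the open part: $\tau^{-1}(T)$ is connected if and only if $\tau^{-1}(T\cap\bar S_\mu)=\tau^{-1}(T\cap\bar S_\mu)$ is connected, because a normal scheme is connected iff its dense open subschemes are. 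Thus the whole problem reduces to showing that the $\Delta(p)$-torsor $\overline{Ig}_\mu\to\bar S_\mu$ is connected over each connected component of $\bar S_\mu$.

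First I would reduce to a monodromy statement. A connected finite étale $\Delta(p)$-torsor over a connected base $T\cap\bar S_\mu$ corresponds to a surjection from $\pi_1$ of the base onto $\Delta(p)=(\mathcal O_{\mathcal K}/p\mathcal O_{\mathcal K})^\times\cong\mathbb F_{p^2}^\times$; a disconnected torsor corresponds to a non-surjective monodromy. So I must show the monodromy representation
\[
\rho:\pi_1\!\left(T\cap\bar S_\mu\right)\longrightarrow \Delta(p)\cong\mathbb F_{p^2}^\times
\]
attached to the multiplicative part $\mathcal A[p]^\mu$ of the universal semi-abelian variety is surjective. The key geometric input is the cuspidal boundary: by the proposition on the Igusa level structure over $C$, near a cuspidal component $E\subset C$ (after base change to $R_N/pR_N$) the toric part is $\mathfrak a\otimes\mathbb G_m$ and the Igusa structure is an $\mathcal O_{\mathcal K}$-linear isomorphism $\delta_{\mathcal K}^{-1}\mathcal O_{\mathcal K}\otimes\mu_p\simeq\mathfrak a\otimes\mu_p$. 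One can therefore analyze the local monodromy around $S_{ss}$ — or better, use the degeneration along a geodesic to a cusp computed in Section \ref{degeneration} — to exhibit a loop whose image under $\rho$ generates (a large subgroup of) $\mathbb F_{p^2}^\times$. Concretely, the canonical section $a$ of $\tau^*\mathcal L$ satisfies $a^{p^2-1}=\tau^*h_{\bar\Sigma}$ and $h_{\bar\Sigma}$ has a simple zero along $S_{ss}$ (Theorem \ref{Hasse invariant}); the local structure in Theorem \ref{Igusa}(iii), $w^{p^2-1}=u$ with $u=0$ defining $S_{gss}$, shows that a small loop transverse to the general supersingular locus already has local monodromy of order exactly $p^2-1$ in $\Delta(p)$. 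Hence $\rho$ is surjective, and $\overline{Ig}_\mu\to\bar S_\mu$ is connected over $T\cap\bar S_\mu$.

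The main obstacle I anticipate is not the surjectivity of the local monodromy around $S_{gss}$ — that follows cleanly from the total ramification in Theorem \ref{Igusa}(iii) — but rather ensuring that this local loop actually lies in (the image of) $\pi_1$ of a single connected component $T\cap\bar S_\mu$, i.e. that $S_{gss}$ meets every connected component of $\bar S$ and that the normal slice to it is genuinely a punctured disc inside $\bar S_\mu$. This uses that $S_{ss}$ is a nonempty connected (for $N$ large, or in any case nonempty) divisor meeting each component — which can be read off from Vollaard's description in Theorem \ref{Vollaard} together with the ampleness of $\mathcal L^{p^2-1}\cong\mathcal O_{\bar S}(S_{ss})$ on a projective surface — so that $S_{ss}\cap T\neq\varnothing$ for every component $T$. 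Granting this, the local monodromy contribution forces $\rho|_{\pi_1(T\cap\bar S_\mu)}$ onto all of $\Delta(p)$, completing the proof. I would present the argument in this order: (1) reduce to connectedness of $\tau^{-1}(T)$ via normality; (2) reduce further to the open part and reinterpret as surjectivity of $\rho$; (3) show $S_{ss}$ meets every component of $\bar S$; (4) compute the local monodromy around $S_{gss}$ using Theorem \ref{Igusa}(iii) and conclude.
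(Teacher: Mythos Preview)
Your proposal is correct and its decisive step is the same as the paper's: the total ramification of $\tau$ along $S_{ss}$ (equivalently, the local description $w^{p^{2}-1}=u$ from Theorem~\ref{Igusa}(iii)) forces $\tau^{-1}(T)$ to be connected for each connected component $T$ of $\bar S$. However, the paper's argument is much shorter and avoids $\pi_{1}$ entirely. It simply notes that $\overline{Ig}$ is normal, so irreducible and connected components coincide; writes $\tau^{-1}(T)=\coprod Y_{i}$; observes that each $Y_{i}$ surjects onto $T$ because $\tau$ is finite flat; and concludes that over any point of $T\cap S_{ss}$ the fibre is a single (non-reduced) point, so there can be only one $Y_{i}$.

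Your route through the étale fundamental group and tame inertia is a valid reformulation of exactly this ramification argument (the inertia at the generic point of $S_{ss}$ is cyclic of order $p^{2}-1$, hence surjects onto $\Delta(p)$), but the detour via the cuspidal degeneration and the toric part of $\mathcal A$ is unnecessary here: the cusps play no role once you have the local picture along $S_{gss}$. You are right to flag that one needs $S_{ss}\cap T\neq\varnothing$ for every component $T$; the paper uses this implicitly, and your justification via the ampleness of $\mathcal L$ (so that the divisor of $h_{\bar\Sigma}$ meets every projective component) is the correct one.
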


\begin{proof}
Since $\overline{Ig}$ is a normal surface, connected components and
irreducible components are the same. Let $T$ be a connected component of $%
\bar{S}$ and $T_{ss}=T\cap S_{ss}.$ Let $\tau ^{-1}(T)=\coprod Y_{i}$ be the
decomposition into connected components. As $\tau $ is finite and flat, each 
$\tau (Y_{i})=T.$ Since $\tau $ is totally ramified over $T_{ss},$ there is
only one $Y_{i}.$
\end{proof}

\section{Modular forms modulo $p$ and the theta operator}

\subsection{Modular forms $\mod p$ as functions on $Ig$}

\subsubsection{Representing modular forms by functions on $Ig$}

The Galois group $\Delta (p)=(\mathcal{O}_{\mathcal{K}}/p\mathcal{O}_{%
\mathcal{K}})^{\times }$ acts on the coordinate ring $H^{0}(Ig_{\mu },%
\mathcal{O})$ and we let $H^{0}(Ig_{\mu },\mathcal{O})^{(k)}$ be the
subspace where it acts via the character $\bar{\Sigma}^{k}$. Then 
\begin{equation}
H^{0}(Ig_{\mu },\mathcal{O})=\bigoplus_{k=0}^{p^{2}-2}H^{0}(Ig_{\mu },%
\mathcal{O})^{(k)}
\end{equation}
and each $H^{0}(Ig_{\mu },\mathcal{O})^{(k)}$ is free of rank 1 over $%
H^{0}(S_{\mu },\mathcal{O})=H^{0}(Ig_{\mu },\mathcal{O})^{(0)}.$

For any $0\le k$ the map $f\mapsto f/a(k)$ is an embedding 
\begin{equation}
M_{k}(N,\kappa _{0})\hookrightarrow H^{0}(Ig_{\mu },\mathcal{O})^{(k)}.
\end{equation}

\begin{lemma}
Fix $0\le k<p^{2}-1.$ Then we have a \emph{surjective} homomorphism 
\begin{equation}
\bigoplus_{n\ge 0}M_{k+n(p^{2}-1)}(N,\kappa _{0})\twoheadrightarrow
H^{0}(Ig_{\mu },\mathcal{O})^{(k)}.
\end{equation}
\end{lemma}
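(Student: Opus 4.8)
The plan is to describe the homomorphism explicitly and then prove surjectivity by a descent-and-extension argument. On the $n$-th summand the map sends $f\in M_{k+n(p^{2}-1)}(N,\kappa _{0})$ to $\tau ^{*}f/a(k+n(p^{2}-1))\in H^{0}(Ig_{\mu },\mathcal{O})$, where $a$ is the tautological nowhere vanishing section of $\tau ^{*}\mathcal{L}$ of Proposition~\ref{Root of Hasse}. First I would check this lands in the correct eigenspace: $\Delta (p)$ acts trivially on $\tau ^{*}f$ while $\gamma ^{*}a=\bar{\Sigma}(\gamma )^{-1}a$, and since $\bar{\Sigma}(\gamma )\in \Bbb{F}_{p^{2}}^{\times }$ has order dividing $p^{2}-1$, we get $\gamma ^{*}a(k+n(p^{2}-1))=\bar{\Sigma}(\gamma )^{-k}a(k+n(p^{2}-1))$ independently of $n$; hence $\tau ^{*}f/a(k+n(p^{2}-1))$ transforms under $\bar{\Sigma}^{k}$ and lies in $H^{0}(Ig_{\mu },\mathcal{O})^{(k)}$. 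These maps are $\kappa _{0}$-linear and compatible with multiplication by the graded ring $\bigoplus _{n}M_{n(p^{2}-1)}(N,\kappa _{0})$, so they assemble into the asserted homomorphism; the content is surjectivity.

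For surjectivity I would start with $g\in H^{0}(Ig_{\mu },\mathcal{O})^{(k)}$ and form $g\cdot a(k)\in H^{0}(Ig_{\mu },\tau ^{*}\mathcal{L}^{k})$, which is $\Delta (p)$-invariant by the eigenvalue computation above. Since $\tau :Ig_{\mu }\rightarrow S_{\mu }$ is a finite \'{e}tale $\Delta (p)$-torsor (Theorem~\ref{Igusa}), and $|\Delta (p)|=p^{2}-1$ is prime to $p$, faithfully flat descent gives $(\tau _{*}\tau ^{*}\mathcal{L}^{k})^{\Delta (p)}=\mathcal{L}^{k}$, so $g\cdot a(k)=\tau ^{*}\phi $ for a unique $\phi \in H^{0}(S_{\mu },\mathcal{L}^{k})$. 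Next I would invoke Theorem~\ref{Hasse invariant}: the restriction of $h_{\bar{\Sigma}}$ to $S_{\kappa }$ is a section of $\mathcal{L}^{p^{2}-1}$ whose zero scheme is exactly $S_{ss}$, so $S_{\mu }=S_{\kappa }\setminus S_{ss}$ is the basic open set where this section is invertible. As $S_{\kappa }$ is quasi-compact and quasi-separated, the standard identification
\[
H^{0}(S_{\mu },\mathcal{L}^{k})=\lim _{\rightarrow }{}_{n}\,H^{0}(S_{\kappa },\mathcal{L}^{k+n(p^{2}-1)}),
\]
with transition maps multiplication by $h_{\bar{\Sigma}}$, shows that $h_{\bar{\Sigma}}^{n}\phi $ extends to $f_{0}\in H^{0}(S_{\kappa },\mathcal{L}^{k+n(p^{2}-1)})$ for all sufficiently large $n$. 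By the Koecher principle $H^{0}(S_{\kappa },\mathcal{L}^{m})=H^{0}(\bar{S}_{\kappa },\mathcal{L}^{m})=M_{m}(N,\kappa _{0})$, so $f:=f_{0}$ is a genuine modular form of weight $k+n(p^{2}-1)$.

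Finally I would close the loop: over $Ig_{\mu }$, using $a(p^{2}-1)=\tau ^{*}h_{\bar{\Sigma}}$ from Proposition~\ref{Root of Hasse}(ii),
\[
\tau ^{*}f=(\tau ^{*}h_{\bar{\Sigma}})^{n}\,\tau ^{*}\phi =a(p^{2}-1)^{n}\cdot g\cdot a(k)=g\cdot a(k+n(p^{2}-1)),
\]
and dividing by the nowhere vanishing $a(k+n(p^{2}-1))$ gives that $f$ maps to $g$. I do not expect a serious obstacle here; the only steps requiring care are the descent identification (which rests solely on $\tau $ being an \'{e}tale torsor for $\Delta (p)$) and the uniform bound on the pole order of $\phi $ along $S_{ss}$, so that a single power of the Hasse invariant suffices --- both of which are subsumed in the displayed qcqs limit formula. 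Conceptually the lemma just says that $H^{0}(Ig_{\mu },\mathcal{O})^{(k)}$ is the weight-$k$ graded piece of the localization of the ring of modular forms $\bmod\,p$ at $h_{\bar{\Sigma}}$, made global over $\bar{S}_{\kappa }$ by Koecher.
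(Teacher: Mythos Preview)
Your proof is correct and follows essentially the same approach as the paper: multiply $g$ by $a(k)$ to get a $\Delta(p)$-invariant section, descend it to $S_{\mu}$, then clear the pole along $S_{ss}$ by multiplying by a sufficiently high power of $h_{\bar{\Sigma}}$, and check that the resulting modular form maps back to $g$. The paper's argument is more terse (it does not spell out the descent step or invoke the qcqs localization formula explicitly), but the logic is identical.
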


\begin{proof}
Take $f\in H^{0}(Ig_{\mu },\mathcal{O})^{(k)},$ so that $f\cdot a(k)\in
H^{0}(Ig_{\mu },\mathcal{L}^{k})^{(0)},$ hence descends to $g\in
H^{0}(S_{\mu },\mathcal{L}^{k}).$ This $g$ may have poles along $S_{ss},$
but some $h_{\bar{\Sigma}}^{n}g$ will extend holomorphically to $S$, hence
represents a modular form of weight $k+n(p^{2}-1),$ which will map to $f$
because $a(k+n(p^{2}-1))=h_{\bar{\Sigma}}^{n}a(k).$
\end{proof}

\begin{proposition}
The resulting ring homomorphism 
\begin{equation}
r:\bigoplus_{k\ge 0}M_{k}(N,\kappa _{0})\twoheadrightarrow H^{0}(Ig_{\mu },%
\mathcal{O)}
\end{equation}
obtained by dividing a modular form of weight $k$ by $a(k)$ is surjective,
respects the $\Bbb{Z}/(p^{2}-1)\Bbb{Z}$-grading on both sides, and its
kernel is the ideal generated by $(h_{\bar{\Sigma}}-1)$.
\end{proposition}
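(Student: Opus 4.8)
The plan is to establish the three assertions in turn --- surjectivity, compatibility with the $\Bbb{Z}/(p^{2}-1)\Bbb{Z}$-grading, and the computation of $\ker r$ --- the last carrying the actual content.

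Surjectivity is essentially already in hand. The preceding lemma gives, for each $0\le k<p^{2}-1$, a surjection $\bigoplus_{n\ge 0}M_{k+n(p^{2}-1)}(N,\kappa _{0})\twoheadrightarrow H^{0}(Ig_{\mu},\mathcal{O})^{(k)}$ compatible with $r$, and since $H^{0}(Ig_{\mu},\mathcal{O})=\bigoplus_{k=0}^{p^{2}-2}H^{0}(Ig_{\mu},\mathcal{O})^{(k)}$, summing over $k$ shows $r$ is onto. That $r$ is a ring homomorphism is the relation $a(k)\otimes a(l)=a(k+l)$: for $f\in M_{k}$, $g\in M_{l}$ one has $r(fg)=\tau ^{*}(fg)/a(k+l)=(\tau ^{*}f/a(k))(\tau ^{*}g/a(l))=r(f)r(g)$, and $r(1)=1$. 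For the grading, note that on $M_{k}$ the pullback $\tau ^{*}f$ is $\Delta (p)$-invariant while $\gamma ^{*}a=\bar{\Sigma}(\gamma )^{-1}\cdot a$ by Proposition~\ref{Root of Hasse}(i), so $\gamma ^{*}r(f)=\bar{\Sigma}(\gamma )^{k}\,r(f)$; thus $r(f)$ lies in the $\bar{\Sigma}^{k}$-isotypic summand, and since $\bar{\Sigma}(\gamma )\in \Bbb{F}_{p^{2}}^{\times }$ has order dividing $p^{2}-1$, the character $\bar{\Sigma}^{k}$ --- hence that summand --- depends only on $k\bmod(p^{2}-1)$. So $r$ is a homomorphism of $\Bbb{Z}/(p^{2}-1)\Bbb{Z}$-graded rings, where on the source we use the coarsening of the weight grading modulo $p^{2}-1$.

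It remains to compute $\ker r$. One inclusion is immediate: $r(h_{\bar{\Sigma}})=\tau ^{*}h_{\bar{\Sigma}}/a(p^{2}-1)=1$ by Proposition~\ref{Root of Hasse}(ii), and $r(1)=1$, so $h_{\bar{\Sigma}}-1\in \ker r$ and hence $(h_{\bar{\Sigma}}-1)\subseteq \ker r$. For the reverse inclusion I would first observe that both $\ker r$ and the ideal $(h_{\bar{\Sigma}}-1)$ are homogeneous for the coarse $\Bbb{Z}/(p^{2}-1)\Bbb{Z}$-grading on $\bigoplus_{k}M_{k}(N,\kappa _{0})$: the former because $r$ is a graded map into a graded ring, the latter because $h_{\bar{\Sigma}}-1$ is a sum of a weight-$(p^{2}-1)$ term and a weight-$0$ term, both of coarse degree $0$. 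Hence it suffices to fix a residue $j$ and take $g=\sum_{n=0}^{N}f_{n}$ with $f_{n}\in M_{j+n(p^{2}-1)}(N,\kappa _{0})$ and $r(g)=0$, and to show $g\in (h_{\bar{\Sigma}}-1)$. Multiplying the relation $r(g)=0$ by the nowhere-vanishing section $a(j)$ (an invertible operation) and descending along the étale cover $\tau $ exactly as in the proof of the preceding lemma, it becomes $\sum_{n=0}^{N}f_{n}/h_{\bar{\Sigma}}^{\,n}=0$ in $H^{0}(\bar{S}_{\mu},\mathcal{L}^{j})$ (where $h_{\bar{\Sigma}}$ is invertible); clearing denominators by $h_{\bar{\Sigma}}^{\,N}$ gives $\sum_{n=0}^{N}f_{n}\,h_{\bar{\Sigma}}^{\,N-n}=0$ as a section of $\mathcal{L}^{j+N(p^{2}-1)}$ over $\bar{S}_{\mu}$. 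Since $\bar{S}_{\kappa }$ is smooth, hence reduced, and $\bar{S}_{\mu}$ is dense in it (its complement $S_{ss}$ being $1$-dimensional, so contained in no component), a section of a line bundle vanishing on $\bar{S}_{\mu}$ vanishes on $\bar{S}_{\kappa }$; thus $\sum_{n=0}^{N}f_{n}\,h_{\bar{\Sigma}}^{\,N-n}=0$ already in $M_{j+N(p^{2}-1)}(N,\kappa _{0})$. Then the telescoping identity $1-h_{\bar{\Sigma}}^{\,m}=(1-h_{\bar{\Sigma}})(1+h_{\bar{\Sigma}}+\cdots +h_{\bar{\Sigma}}^{\,m-1})$ yields
\[
g=g-\sum_{n=0}^{N}f_{n}\,h_{\bar{\Sigma}}^{\,N-n}=\sum_{n=0}^{N}f_{n}\bigl(1-h_{\bar{\Sigma}}^{\,N-n}\bigr)=(1-h_{\bar{\Sigma}})\sum_{n=0}^{N}f_{n}\sum_{i=0}^{N-n-1}h_{\bar{\Sigma}}^{\,i}\in (h_{\bar{\Sigma}}-1),
\]
which finishes the argument.

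The only place requiring care is this kernel computation, and there are three small points to get right: (i) the grading bookkeeping, so that one may reduce to a single residue class modulo $p^{2}-1$; (ii) the passage from a vanishing over the dense open $\bar{S}_{\mu}$ --- where $h_{\bar{\Sigma}}$ is invertible and the descent along $\tau $ is available --- to a vanishing over all of $\bar{S}_{\kappa }$, which uses density of $\bar{S}_{\mu}$ together with reducedness of $\bar{S}_{\kappa }$; and (iii) the telescoping identity that actually lands the element inside $(h_{\bar{\Sigma}}-1)$. The ambiguity between taking $H^{0}$ over $Ig_{\mu}$ and over the compactification $\overline{Ig}_{\mu}$ on which $a$ is defined is harmless, precisely because $a(j)$ is nowhere vanishing and because finite étale (Galois) descent along $\tau $ identifies $\Delta (p)$-invariants upstairs with sections over $S_{\mu}$, resp. $\bar{S}_{\mu}$, downstairs, just as in the lemma's proof.
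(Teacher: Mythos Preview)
Your proof is correct and follows essentially the same approach as the paper's. The paper's argument for the kernel is the same telescoping computation, phrased slightly differently (it solves for the top-degree term $f_{m}=-\sum_{j<m}h_{\bar{\Sigma}}^{\,m-j}f_{j}$ and substitutes, rather than subtracting the cleared-denominator relation as you do); you are more explicit than the paper about the grading bookkeeping and about the density/reducedness step needed to pass from $\bar{S}_{\mu}$ to all of $\bar{S}$, both of which the paper leaves implicit.
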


\begin{proof}
We only have to prove that anything in $\ker (r)$ is a multiple of $h_{\bar{%
\Sigma}}-1,$ the rest being clear. Since $r$ respects the grading, we may
assume that for some $k\ge 0$ we have $f_{j}\in M_{k+j(p^{2}-1)}(S,\kappa
_{0})$ and $f=\sum_{j=0}^{m}f_{j}\in \ker (r),$ i.e. 
\begin{equation}
\sum_{j=0}^{m}a(k)^{-1}h_{\bar{\Sigma}}^{-j}f_{j}=0.
\end{equation}
But then $f_{m}=-h_{\bar{\Sigma}}^{m}\left( \sum_{j=0}^{m-1}h_{\bar{\Sigma}%
}^{-j}f_{j}\right) ,$ so $\sum_{j=0}^{m}f_{j}=\sum_{j=0}^{m-1}(1-h_{\bar{%
\Sigma}}^{m-j})f_{j}$ belongs to $(1-h_{\bar{\Sigma}}).$
\end{proof}

As a result we get that 
\begin{equation}
Ig_{\mu }^{*}=Spec\left( \bigoplus_{k\ge 0}M_{k}(N,\kappa _{0})/(h_{\bar{%
\Sigma}}-1)\right)
\end{equation}
and 
\begin{equation}
S_{\mu }^{*}=Spec\left( \bigoplus_{k\ge 0}M_{k(p^{2}-1)}(N,\kappa _{0})/(h_{%
\bar{\Sigma}}-1)\right) .
\end{equation}

\subsubsection{Fourier-Jacobi expansions modulo $p$}

The arithmetic Fourier-Jacobi expansion (\ref{FJ}) depended on a choice of a
nowhere vanishing section $s$ of $\mathcal{L}$ along the boundary $C=\bar{S}%
\backslash S$ of $\bar{S}.$ As the boundary $\tilde{C}=\overline{Ig}_{\mu
}\backslash Ig_{\mu }$ is (non-canonically) identified with $\Delta
(p)\times C,$ we may ``compute'' the Fourier-Jacobi expansion on the Igusa
surface rather than on $S.$ But on the Igusa surface, $a$ is a canonical
choice for such an $s.$ We may therefore associate a \emph{canonical }%
Fourier-Jacobi expansion 
\begin{equation}
\widetilde{FJ}(f)=\sum_{m=0}^{\infty }c_{m}(f)\in \prod_{m=0}^{\infty }H^{0}(%
\tilde{C},\mathcal{N}^{m})
\end{equation}
along the boundary of $Ig,$ with every 
\begin{equation}
f\in M_{*}(N,R)=\bigoplus_{k=0}^{\infty }M_{k}(N,R)
\end{equation}
($R$ a $\kappa _{0}$-algebra). The following proposition becomes almost a 
\emph{tautology.}

\begin{proposition}
The Fourier-Jacobi expansion $\widetilde{FJ}(h_{\bar{\Sigma}})$ of the Hasse
invariant is 1. Moreover, for $f_{1}$ and $f_{2}$ in the graded ring $%
M_{*}(N,R),$ $r(f_{1})=r(f_{2})$ if and only if $\widetilde{FJ}(f_{1})=%
\widetilde{FJ}(f_{2}).$
\end{proposition}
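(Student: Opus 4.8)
The cleanest route is to observe that $\widetilde{FJ}$ is the composition of the ring homomorphism $r\colon M_\ast(N,R)\to H^0(Ig_\mu,\mathcal{O})$ with an \emph{injective} map $\rho$, after which both assertions become formal. First I would establish $\widetilde{FJ}=\rho\circ r$. Over the formal neighbourhood $\widehat{Ig}$ of the boundary $\tilde{C}=\overline{Ig}_\mu\backslash Ig_\mu$ the semi-abelian scheme $\mathcal{A}$ with its Igusa structure $\varepsilon$ is pulled back from $\tilde{C}$ along the (unique \'etale lift of the) retraction $r$ of Section~\ref{retraction}; hence the canonical section satisfies $a|_{\widehat{Ig}}=r^\ast(a|_{\tilde{C}})$, and for $f$ of weight $k$ the function $r(f)=\tau^\ast f/a(k)\in H^0(\overline{Ig}_\mu,\mathcal{O})$ restricts on $\widehat{Ig}$ to $\tau^\ast f/(r^\ast(a|_{\tilde{C}}))^k$, whose image under the ring isomorphism $H^0(\tilde{C},\mathcal{O}_{\widehat{Ig}})\simeq\prod_m H^0(\tilde{C},\mathcal{N}^m)$ (available exactly as in \S\ref{AFJ}, via that same retraction) is by definition $\widetilde{FJ}(f)$. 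Thus $\widetilde{FJ}=\rho\circ r$, $\rho$ being restriction to $\widehat{Ig}$ followed by this decomposition. Since $\tau^\ast h_{\bar\Sigma}=a(p^2-1)$ on all of $\overline{Ig}_\mu$ by Proposition~\ref{Root of Hasse}(ii), one has $r(h_{\bar\Sigma})=1$, hence $\widetilde{FJ}(h_{\bar\Sigma})=\rho(1)=1$, which is the first assertion; and once $\rho$ is known to be injective the equivalence is immediate, since $\widetilde{FJ}(f_1)=\widetilde{FJ}(f_2)\Leftrightarrow\rho(r(f_1))=\rho(r(f_2))\Leftrightarrow r(f_1)=r(f_2)$.

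The real content is the injectivity of $\rho$. Every element of $H^0(Ig_\mu,\mathcal{O})$ has the form $r(f)=\tau^\ast f/a(\mathrm{wt}\,f)$, which extends to a regular function on all of $\overline{Ig}_\mu$ because $a$ is nowhere vanishing there; so $H^0(Ig_\mu,\mathcal{O})=H^0(\overline{Ig}_\mu,\mathcal{O})$, and injectivity of $\rho$ amounts to the statement that a global regular function $g$ on $\overline{Ig}_\mu$ vanishing to infinite order along $\tilde{C}$ is identically $0$. I would deduce this from Proposition~\ref{irreducibility}: each connected component of $\overline{Ig}_\mu$ is integral, being normal and---this is the crucial point---irreducible, as $\tau$ does not split it over a component of $\bar S$. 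Near a point $y\in\tilde{C}$ the surface $\overline{Ig}_\mu$ is regular (\'etale over the smooth locus of $\bar S$) and $\tilde{C}$ is a Cartier divisor in it, so $\mathcal{I}_{\tilde{C},y}$ is generated by a nonzerodivisor and the Krull intersection theorem in the Noetherian domain $\mathcal{O}_{\overline{Ig}_\mu,y}$ yields $g_y\in\bigcap_n\mathcal{I}_{\tilde{C},y}^{\,n}=(0)$; thus $g$ vanishes on a non-empty open of the component through $y$, hence on that whole component. As $\tilde{C}$ meets every connected component of $\overline{Ig}_\mu$---equivalently, the cuspidal divisor meets every component of $\bar S$, which is clear from $\bar S_{\Bbb C}\simeq\coprod_j\bar X_{\Gamma_j}$ together with the smoothness and properness of $\bar S$ over $R_0$---we conclude $g=0$.

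For a general $\kappa_0$-algebra $R$ one reduces to $R=\kappa_0$ by flat base change: the formation of $M_k(N,-)$ (Bella\"iche's theorem), of the target $\prod_m H^0(\tilde{C},\mathcal{N}^m)$ (with $\tilde{C}$ a smooth curve and $H^1(\tilde{C},\mathcal{N}^m)=0$ for $m\ge1$), and of $r$ and $\widetilde{FJ}$, all commute with $-\otimes_{\kappa_0}R$ once $R_0$-flatness of $\bar S$ and $\overline{Ig}$ is invoked. I expect the injectivity of $\rho$ to be the only non-bookkeeping step, and the point there is that it genuinely requires the irreducibility of the Igusa surface over each component of $\bar S$ (Proposition~\ref{irreducibility}); the rest is a formal consequence of the factorisation $\widetilde{FJ}=\rho\circ r$ and of the identity $\tau^\ast h_{\bar\Sigma}=a(p^2-1)$.
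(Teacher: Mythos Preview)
Your proof is correct and follows essentially the same route as the paper: both factor $\widetilde{FJ}$ through $r$ and then invoke Proposition~\ref{irreducibility} to obtain the $q$-expansion principle on $\overline{Ig}_\mu$, from which injectivity of the restriction-to-$\widehat{Ig}$ map follows. Your write-up is more explicit than the paper's---you spell out why the first claim is ``tautological'' via $r(h_{\bar\Sigma})=1$, isolate the injectivity of $\rho$ as the substantive step, and justify it via Krull intersection plus integrality of each component---but the underlying argument is the same; the additional paragraph on base change to a general $\kappa_0$-algebra $R$ is a point the paper passes over in silence.
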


\begin{proof}
The first statement is tautologically true. For the second, note that for $%
f\in M_{k}(N,R),$ $\widetilde{FJ}(f)$ is the (expansion of the) image of $%
f/a(k)$ in $H^{0}(\tilde{C},\mathcal{O}_{\widehat{Ig}})$ where $\widehat{Ig}$
is the formal completion of $Ig$ along $\tilde{C},$ while $r(f)$ is the
image of $f/a(k)$ in $H^{0}(\overline{Ig}_{\mu },\mathcal{O}).$ The
proposition follows from the fact that by Proposition \ref{irreducibility}
the irreducible components of $\overline{Ig}_{\mu }$ are in bijection with
the connected components of $\bar{S},$ so every irreducible component of $%
\overline{Ig}_{\mu }$ contains at least one cuspidal component (``$q$%
-expansion principle''). A function on $\overline{Ig}_{\mu }$ that vanishes
in the formal neighborhood of any cuspidal component must therefore vanish
on any irreducible component, so is identically $0.$
\end{proof}

\subsubsection{The filtration of a modular form modulo $p$}

Let $f\in M_{k}(N,R),$ where $R$ is a $\kappa _{0}$-algebra as before.
Define the \emph{filtration} $\omega (f)$ to be the minimal $j\ge 0$ such
that $r(f)=r(f^{\prime })$ (equivalently $FJ(f)=FJ(f^{\prime })$) for some $%
f^{\prime }\in M_{j}(N,R).$ The following proposition follows immediately
from previous results.

\begin{proposition}
Let $f\in M_{k}(N,R).$ Then $0\le \omega (f)\le k$ and 
\begin{equation}
\omega (f)\equiv k\mod (p^{2}-1).
\end{equation}
Let $\omega (f)=k-(p^{2}-1)n.$ Then $n$ is\emph{\ the order of vanishing of} 
$f$ along $S_{ss}.$ Equivalently, $k-\omega (f)$ is the order of vanishing
of the pull-back of $f$ to $Ig$ along $Ig_{ss}.$ In addition, $\omega
(f^{m})=m\omega (f).$
\end{proposition}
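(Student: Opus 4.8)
The plan is to derive everything from the single identity
\[
\omega(f)=k-(p^{2}-1)\,n,\qquad n:=\max\{\,m\ge 0:\ h_{\bar{\Sigma}}^{m}\mid f\ \text{in}\ M_{*}(N,\kappa_{0})\,\},
\]
after observing that, since $\bar{S}$ is smooth (hence normal) and $\mathrm{div}(h_{\bar{\Sigma}})=S_{ss}$ is reduced (Theorem~\ref{Hasse invariant}), this $n$ equals $\min_{Z}\mathrm{ord}_{Z}(f)$, the minimum of the orders of vanishing of $f$ along the irreducible components $Z$ of $S_{ss}$ --- i.e. it is exactly ``the order of vanishing of $f$ along $S_{ss}$''. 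Granting the identity, (1)--(3) are immediate: $0\le\omega(f)\le k$ and $\omega(f)\equiv k\pmod{p^{2}-1}$ because $n\ge 0$ is an integer, and $n$ is the order of vanishing of $f$ along $S_{ss}$ by construction. (Everything is written over $\kappa_{0}$; the argument is the same over any $\kappa_{0}$-algebra $R$.)

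To prove the identity I would separate the two inequalities. The bound $\omega(f)\le k-(p^{2}-1)n$ is the easy direction: writing $f=h_{\bar{\Sigma}}^{n}f'$ with $f'\in M_{k-(p^{2}-1)n}(N,\kappa_{0})$, the relation $\tau^{*}h_{\bar{\Sigma}}=a^{p^{2}-1}$ over $\overline{Ig}_{\mu}$ (Proposition~\ref{Root of Hasse}(ii)), together with the fact that $a$ is a nowhere vanishing section of $\tau^{*}\mathcal{L}$ there, gives $r(f)=\tau^{*}f/a(k)=\tau^{*}f'/a(k-(p^{2}-1)n)=r(f')$, so a form of weight $k-(p^{2}-1)n$ already realises $r(f)$. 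For the reverse bound I need two facts about $r$: (a) $r$ is injective on each $M_{k}$ --- equivalently a nonzero modular form $f$ has $\tau^{*}f\ne 0$ on $Ig_{\mu}$ --- which follows from faithful flatness of $\tau:Ig_{\mu}\to S_{\mu}$, the Koecher principle, and the density of $S_{\mu}$ in $S$ (as $S_{ss}$ is a divisor); and (b) if $g\in M_{j}$ satisfies $r(g)=r(f)$, then $j\equiv k\pmod{p^{2}-1}$ --- compare the $\Bbb{Z}/(p^{2}-1)\Bbb{Z}$-graded pieces of $H^{0}(Ig_{\mu},\mathcal{O})$, using (a) to know $r(f)\ne 0$ --- and, when $j\le k$, one has $f=h_{\bar{\Sigma}}^{(k-j)/(p^{2}-1)}g$ on $\bar{S}$. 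For (b) one clears $a(j)$ and $a(k)$ from the equality $\tau^{*}f\cdot a(j)=\tau^{*}g\cdot a(k)$ on $\overline{Ig}_{\mu}$, obtaining $\tau^{*}f=\tau^{*}\big(h_{\bar{\Sigma}}^{(k-j)/(p^{2}-1)}g\big)$, and then descends along the faithfully flat $\tau$ and extends across $S_{ss}$ using normality of $\bar{S}$. (Alternatively (b) is the purely ring-theoretic ``filtration'' computation for the principal ideal $\ker r=(h_{\bar{\Sigma}}-1)$, via the lowest- and highest-weight-component analysis familiar from the elliptic case of Serre and Swinnerton-Dyer.) Now if $\omega(f)=j<k-(p^{2}-1)n$ then $(k-j)/(p^{2}-1)\ge n+1$, so (b) forces $h_{\bar{\Sigma}}^{n+1}\mid f$, contradicting the maximality defining $n$; hence $\omega(f)=k-(p^{2}-1)n$.

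It remains to identify $k-\omega(f)$ on $\overline{Ig}$ and to check multiplicativity. Since $\tau:\overline{Ig}\to\bar{S}$ is finite flat and totally ramified of degree $p^{2}-1$ along $S_{ss}$ (Theorem~\ref{Igusa}(i),(iii)), each component $Z$ of $S_{ss}$ has a single component $W_{Z}$ of $Ig_{ss}$ above it, with ramification index $p^{2}-1$; hence $\mathrm{ord}_{W_{Z}}(\tau^{*}f)=(p^{2}-1)\,\mathrm{ord}_{Z}(f)$ and, taking the minimum over $Z$, $\mathrm{ord}_{Ig_{ss}}(\tau^{*}f)=(p^{2}-1)n=k-\omega(f)$. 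For multiplicativity, orders of vanishing along a prime divisor are additive on products, so $\mathrm{ord}_{Z}(f^{m})=m\,\mathrm{ord}_{Z}(f)$ for every component $Z$ of $S_{ss}$, whence the order of vanishing of $f^{m}$ along $S_{ss}$ is $mn$; applying the identity to $f^{m}$ (of weight $mk$) gives $\omega(f^{m})=mk-(p^{2}-1)mn=m\,\omega(f)$.

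The hard part is fact (b): extracting from the bare equality $r(g)=r(f)$ the clean divisibility $f=h_{\bar{\Sigma}}^{(k-j)/(p^{2}-1)}g$ rather than some non-homogeneous relation among modular forms. Whether one runs this geometrically (injectivity of $\tau^{*}$ and $\tau^{*}h_{\bar{\Sigma}}=a^{p^{2}-1}$ for the nowhere vanishing $a$) or algebraically (the graded analysis of $(h_{\bar{\Sigma}}-1)$), this is where the content sits; once it is in hand the remaining assertions are formal divisor arithmetic on the smooth surfaces $\bar{S}$ and $\overline{Ig}$.
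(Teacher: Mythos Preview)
Your proof is correct and is exactly the detailed unpacking of what the paper means by ``follows immediately from previous results'': you use precisely the ingredients the paper has set up (the kernel of $r$ is $(h_{\bar\Sigma}-1)$, Theorem~\ref{Hasse invariant} giving $\mathrm{div}(h_{\bar\Sigma})=S_{ss}$ reduced, Proposition~\ref{Root of Hasse}(ii) giving $a^{p^2-1}=\tau^*h_{\bar\Sigma}$, and Theorem~\ref{Igusa} for the ramification along $S_{ss}$). Your fact~(b) is the heart of the matter and your geometric argument for it---clearing $a$'s, descending along the \'etale $\tau$ over $S_\mu$, then extending by normality---is clean; the alternative graded analysis of $(h_{\bar\Sigma}-1)$ you mention is equally valid and is the standard Serre--Swinnerton-Dyer argument transplanted here.
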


\subsection{The theta operator}

\subsubsection{Definition of $\Theta (f)$}

We work over $\kappa =\Bbb{\bar{F}}_{p}$. Let $S$ be the (open) Picard
surface over $\kappa $ and $Ig=Ig(p)$ the Igusa surface of level $p$
(completed along the supersingular locus as explained above). To simplify
the notation we denote by $Z=S_{ss}=S\backslash S_{\mu }$ the supersingular
locus of $S,$ by $\tilde{Z}=Ig_{ss}=Ig\backslash Ig_{\mu }$ its pre-image
under the covering map $\tau :Ig\rightarrow S,$ by $Z^{\prime
}=S_{gss}=S_{ss}\backslash S_{ssp}$ the smooth part of $Z,$ and by $\tilde{Z}%
^{\prime }=Ig_{gss}=Ig_{ss}\backslash Ig_{ssp}$ the pre-image of $Z^{\prime
} $ under $\tau .$

Let $f\in H^{0}(S,\mathcal{L}^{k}).$ Then $\tau ^{*}f/a^{k}\in H^{0}(Ig_{\mu
},\mathcal{O})$ has a pole of order at most $k$ along $\tilde{Z},$ and the
Galois group acts on it via $\bar{\Sigma}^{k}.$ Let 
\begin{equation}
\eta _{f}=d(\tau ^{*}f/a^{k})\in H^{0}(Ig_{\mu },\Omega
_{Ig}^{1})=H^{0}(Ig_{\mu },\tau ^{*}\Omega _{S}^{1}).
\end{equation}
The Kodaira-Spencer isomorphism $KS(\Sigma )$ is an isomorphism 
\begin{equation}
KS(\Sigma ):\mathcal{P}\otimes \mathcal{L}\simeq \Omega _{S}^{1}.
\end{equation}
Let 
\begin{equation}
\psi =(V_{\mathcal{P}}\otimes 1)\circ KS(\Sigma )^{-1}:\Omega
_{S}^{1}\rightarrow \mathcal{L}^{(p)}\otimes \mathcal{L}\simeq \mathcal{L}%
^{p+1}.  \label{kappa}
\end{equation}

We denote by $\psi $ also the map induced on the base-change of these vector
bundles by $\tau ^{*}$ to $Ig$ and consider $\psi (\eta _{f}).$ As $\Delta
(p)$ still acts on $\psi (\eta _{f})$ via $\bar{\Sigma}^{k},$ its action on $%
a^{k}\psi (\eta _{f})$ is trivial, so this section descends to $S_{\mu }.$
We define 
\begin{equation}
\Theta (f)=a^{k}\psi (\eta _{f})\in H^{0}(S_{\mu },\mathcal{L}^{k+p+1}).
\end{equation}
A priori, this extends only to a meromorphic modular form of weight $k+p+1,$
as it may have poles along $Z$.

\subsubsection{The main theorem}

For the formulation of the next theorem we need to define what we mean by
the \emph{standard cuspidal component} of $\bar{S}$ or $\overline{Ig}.$
Since its definition involves a transition back and forth between $\Bbb{C}$
and $\kappa $ we need to fix, besides the embedding of $R_{N}$ in $\Bbb{C}$
also a homomorphism 
\begin{equation}
R_{N}\rightarrow \kappa
\end{equation}
extending the map $R_{0}\rightarrow \kappa _{0}\subset \kappa $, and we let $%
\frak{P}$ be its kernel (a prime above $p$).

Recall that according to [Bel] and [La1] the cuspidal scheme $C=\bar{S}%
\backslash S$ classifies $\mathcal{O}_{\mathcal{K}}$-semi-abelian varieties
with level $N$ structure. The \emph{standard component} of $C$ over $\Bbb{C}$
is the component which classifies extensions of the elliptic curve $\Bbb{C}/%
\mathcal{O}_{\mathcal{K}}$ by the $\mathcal{O}_{\mathcal{K}}$-torus $%
\mathcal{O}_{\mathcal{K}}\otimes \Bbb{C}^{\times }$ (thus sits over a cusp
of type $(\mathcal{O}_{\mathcal{K}},\mathcal{O}_{\mathcal{K}})$ in $S_{\Bbb{C%
}}^{*}$), together with a level-$N$ structure $(\alpha ,\beta ,\gamma )$
(see [Bel], I.4.2 and Section \ref{semi abelian model}), where 
\begin{equation}
\alpha :\mathcal{O}_{\mathcal{K}}/N\mathcal{O}_{\mathcal{K}}=\mathcal{O}_{%
\mathcal{K}}\otimes \Bbb{Z}/N\Bbb{Z} \rightarrow \mathcal{O}_{\mathcal{K}%
}\otimes \Bbb{C}^{\times }
\end{equation}
is given by $1\otimes (a\mapsto \exp (2\pi ia/N))$ and 
\begin{equation}
\beta :\mathcal{O}_{\mathcal{K}}/N\mathcal{O}_{\mathcal{K}}=N^{-1}\mathcal{O}%
_{\mathcal{K}}/\mathcal{O}_{\mathcal{K}}\rightarrow \Bbb{C}/\mathcal{O}_{%
\mathcal{K}}
\end{equation}
is the canonical embedding. (The splitting $\gamma $ varies along the
component.) The standard component of $C$ over $R_{N}$ is the one which
becomes this component after base change to $\Bbb{C}$. The standard
component of $C$ over $\kappa $ is the reduction modulo $\frak{P}$ of the
standard component of $C$ over $R_{N}$. Finally, $\overline{Ig}$ maps to $%
\bar{S}$ (over $\kappa $) and the cuspidal components mapping to a given
component $E$ of $C$ are classified by the embedding of $\delta _{\mathcal{K}%
}^{-1}\mathcal{O}_{\mathcal{K}}\otimes \mu _{p}$ in the toric part of $%
\mathcal{A}.$ Since the toric part of the universal semi-abelian variety
over the standard component is $\mathcal{O}_{\mathcal{K}}\otimes \Bbb{G}%
_{m}, $ we may define the standard cuspidal component of $\overline{Ig}$ to
be the component where the map 
\begin{equation}
\varepsilon :\delta _{\mathcal{K}}^{-1}\mathcal{O}_{\mathcal{K}}\otimes \mu
_{p}\rightarrow \mathcal{O}_{\mathcal{K}}\otimes \Bbb{G}_{m}
\label{standard Igusa}
\end{equation}
is the natural embedding. Here we use the fact that 
\begin{equation}
\delta _{\mathcal{K}}^{-1}\mathcal{O}_{\mathcal{K}}\otimes \mu _{p}=\mathcal{%
O}_{\mathcal{K}}\otimes \mu _{p}
\end{equation}
since $\delta _{\mathcal{K}}$ is invertible in $\mathcal{O}_{\mathcal{K}}/p%
\mathcal{O}_{\mathcal{K}}.$ Let $\tilde{E}\subset \tilde{C}=$ $\overline{Ig}%
\backslash Ig$ be \emph{this} standard component.

\begin{theorem}
\label{Main Theorem}(i) The operator $\Theta $ maps $H^{0}(S,\mathcal{L}%
^{k}) $ to $H^{0}(S,\mathcal{L}^{k+p+1}).$

(ii) The effect of $\Theta $ on Fourier-Jacobi expansions is a ``Tate
twist''. More precisely, let 
\begin{equation}
\widetilde{FJ}(f)=\sum_{m=0}^{\infty }c_{m}(f)
\end{equation}
be the canonical Fourier-Jacobi expanison of $f$ along $\tilde{E}$ (thus $%
c_{m}(f)\in H^{0}(\tilde{E},\mathcal{N}^{m})$). Then 
\begin{equation}
\widetilde{FJ}(\Theta (f))=M^{-1}\sum_{m=0}^{\infty }mc_{m}(f).
\label{qd/dq}
\end{equation}
Here $M$ (equal to $N|D_{\mathcal{K}}|$ or $2^{-1}N|D_{\mathcal{K}}|$) is
the width of the cusp.

(iii) If $f\in H^{0}(S,\mathcal{L}^{k})$ and $g\in H^{0}(S,\mathcal{L}^{l})$
then 
\begin{equation}
\Theta (fg)=f\Theta (g)+\Theta (f)g.
\end{equation}

(iv) $\Theta (h_{\bar{\Sigma}}f)=h_{\bar{\Sigma}}\Theta (f)$ (equivalently, $%
\Theta (h_{\bar{\Sigma}})=0$).
\end{theorem}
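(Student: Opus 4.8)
The plan is to treat the two algebraic identities (iii), (iv) first, then the holomorphy statement (i), and finally the Fourier--Jacobi computation (ii), since (ii) presupposes that $\Theta(f)$ is a genuine modular form. Part (iii) is formal: $d$ on $H^{0}(Ig_{\mu},\mathcal{O})$ satisfies the Leibniz rule, so $\eta_{fg}=d\bigl((\tau^{*}f/a^{k})(\tau^{*}g/a^{l})\bigr)=(\tau^{*}f/a^{k})\,\eta_{g}+(\tau^{*}g/a^{l})\,\eta_{f}$, and since $\psi$ of (\ref{kappa}) is $\mathcal{O}$-linear, $\psi(\eta_{fg})=(\tau^{*}f/a^{k})\psi(\eta_{g})+(\tau^{*}g/a^{l})\psi(\eta_{f})$; multiplying by $a^{k+l}$ gives $\Theta(fg)=\tau^{*}f\cdot\Theta(g)+\tau^{*}g\cdot\Theta(f)$ on $Ig_{\mu}$, which descends to $f\,\Theta(g)+g\,\Theta(f)$ on $S_{\mu}$, hence on all of $S$ once (i) is known. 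For (iv) I would use Proposition~\ref{Root of Hasse}(ii): $\tau^{*}h_{\bar\Sigma}=a^{p^{2}-1}$, so $\tau^{*}h_{\bar\Sigma}/a^{p^{2}-1}=1$ and $\eta_{h_{\bar\Sigma}}=d(1)=0$; therefore $\Theta(h_{\bar\Sigma})=0$, and combined with (iii) this gives $\Theta(h_{\bar\Sigma}f)=h_{\bar\Sigma}\Theta(f)$.

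For (i): a priori $\Theta(f)\in H^{0}(S_{\mu},\mathcal{L}^{k+p+1})$, so it is a rational section of the locally free sheaf $\mathcal{L}^{k+p+1}$ on the smooth surface $S$, with poles possibly along $S_{ss}=S_{gss}\cup S_{ssp}$. Since $S_{ssp}$ is finite (codimension $2$), it suffices to show $\Theta(f)$ is holomorphic along $S_{gss}$; the extension over $S_{ssp}$ is then automatic by normality. I would do this after pulling back to $Ig$. By \thmref{Igusa}(iii), at a point $\tilde s_{0}$ over $s_{0}\in S_{gss}$ one has $\widehat{\mathcal{O}}_{Ig,\tilde s_{0}}=\kappa[[w,v]]$ with $\tau^{*}u=w^{p^{2}-1}$, where $u=0$ is a local equation for $S_{gss}\subset S$; by \thmref{Hasse invariant} and \thmref{Igusa}(i), $a$ vanishes to order exactly $1$ along $\{w=0\}$, so $\tau^{*}f/a^{k}$ has pole order $\le k$ there. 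Writing $\tau^{*}f/a^{k}=\Phi/w^{k}$ with $\Phi$ regular, and observing that in characteristic $p$, $d(\tau^{*}u)=d(w^{p^{2}-1})=-w^{p^{2}-2}\,dw$ (so $d\tau$ contracts the $u$-direction by $w^{p^{2}-2}$), the image of $\eta_{f}$ in $\tau^{*}\Omega^{1}_{S}$ under $(d\tau)^{-1}$ is $c_{u}\cdot\tau^{*}du+c_{v}\cdot\tau^{*}dv$ with $\mathrm{ord}_{w=0}\,c_{u}\ge-(k+p^{2}-1)$ and $\mathrm{ord}_{w=0}\,c_{v}\ge-k$. Applying $\psi=(V_{\mathcal{P}}\otimes1)\circ KS(\Sigma)^{-1}$ and using the key fact of the next paragraph — that $\psi(du)$ vanishes along $S_{gss}$, hence $\psi(\tau^{*}du)=\tau^{*}(\psi(du))$ vanishes to order $\ge p^{2}-1$ along $\{w=0\}$ — the worst pole of $c_{u}$ is exactly cancelled, so $\psi(\eta_{f})$ has pole order $\le k$ and $\Theta(f)=a^{k}\psi(\eta_{f})$ is holomorphic at $\tilde s_{0}$. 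The resulting $\Delta(p)$-invariant section on $Ig\setminus Ig_{ssp}$ extends across $Ig_{ssp}$ and descends to $\Theta(f)\in H^{0}(S,\mathcal{L}^{k+p+1})$.

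The main obstacle is this key fact, which I would phrase as: under $KS(\Sigma)$ the sub-line-bundle $\ker\psi=\mathcal{P}_{0}\otimes\mathcal{L}$, restricted to $S_{gss}$, is the conormal bundle of $S_{gss}$ in $S$; equivalently (since $V_{\mathcal{P}}$ induces an isomorphism $\mathcal{P}_{\mu}\simeq\mathcal{L}^{(p)}$ away from $S_{ssp}$) there is a Kodaira--Spencer isomorphism $\Omega^{1}_{S_{gss}}\simeq(\mathcal{P}_{\mu}\otimes\mathcal{L})|_{S_{gss}}$ compatible with the one on $S$; equivalently, $S_{gss}$ is an integral curve of the distribution complementary to the kernel of $V_{\mathcal{P}}$. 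I would establish it by an explicit Dieudonné-module computation of the Gauss--Manin connection and of $V$ at a general supersingular point — in the style of the complex computation of \secref{KS at cusp} but now purely in characteristic $p$ — using the description of the Dieudonné modules along $S_{ss}$ from [Bu-We], [V] and [dS-G]. This is presumably the ``pleasant computation'' referred to in the introduction.

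Finally (ii): with $\Theta(f)$ now a modular form of weight $k+p+1$, I would compute its canonical Fourier--Jacobi expansion along the standard cuspidal component $\tilde E\subset\overline{Ig}$ in the formal neighbourhood $\widehat{Ig}$ of $\tilde E$, using the complex-analytic model of \secref{KS at cusp}, where $a$ is identified with $2\pi i\cdot d\zeta_{3}$. Writing $\tau^{*}f/a^{k}=\sum_{m\ge0}\theta_{m}(u)q^{m}$ with $q=e^{2\pi iz/M}$, so that $c_{m}(f)=\theta_{m}(u)q^{m}\in H^{0}(\tilde E,\mathcal{N}^{m})$, we get $\eta_{f}=\sum_{m}\theta'_{m}(u)q^{m}\,du+\sum_{m}m\,\theta_{m}(u)q^{m-1}\,dq$. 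By Corollary~\ref{complex kappa} $\psi$ annihilates $du$; combining that corollary with $Ver_{\Bbb{G}_{m}}=1$ (which evaluates $V_{\mathcal{P}}$ on the toric generator $d\zeta_{2}$) and the relation $dq/q=(2\pi i/M)\,dz$, one finds $\psi(dq)=M^{-1}q\,a^{p+1}$ in the canonical trivialisations — the bookkeeping of these normalisations, in which every power of $2\pi i$ cancels, being exactly what produces the constant $M^{-1}$. Hence $\psi(\eta_{f})=M^{-1}a^{p+1}\sum_{m}m\,\theta_{m}(u)q^{m}$, so $\Theta(f)=a^{k}\psi(\eta_{f})=M^{-1}a^{k+p+1}\sum_{m}m\,\theta_{m}(u)q^{m}$, i.e. $\widetilde{FJ}(\Theta(f))=M^{-1}\sum_{m}m\,c_{m}(f)$, as claimed (the terms with $p\mid m$ vanishing, which foreshadows the theta-cycle phenomena of \secref{theta cycles}).
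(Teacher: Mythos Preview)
Your proposal is correct and follows essentially the same route as the paper. You identify the same formal arguments for (iii) and (iv); for (i) you reduce to $S_{gss}$ by codimension, then use the local model $w^{p^{2}-1}=u$ from Theorem~\ref{Igusa}(iii) together with the key fact that $\psi(du)$ vanishes along $S_{gss}$ (which the paper proves exactly as you anticipate, via a first-order Dieudonn\'e--module deformation computation at a gss point, Lemma~\ref{Braid} and Lemma~\ref{ssg_equation}); and for (ii) you use the same complex-analytic computation at the standard cusp, with the identifications $a\leftrightarrow 2\pi i\,d\zeta_{3}$ and $a^{p+1}\leftrightarrow 2\pi i\,d\zeta_{2}\otimes 2\pi i\,d\zeta_{3}$ (the paper isolates these as two separate lemmas before the FJ computation).
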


\begin{corollary}
The operator $\Theta $ extends to a derivation of the graded ring of modular
forms $\mod p$, and for any $f,$ $\Theta (f)$ is a cusp form.
\end{corollary}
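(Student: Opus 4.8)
The plan is to derive the Corollary as an immediate formal consequence of the four parts of \thmref{Main Theorem}, so the proof is essentially bookkeeping rather than new geometry.

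First I would address the derivation property. By part (i), $\Theta$ sends $H^{0}(S,\mathcal{L}^{k})=M_{k}(N,\kappa)$ into $M_{k+p+1}(N,\kappa)$ for every $k\ge 0$. Since $M_{*}(N,\kappa)=\bigoplus_{k\ge 0}M_{k}(N,\kappa)$ is generated as a ring by its homogeneous pieces, and part (iii) gives the Leibniz rule $\Theta(fg)=f\Theta(g)+\Theta(f)g$ on homogeneous elements, extending $\Theta$ additively to all of $M_{*}(N,\kappa)$ produces a well-defined additive map satisfying Leibniz on arbitrary products (one reduces to the homogeneous case by bilinearity). Hence $\Theta$ is a derivation of the graded ring, raising weight by $p+1$. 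This part is purely formal once (i) and (iii) are in hand.

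Next I would show $\Theta(f)$ is always a cusp form. The cleanest route is through part (ii): a modular form $g$ of some weight is a cusp form precisely when its Fourier-Jacobi expansion $\widetilde{FJ}(g)$ has vanishing constant term $c_{0}(g)$ along every cuspidal component (here one invokes the $q$-expansion principle, \lemref{irreducibility}, so that it suffices to check the standard component $\tilde{E}$, every irreducible component of $\overline{Ig}$ containing such a cusp). Applying the Tate-twist formula \eqref{qd/dq}, the constant term of $\widetilde{FJ}(\Theta(f))$ is $M^{-1}\cdot 0\cdot c_{0}(f)=0$. Therefore $\Theta(f)$ vanishes along the cuspidal divisor, i.e. $\Theta(f)\in M^{0}_{k+p+1}(N,\kappa)=H^{0}(\bar{S},\mathcal{L}^{k+p+1}\otimes\mathcal{O}(C)^{\vee})$. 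By additivity the same holds for any (not necessarily homogeneous) $f$, component by component.

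I do not expect any real obstacle here; the only point requiring a word of care is that part (ii) as stated concerns only the \emph{standard} cuspidal component, so one must note that the argument applies verbatim at each cusp — the choice of standard component was a normalization, and all cusps are $\mathbf{G}(\Bbb{Q})$-equivalent (or one simply invokes that the Fourier-Jacobi computation is local in nature and identical at every component), so that $c_{0}(\Theta(f))=0$ everywhere. Part (iv), $\Theta(h_{\bar{\Sigma}})=0$, is consistent with but not needed for the Corollary; it does however confirm that $\Theta$ descends to the quotient by $(h_{\bar{\Sigma}}-1)$, which one could mention in passing. With these observations the Corollary follows in a few lines.
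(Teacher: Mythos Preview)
Your approach is correct and matches the paper's intent: the Corollary is meant to follow formally from parts (i)--(iii) of \thmref{Main Theorem}, exactly as you outline. One correction, though: your parenthetical invoking the $q$-expansion principle and Proposition~\ref{irreducibility} to reduce the cusp-form check to the single standard component $\tilde{E}$ is not valid. The $q$-expansion principle says a form is \emph{determined} by its full FJ expansion at one cusp per connected component, but it does \emph{not} say that $c_{0}=0$ at one cusp forces $c_{0}=0$ at the others; a section of $\mathcal{L}^{k}$ could perfectly well vanish along one $E_{j}$ and not another. Fortunately you catch this yourself in the final paragraph: the right argument is simply that the computation behind part~(ii) is local at each cusp and goes through verbatim at every cuspidal component (the paper says as much), so $c_{0}(\Theta(f))=0$ everywhere on $C$. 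Drop the misleading parenthetical and your proof is clean.
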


Parts (iii) and (iv) of the theorem are clear from the construction. The
proof of (i), that $\Theta (f)$ is in fact $\emph{holomorphic}$ along $%
S_{ss},$ will be given in \ref{Theta on ss}. We shall now study its effect
on Fourier-Jacobi expansions, i.e. part (ii). That a factor like $M^{-1}$ is
necessary in (ii) becomes evident if we consider what happens to FJ
expansions under level change. If $N$ is replaced by $N^{\prime }=NQ$ then
the conormal bundle becomes the $Q$-th power of the conormal bundle of level 
$N^{\prime },$ i.e. $\mathcal{N}=\mathcal{N}^{\prime Q}$ (see Section \ref
{level change}). It follows that what was the $m$-th FJ coefficient at level 
$N$ becomes the $Qm$-th coefficient at level $N^{\prime }.$ The operator $%
\Theta $ commutes with level-change, but the factor $M^{-1},$ which changes
to $(QM)^{-1}$, takes care of this.

\subsection{The effect of $\Theta $ on FJ expansions}

Let $E$ be the standard cuspidal component of $\bar{S}$ (over the ring $%
R_{N} $). We have earlier trivialized the line bundle $\mathcal{L}$ along $E$
in two seemingly different ways, that we must now compare. On the one hand,
after reducing modulo $\frak{P}$ (the prime of $R_{N}$ above $p$ fixed
above) and pulling $\mathcal{L}$ back to the Igusa surface, we got a \emph{%
canonical} nowhere vanishing section $a$ trivializing $\mathcal{L}$ over $%
\overline{Ig}_{\mu },$ and in particular along any of the $p^{2}-1$ cuspidal
components lying over $E$ in $\overline{Ig}_{\mu }.$ Using $\tilde{E}$ as a
reference, there is a unique section of $\mathcal{L}$ along $E$ which pulls
back to $a|_{\tilde{E}}.$ On the other hand, extending scalars from $R_{N}$
to $\Bbb{C},$ shifting to the analytic category, restricting to the
connected component $\bar{X}_{\Gamma }$ on which $E$ lies, and then pulling
back to a neighborhood of the cusp $c_{\infty }$ in the unit ball $\frak{X},$
we have trivialized $\mathcal{L}|_{E}$ by means of the section $2\pi id\zeta
_{3},$ which we showed to be $\mathcal{K}_{N}$-rational.

\begin{lemma}
The sections $a|_{\tilde{E}}$ and $2\pi id\zeta _{3}$ ``coincide'' in the
sense that they come from the same section in $H^{0}(E,\mathcal{L}).$
\end{lemma}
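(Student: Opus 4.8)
The plan is to unwind the two trivializations of $\mathcal{L}|_E$ to a common comparison of cotangent vectors on the toric part of the degenerating semi-abelian variety, and to check that they agree there. Recall that $\tilde{E}$ is, by definition, the cuspidal component of $\overline{Ig}$ on which the Igusa level structure $\varepsilon\colon\delta_{\mathcal{K}}^{-1}\mathcal{O}_{\mathcal{K}}\otimes\mu_p\hookrightarrow\mathcal{A}[p]^{\mu}$ restricts to the \emph{natural} embedding into the toric part $\mathcal{O}_{\mathcal{K}}\otimes\Bbb{G}_m$ of the universal semi-abelian variety over the standard component $E$ of $C$. By construction, the section $a$ is $(\varepsilon^*)^{-1}(e_{\bar\Sigma}\cdot 1\otimes dT/T)$, so $a|_{\tilde E}$ is the pull-back of the section of $\mathcal{L}|_E=\omega_{\mathcal{A}/E}^{\mu}(\bar\Sigma)$ that corresponds, under the natural identification of the toric part with $\mathcal{O}_{\mathcal{K}}\otimes\Bbb{G}_m$, to $e_{\bar\Sigma}\cdot(1\otimes dT/T)$. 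First I would record this: $a|_{\tilde E}$ descends (uniquely, using $\tilde E$ as base point among the $p^2-1$ components over $E$) to the section $s_{\mathrm{tor}}\in H^0(E,\mathcal{L})$ pulled back along $e_{\mathcal{O}_{\mathcal{K}}}\colon \mathcal{O}_{\mathcal{K}}\otimes\Bbb{G}_m\twoheadleftarrow$ from $dT/T$, projected to the $\bar\Sigma$-component. This makes sense integrally over $R_N/pR_N$ and, more to the point, the \emph{same} recipe makes sense over $R_N$ (and over $\mathcal{K}$, since the toric part of $\mathcal{G}$ over the standard component is defined already over $\mathcal{K}$), because the degeneration at the standard cusp has toric part literally $\mathcal{O}_{\mathcal{K}}\otimes\Bbb{G}_m$. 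So it suffices to compare $s_{\mathrm{tor}}$ and $2\pi i\cdot d\zeta_3$ over $\mathcal{K}_N$, i.e. before reduction mod $\frak{P}$, which is legitimate since the standard component of $C$ over $\kappa$ is by definition the reduction of the one over $R_N$ and $\mathcal{L}$ is flat there.

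Next I would invoke the complex-analytic computation of Section~\ref{degeneration}, specifically the identification (\ref{semi-abelian}): along the geodesic to $c_\infty$, $\mathcal{G}_u\simeq\{\Bbb{C}\oplus(\mathcal{O}_{\mathcal{K}}\otimes\Bbb{C}^\times)\}/L_u$, where the $\iota'(\mathcal{O}_{\mathcal{K}})$-span of $^t(0,1,1)$ is exactly $\ker e_{\mathcal{O}_{\mathcal{K}}}$, so that $e_{\mathcal{O}_{\mathcal{K}}}$ is precisely the quotient map realizing the toric part $\mathcal{O}_{\mathcal{K}}\otimes\Bbb{C}^\times$. Now the cotangent form $dT/T$ on $\Bbb{G}_m$ (with $T$ the standard coordinate) pulls back under $e=e^{2\pi i(\cdot)}\colon\Bbb{C}\to\Bbb{C}^\times$ to $2\pi i\cdot d(\text{linear coordinate})$; tracking through the identification $\Bbb{C}^3\simeq\Bbb{C}\oplus(\mathcal{O}_{\mathcal{K}}\otimes\Bbb{C})$ compatible with the $\mathcal{O}_{\mathcal{K}}$-action and then taking $\bar\Sigma$-components, $e_{\mathcal{O}_{\mathcal{K}}}^*(e_{\bar\Sigma}\cdot dT/T)$ becomes $2\pi i\cdot d\zeta_3$ on the nose — this is exactly the point already used in the last sentence of the proof of Proposition~(i)--(iii) on rationality of $d\zeta_3$, where it is observed that $e_{\mathcal{O}_{\mathcal{K}}}^*$ maps the cotangent space of $\mathcal{O}_{\mathcal{K}}\otimes\Bbb{G}_{m,\mathcal{K}}$ isomorphically onto the $\mathcal{K}$-span of $2\pi i\,d\zeta_2$ and $2\pi i\,d\zeta_3$, with $d\zeta_3$ the $\bar\Sigma$-part. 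So analytically $s_{\mathrm{tor}}$ and $2\pi i\cdot d\zeta_3$ agree as sections of $\mathcal{L}$ over (the formal neighborhood of) $E_c$ in $\bar X_\Gamma$. Finally, since both sections are $\mathcal{K}_N$-rational — $2\pi i\,d\zeta_3$ by Proposition~(i) of Section~\ref{Fields of rationality}, and $s_{\mathrm{tor}}$ because it is defined by the algebraic recipe above over $R_N\subset\mathcal{K}_N$ — and $H^0(E_{\mathcal{K}_N},\mathcal{L})\hookrightarrow H^0(E_{\Bbb{C}},\mathcal{L})$, their analytic coincidence forces equality over $\mathcal{K}_N$, hence after reduction mod $\frak{P}$ the claimed equality in $H^0(E,\mathcal{L})$ over $\kappa$, which pulls back to $a|_{\tilde E}$ on $\tilde E$.

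The one genuinely delicate point — the main obstacle — is the bookkeeping in the second step: one must verify that the \emph{normalizations} line up, i.e. that the factor relating $dT/T$ on the toric part to $2\pi i\,d\zeta_3$ is exactly $1$ and not some other unit (this is precisely why the paper inserted the $2\pi i$ into the trivialization of $\mathcal{L}$ back in Section~\ref{automorphy factors}, ``for reasons that will become clear later''). Concretely this amounts to checking that the isomorphism $\Bbb{C}^3\simeq\Bbb{C}\oplus(\mathcal{O}_{\mathcal{K}}\otimes\Bbb{C})$ sending $a\otimes\zeta\mapsto(a\zeta,\bar a\zeta)$ carries $\zeta_3$ to the $\bar\Sigma$-coordinate on $\mathcal{O}_{\mathcal{K}}\otimes\Bbb{C}$ with unit constant, and that $e_{\mathcal{O}_{\mathcal{K}}}$ is the map induced by $e(x)=e^{2\pi ix}$ with no extra scaling — both of which are true by the explicit conventions of Section~\ref{degeneration}, but deserve to be spelled out. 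Once that normalization is pinned down, everything else is the formal rationality/flatness argument already deployed twice in Section~\ref{Fields of rationality}, so the lemma follows. I would write the proof in about half a page, citing (\ref{semi-abelian}), the rationality proposition of Section~\ref{Fields of rationality}, and the definition of $\tilde E$, and doing the $dT/T\leftrightarrow 2\pi i\,d\zeta_3$ matching explicitly.
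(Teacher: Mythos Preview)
Your proposal is correct and follows essentially the same route as the paper: both arguments identify a single canonical section $e_{\bar\Sigma}\cdot(1\otimes dT/T)$ of $\mathcal{L}|_E$ coming from the toric part $\mathcal{O}_{\mathcal{K}}\otimes\Bbb{G}_m$ over $R_N$, check via (\ref{semi-abelian}) that its base change to $\Bbb{C}$ is $2\pi i\,d\zeta_3$, and check via the definition of $\tilde E$ and of $a$ that its reduction mod $\frak{P}$ pulls back to $a|_{\tilde E}$. Your write-up is more explicit about the normalization bookkeeping (the $2\pi i$ matching) and the rationality descent, but the mathematical content is the same.
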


\begin{proof}
Let $A$ be the universal semi-abelian variety over $E.$ Its toric part is $%
\mathcal{O}_{\mathcal{K}}\otimes \Bbb{G}_{m}$, hence, taking $\bar{\Sigma}$%
-component of the cotangent space at the origin 
\begin{equation}
\mathcal{L}|_{\tilde{E}}=\omega _{A/\tilde{E}}(\bar{\Sigma})=(\delta _{%
\mathcal{K}}^{-1}\mathcal{O}_{\mathcal{K}}\otimes \omega _{\Bbb{G}_{m}})(%
\bar{\Sigma})
\end{equation}
admits the canonical section $e_{\bar{\Sigma}}\cdot (1\otimes dT/T).$
Tracing back the definitions and using (\ref{semi-abelian}), this section
becomes, under the base change $R_{N}\hookrightarrow \Bbb{C},$ just $2\pi
id\zeta _{3}.$ On the other hand, when we reduce it modulo $\frak{P}$ and
use the Igusa level structure $\varepsilon $ at the standard cusp, it pulls
back to the section ``with the same name'' $e_{\bar{\Sigma}}\cdot (1\otimes
dT/T)$, because along $\tilde{E}$ (\ref{standard Igusa}) induces the
identity on cotangent spaces. The lemma follows from the fact that, by
definition, $\varepsilon ^{*}a=e_{\bar{\Sigma}}\cdot (1\otimes dT/T)$ too.
\end{proof}

\begin{lemma}
The sections $a|_{\tilde{E}}^{p+1}$ and $2\pi id\zeta _{2}\otimes 2\pi
id\zeta _{3}\mod \mathcal{P}_{0}\otimes \mathcal{L}$ ``coincide'' in
the sense that they come from the same section in $H^{0}(E,\mathcal{P}_{\mu
}\otimes \mathcal{L}).$
\end{lemma}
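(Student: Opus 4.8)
The plan is to reduce both sections to the canonical trivialization of $\mathcal{P}_\mu\otimes\mathcal{L}$ along $E$ coming from the toric part of $\mathcal{A}$, and then to invoke the Verschiebung computation already carried out for the Hasse invariant.

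First I would fix notation along the standard cuspidal component $E$, working over the residue field $R_N/\mathfrak{P}$. There the universal semi-abelian variety $\mathcal{A}$ is an extension of an elliptic curve $B$ of CM type $\Sigma$ by $\mathcal{O}_{\mathcal{K}}\otimes\mathbb{G}_m$ (the toric part is in fact defined over $\mathcal{K}$), so $\mathcal{L}=\omega_{\mathcal{A}}^{\mu}(\bar\Sigma)$ carries the canonical section $\omega_3:=e_{\bar\Sigma}\cdot(1\otimes dT/T)$ and $\mathcal{P}_\mu=\omega_{\mathcal{A}}^{\mu}(\Sigma)=(\mathcal{O}_{\mathcal{K}}\otimes\omega_{\mathbb{G}_m})(\Sigma)$ carries the canonical section $\omega_2:=e_{\Sigma}\cdot(1\otimes dT/T)$; the factor $\delta_{\mathcal{K}}$ appearing in $\varepsilon$ is a unit modulo $p$ and may be suppressed. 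By the preceding lemma, $\omega_3$ is the common origin of $a|_{\tilde E}$ and of (the reduction modulo $\mathfrak{P}$ of) $2\pi i\,d\zeta_3$. Likewise, the proof of the first proposition of \secref{Fields of rationality} shows that $e_{\mathcal{O}_{\mathcal{K}}}^{*}$ identifies the cotangent space of the toric part with the $\mathcal{K}$-span of $2\pi i\,d\zeta_2$ and $2\pi i\,d\zeta_3$ compatibly with types, so $\omega_2$ is the common origin of the Igusa section $b:=(\varepsilon^{*})^{-1}(e_{\Sigma}\cdot(1\otimes dT/T))$ and of (the reduction of) $2\pi i\,d\zeta_2 \bmod \mathcal{P}_0$. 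Hence the right-hand side of the lemma comes from $b\otimes a=\omega_2\otimes\omega_3\in H^0(E,\mathcal{P}_\mu\otimes\mathcal{L})$.

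Next I would make explicit the identification $\mathcal{P}_\mu\otimes\mathcal{L}\simeq\mathcal{L}^{p+1}$ under which $a|_{\tilde E}^{p+1}$ is read as a section of $\mathcal{P}_\mu\otimes\mathcal{L}$: it is $(\bar V_{\mathcal{P}}\otimes 1)^{-1}$, where $\bar V_{\mathcal{P}}\colon\mathcal{P}_\mu\simeq\mathcal{L}^{(p)}\simeq\mathcal{L}^{p}$ is the isomorphism induced by $V_{\mathcal{P}}$ (whose kernel is $\mathcal{P}_0$), the same one appearing in \eqref{kappa}. Under it, $a|_{\tilde E}^{p+1}$ becomes $\bar V_{\mathcal{P}}^{-1}(a^{(p)})\otimes a$, so the whole claim reduces to the single identity
\begin{equation*}
\bar V_{\mathcal{P}}(\omega_2)=\omega_3^{(p)}\quad\text{in }\ \mathcal{L}^{(p)}|_E,
\end{equation*}
equivalently $\bar V_{\mathcal{P}}(b)=a^{(p)}$ along $\tilde E$. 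This is exactly the Dieudonn\'e/Verschiebung computation underlying Proposition \ref{Root of Hasse}(ii): on the toric part $\mathcal{O}_{\mathcal{K}}\otimes\mathbb{G}_m$ one has $Ver_{\mathbb{G}_m}=\mathrm{id}$, while the Frobenius twist interchanges the $\Sigma$- and $\bar\Sigma$-isotypic components because $\Phi$ acts on $\mathcal{O}_{\mathcal{K}}/p=\mathbb{F}_{p^2}$ as the nontrivial automorphism (this is the canonical identity $\omega_{\mathcal{A}}^{(p)}(\Sigma)=\omega_{\mathcal{A}}(\bar\Sigma)^{(p)}$); combining the two, $V_{\mathcal{P}}$ carries $e_{\Sigma}\cdot(1\otimes dT/T)$ to $(e_{\bar\Sigma}\cdot(1\otimes dT/T))^{(p)}$, which is $\omega_3^{(p)}$, i.e.\ $\omega_3^{\otimes p}$ under $\mathcal{L}^{(p)}\simeq\mathcal{L}^{p}$.

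Putting these together, both $a|_{\tilde E}^{p+1}$ and $2\pi i\,d\zeta_2\otimes 2\pi i\,d\zeta_3 \bmod \mathcal{P}_0\otimes\mathcal{L}$ come from $\omega_2\otimes\omega_3$, which proves the lemma. The only delicate point — and the main (if modest) obstacle — is the bookkeeping of the two canonical isomorphisms $\mathcal{L}^{(p)}\simeq\mathcal{L}^{\otimes p}$ and $\omega_{\mathcal{A}}^{(p)}(\Sigma)\simeq\omega_{\mathcal{A}}(\bar\Sigma)^{(p)}$ together with the verification that Verschiebung on the $\mathcal{O}_{\mathcal{K}}$-torus respects them as stated; since this was already done for the Hasse invariant, no genuinely new ingredient is needed.
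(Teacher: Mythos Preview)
Your proposal is correct and follows essentially the same route as the paper: both arguments reduce to identifying the two sections with $e_{\Sigma}\cdot(1\otimes dT/T)\,\otimes\,e_{\bar\Sigma}\cdot(1\otimes dT/T)$ on the toric part and then checking that $V_{\mathcal P}$ sends the $\Sigma$-generator to the Frobenius twist of the $\bar\Sigma$-generator, which comes down to $Ver_{\Bbb G_m}=\mathrm{id}$. Your write-up is slightly more explicit about the bookkeeping of $\mathcal{L}^{(p)}\simeq\mathcal{L}^{p}$ and the Frobenius-induced type swap, but the substance is the same as the paper's proof.
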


\begin{proof}
Let $\sigma _{2}$ (resp. $\sigma _{3}$) be the $\mathcal{K}_{N}$-rational
section of $\mathcal{P}_{\mu }$ (resp. $\mathcal{L}$) along $E$, which over $%
\Bbb{C}$ becomes the section $2\pi id\zeta _{2}$ (resp. $2\pi id\zeta _{3}$%
). We have just seen that modulo $\frak{P},$ when we identify $\tilde{E}$
with $E$ (via the covering map $\tau :\overline{Ig}\rightarrow \bar{S}$), $%
\sigma _{3}$ reduces to $a.$ To conclude, we must show that the map 
\begin{equation}
V:\mathcal{P}/\mathcal{P}_{0}=\mathcal{P}_{\mu }\simeq \mathcal{L}^{(p)}
\end{equation}
carries $\sigma _{2}$ to $\sigma _{3}^{(p)}.$ This will map, under $\mathcal{%
L}^{(p)}\simeq \mathcal{L}^{p},$ to $a^{p}.$ Along $E$ the line bundles $%
\mathcal{P}_{\mu }$ and $\mathcal{L}$ are just the $\Sigma $- and $\bar{%
\Sigma}$-parts of the cotangent space at the origin of the torus $\mathcal{O}%
_{\mathcal{K}}\otimes \Bbb{G}_{m},$ and $\sigma _{2}$ and $\sigma _{3}$ are
the sections 
\begin{equation}
\sigma _{2}=e_{\Sigma }\cdot (1\otimes dT/T),\,\,\sigma _{3}=e_{\bar{\Sigma}%
}\cdot (1\otimes dT/T).
\end{equation}
Since in characteristic $p,$ $V=Ver^{*}:\omega _{\Bbb{G}_{m}}\rightarrow
\omega _{\Bbb{G}_{m}}^{(p)}$ maps $dT/T$ to $(dT/T)^{(p)},$ for the $%
\mathcal{O}_{\mathcal{K}}$-torus, $V(\sigma _{2})=\sigma _{3}^{(p)}$, and we
are done.
\end{proof}

To prove part (ii) of the main theorem we argue as follows. Let $g=f/a^{k}$
be the function on $\overline{Ig}_{\mu }$ obtained by trivializing the line
bundle $\mathcal{L}.$ We have to study the FJ expansion along $\tilde{E}$ of 
$\psi (dg)/a^{p+1},$ where $\psi $ is the map defined in (\ref{kappa}). For
that purpose we may restrict to a formal neighborhood of $\tilde{E}$. This
formal neighborhood is isomorphic, under the covering map $\tau :$ $%
\overline{Ig}_{\mu }\rightarrow \bar{S}_{\mu },$ to the formal neighborhood $%
\widehat{S}$ of $E$ in $S.$ We may therefore regard $dg$ as an element of $%
\Omega _{\widehat{S}}^{1}.$ Now 
\begin{equation}
\psi :\Omega _{\widehat{S}}^{1}\rightarrow \mathcal{P}_{\mu }\otimes 
\mathcal{L}
\end{equation}
is a homomorphism of $\mathcal{O}_{\widehat{S}}$-modules defined over $R_{N}$
so, having restricted to $\widehat{S}$, we may study the effect of $\psi $
on FJ expansions by embedding $\widehat{S}_{\Bbb{C}}$ in a tubular
neighborhood $\bar{S}(\varepsilon )$ of $E$ and using complex analytic
Fourier-Jacobi expansions. We are thus reduced to a complex-analytic
computation, near the standard cusp at infinity.

Let 
\begin{equation}
g(z,u)=\sum_{m=0}^{\infty }\theta _{m}(u)q^{m}
\end{equation}
where $q=e^{2\pi iz/M}$ and $\theta _{m}$ is a theta function, so that $%
\theta _{m}(u)q^{m}$ is a section of $\mathcal{N}^{m}$ along $E$ (now over $%
\Bbb{C}$). Then 
\begin{equation}
dg=2\pi iM^{-1}\sum_{m=0}^{\infty }m\theta _{m}(u)q^{m}dz+\sum_{m=0}^{\infty
}\theta _{m}^{\prime }(u)q^{m}du.
\end{equation}
According to Corollary \ref{complex kappa}, $\psi (du)=0,$ and $\psi
(dz)=2\pi id\zeta _{2}\otimes d\zeta _{3}.$ It follows that 
\begin{equation}
\psi (dg)=M^{-1}\sum_{m=0}^{\infty }m\theta _{m}(u)q^{m}\cdot 2\pi id\zeta
_{2}\otimes 2\pi id\zeta _{3}.
\end{equation}
Recalling that in characteristic $p,$ $2\pi id\zeta _{2}\otimes 2\pi id\zeta
_{3}$ reduced to $a^{p+1},$ the proof of part (ii) of the theorem is now
complete. For the convenience of the reader we summarize the transitions
between complex and $p$-adic maps in the following diagram: 
\begin{equation}
\begin{array}{lllllll}
/\text{\thinspace }\kappa &  & \Omega _{\bar{S}_{\mu }/\kappa %
}^{1} & \overset{KS(\Sigma )^{-1}}{\rightarrow } & \mathcal{P}\otimes 
\mathcal{L} &  &  \\ 
&  & \cap &  & \downarrow \mod \mathcal{P}_{0} &  &  \\ 
/\kappa &  & \Omega _{\widehat{S}/\kappa }^{1} & \overset{\psi 
}{\rightarrow } & \mathcal{P}_{\mu }\otimes \mathcal{L} & \overset{V\otimes 1%
}{\simeq } & \mathcal{L}^{p+1} \\ 
&  & \uparrow \mod \frak{P} &  & \uparrow &  &  \\ 
/R_{N} &  & \Omega _{\widehat{S}/R_{N}}^{1} & \overset{\psi }{\rightarrow }
& \mathcal{P}_{\mu }\otimes \mathcal{L} &  &  \\ 
&  & \downarrow \otimes _{R_{N}}\Bbb{C} &  & \downarrow &  &  \\ 
/\Bbb{C} &  & \Omega _{\widehat{S}/\Bbb{C}}^{1} & \overset{\psi }{%
\rightarrow } & \mathcal{P}_{\mu }\otimes \mathcal{L} &  &  \\ 
&  & \cup &  & \uparrow \mod \mathcal{P}_{0} &  &  \\ 
/\Bbb{C} &  & \Omega _{\bar{S}(\varepsilon )/\Bbb{C}}^{1} & \overset{%
KS(\Sigma )_{an}^{-1}}{\rightarrow } & \mathcal{P}\otimes \mathcal{L} &  & 
\end{array}
.
\end{equation}

We next turn to part (i).

\subsection{A study of the theta operator along the supersingular locus\label%
{Theta on ss}}

\subsubsection{De Rham cohomology in characteristic $p$}

We continue to consider the Picard surface $S$ over $\kappa$ and
recall some facts about de Rham cohomology in characteristic $p$. Let $%
U=Spec(R)\hookrightarrow S$ be a closed point $s_{0}$ ($R= \kappa =$ $%
\mathcal{O}_{S,s_{0}}/\frak{m}_{S,s_{0}}$), a nilpotent thickening of a
closed point, or an affine open subset of $S.$ We consider the restriction
of the universal abelian scheme to $R$ and denote it by $A/R.$ Let $%
A^{(p)}=R\otimes _{\phi ,R}A$ be its base change with respect to the map $%
\phi (x)=x^{p}.$ Let 
\begin{equation}
D=H_{dR}^{1}(A/R),
\end{equation}
a locally free $R$-module of rank $6.$ The de Rham cohomology of $A^{(p)}$
is 
\begin{equation}
D^{(p)}=R\otimes _{\phi ,R}D.
\end{equation}
The $R$-linear Frobenius and Verschiebung morphisms $Frob:A\rightarrow
A^{(p)},$ $Ver:A^{(p)}\rightarrow A$ induce (by pull-back) linear maps 
\begin{equation}
F:D^{(p)}\rightarrow D,\,\,V:D\rightarrow D^{(p)}.
\end{equation}
Both $F$ and $V$ are everywhere of rank 3, which implies that their kernel
and image are locally free direct summands. Moreover, $\text{Im} F=\ker V$
and $\text{Im} V=\ker F=\omega _{A^{(p)}/R}.$ The maps $F$ and $V$ preserve
the types $\Sigma ,\bar{\Sigma},$ but note that $D^{(p)}(\Sigma )=D(\bar{%
\Sigma})^{(p)}$ etc.

The principal polarization on $A$ induces one on $A^{(p)}$, and these
polarizations induce symplectic forms 
\begin{equation}
\left\langle ,\right\rangle :D\times D\rightarrow R,\,\,\left\langle
,\right\rangle ^{(p)}:D^{(p)}\times D^{(p)}\rightarrow R
\end{equation}
where the second form is just the base-change of the first. For $x\in
D^{(p)},y\in D$ we have 
\begin{equation}
\left\langle Fx,y\right\rangle =\left\langle x,Vy\right\rangle ^{(p)}.
\label{FVduality}
\end{equation}
In addition, for $a\in \mathcal{O}_{\mathcal{K}}$ 
\begin{equation}
\left\langle \iota (a)x,y\right\rangle =\left\langle x,\iota (\bar{a}%
)y\right\rangle .
\end{equation}
As $VF=FV=0,$ the first relation implies that $\text{Im} F$ and $\text{Im} V$
are isotropic subspaces. So is $\omega _{A/R}.$

The Gauss-Manin connection is an integrable connection 
\begin{equation}
\nabla :D\rightarrow \Omega _{R}^{1}\otimes D.
\end{equation}
It is a priori defined (e.g. in [Ka-O]) when $R$ is smooth over $\kappa 
$, but we can define it by base change also when $R$ is a nilpotent
thickening of a point of $S$ (see [Kob], where $R$ is a local Artinian ring).

We shall need to deal only with the first infinitesimal neighborhood of a
point, $R=\mathcal{O}_{S,s_{0}}/\frak{m}_{S,s_{0}}^{2}.$ In this case, $D$
has a basis of horizontal sections. Indeed, $R=\kappa %
[u,v]/(u^{2},uv,v^{2})$ where $u$ and $v$ are local parameters at $s_{0},$
and 
\begin{equation*}
\Omega _{R}^{1}=(Rdu+Rdv)/\left\langle udu,\,vdv,\,udv+vdu\right\rangle
\end{equation*}
($p$ is odd). If $x\in D$ and 
\begin{equation}
\nabla x=du\otimes x_{1}+dv\otimes x_{2}
\end{equation}
then $\tilde{x}=x-ux_{1}-vx_{2}$ is horizontal, so the horizontal sections
span $D$ over $R$ by Nakayama's lemma. It follows that if $D_{0}=D^{\nabla }$
is the space of hoizontal sections, 
\begin{equation}
R\otimes _{\kappa }D_{0}=D,
\end{equation}
$\nabla =d\otimes 1$ and we can idnetify $D_{0}=H_{dR}^{1}(A_{s_{0}}/
\kappa ),$ i.e. every de Rham class at $s_{0}$ has a \emph{unique}
extension to a horizontal section $x\in H_{dR}^{1}(A/R).$

There is a similar connection on $D^{(p)}.$ The isogenies $Frob$ and $Ver,$
like any isogeny, take horizontal sections with respect to the Gauss-Manin
connection to horizontal sections, e.g. if $x\in D$ and $\nabla x=0$ then $%
Vx\in D^{(p)}$ satisfies $\nabla (Vx)=0.$

The pairing $\left\langle ,\right\rangle $ is horizontal for $\nabla ,$ i.e. 
\begin{equation}
d\left\langle x,y\right\rangle =\left\langle \nabla x,y\right\rangle
+\left\langle x,\nabla y\right\rangle .
\end{equation}

\begin{remark}
In the theory of Dieudonn\'{e} modules one works over a perfect base. It is
then customary to identify $D$ with $D^{(p)}$ via $x\leftrightarrow 1\otimes
x.$ This identification is only $\sigma $-linear where $\sigma =\phi $, now
viewed as an \emph{automorphism} of $R.$ The operator $F$ becomes $\sigma $%
-linear, $V$ becomes $\sigma ^{-1}$-linear and (\ref{FVduality}) reads $%
\left\langle Fx,y\right\rangle =\left\langle x,Vy\right\rangle ^{\sigma }.$
With this convention $F$ and $V$ switch types, rather than preserve them.
\end{remark}

\subsubsection{The Dieudonn\'{e} module at a \emph{gss} point}

Assume from now on that $s_{0}\in Z^{\prime }=S_{gss}$ is a closed point of
the general supersingular locus. We write $D_{0}$ for $H_{dR}^{1}(A_{s_{0}}/%
\kappa ).$

\begin{lemma}
\label{Braid}There exists a basis $e_{1},e_{2},f_{3},f_{1},f_{2},e_{3}$ of $%
D_{0}$ with the following properties. Denote by $e_{1}^{(p)}=1\otimes
e_{1}\in D_{0}^{(p)}$ etc.

(i) $\mathcal{O}_{\mathcal{K}}$ acts on the $e_{i}$ via $\Sigma $ and on the 
$f_{i}$ via $\bar{\Sigma}$ (hence it acts on the $e_{i}^{(p)}$ via $\bar{%
\Sigma}$ and on the $f_{i}^{(p)}$ via $\Sigma ).$

(ii) The symplectic pairing satisfies 
\begin{equation}
\left\langle e_{i},f_{j}\right\rangle =-\left\langle
f_{j},e_{i}\right\rangle =\delta _{ij},\,\,\,\left\langle
e_{i},e_{j}\right\rangle =\left\langle f_{i},f_{j}\right\rangle =0.
\end{equation}

(iii) The vectors $e_{1},e_{2},f_{3}$ form a basis for the cotangent space $%
\omega _{A_{0}/\kappa }.$ Hence $e_{1}$ and $e_{2}$ span $\mathcal{P}$
and $f_{3}$ spans $\mathcal{L}.$

(iv) $\ker (V)$ is spanned by $e_{1},f_{2},e_{3}.$ Hence $\mathcal{P}_{0}=%
\mathcal{P}\cap \ker (V)$ is spanned by $e_{1}.$

(v) $Ve_{2}=f_{3}^{(p)},\,Vf_{3}=e_{1}^{(p)},\,Vf_{1}=e_{2}^{(p)}.$

(vi) $Ff_{1}^{(p)}=-e_{3},\,Ff_{2}^{(p)}=-e_{1},Fe_{3}^{(p)}=-f_{2}.$
\end{lemma}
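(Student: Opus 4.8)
The strategy is to produce the basis by starting from the structure of the $p$-divisible group $\mathfrak{G}^3$ attached to a general supersingular (but not superspecial) point and carefully tracking the $\mathcal{O}_\mathcal{K}$-action, the symplectic form, and the $F$, $V$ operators simultaneously. I would first work purely on the Dieudonn\'e side. At $s_0\in Z'=S_{gss}$, we know from \thmref{Vollaard} and the $p$-divisible group description that $\mathcal{A}_{s_0}(p)$ is isogenous, but not isomorphic, to $\mathfrak{G}^3$, and from Section~\ref{p-div} that Dieudonn\'e theory gives a free $\kappa$-module $D_0=H^1_{dR}(A_{s_0}/\kappa)$ of rank $6$ with $F\colon D_0^{(p)}\to D_0$ and $V\colon D_0\to D_0^{(p)}$ each of rank $3$, satisfying $\operatorname{Im}F=\ker V$, $\operatorname{Im}V=\ker F=\omega_{A^{(p)}}$, $FV=VF=0$, together with the $\mathcal{O}_\mathcal{K}$-action splitting everything into $\Sigma$- and $\bar\Sigma$-parts, and a perfect alternating pairing with $\langle Fx,y\rangle=\langle x,Vy\rangle^{(p)}$ and $\langle\iota(a)x,y\rangle=\langle x,\iota(\bar a)y\rangle$.

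The key combinatorial input is that at a $gss$ point the composite $V^2$ (suitably $\sigma$-twisted) acts on the line bundle $\mathcal{L}$ with a one-dimensional kernel — this is exactly the content of \thmref{Hasse invariant}, that the Hasse invariant $h_{\bar\Sigma}=V_\mathcal{P}^{(p)}\circ V_\mathcal{L}$ vanishes to order one along $S_{ss}$. Dieudonn\'e-theoretically this forces the $F,V$-action on $D_0$ to have a single ``braid'' or ``chain'' structure of length $6$ rather than splitting into two $3$-cycles (the latter being the superspecial case). Concretely: choose $f_3$ to span $\mathcal{L}=\omega_{A_0}(\bar\Sigma)$. Since $\mathcal{L}^\vee$ is not in the kernel of the relevant power of $V$, I can set $e_1^{(p)}:=Vf_3$ (nonzero in $D_0^{(p)}(\bar\Sigma)$, hence $e_1\in D_0(\Sigma)$), then chase: $e_1\in\omega_{A_0}$ because $\operatorname{Im}V=\omega_{A^{(p)}}$ — wait, one must be careful with the $(p)$-twist, so I would instead define $e_1$ so that it spans $\mathcal{P}_0=\ker V_\mathcal{P}$, which is rank $1$ outside $S_{ssp}$ by the Proposition in Section on extending the filtration. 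Then pick $e_2$ completing a basis of $\mathcal{P}=\omega_{A_0}(\Sigma)$, normalized so that $Ve_2=f_3^{(p)}$ (possible since $V_\mathcal{P}$ has rank $1$ with image spanning $\mathcal{L}^{(p)}$). Continue the chain: $f_1$ is chosen in $D_0(\bar\Sigma)$ with $Vf_1=e_2^{(p)}$; this uses that $V\colon D_0(\bar\Sigma)\to D_0^{(p)}(\bar\Sigma)=D_0(\Sigma)^{(p)}$ has image a line not equal to $\langle f_3^{(p)}\rangle$. The remaining two basis vectors $f_2,e_3$ are then pinned down by the duality $\langle Fx,y\rangle=\langle x,Vy\rangle^{(p)}$: define $e_3,f_2$ via $Ff_1^{(p)}=-e_3$, $Ff_2^{(p)}=-e_1$, $Fe_3^{(p)}=-f_2$, and check these are forced and consistent with $\ker F=\omega_{A^{(p)}}$.

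Having produced the six vectors, the remaining work is verification: (i) the type assignments are automatic from how each vector was extracted from a $\Sigma$- or $\bar\Sigma$-isotypic piece; (iii) and (iv) hold by construction ($e_1,e_2,f_3\in\omega_{A_0}$ and $\ker V$ spanned by the three vectors killed by $V$, namely $e_1,f_2,e_3$ — note $Ve_1=0$ since $e_1$ spans $\mathcal{P}_0=\ker V_\mathcal{P}$, and $Vf_2=Ve_3=0$ follows from $VF=0$ applied to the relations in (vi)); (v) and (vi) are the normalizations just made. The one genuinely substantive check is (ii), the symplectic relations: I would verify $\langle e_i,f_j\rangle=\delta_{ij}$ and the vanishing of $\langle e_i,e_j\rangle$, $\langle f_i,f_j\rangle$ by combining (a) $\omega_{A_0}$ and $\operatorname{Im}F=\operatorname{Im}V$ are isotropic, (b) the $\mathcal{O}_\mathcal{K}$-compatibility $\langle\iota(a)x,y\rangle=\langle x,\iota(\bar a)y\rangle$ forcing $\langle e_i,e_j\rangle=\langle f_i,f_j\rangle=0$ (same-type pairings vanish since $e_i,e_j$ both of type $\Sigma$), and (c) the $FV$-duality $\langle Fx,y\rangle=\langle x,Vy\rangle^{(p)}$ propagating a single normalization $\langle e_2,f_1\rangle=1$ (say) along the chain to all the other pairings; perfectness of the form then shows the normalization can be achieved.

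\textbf{Main obstacle.} The delicate point is showing the chain structure is forced — i.e. that one really can choose $e_2$ with $Ve_2=f_3^{(p)}$ \emph{and} $f_1$ with $Vf_1=e_2^{(p)}$ \emph{and} the $F$-relations close up consistently, rather than the module breaking into two length-$3$ pieces. This is where one must invoke that $s_0\in S_{gss}$ and not $S_{ssp}$: the rank-$1$ statements for $V_\mathcal{P}$, $V_\mathcal{L}$ away from $S_{ssp}$, together with the \emph{simple} (not double) zero of $h_{\bar\Sigma}$, are precisely what rule out the decomposed case. I would make this rigorous either by a direct dimension count of the images of $F,V$ on each isotypic piece, or by citing [dS-G] where (as the text says) ``this requires a little computation with Dieudonn\'e modules'' has presumably been carried out; in the write-up I expect to reduce to a short explicit linear-algebra argument over $\kappa$ on the $6$-dimensional module with its two-step $\mathbb{Z}/2$-grading.
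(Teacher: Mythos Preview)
Your approach is correct but takes a genuinely different route from the paper. The paper's proof is a one-sentence citation: this Dieudonn\'e module is, up to notation, the ``braid of length $3$'' that B\"ultel and Wedhorn denote $\bar{B}(3)$, and their Proposition~3.6 gives the complete classification --- $\mu$-ordinary corresponds to $\bar{B}(2)\oplus\bar{S}$, gss to $\bar{B}(3)$, and superspecial to $\bar{B}(1)\oplus\bar{S}^2$. No construction is carried out; one simply reads off the basis from the standard form of $\bar{B}(3)$.

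You instead rebuild this special case of the classification by hand, chasing the $V$-chain through the isotypic pieces. This is viable and arguably more self-contained. Two remarks. First, your mid-course hesitation between ``$e_1^{(p)}:=Vf_3$'' and ``$e_1$ spans $\mathcal{P}_0=\ker V_{\mathcal{P}}$'' is unnecessary: at a gss point these coincide (up to scalar) precisely because $h_{\bar\Sigma}=V_{\mathcal{P}}^{(p)}\circ V_{\mathcal{L}}$ vanishes there, so $V_{\mathcal{L}}(\mathcal{L})\subset\mathcal{P}_0^{(p)}$. You should say this rather than switch strategies. Second, in your identification of the main obstacle you invoke the \emph{simple} zero of $h_{\bar\Sigma}$, but that is a statement about the family over $S$, not about the fibre at $s_0$; what you actually need pointwise is only that $V_{\mathcal{P}}$ and $V_{\mathcal{L}}$ each have rank exactly $1$ at $s_0$ (this rules out superspecial) together with $h_{\bar\Sigma}(s_0)=0$ (this rules out $\mu$-ordinary). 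Those two facts pin down the chain shape over $\kappa$ by an elementary dimension count on the $\Sigma$- and $\bar\Sigma$-pieces of $\ker V$ and $\operatorname{Im}V$, and then the $FV$-duality $\langle Fx,y\rangle=\langle x,Vy\rangle^{(p)}$ together with perfectness of the pairing lets you normalise the remaining scalars. The citation approach buys brevity and places the lemma in its natural conceptual home; your approach buys independence from [Bu-We] at the cost of a page of linear algebra.
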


\begin{proof}
Up to a slight change of notation, this is the unitary Dieudonn\'{e} module
which B\"{u}ltel and Wedhorn call a ``braid of length 3'' and denote by $%
\bar{B}(3),$ cf [Bu-We] (3.2). The classification in loc. cit. Proposition
3.6 shows that the Dieudonn\'{e} module of a $\mu $-ordinary abelian variety
is isomorphic to $\bar{B}(2)\oplus \bar{S},$ that of a gss abelian variety
is isomorphic to $\bar{B}(3)$ and in the superspecial case we get $\bar{B}%
(1)\oplus \bar{S}^{2}.$
\end{proof}

\subsubsection{Infinitesimal deformations\label{PD deformation}}

Let $\mathcal{O}_{S,s_{0}}$ be the local ring of $S$ at $s_{0},$ $\frak{m}$
its maximal ideal, and $R=\mathcal{O}_{S,s_{0}}/\frak{m}^{2}.$ This $R$ is a
truncated polynomial ring in two variables, isomorphic to $\kappa %
[u,v]/(u^{2},uv,v^{2}).$

As remarked above, the de Rham cohomology $D=H_{dR}^{1}(A/R)$ has a basis of
horizontal sections and we may identify $D^{\nabla }$ with $D_{0}$ and $D$
with $R\otimes _{\kappa }D_{0}.$

Grothendieck tells us that $A/R$ is completely determined by $A_{0}$ and by
the Hodge filtration $\omega _{A/R}\subset D=R\otimes _{\kappa }D_{0}.$
Since $A$ \emph{is} the universal infinitesimal deformation of $A_{0}$, we
may choose the coordinates $u$ and $v$ so that 
\begin{equation}
\mathcal{P}=Span_{R}\{e_{1}+ue_{3},e_{2}+ve_{3}\}.
\end{equation}
The fact that $\omega _{A/R}$ is isotropic implies then that 
\begin{equation}
\mathcal{L}=Span_{R}\{f_{3}-uf_{1}-vf_{2}\}.
\end{equation}

Consider the abelian scheme $A^{(p)}.$ It is \emph{not} the universal
deformation of $A_{0}^{(p)}$ over $R.$ In fact, the map $\phi :R\rightarrow
R $ factors as 
\begin{equation}
R\overset{\pi }{\rightarrow }\kappa \overset{\phi }{\rightarrow }
\kappa \overset{i}{\rightarrow }R,
\end{equation}
and therefore $A^{(p)},$ \emph{unlike} $A,$ is constant: $%
A^{(p)}=Spec(R)\times _{Spec(\kappa )}A_{0}^{(p)}.$ As with $D,$ $%
D^{(p)}=R\otimes _{\Bbb{\kappa }}D_{0}^{(p)},$ $\nabla =d\otimes 1,$ but
this time the basis of horizontal sections can be obtained also from the
trivalization of $A^{(p)},$ and $\omega
_{A^{(p)}/R}=Span_{R}\{e_{1}^{(p)},e_{2}^{(p)},f_{3}^{(p)}\}.$

Since $V$ and $F$ preserve horizontality, $e_{1},f_{2},e_{3}$ span $\ker (V)$
over $R$ in $D,$ and the relations in \emph{(v)} and \emph{(vi)} of Lemma 
\ref{Braid} continue to hold. Indeed, the matrix of $V$ in the basis at $%
s_{0}$ prescribed by that lemma, continues to represent $V$ over $Spec(R)$
by ``horizontal continuation''. The matrix of $F$ is then derived from the
relation (\ref{FVduality}).

The Hodge filtration nevertheless varies, so we conclude that 
\begin{equation}
\mathcal{P}_{0}=\mathcal{P}\cap \ker (V)=Span_{R}\{e_{1}+ue_{3}\}.
\end{equation}
The condition $V(\mathcal{L})=\mathcal{P}_{0}^{(p)},$ which is the
``equation'' of the closed subscheme $Z^{\prime }\cap Spec(R)$ (see Theorem 
\ref{Hasse invariant}) means 
\begin{equation}
V(f_{3}-uf_{1}-vf_{2})=e_{1}^{(p)}-ue_{2}^{(p)}\in R\cdot e_{1}^{(p)}
\end{equation}
and this holds if and only if $u=0.$ We have proved the following lemma.

\begin{lemma}
\label{ssg_equation}Let $s_{0}\in S_{gss}$ and the notation be as above.
Then the closed subscheme $S_{gss}\cap Spec(R)$ is given by the equation $%
u=0.$
\end{lemma}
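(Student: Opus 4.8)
The plan is to realize the local ideal of $S_{ss}$ at $s_0$ through the Verschiebung on de Rham cohomology and then to compute it explicitly on the first-order deformation $Spec(R)$, using the bases and the Hodge filtration fixed in Lemma~\ref{Braid} and \S\ref{PD deformation}.

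First I would pin down \emph{which} equation is to be solved. By the propositions on the Verschiebung maps recalled just above, away from $S_{ssp}$ the composite
\[
\bar V_{\mathcal L}\colon\mathcal L\xrightarrow{\ V_{\mathcal L}\ }\mathcal P^{(p)}\twoheadrightarrow\mathcal P^{(p)}/\mathcal P_0^{(p)}
\]
is a morphism of line bundles which is an isomorphism exactly over $\bar S_\mu$, and $V_{\mathcal P}^{(p)}$ kills $\mathcal P_0^{(p)}$ and induces an isomorphism $\mathcal P^{(p)}/\mathcal P_0^{(p)}=\mathcal P_\mu^{(p)}\xrightarrow{\ \sim\ }\mathcal L^{(p^2)}$ there. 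Consequently $h_{\bar\Sigma}=V_{\mathcal P}^{(p)}\circ V_{\mathcal L}$ and $\bar V_{\mathcal L}$ have the same divisor in a neighbourhood of $s_0$, and by Theorem~\ref{Hasse invariant} that divisor is $S_{ss}$ with its reduced structure; since $s_0\notin S_{ssp}$, this coincides with $S_{gss}$ near $s_0$. It therefore suffices to show that $\bar V_{\mathcal L}$, viewed over $Spec(R)$, cuts out the locus $u=0$.

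Next comes the computation over $R=\kappa[u,v]/(u^2,uv,v^2)$. From \S\ref{PD deformation} the Hodge filtration is $\mathcal L=Span_R\{f_3-uf_1-vf_2\}$ and $\mathcal P_0=Span_R\{e_1+ue_3\}$, and the relations (iv)--(vi) of Lemma~\ref{Braid} — in particular $Vf_3=e_1^{(p)}$, $Vf_1=e_2^{(p)}$, $Vf_2=0$ — persist over $R$ by horizontal continuation, with $V$ acting $R$-linearly. Hence
\[
V_{\mathcal L}(f_3-uf_1-vf_2)=e_1^{(p)}-u\,e_2^{(p)}.
\]
Since $p\ge 2$ forces $u^p=v^p=0$ in $R$, the Frobenius twist of the Hodge filtration collapses: $\mathcal P^{(p)}=Span_R\{e_1^{(p)},e_2^{(p)}\}$ and $\mathcal P_0^{(p)}=R\cdot e_1^{(p)}$, so $\mathcal P^{(p)}/\mathcal P_0^{(p)}$ is free of rank one on the class of $e_2^{(p)}$, and under these trivializations $\bar V_{\mathcal L}$ is multiplication by $-u$. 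Its zero locus in $Spec(R)$ is thus the reduced subscheme $\{u=0\}$ (as $-u$ is a unit multiple of $u$), and combined with the first step this gives $S_{gss}\cap Spec(R)=\{u=0\}$.

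The only point requiring care is that first step: transporting the \emph{reduced} structure on $S_{ss}$ provided by Theorem~\ref{Hasse invariant} to the concrete equation actually being solved, i.e. checking that $\bar V_{\mathcal L}$ — rather than some power or scalar multiple of it — computes the local ideal of $S_{ss}$ at $s_0$. Once that matching is in place, what remains is the short linear-algebra computation above, whose only subtlety is the harmless vanishing $u^p=v^p=0$ in $R$ that trivializes the Frobenius pullback of the Hodge filtration.
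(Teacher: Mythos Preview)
Your proof is correct and follows essentially the same route as the paper. The paper's argument (given inline just before the lemma statement) is simply that the condition $V(\mathcal L)\subset\mathcal P_0^{(p)}$ is the defining equation of $Z'\cap Spec(R)$ by Theorem~\ref{Hasse invariant}, and then the identical computation $V(f_3-uf_1-vf_2)=e_1^{(p)}-ue_2^{(p)}\in R\cdot e_1^{(p)}\iff u=0$; you have merely unpacked the appeal to Theorem~\ref{Hasse invariant} by factoring $h_{\bar\Sigma}$ through $\bar V_{\mathcal L}$ and observing that $V_{\mathcal P}^{(p)}$ is an isomorphism on $\mathcal P_\mu^{(p)}$ near $s_0$. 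One small verbal slip: $Spec(R/(u))$ is not itself reduced, so ``the reduced subscheme $\{u=0\}$'' is not quite the right phrase—what you mean, and what matters, is that the defining ideal is $(u)$.
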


\subsubsection{The Kodaira-Spencer isomorphism along the general
supersingular locus}

We keep the assumptions of the previous subsections, and compute what the
Gauss-Manin connection does to $\mathcal{P}_{0}.$ A typical element of $%
\mathcal{P}_{0}$ is $g(e_{1}+ue_{3})$ for some $g\in R.$ Then 
\begin{equation}
\nabla (g(e_{1}+ue_{3}))=dg\otimes (e_{1}+ue_{3})+gdu\otimes e_{3}.
\label{Gauss-Manin}
\end{equation}
Note that when we divide by $\omega _{A/R}$ and project $H_{dR}^{1}(A/R)$ to 
$H^{1}(A,\mathcal{O}),$ $e_{1}+ue_{3}$ dies, and the image $\overline{e_{3}}$
of $e_{3}$ becomes a basis for the line bundle that we called $\mathcal{L}%
^{\vee }(\rho )=H^{1}(A,\mathcal{O})(\Sigma )$. Recall the definition of $%
\psi $ given in (\ref{kappa}), but note that this definition only makes
sense over $Spec(\mathcal{O}_{S,s_{0}})$ or its completion, where $KS(\Sigma
)$ is an isomorphism, and can be inverted.

\begin{proposition}
Let $s_{0}\in Z^{\prime }=S_{gss}.$ Choose local parameters $u$ and $v$ at $%
s_{0}$ so that in $\mathcal{O}_{S,s_{0}}$ the local equation of $Z^{\prime }$
becomes $u=0.$ Then at $s_{0},$ $\psi (du)$ has a zero along $Z^{\prime }.$
\end{proposition}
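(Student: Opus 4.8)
The plan is to reduce the assertion to a pointwise computation at each general supersingular point, carried out with Dieudonn\'e modules over the first infinitesimal neighborhood, and then to upgrade pointwise vanishing to vanishing along the divisor $Z'$ by an argument on the curve $Z'$ itself.

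First I would fix an arbitrary $s_{0}\in Z'=S_{gss}$ and work over $R=\mathcal{O}_{S,s_{0}}/\frak{m}_{S,s_{0}}^{2}$, using the basis $e_{1},e_{2},f_{3},f_{1},f_{2},e_{3}$ of $D=H^{1}_{dR}(\mathcal{A}/R)$ of \lemref{Braid} together with the infinitesimal deformation picture of \secref{PD deformation}: there $\mathcal{P}_{0}=\mathrm{Span}_{R}\{e_{1}+ue_{3}\}$, $\mathcal{L}=\mathrm{Span}_{R}\{f_{3}-uf_{1}-vf_{2}\}$, $\ker V=\mathrm{Span}_{R}\{e_{1},f_{2},e_{3}\}$, and $Z'=\{u=0\}$ by \lemref{ssg_equation}. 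The Kodaira--Spencer isomorphism is computed by $KS(\Sigma)(\theta\otimes\ell)=\langle\nabla\theta,\ell\rangle_{\lambda}$ (the polarization pairing on $D$, which is well defined on $\theta\otimes\ell$ because $\omega_{\mathcal{A}}$ is isotropic), exactly as in the computation near the cusp preceding Corollary~\ref{KS at cusps}. Hence the connection formula $(\ref{Gauss-Manin})$ and the pairing relations of \lemref{Braid}(ii) give
\[
KS(\Sigma)\bigl((e_{1}+ue_{3})\otimes(f_{3}-uf_{1}-vf_{2})\bigr)=\langle du\otimes e_{3},\,f_{3}-uf_{1}-vf_{2}\rangle_{\lambda}=du,
\]
and a parallel computation gives $KS(\Sigma)\bigl((e_{2}+ve_{3})\otimes(f_{3}-uf_{1}-vf_{2})\bigr)=dv$, so $KS(\Sigma)^{-1}$ carries the basis $du,dv$ of $\Omega^{1}_{R}$ to the basis $(e_{1}+ue_{3})\otimes(f_{3}-uf_{1}-vf_{2}),\,(e_{2}+ve_{3})\otimes(f_{3}-uf_{1}-vf_{2})$ of $\mathcal{P}\otimes\mathcal{L}$. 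In particular $KS(\Sigma)^{-1}(du)$ is a generator of $\mathcal{P}_{0}\otimes\mathcal{L}$. Since $e_{1}+ue_{3}\in\ker V=\ker V_{\mathcal{P}}$, applying $V_{\mathcal{P}}\otimes 1$ kills it, so $\psi(du)=0$ in $\mathcal{L}^{p+1}\otimes R$; that is, $\psi(du)\in\frak{m}_{S,s_{0}}^{2}\mathcal{L}^{p+1}$, and in particular $\psi(du)$ vanishes at $s_{0}$.

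Next I would globalize. The map $\psi$ of $(\ref{kappa})$ is defined on all of $S$ (only $KS(\Sigma)^{-1}$ is used, and $KS(\Sigma)$ is an isomorphism over $S$), so restricting it to $Z'$ and precomposing with the conormal inclusion $N^{\vee}_{Z'/S}\hookrightarrow\Omega^{1}_{S}|_{Z'}$ produces a homomorphism of line bundles $\phi\colon N^{\vee}_{Z'/S}\to\mathcal{L}^{p+1}|_{Z'}$ on $Z'$. Near any $s_{0}\in S_{gss}$ a local equation $u$ of $Z'$ restricts to a generator of $N^{\vee}_{Z'/S}$, and $\phi$ applied to it equals $\psi(du)|_{Z'}$, which vanishes at $s_{0}$ by the previous step. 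Thus $\phi$ vanishes at every point of $S_{gss}$; by \thmref{Vollaard} the components of $Z'=S_{ss}$ are integral curves (with Fermat curves as normalizations) once the finite superspecial locus is removed, so $\phi$, vanishing at all but finitely many points of each component, is the zero homomorphism. Equivalently, for $s_{0}\in S_{gss}$ and $u$ a local equation of $Z'$ at $s_{0}$, the section $\psi(du)$ restricts to $0$ on $Z'$, i.e. $\psi(du)\in\mathcal{I}_{Z'}\mathcal{L}^{p+1}$ near $s_{0}$ --- which is the assertion.

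The step I expect to be the real obstacle is precisely this last passage: the first-order Dieudonn\'e computation only yields $\psi(du)\in\frak{m}_{S,s_{0}}^{2}\mathcal{L}^{p+1}$, which does \emph{not} by itself imply divisibility by the local equation of $Z'$ (for instance $v^{2}\in\frak{m}^{2}$ but $v^{2}\notin(u)$); one genuinely needs to spread the first-order vanishing out along $Z'$, which is why it matters that $\psi$ is globally defined on $S$ and that $Z'$ is locally an integral curve. Alternatively one may phrase the globalization as the observation that $\ker\psi=KS(\Sigma)(\mathcal{P}_{0}\otimes\mathcal{L})$ is a \emph{saturated} sub-line-bundle of $\Omega^{1}_{S}$ which, by the pointwise computation, agrees fibrewise with $N^{\vee}_{Z'/S}$ at every point of $S_{gss}$, and two saturated sub-line-bundles of a rank-two bundle on an integral curve agreeing generically coincide.
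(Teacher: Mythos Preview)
Your proof is correct and takes essentially the same approach as the paper: compute over $R=\mathcal{O}_{S,s_0}/\frak{m}^2$ that $KS(\Sigma)$ carries $\mathcal{P}_0\otimes\mathcal{L}$ onto $R\,du$ via the Gauss--Manin formula (\ref{Gauss-Manin}), deduce fiberwise vanishing of $\psi(du)$ at each point of $Z'$, and then use that $Z'$ is reduced to pass from pointwise vanishing to vanishing along the divisor. Your explicit treatment of the globalization step---and the warning that $\psi(du)\in\frak{m}^2$ alone does not imply divisibility by $u$---is more careful than the paper's terse ``all points being alike, it is enough to do it at $s_0$'', but the underlying logic is identical.
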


\begin{proof}
Let $i:Z^{\prime }\hookrightarrow S$ be the locally closed embedding. We
must show that in a suitable Zariski neighborhood of $s_{0},$ where $u=0$ is
the local equation of $Z^{\prime },$ $i^{*}\psi (du)=0.$ It is enough to
show that the image of $\psi (du)$ in the \emph{fiber} at every point $s$ of 
$Z^{\prime }$ near $s_{0},$ vanishes. All points being alike, it is enough
to do it at $s_{0}.$ In other words, we denote by $\psi _{0}$ the map 
\begin{equation}
\psi _{0}:\Omega _{S,s_{0}}^{1}\rightarrow \mathcal{P}_{\mu }\otimes 
\mathcal{L}|_{s_{0}}\simeq \mathcal{L}^{p+1}|_{s_{0}}.
\end{equation}
and show that $\psi _{0}(du)=0.$ We may now work over $Spec(R),$ where $R=%
\mathcal{O}_{S,s_{0}}/\frak{m}^{2}$. It is enough to show that in the
diagram 
\begin{equation}
\begin{array}{lll}
\mathcal{P}_{R}\otimes \mathcal{L}_{R} & \overset{KS(\Sigma )}{\rightarrow }
& \Omega _{R}^{1} \\ 
\downarrow &  & \downarrow \\ 
\mathcal{P}_{s_{0}}\otimes \mathcal{L}_{s_{0}} & \simeq & \Omega
_{S,s_{0}}^{1}
\end{array}
\end{equation}
$KS(\Sigma )$ maps the line sub-bundle $\mathcal{P}_{0,R}\otimes \mathcal{L}%
_{R}$ onto $Rdu.$ Once we have passed to the infinitesimal neighborhood $%
Spec(R)$ we can replace the local parameters $u,v$ by any two formal
parameters for which $u=0$ defines $Z^{\prime }\cap Spec(R).$ We may
therefore assume, in view of Lemma \ref{ssg_equation}, that $u$ and $v$ have
been chosen as in Section \ref{PD deformation}. But then (\ref{Gauss-Manin})
shows that the restriction of $KS(\Sigma )$ to $Z^{\prime },$ i.e. the
homomorphism $i^{*}KS(\Sigma ),$ maps $i^{*}\mathcal{P}_{0}$ onto $%
i^{*}R\cdot du\otimes \overline{e_{3}}$. This concludes the proof.
\end{proof}

\subsubsection{A computation of poles along the supersingular locus}

We are now ready to prove the following.

\begin{proposition}
Let $k\ge 0,$ and let $f\in H^{0}(S,\mathcal{L}^{k})$ be a modular form of
weight $k$ in characteristic $p.$ Then $\Theta (f)\in H^{0}(S,\mathcal{L}%
^{k+p+1})$.
\end{proposition}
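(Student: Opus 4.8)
The plan is to show that $\Theta(f)$, which by its construction is a holomorphic section of $\mathcal{L}^{k+p+1}$ over the $\mu$-ordinary locus $S_{\mu}$ and \emph{a priori} only a meromorphic section over $S$ with possible poles along the supersingular locus $Z=S_{ss}$, in fact has no poles at all. Since $S$ is smooth, hence normal, and the superspecial locus $S_{ssp}$ is a finite set of closed points (codimension $2$), algebraic Hartogs reduces the problem to showing that $\Theta(f)$ has no pole along the one-dimensional locus $Z'=S_{gss}$; equivalently, $\operatorname{ord}_{\Gamma}\Theta(f)\ge 0$ at the generic point $\Gamma$ of each irreducible component of $Z$ (each such generic point lies in $S_{gss}$). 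Because $\tau\colon\overline{Ig}\to\bar S$ is finite flat and totally ramified of index $p^{2}-1$ along $Z$ (\thmref{Igusa}(i),(iii)), one has $\operatorname{ord}_{\tilde\Gamma}\tau^{*}\Theta(f)=(p^{2}-1)\operatorname{ord}_{\Gamma}\Theta(f)$, so it suffices to prove that $\tau^{*}\Theta(f)=a^{k}\psi(\eta_{f})$ extends holomorphically across $\widetilde Z{}'=Ig_{gss}$; this I check in the completed local ring at an arbitrary closed point $\tilde s_{0}\in Ig_{gss}$.

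So fix $s_{0}\in Z'$ and, using \thmref{Igusa}(iii), choose local parameters $u,v$ at $s_{0}$ with $Z'=\{u=0\}$ and $\widehat{\mathcal O}_{Ig,\tilde s_{0}}=\kappa[[w,v]]$, $w^{p^{2}-1}=u$. Two elementary numerical facts are needed. First, since $a^{p^{2}-1}=\tau^{*}h_{\bar\Sigma}$ and $h_{\bar\Sigma}$ vanishes to order exactly one along $Z$ (\thmref{Hasse invariant}), $a$ vanishes to order exactly one in $w$ along $\widetilde Z{}'$, so $g:=\tau^{*}f/a^{k}$ has a pole of order at most $k$ in $w$, say $g=w^{-k}G$ with $G\in\kappa[[w,v]]$. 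Second, the canonical inclusion $\tau^{*}\Omega^{1}_{S}\hookrightarrow\Omega^{1}_{Ig}$ sends $\tau^{*}du\mapsto (p^{2}-1)\,w^{p^{2}-2}\,dw$ (with $p^{2}-1$ a unit in $\kappa$) and $\tau^{*}dv\mapsto dv$. Writing $\eta_{f}=dg=w^{-k-1}(-kG+wG_{w})\,dw+w^{-k}G_{v}\,dv$ and re-expressing it in the basis $\tau^{*}du,\tau^{*}dv$ of rational sections of $\tau^{*}\Omega^{1}_{S}$, one gets $\eta_{f}=A\,\tau^{*}du+B\,\tau^{*}dv$ with $\operatorname{ord}_{w}A\ge -(k+p^{2}-1)$ and $\operatorname{ord}_{w}B\ge -k$.

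Now apply $\psi=(V_{\mathcal P}\otimes 1)\circ KS(\Sigma)^{-1}$ (see \eqref{kappa}), which is a genuine morphism of vector bundles $\Omega^{1}_{S}\to\mathcal{L}^{p+1}$ over all of $S$, since $KS(\Sigma)$ is an isomorphism there. By the preceding proposition, $\psi(du)$ vanishes along $Z'$ — this is the one place the braid-type Dieudonné computation of \secref{Theta on ss} enters, and the vanishing does not depend on the choice of local equation of $Z'$ — so $\psi(\tau^{*}du)=\tau^{*}\psi(du)$ has $\operatorname{ord}_{w}\ge p^{2}-1$; and $\psi(dv)$ is regular at $s_{0}$, so $\psi(\tau^{*}dv)$ has $\operatorname{ord}_{w}\ge 0$. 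Since $\operatorname{ord}_{w}a^{k}=k$, the two terms of
\[
\tau^{*}\Theta(f)=a^{k}\psi(\eta_{f})=a^{k}A\,\psi(\tau^{*}du)+a^{k}B\,\psi(\tau^{*}dv)
\]
have $w$-orders at least $k-(k+p^{2}-1)+(p^{2}-1)=0$ and $k-k+0=0$ respectively. Hence $\tau^{*}\Theta(f)$ is regular at $\tilde s_{0}$; as $\tilde s_{0}\in Ig_{gss}$ was arbitrary, it is regular along $\widetilde Z{}'$, so $\Theta(f)$ is regular along $Z'$, and then, by normality of $S$, extends to a global section of $\mathcal{L}^{k+p+1}$ over $S$ (Koecher's principle then even gives a section over $\bar S$).

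The main obstacle is not in the bookkeeping above but in the input it relies on: the vanishing $\psi(du)|_{Z'}=0$ supplied by the preceding proposition, whose proof rests on the explicit structure of the Dieudonné module at a general supersingular point (the ``braid of length $3$'' of \lemref{Braid}) and the resulting computation of the Gauss--Manin connection on $\mathcal{P}_{0}$. Granting that, the only delicate feature here is that the estimate is exactly tight: the pole of order $k+p^{2}-1$ acquired by the coefficient $A$ is cancelled precisely by the $(p^{2}-1)$-fold zero of $\psi(\tau^{*}du)$ together with the $k$-fold zero of $a^{k}$, leaving no slack — which is why $p+1$, rather than a smaller weight shift, is forced.
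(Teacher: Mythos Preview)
Your proof is correct and follows essentially the same route as the paper: reduce via normality of $S$ to checking regularity along $Z'=S_{gss}$, pass to the completed local ring on $Ig$ with parameters $w,v$ where $w^{p^{2}-1}=u$, bound the $w$-orders of the $du$- and $dv$-coefficients of $dg$, and then use the preceding proposition ($\psi(du)|_{Z'}=0$, hence order $\ge p^{2}-1$ in $w$ after pullback) together with $\operatorname{ord}_{w}a^{k}=k$ to cancel the poles exactly. The paper's argument differs only cosmetically (it expands $g$ as a Laurent series in $w$ rather than writing $g=w^{-k}G$), and your closing remarks correctly isolate both the key input from \lemref{Braid} and the tightness of the estimate.
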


\begin{proof}
A priori, the definition that we have given for $\Theta (f)$ produces a
meromorphic section of $\mathcal{L}^{k+p+1}$ which is holomorphic on the $%
\mu $-ordinary part $S_{\mu }$ but may have a pole along $Z=S_{ss}.$ Since $%
S $ is a non-singular surface, it is enough to show that $\Theta (f)$ does
not have a pole along $Z^{\prime }=S_{gss},$ the non-singular part of the
divisor $Z$. Consider the degree $p^{2}-1$ covering $\tau :Ig\rightarrow S,$
which is finite, \'{e}tale over $S_{\mu }$ and totally ramified along $Z.$
Let $s_{0}\in Z^{\prime }$ and let $\tilde{s}_{0}\in Ig$ be the closed point
above it. Let $u,v$ be formal parameters at $s_{0}$ for which $Z^{\prime }$
is given by $u=0,$ as in Theorem \ref{Igusa}. As explained there we may
choose formal parameters $w,v$ at $\tilde{s}_{0}$ where $w^{p^{2}-1}=u$ (and 
$v$ is the same function $v$ pulled back from $S$ to $Ig$). It follows that
in $\Omega _{Ig}^{1}$ we have 
\begin{equation}
du=-w^{p^{2}-2}dw.
\end{equation}
We now follow the steps of our construction. Dividing $f$ by $a^{k}$ we get
a function $g=f/a^{k}$ on $Ig$ with a pole of order at most $k$ along $%
\tilde{Z},$ the supersingular divisor on $Ig,$ whose local equation is $w=0.$
In $\widehat{\mathcal{O}}_{Ig,\tilde{s}_{0}}$we may write 
\begin{equation}
g=\sum_{l=-k}^{\infty }g_{l}(v)w^{l}.
\end{equation}
Then 
\begin{eqnarray}
dg &=&\sum_{l=-k}^{\infty }lg_{l}(v)w^{l-1}dw+\sum_{l=-k}^{\infty
}w^{l}g_{l}^{\prime }(v)dv  \notag \\
&=&-\sum_{l=-k}^{\infty }lg_{l}w^{l-(p^{2}-1)}du+\sum_{l=-k}^{\infty
}w^{l}g_{l}^{\prime }(v)dv.
\end{eqnarray}
Applying the map $\psi $ (extended $\mathcal{O}_{Ig}$-linearly from $S$ to $%
Ig$), and noting that $\psi (du)$ has a zero along $Z^{\prime },$ hence a
zero of order $p^{2}-1$ along $\tilde{Z}^{\prime },$ we conclude that $\psi
(dg)$ has a pole of order $k$ (at most) along $\tilde{Z}^{\prime }.$ Finally 
$\Theta (f)=a^{k}\cdot \psi (dg)$ becomes holomorphic along $\tilde{Z}%
^{\prime },$ and also descends to $S$. It is therefore a holomorphic section
of $\mathcal{P}_{\mu }\otimes \mathcal{L}^{k+1}\simeq \mathcal{L}^{k+p+1}.$
\end{proof}

It is amusing to compare the reasons for the increase by $p+1$ in the weight
of $\Theta (f)$ for modular curves and for Picard modular suraces. In the
case of modular curves the Kodaira-Spencer isomorphism is responsible for a
shift by 2 in the weight, but the section acquires simple poles at the
supersingular points. One has to multiply it by the Hasse invariant, which
has weight $p-1,$ to make the section holomorphic, hence a total increase by 
$p+1=2+(p-1)$ in the weight. In our case, the map $\psi $ is responsible for
a shift by $p+1$ (the $p$ coming from $\mathcal{P}_{\mu }\simeq \mathcal{L}%
^{p}$), but the section turns out to be holomorphic along the supersingular
locus. See Section \ref{compatability}.

\section{Further results on $\Theta $}

\subsection{Relation to the filtration and theta cycles\label{theta cycles}}

In part (ii) of Theorem \ref{Main Theorem} we have described the way $\Theta 
$ acts on Fourier-Jacobi expansions at the standard cusp. A similar formula
holds at all the other cusps. We deduce from it that modular forms in the
image of $\Theta $ have vanishing FJ coefficients in degrees divisible by $%
p. $ Moreover, for such a form $f\in \text{Im}(\Theta )$, $\Theta ^{p-1}(f)$
and $f$ have the same FJ expansions, and hence the same filtration. Note
also that if $r(f_{1})=r(f_{2})$ then $r(\Theta (f_{1}))=r(\Theta (f_{2})).$
We may therefore define unambiguously 
\begin{equation}
\Theta (r(f))=r(\Theta (f)).
\end{equation}
As we clearly have 
\begin{equation}
\omega (\Theta (f))=\omega (f)+p+1-a(p^{2}-1)
\end{equation}
for some $a\ge 0$ we deduce the following result.

\begin{proposition}
\label{Prop 4.1}Let $f\in M_{k}(N,\Bbb{\kappa })$ be a modular form modulo $%
p,$ and assume that $r(f)\in \text{Im}(\Theta ).$ Then 
\begin{equation}
r(f)=r(\Theta ^{p-1}(f)).
\end{equation}
There exists a unique index $0\le i\le p-2$ such that 
\begin{equation}
\omega (\Theta ^{i+1}(f))=\omega (\Theta ^{i}(f))+p+1-(p^{2}-1).
\end{equation}
For any other $i$ in this range 
\begin{equation}
\omega (\Theta ^{i+1}(f))=\omega (\Theta ^{i}(f))+p+1.
\end{equation}
\end{proposition}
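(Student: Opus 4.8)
The plan is to track how the filtration $\omega$ changes under each application of $\Theta$, using the formula $\omega(\Theta(g)) = \omega(g) + p + 1 - a(g)(p^2-1)$ for some integer $a(g) \ge 0$ (which records how much extra vanishing along $S_{ss}$ the form $\Theta(g)$ acquires beyond being holomorphic of weight $\omega(g)+p+1$). First I would note that since $r(f) \in \mathrm{Im}(\Theta)$, the ``Tate twist'' formula (part (ii) of Theorem \ref{Main Theorem}) applied at all cusps shows that all Fourier-Jacobi coefficients $c_m(f)$ with $p \mid m$ vanish, hence $\Theta^{p-1}(f)$ and $f$ have identical FJ expansions at every cusp (multiplication by $m^{p-1} \equiv 1 \bmod p$ for $p \nmid m$, and $0$ otherwise). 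By the $q$-expansion principle embodied in the proposition preceding the statement (FJ expansions determine $r$), this gives $r(f) = r(\Theta^{p-1}(f))$, the first assertion.

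Next I would set up the telescoping argument for the weights. Write $k_i = \omega(\Theta^i(f))$ for $0 \le i \le p-1$. From the formula above, each step satisfies $k_{i+1} = k_i + (p+1) - a_i(p^2-1)$ with $a_i \ge 0$. Summing from $i=0$ to $i=p-2$ gives
\begin{equation}
k_{p-1} - k_0 = (p-1)(p+1) - \left(\sum_{i=0}^{p-2} a_i\right)(p^2-1) = (p^2-1)\left(1 - \sum_{i=0}^{p-2} a_i\right).
\end{equation}
But $k_{p-1} = k_0$ because $r(\Theta^{p-1}(f)) = r(f)$ forces $\omega(\Theta^{p-1}(f)) = \omega(f)$ (the filtration depends only on $r$). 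Therefore $\sum_{i=0}^{p-2} a_i = 1$, so exactly one index $i$ has $a_i = 1$ and all others have $a_i = 0$; this is precisely the dichotomy in the statement, with the distinguished index satisfying $\omega(\Theta^{i+1}(f)) = \omega(\Theta^i(f)) + p+1 - (p^2-1)$ and every other index satisfying $\omega(\Theta^{i+1}(f)) = \omega(\Theta^i(f)) + p+1$.

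The one point requiring care — and the main obstacle — is justifying the basic formula $\omega(\Theta(g)) = \omega(g) + (p+1) - a(p^2-1)$ with $a \ge 0$ and, in particular, that $\Theta$ is well-defined on the level of $r(g)$, i.e. that $r(g_1) = r(g_2) \implies r(\Theta(g_1)) = r(\Theta(g_2))$. For the latter: if $g_1, g_2$ have the same image in $H^0(Ig_\mu, \mathcal{O})$ after dividing by the appropriate powers of $a$, then $\eta_{g_1} = \eta_{g_2}$ as sections of $\Omega^1$ over $Ig_\mu$, hence $\psi(\eta_{g_1}) = \psi(\eta_{g_2})$, and dividing/multiplying by the same power of $a$ shows the images of $\Theta(g_1)$ and $\Theta(g_2)$ in $H^0(Ig_\mu, \mathcal{O})$ agree — so $\Theta$ descends to a well-defined operator on $r(g)$, and one sets $\Theta(r(g)) := r(\Theta(g))$. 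For the formula itself: taking $g$ of minimal weight $\omega(g)$ representing $r(f)$, Theorem \ref{Main Theorem}(i) gives $\Theta(g) \in H^0(S, \mathcal{L}^{\omega(g)+p+1})$, and $\omega(\Theta(g)) \le \omega(g)+p+1$ by definition of the filtration, with the difference a non-negative multiple of $p^2-1$ by the congruence $\omega(h) \equiv \mathrm{wt}(h) \bmod (p^2-1)$ from the proposition on filtrations; moreover $r(\Theta(g)) = r(\Theta(f))$ by the well-definedness just established, so $\omega(\Theta(f)) = \omega(\Theta(g))$. This closes the argument. A subtlety worth double-checking is the edge behavior at $i = p-2$ versus wrapping around to $i = p-1$, but since $k_{p-1} = k_0$ is the only relation used and the sum is over the $p-1$ steps $i = 0, \dots, p-2$, no wrap-around issue arises.
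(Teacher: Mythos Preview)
Your proof is correct and follows essentially the same route as the paper: the paper records, just before the proposition, that forms in $\mathrm{Im}(\Theta)$ have vanishing FJ coefficients in degrees divisible by $p$, that consequently $\Theta^{p-1}(f)$ and $f$ share FJ expansions and filtration, that $\Theta$ is well-defined on $r$, and that $\omega(\Theta(f))=\omega(f)+p+1-a(p^{2}-1)$ for some $a\ge 0$, then simply says ``we deduce the following result.'' Your telescoping sum $\sum_{i=0}^{p-2}a_i=1$ is exactly the omitted deduction made explicit.
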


This is reminiscent of the ``theta cycles'' for classical (i.e. elliptic)
modular forms modulo $p,$ see [Se], [Ka2] and [Joc]. Recall that if $f$ is a 
$\mod p$ modular form of weight $k$ on $\Gamma _{0}(N)$ with $q$%
-expansion $\sum a_{n}q^{n}$ ($a_{n}\in \Bbb{\bar{F}}_{p}$), then $\theta
(f) $ is a $\mod p$ modular form of weight $k+p+1$ with $q$-expansion $%
\sum na_{n}q^{n}$ (Katz denotes $\theta (f)$ by $A\theta (f)$). One has $%
\omega (\theta (f))<\omega (f)+p+1$ \emph{if and only if} $\omega (f)\equiv 0%
\mod p$. In such a case we say that the filtration ``drops'' and we
have 
\begin{equation}
\omega (\theta (f))=\omega (f)+p+1-a(p-1)
\end{equation}
for some $a>0.$ As a corollary, $\omega (f)$ can never equal $1\mod p$
for an $f\in \text{Im}(\theta ).$ Assume now that $f\in \text{Im}(\theta )$
is a ``low point'' in its ``theta cycle'', namely, $\omega (f)$ is minimal
among all $\omega (\theta ^{i}(f)).$ Then $\omega (\theta ^{i+1}(f))<\omega
(\theta ^{i}(f))+p+1$ for \emph{one} or \emph{two} values of $i\in [0,p-2],$
which are completely determined by $\omega (f)\mod p$ [Joc].

This is not true anymore for Picard modular forms. Not only is the drop in
the theta cycle unique, but the question of when exactly it occurs is
mysterious and deserves further study. We make the following elementary
observation showing that whether a drop in the filtration occurs in passing
from $f$ to $\Theta (f)$ can \emph{not} be determined by $\omega (f)$ modulo 
$p$ alone. Let $f$ and $k$ be as in Proposition \ref{Prop 4.1}.

\begin{enumerate}
\item  If $k\le p^{2}-1$ then $\omega (f)=k.$

\item  If $k<p+1$ then $\omega (\Theta ^{i}(f))=k+i(p+1)$ for $0\le i\le p-2,
$ so the drop occurs at the last step of the theta cycle, i.e. at weight $%
k+(p-2)(p+1),$ which is congruent to $k-2$ modulo $p.$

\item  If $k<p+1$ but $r(f)\notin \text{Im}(\Theta )$ then starting with $%
\Theta (f)$ instead of $f,$ one sees that the drop in the theta cycle of $%
\Theta (f)$ occurs either in passing from $\Theta ^{p-2}(f)$ to $\Theta
^{p-1}(f)$, or in passing from $\Theta ^{p-1}(f)$ to $\Theta ^{p}(f).$
\end{enumerate}

\subsection{Compatibility between theta operators for elliptic and Picard
modular forms\label{compatability}}

\subsubsection{The theta operator for elliptic modular forms}

The theta operator for elliptic modular forms modulo $p$ was introduced by
Serre and Swinnerton-Dyer in terms of $q$-expansions, but its geometric
construction was given by Katz in [Ka1] and [Ka2]. Katz relied on a
canonical splitting of the Hodge filtration over the ordinary locus, but
Gross gave in [Gr], Proposition 5.8, the construction after which we modeled
our $\Theta $.

Let us quickly repeat Gross' construction as outlined in the introduction.
Let $X$ be the open modular curve $X(N)$ over $\Bbb{\bar{F}}_{p}$ ($N\ge
3,\,p\nmid N$) and $I_{ord}$ the Igusa curve of level $p$ lying over $%
X_{ord}=X\backslash X_{ss},$ the ordinary part of $X.$ Let $\bar{X}$ and $%
\bar{I}_{ord}$ be the curves obtained by adjoing the cusps to $X$ and $%
I_{ord}$ respectively. Let $\mathcal{L}=\omega _{E/X}$ be the cotangent
bundle of the universal elliptic curve, extended over the cusps as usual.
Classical modular forms of weight $k$ and level $N$ are sections of $%
\mathcal{L}^{k}$ over $\bar{X}.$ Let $a$ be the tautological nowehere
vanishing section of $\mathcal{L}$ over $\bar{I}_{ord}.$ Given a modular
form $f$ of weight $k,$ we consider $r(f)=\tau ^{*}f/a^{k}$ where $\tau :%
\bar{I}_{ord}\rightarrow \bar{X}$ is the covering map, and apply the inverse
of the Kodaira-Spencer isomorphism $KS:\mathcal{L}^{2}\rightarrow \Omega
_{I_{ord}}^{1}$ to get a section $KS^{-1}(dr(f))$ of $\mathcal{L}^{2}$ over $%
\bar{I}_{ord}.$ When multiplied by $a^{k}$ it descends to $\bar{X}_{ord},$
and when this is multiplied further by $h=a^{p-1},$ the Hasse invariant for
elliptic modular forms, it extends holomorphically over $X_{ss}$ to an
element 
\begin{equation}
\theta (f)=a^{k+p-1}KS^{-1}(dr(f))\in H^{0}(\bar{X},\mathcal{L}^{k+p+1}).
\end{equation}

\subsubsection{An embedding of a modular curve in $\bar{S}$}

To illustrate our idea, and to simplify the computations, we assume that $%
N=1 $ and $d_{\mathcal{K}}\equiv 1\mod 4,$ so that $D=D_{\mathcal{K}%
}=d_{\mathcal{K}}.$ This conflicts of course with our running hypothesis $%
N\ge 3,$ but for the current section does not matter much. We shall treat
only one special embedding of the modular curve $\bar{X}=X_{0}(D)$ into $%
\bar{S}$ (there are many more).

Embed $SL_{2}(\Bbb{R})=SU(1,1)$ in $G_{\infty }^{\prime }$ via 
\begin{equation}
\left( 
\begin{array}{ll}
a & b \\ 
c & d
\end{array}
\right) \mapsto \left( 
\begin{array}{lll}
a &  & b \\ 
& 1 &  \\ 
c &  & d
\end{array}
\right) .
\end{equation}
This embedding induces an embedding of symmetric spaces $\frak{H}%
\hookrightarrow \frak{X},$ $z\mapsto \,^{t}(z,0).$ One can easily compute
that the intersection of $\Gamma ,$ the stabilizer of the lattice $L_{0}$ in 
$G_{\infty }^{\prime },$ with $SL_{2}(\Bbb{R}),$ is the subgroup of $SL_{2}(%
\Bbb{Z})$ given by 
\begin{equation}
\Gamma ^{0}(D)=\left\{ \left( 
\begin{array}{ll}
a & b \\ 
c & d
\end{array}
\right) :\,D|b\right\} .
\end{equation}
Let $E_{0}=\Bbb{C}/\mathcal{O}_{\mathcal{K}}$, endowed with the canonical
principal polarization and $CM$ type $\Sigma .$ For $z\in \frak{H}$ let $%
\Lambda _{z}=\Bbb{Z}+\Bbb{Z}z$ and $E_{z}=\Bbb{C}/\Lambda _{z}.$ Let $M_{z}$
be the cyclic subgroup of order $D$ of $E_{z}$ generated by $D^{-1}z\mod%
\Lambda _{z}.$ Using the model (\ref{moving lattice model}) of the abelian
variety $A_{z}$ associated to the point $^{t}(z,0)\in \frak{X},$ we compute
that 
\begin{equation}
A_{z}\simeq E_{0}\times (\mathcal{O}_{\mathcal{K}}\otimes E_{z})/(\delta _{%
\mathcal{K}}\otimes M_{z})
\end{equation}
with the obvious $\mathcal{O}_{\mathcal{K}}$-structure. The group $\delta _{%
\mathcal{K}}\otimes M_{z}$ is a cyclic subgroup of $\mathcal{O}_{\mathcal{K}%
}\otimes E_{z}$ of order $D,$ generated by $\delta _{\mathcal{K}%
}^{-1}\otimes z\mod \mathcal{O}_{\mathcal{K}}\otimes \Lambda _{z}.$ The
principal polarization on $A_{z}$ provided by the complex uniformization is
the product of the canonical polarization of $E_{0}$ and the principal
polarization of $\mathcal{O}_{\mathcal{K}}\otimes E_{z}/\delta _{\mathcal{K}%
}\otimes M_{z}$ obtained by descending the polarization 
\begin{equation}
\lambda _{can}:\mathcal{O}_{\mathcal{K}}\otimes E_{z}\rightarrow \delta _{%
\mathcal{K}}^{-1}\otimes E_{z}=(\mathcal{O}_{\mathcal{K}}\otimes E_{z})^{t}
\end{equation}
of degree $D^{2},$ modulo the maximal isotropic subgroup $\delta _{\mathcal{K%
}}\otimes M_{z}$ of $\ker (\lambda _{can}).$

It is now clear that over any $R_{0}$-algebra $R$ we have the same moduli
theoretic construction, sending a pair $(E,M)$ where $M$ is a cyclic
subgroup of degree $D$ to $A(E,M),$ with $\mathcal{O}_{\mathcal{K}}$
structure and polarization given by the same formulae. This gives a modular
embedding $j:X\rightarrow S$ which is generically injective. To make this
precise at the level of schemes (rather than stacks) one would have to add a
level $N$ structure and replace the base ring $R_{0}$ by $R_{N}.$

\subsubsection{Comparison of the two theta operators}

From now on we work over $\Bbb{\bar{F}}_{p}.$ The modular interpretation of
the embedding $j:\bar{X}\rightarrow \bar{S}$ allows us to complete it to a
diagram 
\begin{equation}
\begin{array}{lll}
\bar{I}_{ord} & \overset{j}{\rightarrow } & \overline{Ig}_{\mu } \\ 
\tau \downarrow &  & \downarrow \tau \\ 
\bar{X}_{ord} & \overset{j}{\rightarrow } & \bar{S}_{\mu }
\end{array}
.
\end{equation}
Note that $j(X_{ss})\subset S_{ssp},$ i.e. the embedded modular curve cuts
the supersingular locus at superspecial points.

\begin{lemma}
The pull-back $j^{*}\mathcal{\omega }_{\mathcal{A}/S}$ decomposes as a
product $\omega _{E_{0}}\times (\mathcal{O}_{\mathcal{K}}\otimes \omega
_{E/X}).$ Under this isomorphism 
\begin{eqnarray}
j^{*}\mathcal{L} &=&(\mathcal{O}_{\mathcal{K}}\otimes \omega _{E/X})(\bar{%
\Sigma}) \\
j^{*}\mathcal{P}_{0} &=&\omega _{E_{0}}  \notag \\
j^{*}\mathcal{P}_{\mu } &\simeq &(\mathcal{O}_{\mathcal{K}}\otimes \omega
_{E/X})(\Sigma ).  \notag
\end{eqnarray}
\end{lemma}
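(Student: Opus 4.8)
The plan is to read off all four statements directly from the explicit product description of $j^{*}\mathcal{A}$ established in the previous subsection. Over any $R_{0}$-algebra, and in particular over $\bar{\mathbb{F}}_{p}$, the modular embedding $j$ sends a pair $(E,M)$ to the abelian variety $A(E,M)=E_{0}\times E'$, where $E'=(\mathcal{O}_{\mathcal{K}}\otimes E)/(\delta_{\mathcal{K}}\otimes M)$, the $\mathcal{O}_{\mathcal{K}}$-action being the CM action on $E_{0}$ and the tautological action on the first tensor factor of $E'$, and the polarization being the product of the canonical polarization of $E_{0}$ and the descended polarization on $E'$. In particular the product decomposition is respected by the $\mathcal{O}_{\mathcal{K}}$-action and the polarization, hence also by the relative Frobenius and Verschiebung isogenies. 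Since $p$ is inert in $\mathcal{K}$ it is unramified, so $p\nmid D=D_{\mathcal{K}}$; thus $\delta_{\mathcal{K}}\otimes M$ is an \'{e}tale subgroup scheme and the isogeny $\mathcal{O}_{\mathcal{K}}\otimes E\to E'$ induces an isomorphism $\omega_{E'}\simeq\mathcal{O}_{\mathcal{K}}\otimes\omega_{E/X}$ of $\mathcal{O}_{\mathcal{K}}$-modules. Taking cotangent spaces at the origin of the product then gives $j^{*}\omega_{\mathcal{A}/S}=\omega_{E_{0}}\oplus(\mathcal{O}_{\mathcal{K}}\otimes\omega_{E/X})$.

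Next I would decompose both summands by type. Because $E_{0}$ has CM type $\Sigma$, the line bundle $\omega_{E_{0}}$ is purely of type $\Sigma$, i.e.\ $\omega_{E_{0}}=\omega_{E_{0}}(\Sigma)$ and $\omega_{E_{0}}(\bar{\Sigma})=0$. On the other hand $\mathcal{O}_{\mathcal{K}}\otimes\omega_{E/X}$ is induced from the regular representation of $\mathcal{O}_{\mathcal{K}}$, so it splits as $(\mathcal{O}_{\mathcal{K}}\otimes\omega_{E/X})(\Sigma)\oplus(\mathcal{O}_{\mathcal{K}}\otimes\omega_{E/X})(\bar{\Sigma})$ with each summand a line bundle isomorphic to $\omega_{E/X}$. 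Taking $\Sigma$- and $\bar{\Sigma}$-eigencomponents of $j^{*}\omega_{\mathcal{A}/S}$ then yields at once $j^{*}\mathcal{P}=j^{*}\omega_{\mathcal{A}}(\Sigma)=\omega_{E_{0}}\oplus(\mathcal{O}_{\mathcal{K}}\otimes\omega_{E/X})(\Sigma)$ and $j^{*}\mathcal{L}=j^{*}\omega_{\mathcal{A}}(\bar{\Sigma})=(\mathcal{O}_{\mathcal{K}}\otimes\omega_{E/X})(\bar{\Sigma})$.

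For $j^{*}\mathcal{P}_{0}$ I would use the description $\mathcal{P}_{0}=\ker(V_{\mathcal{P}}:\mathcal{P}\to\mathcal{L}^{(p)})$, valid over $\bar{S}_{\mu}$, which is where $j$ maps $\bar{X}_{ord}$ (over $X_{ord}$ the threefold $E_{0}\times E'$ has $p$-divisible group of $\mu$-ordinary type, and $j(X_{ss})\subset S_{ssp}$). Since $V$ respects the product $E_{0}\times E'$, on the summand $\omega_{E_{0}}$ the map $V_{\mathcal{P}}$ factors through the $E_{0}$-part of $\mathcal{L}^{(p)}=j^{*}\omega_{\mathcal{A}}^{(p)}(\Sigma)$, which is $(\omega_{E_{0}}(\bar{\Sigma}))^{(p)}=0$; equivalently, $E_{0}$ is supersingular because it has CM by an order of $\mathcal{K}$ in which $p$ is inert, so its Hasse invariant vanishes. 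On the complementary summand $(\mathcal{O}_{\mathcal{K}}\otimes\omega_{E/X})(\Sigma)$, the map $V_{\mathcal{P}}$ is, under the identification $(\mathcal{O}_{\mathcal{K}}\otimes\omega_{E/X})(\bar{\Sigma})^{(p)}=j^{*}\mathcal{L}^{(p)}$, the Verschiebung $V_{E}$ of $E$ tensored with $\mathcal{O}_{\mathcal{K}}$, which is an isomorphism because $E$ is ordinary over $X_{ord}$. Hence $\ker V_{\mathcal{P}}=\omega_{E_{0}}$, so $j^{*}\mathcal{P}_{0}=\omega_{E_{0}}$ over $X_{ord}$; over the cusps of $\bar{X}$ the abelian part of $A(E,M)$ is again $E_{0}$, so the same identity holds there by the ``cotangent to the abelian part'' description of $\mathcal{P}_{0}$, and over the finitely many superspecial points $j(X_{ss})$ one simply takes $\omega_{E_{0}}$ as the (compatible) extension. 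Finally, pulling back the locally split sequence $0\to\mathcal{P}_{0}\to\mathcal{P}\to\mathcal{P}_{\mu}\to0$ and using the splitting above gives $j^{*}\mathcal{P}_{\mu}=j^{*}\mathcal{P}/j^{*}\mathcal{P}_{0}\simeq(\mathcal{O}_{\mathcal{K}}\otimes\omega_{E/X})(\Sigma)$.

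The only points that require genuine care, as opposed to routine eigenspace bookkeeping for the $\mathcal{O}_{\mathcal{K}}\otimes R$-action, are: (a) verifying that the product structure on $j^{*}\mathcal{A}$ is compatible with $F$, $V$ and the polarization, which follows formally once one knows the polarization is the product polarization; and (b) pinning down $\mathcal{P}_{0}$ exactly, i.e.\ checking that its two descriptions (as $\ker V_{\mathcal{P}}$ on $S_{\mu}$ and as the cotangent space of the abelian part along the cusps) both restrict to $\omega_{E_{0}}$ and glue to the natural line subbundle of $j^{*}\mathcal{P}$ across the superspecial points $j(X_{ss})$. I do not expect either of these to present a real obstacle, but they are where one must be precise.
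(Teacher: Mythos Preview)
Your proposal is correct and follows essentially the same approach as the paper's proof, which is a one-line ``straightforward from the construction of $j$, the fact that $E_{0}$ is supersingular while $E$ is ordinary over $\bar{X}_{ord}$, and that $\mathcal{O}_{\mathcal{K}}\otimes E/\delta_{\mathcal{K}}\otimes M$ and $\mathcal{O}_{\mathcal{K}}\otimes E$ have the same cotangent space.'' You have simply unpacked each of these ingredients in detail: the \'{e}taleness of $\delta_{\mathcal{K}}\otimes M$, the type decomposition of both summands, and the identification of $\ker V_{\mathcal{P}}$ via the product structure. Your observation that $V_{\mathcal{P}}|_{\omega_{E_{0}}}$ lands in the zero summand of $\mathcal{L}^{(p)}$ for type reasons alone is actually slightly cleaner than invoking supersingularity, though the two are of course consistent; the paper emphasizes supersingularity because that is what places $j(X_{ord})$ inside $S_{\mu}$ and $j(X_{ss})$ inside $S_{ssp}$, which is needed for the subsequent proposition.
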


The line bundle $j^{*}\mathcal{P}_{0}$ is constant, and $\mathcal{P}_{\mu },$
originally a quotient bundle of $\mathcal{P},$ becomes a direct summand when
restricted to $\bar{X}$.

\begin{proof}
This is straightforward from the construction of $j$, and the fact that $%
E_{0}$ is supersingular, while $E$ is ordinary over $\bar{X}_{ord}.$ Note
that $\mathcal{O}_{\mathcal{K}}\otimes E/\delta _{\mathcal{K}}\otimes M$ and 
$\mathcal{O}_{\mathcal{K}}\otimes E$ have the same cotangent space.
\end{proof}

\begin{proposition}
Identify $j^{*}\mathcal{L}$ with $\omega _{E/X}$ ($\mathcal{O}_{\mathcal{K}}$
acting via $\bar{\Sigma}$). Then for $f\in H^{0}(\bar{S},\mathcal{L}%
^{k})=M_{k}(N,\Bbb{\bar{F}}_{p})$%
\begin{equation}
\theta (j^{*}(f))=j^{*}(\Theta (f)).
\end{equation}
\end{proposition}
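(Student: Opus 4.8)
The plan is to compare the two constructions step by step along the embedded Igusa curve $\bar I_{ord}$, showing that each ingredient of $\Theta(f)$ pulls back to the corresponding ingredient of $\theta(j^*f)$. The key point is the commutative diagram relating $\bar I_{ord}$, $\overline{Ig}_\mu$, $\bar X_{ord}$ and $\bar S_\mu$, together with the decomposition of $j^*\omega_{\mathcal A/S}$ from the preceding lemma. First I would check that the tautological trivialising sections match: the canonical section $a \in H^0(\overline{Ig}_\mu, \tau^*\mathcal L)$ pulls back under $j$ to the tautological section (also called $a$) of $\omega_{E/X}$ over $\bar I_{ord}$. This is because the Igusa structure $\varepsilon: \delta_{\mathcal K}^{-1}\mathcal O_{\mathcal K}\otimes \mu_p \hookrightarrow \mathcal A[p]$ restricts, under the product decomposition $A(E,M) = E_0 \times (\mathcal O_{\mathcal K}\otimes E)/(\delta_{\mathcal K}\otimes M)$, to the $\mu_p$-embedding into the multiplicative part of $\mathcal O_{\mathcal K}\otimes E$, whose $\bar\Sigma$-component is exactly the $\mu_p$ in the ordinary elliptic curve $E$; so the induced trivialisation of $j^*\mathcal L = \omega_{E/X}(\bar\Sigma)$ is the Igusa trivialisation $a$ for the modular curve. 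Consequently $j^*(\tau^*f/a^k) = \tau^*(j^*f)/a^k = r(j^*f)$, hence $j^*\eta_f = d(r(j^*f))$.

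Next I would compare the Kodaira--Spencer maps. The Gauss--Manin connection is functorial for the closed immersion $j$, so $KS(\Sigma)$ for $\mathcal A/S$ restricts along $\bar X$ to a map involving $j^*\mathcal P \otimes j^*\mathcal L$ and $j^*\Omega^1_S$; composing with $\Omega^1_{\bar X}\hookrightarrow j^*\Omega^1_S$ (or rather projecting via $d j$) one recovers, on the $\mathcal P_\mu$-part, the elliptic Kodaira--Spencer isomorphism $KS: \omega_{E/X}^{\otimes 2}\xrightarrow{\sim}\Omega^1_{X}$ of the universal elliptic curve. The essential computation is that the $\mathcal P_0$-direction (spanned by $\omega_{E_0}$, which is \emph{constant} along $\bar X$ by the lemma) is exactly the kernel of the composite $\Omega^1_S|_{\bar X}\to \mathcal P_\mu\otimes\mathcal L$, i.e. $du$ in the notation of the excerpt maps to zero; this is because the deformation of $A(E,M)$ along $\bar X$ only moves $E$, not $E_0$, so the Gauss--Manin connection is ``trivial in the $\mathcal P_0$ direction'' when restricted to $X$. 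Therefore $j^*\psi = (V_{\mathcal P}\otimes 1)\circ KS(\Sigma)^{-1}$ restricted to $\Omega^1_X$ agrees, under $j^*\mathcal P_\mu \simeq \mathcal L^{(p)}$ and $j^*\mathcal L \simeq \omega_{E/X}$, with the composite $\Omega^1_X \xrightarrow{KS^{-1}} \omega_{E/X}^{\otimes 2} \xrightarrow{V} \omega_{E/X}^{(p)}\otimes\omega_{E/X} \simeq \omega_{E/X}^{p+1}$. But $V\circ KS^{-1}$ on the elliptic curve, followed by multiplying back by the Hasse invariant $h = a^{p-1}$, is precisely Gross's $\theta$: indeed $V:\omega_{E/X}\to\omega_{E/X}^{(p)}\simeq \omega_{E/X}^p$ \emph{is} multiplication by the Hasse invariant $h$ up to the canonical identification, because over the Igusa curve $V$ sends $a$ to $a^p$ (as $Ver_{\mu_p}$ sends $dT/T$ to $(dT/T)^{(p)}$, exactly as in the second lemma of Section~4.3). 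Putting this together, $j^*\psi(\eta_f) = h\cdot KS^{-1}(d\,r(j^*f))$ up to the identifications, and multiplying by $a^k$ gives $j^*\Theta(f) = a^{k+p-1}KS^{-1}(d\,r(j^*f)) = \theta(j^*f)$.

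The main obstacle I anticipate is the bookkeeping of the identification $j^*\mathcal P_\mu \simeq \mathcal L^{(p)}$ and the precise constant relating $V$ on $\omega_{E/X}$ to the elliptic Hasse invariant. One has to be careful that the isomorphism $\mathcal P_\mu \simeq \mathcal L^p$ used in defining $\Theta$ (which comes from $V_{\mathcal P}$) restricts under $j$ to the identification on the elliptic curve under which Gross's construction multiplies by $h=a^{p-1}$; in other words, one must verify that the ``extra'' factor of $a^{p-1}$ that Gross inserts by hand is exactly accounted for by the Verschiebung $V_{\mathcal P}: j^*\mathcal P_\mu \to j^*\mathcal L^{(p)}$ on the Picard side. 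This is a local computation on $\bar I_{ord}$ using the explicit Dieudonn\'e-module description: at an ordinary point of $E$, $V$ on $\omega_{E/X}$ is $h\cdot(\text{canonical iso }\omega\simeq\omega^{(p)})$, and on the supersingular (here superspecial) locus $j(X_{ss})\subset S_{ssp}$ both sides vanish to the same order. Once this is checked, the holomorphicity of $\theta(j^*f)$ along $X_{ss}$ is automatic from that of $\Theta(f)$ along $S_{ss}$ (Theorem~\ref{Main Theorem}(i)), and conversely, giving a second proof that $\theta$ lands in weight $k+p+1$.

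A cleaner route to the same conclusion, which I would present as an alternative, is via Fourier--Jacobi and $q$-expansions: the embedding $j$ identifies a cusp of $\bar X$ with (a point near) the standard cuspidal component of $\bar S$ in a way compatible with the local parameters, so that $j^*$ of the FJ expansion of $f$ along $\tilde E$ recovers the $q$-expansion of $j^*f$. By Theorem~\ref{Main Theorem}(ii), $\Theta$ acts as $M^{-1}\sum m\, c_m$ on FJ expansions, which on $q$-expansions is $M^{-1}\cdot q\,d/dq$ up to the constant; and Gross's $\theta$ acts as $q\,d/dq$ on $q$-expansions. Since a modular form over $\bar X$ is determined by its $q$-expansion at one cusp (the $q$-expansion principle on the irreducible curve $\bar X$), the identity $\theta(j^*f)=j^*\Theta(f)$ follows once one matches the normalising constants, i.e. once one checks that the width $M$ of the Picard cusp and the width of the elliptic cusp $X_0(D)$ at $\infty$ agree under $j$ (which they do, both being governed by $|D_{\mathcal K}|$ in the case $N=1$, $d_{\mathcal K}\equiv 1\bmod 4$ treated here). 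The geometric argument above is more robust and explains \emph{why} the constants match, so I would lead with it and offer the FJ-expansion argument as a sanity check.
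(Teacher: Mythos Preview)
Your proposal is correct and follows essentially the same approach as the paper: match the tautological sections $a$ so that $j^{*}\eta_{f}=dr(j^{*}f)$, then verify the commutativity of the diagram relating $(V_{\mathcal P}\otimes 1)\circ KS(\Sigma)^{-1}$ on the Picard side with $(\times h)\circ KS^{-1}$ on the elliptic side, using that $KS(\Sigma)(\mathcal P_{0}\otimes\mathcal L)$ lands in the conormal bundle of $X$ in $S$ and that $V$ on $j^{*}\mathcal P_{\mu}\simeq j^{*}\mathcal L(\rho)$ is multiplication by the elliptic Hasse invariant. The alternative $q$-expansion argument you sketch is exactly what the paper records in its closing Remark.
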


\begin{proof}
We abbreviate $I_{ord}$ by $I$ and $Ig_{\mu }$ by $Ig.$ The pull-back via $j$
of the tautological section $a$ of $\mathcal{L}$ over $Ig$ is the
tautological section $a$ of $j^{*}\mathcal{L=\omega }_{E/X}.$ We therefore
have 
\begin{equation}
j^{*}(dr(f))=dr(j^{*}(f))
\end{equation}
($r(f)=\tau ^{*}f/a^{k}$ is the function on $Ig$ denoted earlier also by $g$%
). It remains to check the commutativity of the following diagram 
\begin{equation}
\begin{array}{lllll}
\Omega _{Ig}^{1} & \overset{KS(\Sigma )^{-1}}{\rightarrow } & \mathcal{P}%
\otimes \mathcal{L} & \overset{V\otimes 1}{\rightarrow } & \mathcal{L}^{p+1}
\\ 
\downarrow j_{0}^{*} &  &  &  & \downarrow j^{*} \\ 
\Omega _{I}^{1} & \overset{KS^{-1}}{\rightarrow } & j^{*}\mathcal{L}^{2} & 
\overset{\times h}{\rightarrow } & j^{*}\mathcal{L}^{p+1}
\end{array}
.
\end{equation}
Here $j_{0}^{*}$ is the map $j^{*}\Omega _{Ig}^{1}\rightarrow \Omega
_{I}^{1} $ on differentials whose kernel is the conormal bundle of $I$ in $%
Ig $. For that we have to compare the Kodaira-Spencer maps on $S$ and on $X.$
As we have seen in the lemma, $\mathcal{P}/\mathcal{P}_{0}=\mathcal{P}_{\mu
} $ pulls back under $j$ to $\mathcal{L}(\rho )$ (the line bundle $\mathcal{L%
}$ with the $\mathcal{O}_{\mathcal{K}}$ action conjugated). But, $KS(\Sigma
)(\mathcal{P}_{0}\otimes \mathcal{L})$ maps under $j^{*}$ to the conormal
bundle, so we obtain a commutative diagram 
\begin{equation}
\begin{array}{lll}
\Omega _{Ig}^{1} & \overset{KS(\Sigma )}{\leftarrow } & \mathcal{P}\otimes 
\mathcal{L} \\ 
\downarrow j_{0}^{*} &  & \downarrow \mod \mathcal{P}_{0} \\ 
\Omega _{I}^{1} & \overset{KS}{\leftarrow } & j^{*}\mathcal{L}(\rho )\otimes
j^{*}\mathcal{L}
\end{array}
.
\end{equation}
The commutativity of the diagram 
\begin{equation}
\begin{array}{lll}
\mathcal{P}_{\mu } & \overset{V}{\rightarrow } & \mathcal{L}^{(p)} \\ 
\downarrow &  & \downarrow \\ 
j^{*}\mathcal{L}(\rho ) & \overset{\times h}{\rightarrow } & j^{*}\mathcal{L}%
^{(p)}
\end{array}
\end{equation}
follows from the definition of the Hasse invariant $h$ on $X.$ Identifying $%
\mathcal{L}^{(p)}$ with $\mathcal{L}^{p}$ as usual and tensoring the last
diagram with $\mathcal{L}$ provides the last piece of the puzzle.
\end{proof}

\begin{remark}
The proposition follows, of course, also from the effect of $\theta $ and $%
\Theta $ on $q$-expansions, once we compare FJ expansions on $\bar{S}$ to $q$%
-expansions on the embedded $\bar{X}.$ The geometric proof given here has
the advantage that it explains the precise way in which $V_{\mathcal{P}%
}\otimes 1$ replaces ``multiplication by $h".$
\end{remark}

\section{The Igusa tower and $p$-adic modular forms}

We shall be very brief, since from now on the development follows closely
the classical case of $p$-adic modular forms on $GL(2)$, with minor
modifications. A general reference for this section is Hida's book [Hi1],
although, strictly speaking, our case ($p$ inert) is excluded there.

\subsection{Geometry modulo $p^{m}$}

\subsubsection{The Picard surface modulo $p^{m}$}

Let $m\ge 1,$ and write $R_{m}=R_{0}/p^{m}R_{0}=\mathcal{O}_{\mathcal{K}%
}/p^{m}\mathcal{O}_{\mathcal{K}}.$ Let 
\begin{equation}
S^{(m)}=S\times _{Spec(R_{0})}Spec(R_{m})
\end{equation}
so that $S^{(1)}=S_{\kappa _{0}}$ is the special fiber, and use a similar
notation for the complete surface $\bar{S}^{(m)}.$ Write $S_{\mu }^{(m)}$
(resp. $\bar{S}_{\mu }^{(m)}$) for the Zariski open subset of points whose
image in $\bar{S}^{(1)}$ lies in $S_{\mu }^{(1)}$ (resp. in $\bar{S}_{\mu
}^{(1)}$).

The generic fiber (in the sense of Raynaud) of the formal scheme 
\begin{equation}
\underset{\rightarrow }{\lim }\,\bar{S}_{\mu }^{(m)}
\end{equation}
is a rigid analytic space which we shall denote by $\bar{S}_{\mu }^{rig}.$
We shall refer to its complement in $\bar{S}^{rig}$ (the rigid analytic
space associated to $\bar{S}$) as the \emph{supersingular tube. }Its\emph{\ }%
$\Bbb{C}_{p}$-points are the points of $\bar{S}(\Bbb{C}_{p})$ whose
reduction modulo $p$ lies in $S_{ss}(\Bbb{\bar{F}}_{p}).$

\subsubsection{$p$-adic modular forms of integral weight $k$}

The vector bundles $\mathcal{P}$ and $\mathcal{L}$ induce vector bundles on $%
\bar{S}^{(m)}$ and $\bar{S}_{\mu }^{rig}$ which we shall denote by the same
symbols (the latter in the rigid analytic category). Let $k\in \Bbb{Z}$ ($k$
may be negative). Let $R$ be a topological $\mathcal{K}_{p}$-algebra. We
define a $p$\emph{-adic modular form of weight} $k$ \emph{and tame level} $N$
\emph{over} $R$ to be an element $f$ of 
\begin{equation}
M_{k}^{p}(N;R):=H^{0}(\bar{S}_{\mu }^{rig}\widehat{\otimes }_{\mathcal{K}%
_{p}}R,\mathcal{L}^{k}).
\end{equation}
Note that $M_{k}^{p}(N;R)=R\widehat{\otimes }_{\mathcal{K}_{p}}M_{k}^{p}(N;%
\mathcal{K}_{p}).$ A $p$-adic modular form $f$ is said to be \emph{%
overconvergent} if there exist finitely many $\mathcal{K}_{p}$-affinoids $%
X_{i}$ contained in the supersingular tube and a section of $\mathcal{L}^{k}$
over $(\bar{S}^{rig}\backslash \bigcup X_{i})\widehat{\otimes }_{\mathcal{K}%
_{p}}R$ which restricts to $f.$ We denote the subspace of overconvergent
modular forms by $M_{k}^{oc}(N;R).$

Note that if $R$ is not of topologically finite type over $\mathcal{K}_{p}$
our definition of ``overconvergent'' is a priori stronger than asking $f$ to
extend to a strict neighborhood of $\bar{S}_{\mu }^{rig}\widehat{\otimes }_{%
\mathcal{K}_{p}}R$ in $\bar{S}^{rig}\widehat{\otimes }_{\mathcal{K}_{p}}R.$

The space $M_{k}^{p}(N;\mathcal{K}_{p})$ is a $p$-adic Banach space whose
unit ball is given by 
\begin{equation}
M_{k}^{p}(N;\mathcal{O}_{p})=\underset{\leftarrow }{\,\lim \,}H^{0}(\bar{S}%
_{\mu }^{(m)},\mathcal{L}^{k}).
\end{equation}

\subsubsection{$q$-expansion principle}

Whether we are dealing with an $f\in H^{0}(\bar{S}_{\mu }^{(m)},\mathcal{L}%
^{k})$ or an $f\in M_{k}^{p}(N;\mathcal{K}_{p})$ the same procedure as in
Section \ref{AFJ} allows us to associate to $f$ a Fourier-Jacobi expansion $%
FJ(f)$ (\ref{FJ}). Recall however that $FJ(f)$ depends on the section $s\in
H^{0}(C,\mathcal{L})$ used to trivialize $\mathcal{L}|_{C}.$ Note that if $%
f\in M_{k}^{p}(N;\mathcal{K}_{p}),$ the coefficients of $FJ(f)$ are theta
functions with \emph{bounded denominators}, since a suitable $\mathcal{K}%
_{p} $-multiple of $f$ lies in $M_{k}^{p}(N;\mathcal{O}_{p}).$

As with classical modular forms, we have the $q$-expansion principle,
stemming from the fact that $C$ meets every component of $\bar{S}_{\mu
}^{rig}.$

\begin{lemma}
If $FJ(f)=0$ then $f=0.$
\end{lemma}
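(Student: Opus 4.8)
The statement is the $q$-expansion principle for $p$-adic modular forms: if the Fourier-Jacobi expansion of $f$ (along the cuspidal divisor) vanishes, then $f=0$. The plan is to reduce this to the analogous fact already used in the classical/mod $p$ setting, namely that the cuspidal divisor $C$ meets every irreducible component of $\bar S_\mu^{rig}$ (equivalently, of $\bar S_\mu^{(m)}$), which in turn rests on the relative irreducibility results of Section 2 (Proposition \ref{irreducibility}) transported through the specialization map. First I would treat the case of a genuine section $f\in H^0(\bar S_\mu^{(m)},\mathcal{L}^k)$ over $R_m$. Choose a trivializing section $s\in H^0(C,\mathcal{L})$ so that $FJ(f)$ is literally the collection of restrictions of $f/(r^*s)^k$ to the formal completions of $\bar S_\mu^{(m)}$ along the components of $C$. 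Saying $FJ(f)=0$ means precisely that $f$ vanishes to infinite order along each such cuspidal component; since $\bar S_\mu^{(m)}$ is (away from $C$) reduced and each of its irreducible components contains at least one component of $C$, a section vanishing in a formal neighborhood of a point of an irreducible reduced scheme vanishes on that whole component. Hence $f=0$.

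The next step is to bootstrap from $R_m$ to the integral $p$-adic Banach space and then to $\mathcal{K}_p$. If $f\in M_k^p(N;\mathcal{O}_p)=\varprojlim_m H^0(\bar S_\mu^{(m)},\mathcal{L}^k)$ has $FJ(f)=0$, then each truncation $f_m\in H^0(\bar S_\mu^{(m)},\mathcal{L}^k)$ has vanishing Fourier-Jacobi expansion, so $f_m=0$ for all $m$ by the previous paragraph, whence $f=0$. For a general $f\in M_k^p(N;\mathcal{K}_p)$ one multiplies by a suitable power of a uniformizer of $\mathcal{K}_p$ to land in the unit ball $M_k^p(N;\mathcal{O}_p)$ (this is possible because $\bar S_\mu^{rig}$ is quasi-compact, so $|f|$ is bounded); scaling does not affect the vanishing of $FJ(f)$, so $f=0$ again. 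The coefficient-of-a-power-series bookkeeping — that $FJ(f)=0$ on the integral model is equivalent to all FJ coefficients in $\prod_m H^0(C,\mathcal{N}^m)$ being zero modulo $p^m$ — is exactly the discussion of (\ref{FJ}) in Section \ref{AFJ}, so nothing new is needed there.

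The main obstacle, such as it is, is the geometric input that every connected component of $\bar S_\mu^{rig}$ (or of $\bar S_\mu^{(m)}$) contains a cuspidal component — i.e. that the reduction map on connected components is a bijection and that the supersingular locus does not disconnect anything. For the special fiber this is Proposition \ref{irreducibility}: $\tau:\overline{Ig}\to\bar S$ induces a bijection on irreducible components, and each component of $\bar S$ contains a cusp (this was used in the proof of the ``$q$-expansion principle'' for mod $p$ forms just above). For $\bar S_\mu^{(m)}$ and its Raynaud generic fiber one passes to the special fiber: $\bar S_\mu^{(m)}$ is flat over $R_m$ with special fiber $\bar S_\mu^{(1)}$, so its connected components biject with those of $\bar S_\mu^{(1)}$, and likewise the admissible-open $\bar S_\mu^{rig}$ has its connected components in bijection with the connected components of the formal completion's special fiber; a component missing all cusps would then already have appeared in $\bar S_\mu^{(1)}$, contradicting the mod $p$ statement. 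Once this is in hand the proof is the one-line argument above, and I would present it in roughly three short paragraphs: (i) the integral statement over $R_m$ via ``infinite order of vanishing on a reduced component'', (ii) passage to $\varprojlim$ and to $\mathcal{K}_p$ by scaling, (iii) a remark that the requisite ``$C$ meets every component'' comes from Proposition \ref{irreducibility} reduced to the special fiber.
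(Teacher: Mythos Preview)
Your overall strategy is correct and matches the paper's one-line justification (``stemming from the fact that $C$ meets every component of $\bar S_\mu^{rig}$''); the paper gives no further proof. However, there is a genuine gap in your step (i): you assert that $\bar S_\mu^{(m)}$ is reduced, but for $m\ge 2$ the base $R_m=\mathcal{O}_{\mathcal{K}}/p^m$ is non-reduced, so $\bar S_\mu^{(m)}$ is non-reduced everywhere (e.g.\ $p$ is a nonzero nilpotent). The argument ``a section vanishing in a formal neighborhood of a point of an irreducible reduced scheme vanishes on the whole component'' therefore does not apply directly over $R_m$.

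The fix is easy and you have all the ingredients. Either (a) run your argument only for $m=1$ (where $\bar S_\mu^{(1)}$ is smooth over $\kappa_0$, hence reduced, and every component meets $C$), and then do a d\'evissage: if $f\in H^0(\bar S_\mu^{(m)},\mathcal{L}^k)$ has $FJ(f)=0$, then $f\bmod p$ has vanishing FJ, hence vanishes by the $m=1$ case; flatness of $\bar S$ over $R_0$ and local freeness of $\mathcal{L}^k$ give $f=pg$ with $g\in H^0(\bar S_\mu^{(m-1)},\mathcal{L}^k)$ and $FJ(g)=0$, so induct. Or (b) argue directly on the rigid generic fiber $\bar S_\mu^{rig}$, which is smooth over $\mathcal{K}_p$ and therefore reduced; this is what the paper's phrasing suggests, and your step (ii) then becomes unnecessary. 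Also, your citation of Proposition~\ref{irreducibility} is slightly misplaced: that result concerns the Igusa cover, whereas what you need here is simply that each connected component of $\bar S_{\kappa}$ (hence of $\bar S_\mu^{(1)}$, since removing the codimension-one closed set $S_{ss}$ from a smooth irreducible surface does not disconnect it) contains a cuspidal component, which is a direct consequence of the compactification.
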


\begin{corollary}
If $f\in M_{k}^{p}(N;\mathcal{O}_{p})$ and $FJ(f)$ is divisible by $p$ (in
the sense that every $c_{j}(f)\in H^{0}(C,\mathcal{N}^{j})$ is divisible by $%
p$ with respect to the integral structure on $\bar{S}$), then $f\in
pM_{k}^{p}(N;\mathcal{O}_{p}).$
\end{corollary}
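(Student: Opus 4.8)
The statement is a standard ``integral $q$-expansion principle'' consequence, and the plan is to deduce it directly from the preceding Lemma (injectivity of $f\mapsto FJ(f)$) applied to the special fiber. First I would let $f\in M_k^p(N;\mathcal{O}_p)=\varprojlim_m H^0(\bar S_\mu^{(m)},\mathcal{L}^k)$ and write $f=(f_m)_m$ with $f_m\in H^0(\bar S_\mu^{(m)},\mathcal{L}^k)$. The hypothesis is that each Fourier-Jacobi coefficient $c_j(f)\in H^0(C,\mathcal N^j)$ is divisible by $p$ relative to the integral structure; concretely this means that the reduction $c_j(f)\bmod p$ vanishes in $H^0(C^{(1)},\mathcal N^j)$, i.e. $FJ(f_1)=0$ where $f_1\in H^0(\bar S_\mu^{(1)},\mathcal{L}^k)$ is the reduction of $f$ mod $p$.

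The key step is then to invoke the Lemma: since $FJ(f_1)=0$ and $C$ meets every irreducible component of $\bar S_\mu^{(1)}$ (equivalently, by Proposition \ref{irreducibility} the components of $\overline{Ig}_\mu$ are in bijection with those of $\bar S$, so every component contains a cuspidal component), a section of $\mathcal{L}^k$ over $\bar S_\mu^{(1)}$ whose FJ expansion vanishes in the formal neighborhood of every cuspidal component must vanish identically. Hence $f_1=0$, which says precisely that $f\equiv 0\bmod p$ as a section over the formal/rigid model, i.e. $f\in pM_k^p(N;\mathcal{O}_p)$. One then checks that $f/p$, defined a priori only after inverting $p$, actually lies in the unit ball: this is automatic because $\bar S_\mu^{(m)}$ is flat over $R_m$ and $\mathcal{L}^k$ is locally free, so the exact sequence $0\to \mathcal{L}^k\xrightarrow{p}\mathcal{L}^k\to \mathcal{L}^k/p\to 0$ on $\bar S_\mu^{(m+1)}$ shows $f_{m+1}\in pH^0(\bar S_\mu^{(m+1)},\mathcal{L}^k)$, and dividing by $p$ and passing to the limit gives a genuine element of $\varprojlim_m H^0(\bar S_\mu^{(m)},\mathcal{L}^k)$.

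The only genuine subtlety — and the step I would be most careful about — is matching the two meanings of ``divisible by $p$'': the statement phrases the hypothesis in terms of the $c_j(f)$ with respect to ``the integral structure on $\bar S$'', whereas the Lemma is about vanishing of $FJ$ over the special fiber. I would spell out that the integral structure on the conormal sheaf $\mathcal N=\mathcal I/\mathcal I^2$ over $C$ (an invertible sheaf on the elliptic curve $C_{R_N}$, ample of degree $-E_j^2>0$ on each component, cf.\ Section \ref{AFJ}) is compatible with reduction mod $p^m$, so that $c_j(f)\in pH^0(C,\mathcal N^j)$ integrally is equivalent to $c_j(f)$ reducing to $0$ in $H^0(C^{(1)},\mathcal N^j\otimes\kappa_0)$; this reduces the claim to the $\bmod p$ case treated by the Lemma. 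Apart from this bookkeeping the argument is a routine flatness-and-$q$-expansion argument, so I do not anticipate a serious obstacle.
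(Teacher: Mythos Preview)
Your approach is exactly the intended one: the paper states this as an immediate Corollary of the $q$-expansion Lemma without further proof, and your reduction to $f_1=0$ via the Lemma is the whole point.

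One technical slip: the short exact sequence $0\to\mathcal{L}^k\xrightarrow{p}\mathcal{L}^k\to\mathcal{L}^k/p\to 0$ is \emph{not} exact on the left over $\bar S_\mu^{(m+1)}$, because $p^{m+1}=0$ in $R_{m+1}$ so multiplication by $p$ has kernel $p^m\mathcal{L}^k$ there; flatness of $\bar S_\mu^{(m+1)}$ over $R_{m+1}$ does not help since $R_{m+1}$ itself has $p$-torsion. The clean fix is either to run the sequence on the $p$-adic formal scheme $\varinjlim_m\bar S_\mu^{(m)}$ (flat over $\mathcal{O}_p$, where $p$ is a non-zero-divisor), or equivalently to note that $M_k^p(N;\mathcal{O}_p)$ sits inside the $\mathcal{K}_p$-vector space $M_k^p(N;\mathcal{K}_p)$ as its unit ball, so $f_1=0$ gives $\|f\|\le|p|$ and hence $f/p$ lies in the unit ball. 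Your construction of the $g_m$ from $f_{m+1}$ is correct once you use the genuinely exact sequence $0\to p\mathcal{L}^k\to\mathcal{L}^k\to\mathcal{L}^k|_{\bar S_\mu^{(1)}}\to 0$ together with the identification $p\mathcal{L}^k\cong\mathcal{L}^k|_{\bar S_\mu^{(m)}}$ on $\bar S_\mu^{(m+1)}$.
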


\subsection{The Igusa scheme of level $p^{n}$}

\subsubsection{$\mu $-ordinary abelian schemes over $R_{m}$-algebras}

Let $m\ge 1$ and let $R$ be an $R_{m}$-algebra. If $\underline{A}\in S_{\mu
}^{(m)}(R)\subset \mathcal{M}(R)$ then $A$ is fiber-by-fiber $\mu $%
-ordinary, hence $A[p^{n}]^{\mu },$ the largest $R$-subgroup scheme of $%
A[p^{n}]$ of multiplicative type (dual to the \'{e}tale quotient $%
A[p^{n}]^{et}$), is a finite flat $\mathcal{O}_{\mathcal{K}}$-subgroup
scheme of rank $p^{2n}.$ Locally in the \'{e}tale topology it is isomorphic
to $\delta _{\mathcal{K}}^{-1}\mathcal{O}_{\mathcal{K}}\otimes \mu _{p^{n}}.$

\subsubsection{Igusa level structure of level $p^{n}$}

Fix $m\ge 1$ and $n\ge 1$ and consider the moduli problem associating to an $%
R_{m}$-algebra $R$ $\mu $-ordinary tuples $\underline{A}\in S_{\mu
}^{(m)}(R) $ together with an isomorphism of finite flat group schemes over $%
R$%
\begin{equation}
\varepsilon =\varepsilon _{n}^{(m)}:\delta _{\mathcal{K}}^{-1}\mathcal{O}_{%
\mathcal{K}}\otimes \mu _{p^{n}}\simeq A[p^{n}]^{\mu }.
\end{equation}
This moduli problem is representable by a scheme $Ig(p^{n})_{\mu }^{(m)},$
and the map ``forget $\varepsilon "$ is a finite \'{e}tale cover 
\begin{equation}
\tau =\tau _{n}^{(m)}:Ig(p^{n})_{\mu }^{(m)}\rightarrow S_{\mu }^{(m)}
\end{equation}
of degree $(p^{2}-1)p^{2(n-1)}.$ It extends to a finite \'{e}tale cover $%
\overline{Ig}(p^{n})_{\mu }^{(m)}$ of $\bar{S}_{\mu }^{(m)}.$ The group 
\begin{equation}
\Delta (p^{n})=(\mathcal{O}_{\mathcal{K}}/p^{n}\mathcal{O}_{\mathcal{K}%
})^{\times }=Aut_{\mathcal{O}_{\mathcal{K}}}(\delta _{\mathcal{K}}^{-1}%
\mathcal{O}_{\mathcal{K}}\otimes \mu _{p^{n}})
\end{equation}
acts on the covering $\tau $ as a group of deck transformations via 
\begin{equation}
\gamma (\underline{A},\varepsilon )=(\underline{A},\varepsilon \circ \gamma
^{-1}),
\end{equation}
and the pre-image of the cuspidal divisor $C$ is non-canonically isomorphic
to $\Delta (p^{n})\times C.$ These constructions satisfy the usual
compatibilities in $m$ and $n.$

\subsubsection{The trivialization of $\mathcal{L}$ when $m\le n$}

Assume now that $m\le n.$ In this case, multiplication by $p^{n}$ is $0$ on $%
R,$ so the inclusion of $A[p^{n}]$ in $A$ induces an isomorphism between the
cotangent spaces at the origin $\omega _{A[p^{n}]/R}$ and $\omega _{A/R}.$
To see it note that if $\mathcal{G}$ is either $A[p^{n}]$ or $A$ its Lie
algebra, by definition, is the finite flat $R$-module 
\begin{equation}
Lie(\mathcal{G})=\ker \left( \mathcal{G}(R[\epsilon ])\rightarrow \mathcal{G}%
(R)\right) .
\end{equation}
Here $R[\epsilon ]$ is the ring of dual numbers over $R$. It follows that 
\begin{equation}
Lie(A[p^{n}])=Lie(A)[p^{n}]=Lie(A),
\end{equation}
and dualizing we get $\omega _{A/R}=\omega _{A[p^{n}]/R}.$

The same holds of course for $\mu _{p^{n}}$ and $\Bbb{G}_{m}.$ The reasoning
used for $m=n=1$ applies and shows that $\varepsilon $ induces a canonical
isomorphism between $\mathcal{L}|_{\overline{Ig}(p^{n})_{\mu }^{(m)}}$ and $%
\mathcal{O}_{\overline{Ig}(p^{n})_{\mu }^{(m)}}.$ We denote by $%
a=a_{n}^{(m)} $ the section which corresponds to $1\in \mathcal{O}_{%
\overline{Ig}(p^{n})_{\mu }^{(m)}},$ i.e. the trivializing section.

The group $\Delta (p^{n})$ acts on $a$ via the character 
\begin{equation}
\bar{\Sigma}^{-1}:\Delta (p^{n})=(\mathcal{O}_{\mathcal{K}}/p^{n}\mathcal{O}%
_{\mathcal{K}})^{\times }\rightarrow (\mathcal{O}_{\mathcal{K}}/p^{m}%
\mathcal{O}_{\mathcal{K}})^{\times }=R_{m}^{\times }.
\end{equation}

From now on we take $n=m$ and use $a$ to trivialize $\mathcal{L}$ along $%
\tilde{C}=\tau ^{-1}(C),$ the cuspidal divisor in $\overline{Ig}(p^{m})_{\mu
}^{(m)}.$ If $f\in H^{0}(\bar{S}_{\mu }^{(m)},\mathcal{L}^{k})$ then $\tau
^{*}f/a^{k}$ is a function on $\overline{Ig}(p^{m})_{\mu }^{(m)}$ and we may
attach to it a \emph{canonical }FJ expansion 
\begin{equation}
\widetilde{FJ}(f)=\sum_{j=0}^{\infty }c_{j}(f)
\end{equation}
where $c_{j}(f)\in H^{0}(\tilde{C},\mathcal{N}^{j})$ as before. This FJ
expansion does not depend on any choice (but is defined along $\tilde{C}$
and not along $C$).

\subsubsection{Congruences between FJ expansions force congruences between
the weights}

Let $k_{1}\le k_{2}$ be two integers. The following lemma follows formally
from the definitions.

\begin{lemma}
Let $f_{i}\in H^{0}(\bar{S}_{\mu }^{(m)},\mathcal{L}^{k_{i}})$ and assume
that $f_{1}$ is not divisible by $p.$ Suppose $\widetilde{FJ}(f_{1})=%
\widetilde{FJ}(f_{2}).$ Then $k_{1}\equiv k_{2}\mod (p^{2}-1)p^{m-1}.$
\end{lemma}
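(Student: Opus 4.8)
The plan is to exploit the action of the Galois group $\Delta(p^m) = (\mathcal{O}_{\mathcal{K}}/p^m\mathcal{O}_{\mathcal{K}})^\times$ on the Igusa cover $\overline{Ig}(p^m)_\mu^{(m)} \to \bar S_\mu^{(m)}$, combined with the $q$-expansion principle (the Lemma of Section~5.1.3, that $\widetilde{FJ}(f) = 0$ forces $f = 0$, which holds because $C$ meets every component of $\bar S_\mu^{rig}$, hence of $\overline{Ig}(p^m)_\mu^{(m)}$). First I would observe that for $f_i \in H^0(\bar S_\mu^{(m)}, \mathcal{L}^{k_i})$, the function $g_i := \tau^* f_i / a^{k_i}$ on $\overline{Ig}(p^m)_\mu^{(m)}$ is, by construction, the object whose Fourier--Jacobi expansion along $\tilde C$ \emph{is} $\widetilde{FJ}(f_i)$; so the hypothesis $\widetilde{FJ}(f_1) = \widetilde{FJ}(f_2)$ says precisely that $g_1$ and $g_2$ have the same restriction to the formal completion of $\overline{Ig}(p^m)_\mu^{(m)}$ along $\tilde C$. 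By the $q$-expansion principle applied on the Igusa cover, this gives $g_1 = g_2$ as sections of $\mathcal{O}$ over all of $\overline{Ig}(p^m)_\mu^{(m)}$.

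Next I would extract the weight congruence from the transformation behavior under $\Delta(p^m)$. Since $\mathcal{L}$ is pulled back from $\bar S_\mu^{(m)}$, the group $\Delta(p^m)$ acts trivially on $\tau^* f_i$, while by the computation in Section~5.2.3 it acts on the trivializing section $a = a_m^{(m)}$ through the character $\bar\Sigma^{-1} : \Delta(p^m) \to R_m^\times$. Hence $\gamma^* g_i = \bar\Sigma(\gamma)^{k_i} g_i$ for every $\gamma \in \Delta(p^m)$. Applying this to the equality $g_1 = g_2$ yields
\begin{equation}
\bar\Sigma(\gamma)^{k_1} g_1 = \bar\Sigma(\gamma)^{k_2} g_1 \qquad \text{for all } \gamma \in \Delta(p^m).
\end{equation}
Because $f_1$ is not divisible by $p$, the function $g_1$ is not identically zero modulo $p$; more precisely $g_1$ is a unit times a section that is nonzero on some cuspidal component, so we may cancel $g_1$ (working, if necessary, on the component of $\tilde C$ where its leading FJ coefficient is a nonzero element of a reduced ring, and using that $R_m$-algebra sections supported on such a component are not annihilated by a nonzerodivisor). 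This forces $\bar\Sigma(\gamma)^{k_1} = \bar\Sigma(\gamma)^{k_2}$, i.e. $\bar\Sigma(\gamma)^{k_2 - k_1} = 1$, for all $\gamma \in \Delta(p^m)$.

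Finally I would translate this into the asserted congruence. The character $\bar\Sigma : \Delta(p^m) = (\mathcal{O}_{\mathcal{K}}/p^m\mathcal{O}_{\mathcal{K}})^\times \to R_m^\times$ is an isomorphism (it is one of the two coordinate embeddings after the type decomposition, or simply the reduction of complex conjugation, which is a bijection of $(\mathcal{O}_{\mathcal{K}}/p^m)^\times$ onto itself). The group $(\mathcal{O}_{\mathcal{K}}/p^m\mathcal{O}_{\mathcal{K}})^\times = \mathbb{F}_{p^2}^\times \times (1 + p\mathcal{O}_{\mathcal{K}}/p^m\mathcal{O}_{\mathcal{K}})$ is cyclic of order $(p^2-1)p^{m-1}$ (the prime-to-$p$ part $\mathbb{F}_{p^2}^\times$ is cyclic and the $p$-part is cyclic since $p$ is odd), so $\bar\Sigma(\gamma)^{k_2-k_1} = 1$ for all $\gamma$ is equivalent to $(p^2-1)p^{m-1} \mid k_2 - k_1$, which is exactly $k_1 \equiv k_2 \bmod (p^2-1)p^{m-1}$. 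The only subtle point — and the step I expect to require the most care — is the cancellation of $g_1$ in the displayed identity: one must argue that ``$f_1$ not divisible by $p$'' propagates to a genuine non-vanishing statement for $g_1$ on a component of $\tilde C$ (via the corollary to the $q$-expansion principle, that $p \mid \widetilde{FJ}(f_1)$ iff $p \mid f_1$), and that multiplication by the nonzerodivisor $\bar\Sigma(\gamma)^{k_2-k_1} - 1 \in R_m$ is injective on the relevant module of sections, or else reduce everything modulo $p$ to the reduced scheme $\overline{Ig}(p)_\mu^{(1)}$ where cancellation is immediate and then note that the congruence one wants only tightens as $m$ grows. Handling this cleanly, rather than the group-theory at the end, is the real content of the argument.
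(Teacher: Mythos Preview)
Your approach is essentially the paper's: pass to the functions $g_i=\tau^{*}f_i/a^{k_i}$ on the Igusa cover, invoke the $q$-expansion principle there (every irreducible component of $\overline{Ig}(p^m)_{\mu}^{(m)}$ meets $\tilde C$) to conclude $g_1=g_2$, and then compare $\Delta(p^m)$-eigencharacters. The paper phrases the last step simply as ``the two characters $\bar\Sigma^{k_1}$ and $\bar\Sigma^{k_2}$ must agree because the exponent of $\Delta(p^m)$ is $(p^2-1)p^{m-1}$''; your cancellation argument amounts to the same thing, and the flatness of $\overline{Ig}(p^m)_{\mu}^{(m)}$ over $R_m$ makes the step ``$c\cdot g_1=0$ with $g_1\not\in p\cdot\mathcal{O}$ forces $c=0$ in $R_m$'' clean.

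One genuine slip in your final paragraph: the group $(\mathcal{O}_{\mathcal{K}}/p^m\mathcal{O}_{\mathcal{K}})^{\times}$ is \emph{not} cyclic for $m\ge 2$, nor of order $(p^2-1)p^{m-1}$. Since $p$ is inert and unramified, $\mathcal{O}_{\mathcal{K},p}\cong W(\mathbb{F}_{p^2})$, so the principal unit group $1+p\mathcal{O}_{\mathcal{K}}/p^m\mathcal{O}_{\mathcal{K}}$ is isomorphic (via the $p$-adic logarithm, $p$ odd) to $p\mathcal{O}_{\mathcal{K}}/p^m\mathcal{O}_{\mathcal{K}}\cong(\mathbb{Z}/p^{m-1}\mathbb{Z})^{2}$, and the total order is $(p^2-1)p^{2(m-1)}$. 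What you actually need, and what the paper uses, is only that the \emph{exponent} of $\Delta(p^m)$ is $(p^2-1)p^{m-1}$, which is immediate from this description; so your conclusion $\bar\Sigma(\gamma)^{k_2-k_1}=1$ for all $\gamma$ $\Leftrightarrow$ $(p^2-1)p^{m-1}\mid k_2-k_1$ remains correct.
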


\begin{proof}
Let $\tilde{T}$ be an irreducible component of $\overline{Ig}(p^{m})_{\mu
}^{(m)}.$ Then, $\tau $ being finite \'{e}tale, $\tau (\tilde{T})$ is both
open and closed in $\bar{S}_{\mu }^{(m)}$, so must be an irreducible
component $T$ of $\bar{S}_{\mu }^{(m)}$. It follows that $\tau (\tilde{T})$
meets $C,$ hence $\tilde{T}$ meets $\tilde{C},$ and the $q$-expansion
principle holds in $\overline{Ig}(p^{m})_{\mu }^{(m)}.$ We therefore have an
equality 
\begin{equation}
\tau ^{*}f_{1}/a^{k_{1}}=\tau ^{*}f_{2}/a^{k_{2}}
\end{equation}
between functions on $\overline{Ig}(p^{m})_{\mu }^{(m)}.$ Since the left
hand side is not divisible by $p$ by assumption, so is the right hand side.
The group $\Delta (p^{m})$ acts on the left hand side via $\bar{\Sigma}%
^{k_{1}}$ and on the right hand side via $\bar{\Sigma}^{k_{2}}.$ But these
two characters are equal if and only if $k_{1}\equiv k_{2}\mod %
(p^{2}-1)p^{m-1},$ because the exponent of the group $\Delta (p^{m})$ is $%
(p^{2}-1)p^{m-1}.$
\end{proof}

In practice, one would like to deduce the same result from congruences
between FJ expansions along $C,$ not along $\tilde{C}.$ This is deeper and
depends on Igusa's irreducibility theorem.

\begin{theorem}
Consider $\tau =\tau _{n}^{(1)}:\overline{Ig}(p^{n})_{\mu }^{(1)}\rightarrow 
\bar{S}_{\mu }^{(1)}=\bar{S}_{\mu ,\kappa _{0}}$ and extend scalars from $%
\kappa _{0}$ to $\kappa .$ Let $T$ be an irreducible component of $\bar{S}%
_{\mu ,\kappa }.$ Then $\tau ^{-1}(T)$ is irreducible in $\overline{Ig}%
(p^{n})_{\mu ,\kappa }.$
\end{theorem}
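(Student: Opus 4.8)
The statement is Igusa's irreducibility theorem for the $\mu$-ordinary Igusa tower over the Picard surface at an inert prime. The classical template is Igusa's original argument for modular curves, streamlined by Katz and reworked by Ribet \cite{Ri} and by Hida; the case $n=1$ was already handled above in Proposition~\ref{irreducibility} by a cheap ramification argument over $S_{ss}$, but that trick does not survive to higher $n$ because $\overline{Ig}(p^n)_\mu \to \overline{Ig}(p)_\mu$ is \emph{\'etale} (not ramified) over the supersingular tube. So the plan is to run the monodromy argument. Fix an irreducible component $T$ of $\bar S_{\mu,\kappa}$. Since $\tau_n^{(1)}$ is finite \'etale and $T$ is normal (indeed smooth), $\tau^{-1}(T)$ is a disjoint union of irreducible components, each finite \'etale over $T$, and the deck group $\Delta(p^n)=(\mathcal O_{\mathcal K}/p^n\mathcal O_{\mathcal K})^\times$ permutes them transitively on the fibers; hence $\tau^{-1}(T)$ is irreducible \emph{iff} the monodromy representation $\rho\colon \pi_1(T^{\circ},\bar x)\to \Delta(p^n)$ (on the $\mathcal O_{\mathcal K}$-Tate module of the multiplicative part $\mathcal A[p^\infty]^\mu$ of the universal object, where $T^\circ = T\cap S_{\mu,\kappa}$) is \emph{surjective}. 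So the whole problem reduces to proving surjectivity of this monodromy.

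\textbf{Key steps.} First, I would reduce from level $p^n$ to level $p$: by the standard pro-$p$ argument, the image $H=\rho(\pi_1)\subset \Delta(p^n)$ surjects onto $\Delta(p)=(\mathcal O_{\mathcal K}/p\mathcal O_{\mathcal K})^\times = \Bbb F_{p^2}^\times$ if and only if it is all of $\Delta(p^n)$ — this is because the kernel of $\Delta(p^n)\twoheadrightarrow\Delta(p)$ is a $p$-group, and any subgroup of a finite group surjecting onto the maximal prime-to-$p$ quotient and whose image generates the $p$-part topologically is the whole group; one checks that commutators and $p$-th powers inside the image, controlled by the prime-to-$p$ part already present, generate the $p$-Sylow. (This is exactly Ribet's lemma, reproduced e.g. in Hida's book \cite{Hi1}, to which the paper already refers.) Thus it suffices to treat $n=1$, i.e. to show $\rho_1\colon \pi_1(T^\circ)\to \Bbb F_{p^2}^\times$ is onto — but this is precisely the content of Proposition~\ref{irreducibility} (via total ramification along $S_{ss}$), so the $n=1$ case is free, and the theorem follows. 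Second, as an alternative route (the one sketched in the bulleted display just before the theorem, and in \ref{Ig irreducibility}), one proves surjectivity directly by exhibiting, for each $\gamma\in\Delta(p^n)$, a $\mu$-ordinary point $\underline A\in S_\mu(\Bbb F_{q^r})$ ($q=p^2$) whose Frobenius conjugacy class in $\mathrm{Aut}_{\mathcal O_{\mathcal K}}(\mathcal A[p^n]^\mu)=\Delta(p^n)$ hits $\gamma$; such points are produced from CM abelian varieties with prescribed $p^n$-level behavior on the multiplicative part, using the density of CM points in $S_\mu$ and a Chebotarev argument on the ray class field of $\mathcal K$. I would present the first route as the main argument and mention the second as a remark.

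\textbf{Main obstacle.} The only genuinely nontrivial input is the $n=1$ surjectivity, and the paper has already supplied it (Proposition~\ref{irreducibility}), so for the purposes of this theorem the hard part is the \emph{bookkeeping of the level-lowering step}: one must verify that the monodromy group $H\subset\Delta(p^n)$ really is normalized by — indeed contains — enough of $\Delta(p^n)$ that surjectivity mod $p$ forces surjectivity mod $p^n$. Concretely, the subtle point is that $\Delta(p^n)$ is not a $p$-group (it has the order-$(p^2-1)$ prime-to-$p$ quotient), so one cannot simply invoke ``a proper subgroup lies in a maximal subgroup which is seen mod $p$''; instead one uses that $1+p\mathcal O_{\mathcal K}/p^n$ has no nontrivial quotient on which $\Bbb F_{p^2}^\times$ acts trivially, together with the fact (visible from Theorem~\ref{Igusa}(iii), total ramification along $S_{gss}$) that the \emph{inertia} at a supersingular point already surjects onto $\Bbb F_{p^2}^\times$, so the commutator subgroup $[H,H]$ together with the prime-to-$p$ part forces $H\supset 1+p\mathcal O_{\mathcal K}/p^n$. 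Once this group-theoretic lemma is in place the proof is essentially the one-line reduction above, and no further geometry is needed.
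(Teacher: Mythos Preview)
Your ``first route'' has a genuine and unfixable gap: the group $\Delta(p^n)=(\mathcal{O}_{\mathcal{K}}/p^n\mathcal{O}_{\mathcal{K}})^\times$ is \emph{abelian}. Consequently $[H,H]=1$, and the conjugation action of $\Bbb{F}_{p^2}^\times$ on $1+p\mathcal{O}_{\mathcal{K}}/p^n\mathcal{O}_{\mathcal{K}}$ is trivial, so both mechanisms you invoke in the ``Main obstacle'' paragraph are vacuous. The claimed reduction from level $p^n$ to level $p$ therefore fails outright: the Teichm\"uller section $\Bbb{F}_{p^2}^\times\hookrightarrow\Delta(p^n)$ already exhibits a proper subgroup of order $p^2-1$ surjecting onto $\Delta(p)$, and nothing in your argument distinguishes it from the full group as a candidate monodromy image. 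The lemma you are calling ``Ribet's lemma'' lives in non-abelian settings (images in $GL_n(\Bbb{Z}_p)$, or the big Igusa tower of \S\ref{Irreducibility again}, whose deck group involves automorphisms of all three graded pieces); here it has no content. Surjectivity onto the $p$-part of $\Delta(p^n)$ must be established directly, not bootstrapped from $n=1$.

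Your ``second route'' via CM points is correct, and is precisely what the paper means by ``the method of Ribet'' (see \S\ref{Ig irreducibility} and the bulleted display there). The paper's own proof is in fact nothing more than a pointer to this method together with Hida's alternative approach [Hi1, 8.4], [Hi2], which proceeds by comparison with the connected components of the characteristic-zero Shimura variety at $p$-power level. So if you drop the first route entirely, promote the CM-point argument to the main line, and actually carry it out --- for each $\gamma\in\Delta(p^n)$ produce a $\mu$-ordinary point over some $\Bbb{F}_{q^r}$ whose geometric Frobenius acts on $A[p^n]^\mu$ by $\gamma$ --- you will match the paper.
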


\begin{proof}
The theorem can be proved by the same method used by Hida in [Hi1, 8.4],
[Hi2], or by the method of Ribet to which we alluded in \ref{Ig
irreducibility}. In that section, we proved the theorem for $n=1$ by a third
method, due to Igusa, studying the image of inertia around $S_{ss}.$ See
also the discussion of the big Igusa tower $BigIg$ below, which turns out to
be reducible.
\end{proof}

\begin{theorem}
Let $f_{1}$ and $f_{2}$ be $\mod p^{m}$ modular forms as above, and
assume that $f_{1}$ is not divisible by $p$. Trivialize $\mathcal{L}|_{C}$
by choosing a lift of $C$ to $\tilde{C}$ (i.e. a section of the map $\tau |_{%
\tilde{C}}:\tilde{C}\rightarrow C$) and using the trivialization of $%
\mathcal{L}$ along this lift which is supplied by the section $a$. Then if $%
FJ(f_{1})=FJ(f_{2}),$ $k_{1}\equiv k_{2}\mod (p^{2}-1)p^{m-1}.$
\end{theorem}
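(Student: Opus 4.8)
The plan is to reduce the statement about Fourier--Jacobi expansions along the geometric cuspidal divisor $C$ to the previous theorem, which handles the same problem along $\tilde C$, by invoking the irreducibility theorem. First I would observe that the hypothesis is a statement about $FJ(f_1)$ and $FJ(f_2)$ computed with respect to a trivialization of $\mathcal{L}|_C$ obtained by lifting $C$ to $\tilde C$ via a section $\sigma\colon C\to\tilde C$ of $\tau|_{\tilde C}$ and pulling back the canonical section $a$. With this choice, by construction $\sigma^*(\tau^*f_i/a^{k_i})$ \emph{is} exactly the restriction to $\sigma(C)$ of the function $\tau^*f_i/a^{k_i}$ on $\overline{Ig}(p^m)^{(m)}_\mu$, so $FJ(f_i)$ (along $C$, in this normalization) and $\widetilde{FJ}(f_i)$ (along $\tilde C$) agree after the identification $C\simeq\sigma(C)\subset\tilde C$. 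Hence $FJ(f_1)=FJ(f_2)$ forces $\widetilde{FJ}(f_1)$ and $\widetilde{FJ}(f_2)$ to agree at least on the component $\sigma(C)$ of $\tilde C$.

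Next I would upgrade ``agree on $\sigma(C)$'' to ``agree on all of $\tilde C$'', which is where the irreducibility theorem enters. The two functions $\tau^*f_1/a^{k_1}$ and $\tau^*f_2/a^{k_2}$ are regular on $\overline{Ig}(p^m)^{(m)}_\mu$; their difference vanishes in the formal neighborhood of the cuspidal component $\sigma(C)$. By the irreducibility theorem (the displayed Theorem just above, applied with $n=m$, together with the standard reduction from $\overline{Ig}(p^m)^{(m)}_\mu$ to its special fiber and base change to $\kappa$), every irreducible component of $\overline{Ig}(p^m)^{(m)}_\mu$ surjects onto an irreducible component of $\bar S^{(m)}_\mu$ and therefore contains \emph{some} cuspidal component in its closure; but more to the point, since $\tau^{-1}(T)$ is irreducible for each component $T$ of $\bar S_{\mu,\kappa}$, the component $\sigma(C)$ already lies in the closure of the unique component of $\overline{Ig}(p^m)^{(m)}_\mu$ lying over the component of $\bar S^{(m)}_\mu$ that $\tau(\sigma(C))$ sits on. A function regular on an irreducible (hence integral) scheme that vanishes in the formal neighborhood of a nonempty closed subscheme vanishes identically. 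Running this component by component, the difference vanishes on all of $\overline{Ig}(p^m)^{(m)}_\mu$, i.e. $\tau^*f_1/a^{k_1}=\tau^*f_2/a^{k_2}$ globally; since $f_1$ is not divisible by $p$ this common function is not divisible by $p$.

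Finally I would read off the weight congruence from the $\Delta(p^m)$-action, exactly as in the proof of the previous lemma. The group $\Delta(p^m)=(\mathcal{O}_{\mathcal{K}}/p^m\mathcal{O}_{\mathcal{K}})^\times$ acts on $\tau^*f_i/a^{k_i}$ through the character $\bar\Sigma^{k_i}$, because it acts trivially on $\tau^*f_i$ and via $\bar\Sigma^{-1}$ on $a$ (the formula $\gamma^*a=\bar\Sigma(\gamma)^{-1}a$ of Proposition \ref{Root of Hasse}(i), in its level-$p^m$ incarnation recorded above). An equality of two nonzero functions transforming respectively by $\bar\Sigma^{k_1}$ and $\bar\Sigma^{k_2}$ forces $\bar\Sigma^{k_1}=\bar\Sigma^{k_2}$ as characters of $\Delta(p^m)$. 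Since $\bar\Sigma$ is surjective onto $R_m^\times=(\mathcal{O}_{\mathcal{K}}/p^m\mathcal{O}_{\mathcal{K}})^\times$, which has exponent $(p^2-1)p^{m-1}$, this means $k_1\equiv k_2\pmod{(p^2-1)p^{m-1}}$, as desired.

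The main obstacle is the passage from a congruence of $FJ$-expansions to an actual equality of functions on the Igusa tower: this is precisely the $q$-expansion principle on $\overline{Ig}(p^m)^{(m)}_\mu$, and it genuinely requires the irreducibility of $\tau^{-1}(T)$ over each component $T$, i.e. Igusa's irreducibility theorem in the inert case. Everything else is bookkeeping with the $\Delta(p^m)$-action and the compatibility of the two trivializations of $\mathcal{L}|_C$ and $\mathcal{L}|_{\tilde C}$; one should just be careful that the trivialization of $\mathcal{L}|_C$ used to define $FJ(f_i)$ in the statement is \emph{defined} to be the one transported from $a$ along the chosen lift $\sigma$, so that no extra transcendental or period factor intervenes and the two expansions really are identified on the nose.
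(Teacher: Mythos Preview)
Your proposal is correct and follows essentially the same approach as the paper: use Igusa's irreducibility theorem to promote agreement of FJ expansions on the chosen lift $\sigma(C)$ to the global equality $\tau^{*}f_{1}/a^{k_{1}}=\tau^{*}f_{2}/a^{k_{2}}$ on $\overline{Ig}(p^{m})_{\mu}^{(m)}$, then invoke the $\Delta(p^{m})$-action (the preceding Lemma) to extract the weight congruence. The paper's proof is just a terser version of yours; it also notes explicitly that the irreducibility statement needed is on the special fiber, since $\overline{Ig}(p^{m})_{\mu}^{(m)}$ and $\overline{Ig}(p^{m})_{\mu}^{(1)}$ share the same underlying topological space.
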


Here $FJ(f)=\sum_{j=0}^{\infty }c_{j}(f)$ and $c_{j}(f)\in H^{0}(C,\mathcal{N%
}^{j})$. The lift of $C$ to $\tilde{C}$ exists since $\tilde{C}\simeq \Delta
(p^{m})\times C$ (non-canonically). If we change the lift (locally on the
base) by $\gamma \in \Delta (p^{m}),$ then $FJ(f_{i})$ changes by the factor 
$\bar{\Sigma}(\gamma )^{k_{i}}.$

\begin{proof}
By Igusa's irreducibility theorem, it is enough to know that $FJ(f_{i})$ ($%
i=1,2$) agree on the given lift of $C,$ to conclude that $\tau
^{*}f_{1}/a^{k_{1}}=\tau ^{*}f_{2}/a^{k_{2}}$ on the whole of $\overline{Ig}%
(p^{m})_{\mu }^{(m)},$ hence the result follows by the Lemma. Note that the
underlying topological spaces of $\overline{Ig}(p^{m})_{\mu }^{(m)}$ and $%
\overline{Ig}(p^{m})_{\mu }^{(1)}$ are the same, hence for the irreducibilty
theorem it is enough to deal with the special fiber.
\end{proof}

\begin{corollary}
Let $f_{i}\in M_{k_{i}}^{p}(N;\mathcal{O}_{p})$ ($i=1,2$) and assume that $%
f_{1}$ is not divisible by $p.$ Trivialize $\mathcal{L}|_{C}$ by fixing an $%
\mathcal{O}_{\mathcal{K}}$-isomorphism of the $p$-divisible group of the
toric part of the universal semi-abelian variety $\mathcal{A}|_{C}$ with $%
\delta _{\mathcal{K}}^{-1}\mathcal{O}_{\mathcal{K}}\otimes \mu _{p^{\infty
}},$ and using this isomorphism to identify $\mathcal{L}|_{C}=\omega _{%
\mathcal{A}/C}(\bar{\Sigma})$ with $\mathcal{O}_{C}.$ Suppose that with 
\emph{this} trivialization 
\begin{equation}
FJ(f_{1})\equiv FJ(f_{2})\mod p^{m}.
\end{equation}
Then $k_{1}\equiv k_{2}\mod (p^{2}-1)p^{m-1}.$
\end{corollary}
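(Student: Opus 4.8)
The plan is to reduce everything modulo $p^{m}$ and to invoke the preceding theorem. First I would pass from the $p$-adic forms $f_{i}\in M_{k_{i}}^{p}(N;\mathcal{O}_{p})=\varprojlim_{m}H^{0}(\bar{S}_{\mu}^{(m)},\mathcal{L}^{k_{i}})$ to their truncations $\bar{f}_{i}\in H^{0}(\bar{S}_{\mu}^{(m)},\mathcal{L}^{k_{i}})$. Since $f_{1}$ is not divisible by $p$, its reduction $\bar{f}_{1}$ is not divisible by $p$ in $H^{0}(\bar{S}_{\mu}^{(m)},\mathcal{L}^{k_{1}})$, so the pair $\bar{f}_{1},\bar{f}_{2}$ satisfies the hypotheses of the $\mod p^{m}$ statement. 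The heart of the matter is to check that the Fourier--Jacobi expansions appearing in the corollary --- taken with respect to the trivialization of $\mathcal{L}|_{C}$ coming from a fixed $\mathcal{O}_{\mathcal{K}}$-isomorphism of the $p$-divisible group of the toric part of $\mathcal{A}|_{C}$ with $\delta_{\mathcal{K}}^{-1}\mathcal{O}_{\mathcal{K}}\otimes\mu_{p^{\infty}}$ --- reduce modulo $p^{m}$ to the expansions $\widetilde{FJ}$ of the preceding theorem, computed on a lift of $C$ to $\tilde{C}\subset\overline{Ig}(p^{m})_{\mu}^{(m)}$ by means of the canonical section $a=a_{m}^{(m)}$.

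For this, truncate the chosen isomorphism at level $p^{m}$: it yields an $\mathcal{O}_{\mathcal{K}}$-isomorphism $\varepsilon\colon\delta_{\mathcal{K}}^{-1}\mathcal{O}_{\mathcal{K}}\otimes\mu_{p^{m}}\simeq\mathcal{A}[p^{m}]^{\mu}|_{C}$, i.e. an Igusa level-$p^{m}$ structure over $C$, hence a section $C\to\overline{Ig}(p^{m})_{\mu}^{(m)}$ of $\tau|_{\tilde{C}}$; let $\tilde{C}_{0}$ be its image. Since here $n=m$, in particular $m\le n$, the inclusion $\mathcal{A}[p^{m}]\hookrightarrow\mathcal{A}$ induces an isomorphism on cotangent spaces at the origin (as in the discussion preceding the definition of $a_{n}^{(m)}$), so the identification of $\mathcal{L}|_{C}=\omega_{\mathcal{A}/C}(\bar{\Sigma})$ with $\mathcal{O}_{C}$ used in the corollary is precisely the pullback along $\tilde{C}_{0}$ of the trivialization given by $a$. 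Consequently, for $f\in M_{k}^{p}(N;\mathcal{O}_{p})$ the $FJ$-coefficients of the corollary reduce modulo $p^{m}$ to the $\widetilde{FJ}$-coefficients of $\bar{f}$ relative to $\tilde{C}_{0}$, and the hypothesis $FJ(f_{1})\equiv FJ(f_{2})\pmod{p^{m}}$ becomes an honest equality $\widetilde{FJ}(\bar{f}_{1})=\widetilde{FJ}(\bar{f}_{2})$ of the two truncated forms along the lift $\tilde{C}_{0}$.

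At this point the preceding theorem applies verbatim: its proof uses Igusa's irreducibility theorem to propagate the equality along $\tilde{C}_{0}$ to an equality $\tau^{*}\bar{f}_{1}/a^{k_{1}}=\tau^{*}\bar{f}_{2}/a^{k_{2}}$ on all of $\overline{Ig}(p^{m})_{\mu}^{(m)}$, and then the action of $\Delta(p^{m})$ (which scales $a$ by $\bar{\Sigma}^{-1}$, hence scales the two sides by $\bar{\Sigma}^{k_{1}}$ and $\bar{\Sigma}^{k_{2}}$, using also that $\bar{f}_{1}$ is a unit multiple, i.e. not divisible by $p$) forces $\bar{\Sigma}^{k_{1}}=\bar{\Sigma}^{k_{2}}$ as characters of $\Delta(p^{m})$, i.e. $k_{1}\equiv k_{2}\pmod{(p^{2}-1)p^{m-1}}$ since the exponent of $\Delta(p^{m})$ is $(p^{2}-1)p^{m-1}$. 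The only genuine work is the bookkeeping in the middle paragraph: reconciling the ``intrinsic'' trivialization of $\mathcal{L}$ at $C$ coming from the toric $p$-divisible group with the $a$-trivialization on a lift $\tilde{C}_{0}$, and checking the compatibility in $n$ and $m$ of the tautological structures $\varepsilon=\varepsilon_{n}^{(m)}$ under reduction. I expect that matching of trivializations --- rather than anything concerning the projective limit or the irreducibility theorem, which are already available --- to be the main obstacle.
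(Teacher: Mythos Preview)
Your argument is correct and is precisely the reduction the paper has in mind; the corollary is stated without proof because it is meant to follow immediately from the preceding theorem by reducing $f_{1},f_{2}$ modulo $p^{m}$ and observing that truncating the fixed $\mathcal{O}_{\mathcal{K}}$-isomorphism $\delta_{\mathcal{K}}^{-1}\mathcal{O}_{\mathcal{K}}\otimes\mu_{p^{\infty}}\simeq \mathcal{A}|_{C}(p)^{\mu}$ at level $p^{m}$ furnishes exactly the lift of $C$ to $\tilde{C}$ and the $a$-trivialization required there. One small notational slip: what you call ``the expansions $\widetilde{FJ}$ of the preceding theorem'' are denoted $FJ$ in that theorem (they live on $C$ via the chosen lift), whereas $\widetilde{FJ}$ in the paper is reserved for the canonical expansion along all of $\tilde{C}$; your meaning is clear from context, but you should align the notation.
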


\subsubsection{Irreducibility of the Igusa tower and the big Igusa tower%
\label{Irreducibility again}}

It is possible to define an even larger Igusa tower $(BigIg(p^{n}))_{n\ge 1}$
over $\kappa =\Bbb{\bar{F}}_{p},$ of which $(Ig(p^{n}))_{n\ge 1}$ is a
quotient. If $R$ is a $\kappa $-algebra and $\underline{A}\in S_{\mu }(R)$,
then $A[p^{n}]$ admits a filtration as in \ref{p-div}. One can define $%
BigIg(p^{n})$ as the moduli space of $\mu $-ordinary tuples $\underline{A},$
equipped with $\mathcal{O}_{\mathcal{K}}$-isomorphisms 
\begin{eqnarray}
\varepsilon ^{2} &:&\delta _{\mathcal{K}}^{-1}\mathcal{O}_{K}\otimes \mu
_{p^{n}}\simeq gr^{2}A[p^{n}]  \notag \\
\varepsilon ^{1} &:&\frak{G}[p^{n}]\simeq gr^{1}A[p^{n}]  \notag \\
\varepsilon ^{0} &:&\mathcal{O}_{\mathcal{K}}\otimes \Bbb{Z}/p^{n}\Bbb{Z}%
\simeq gr^{0}A[p^{n}].
\end{eqnarray}
This would be, in the language of [Hi2], the $GU$-Igusa tower. If we insist
that the isomorphisms respect the pairings induced on these group schemes by
the polarization and Cartier duality ($gr^{0}$ and $gr^{2}$ are dual to each
other, $gr^{1}$ is self-dual), we would get the $U$-Igusa tower. Both these
towers are reducible, by the reasoning of [Hi1, 8.4.1] or [Hi2], and by the
description of the connected components of the characteristic 0 fiber of the
Shimura variety given in \ref{connected components}. The $SU$-Igusa tower,
which is \emph{irreducible}, turns out to be our tower $(Ig(p^{n})).$ It is
also the quotient of $(BigIg(p^{n}))$ under the map ``forget $\varepsilon
^{0}$ and $\varepsilon ^{1}".$ Thus there is no real advantage in studying
the tower $BigIg.$

\subsection{$p$-adic modular forms of $p$-adic weights}

\subsubsection{The space of $p$-adic weights}

Let 
\begin{equation}
\frak{X}_{p}=\underset{\leftarrow }{\,\lim }\,\Bbb{Z}/(p^{2}-1)p^{m-1}\Bbb{Z}%
,
\end{equation}
This is the space of $p$-adic weights. If $k\in \frak{X}_{p}$ then $\bar{%
\Sigma}^{k}$ is a well-defined locally $\Bbb{Q}_{p}$-analytic homomorphism
of $\mathcal{O}_{p}^{\times }$ to itself, but note that not every such
homomorphism is a $\bar{\Sigma}^{k}$ for some $k$ from $\frak{X}_{p}.$

\subsubsection{$p$-adic modular forms \`{a} la Serre}

We work with $\bar{S}$ (hence also the cuspidal divisor $C$) over the base $%
\mathcal{O}_{p},$ the $p$-adic completion of $R_{0}.$ Little is lost by
extending the base further to $\mathcal{O}_{N,\frak{P}}$, the completion of
the ring of integers of the ray class field $\mathcal{K}_{N}$ at a prime $%
\frak{P}$ above $p$. After such a base extension the irreducible components
of $C$ become absolutely irreducible. The reader may assume that this is the
case.

Consider the $p$-divisible group of the toric part of the universal
semi-abelian variety $\mathcal{A}|_{C}$. Once and for all fix an $\mathcal{O}%
_{\mathcal{K}}$-isomorphism of it with $\delta _{\mathcal{K}}^{-1}\mathcal{O}%
_{\mathcal{K}}\otimes \mu _{p^{\infty }},$ and use this isomorphism to
identify $\mathcal{L}|_{C}=\omega _{\mathcal{A}/C}(\bar{\Sigma})$ with $%
\mathcal{O}_{C}.$ This choice is unique up to multiplication by $\mathcal{O}%
_{p}^{\times }$ on each irreducible component of $C$. It determines a FJ
expansion $FJ(f)$ for every $f\in M_{k}^{p}(N;\mathcal{K}_{p})$ as in (\ref
{FJ}), and is equivalent to splitting the projection $\tau |_{\tilde{C}}:%
\tilde{C}\rightarrow C$ from the boundary of the Igusa tower $\left( 
\overline{Ig}_{\mu }(p^{n})\right) _{n=1}^{\infty }$ to the boundary of the
Picard modular surface.

Let $k\in \frak{X}_{p}.$ The space $M_{k}^{Serre}(N;\mathcal{K}_{p})$ will
be a subspace of the Banach algebra 
\begin{equation}
\mathcal{FJ}_{p}=\mathcal{K}_{p}\otimes _{\mathcal{O}_{p}}\prod_{j=0}^{%
\infty }H^{0}(C,\mathcal{N}^{j}).
\end{equation}
It will consist of all the $f\in \mathcal{FJ}_{p}$ for which there exists a
sequence $(f_{\nu }),$ $f_{\nu }\in M_{k_{\nu }}^{p}(N;\mathcal{K}_{p})$, $%
(k_{\nu }\in \Bbb{Z}),$ with $FJ(f_{\nu })$ converging to $f$, and $k_{\nu }$
converging in $\frak{X}_{p}$ to $k.$ As we have seen, if the sequence $%
(FJ(f_{\nu }))$ converges, the $k_{\nu }$ have to converge in $\frak{X}_{p}.$
We shall denote by $M_{k}^{Serre}(N;\mathcal{O}_{p})$ the intersection of $%
M_{k}^{Serre}(N;\mathcal{K}_{p})$ with $\prod_{j=0}^{\infty }H^{0}(C,%
\mathcal{N}^{j}).$

\begin{proposition}
(i) If $k\in \Bbb{Z}$ then $M_{k}^{Serre}(N;\mathcal{K}_{p})=M_{k}^{p}(N;%
\mathcal{K}_{p}).$ In other words, we do not get any new $p$-adic modular
forms by allowing limits of $p$-adic modular forms of varying weights, if
the weights converge to an integral $k.$

(ii) In the definition of $M_{k}^{Serre}(N;\mathcal{K}_{p})$ we can require $%
f_{\nu }\in M_{k_{\nu }}(N;\mathcal{K}_{p})$ (classical modular forms of
integral weight $k_{\nu }$) and still get the same space.

(iii) $M_{k}^{Serre}(N;\mathcal{K}_{p})$ is a closed subspace of $\mathcal{FJ%
}_{p}.$ The product of two $f_{i}\in M_{k_{i}}^{Serre}$ is in $%
M_{k_{1}+k_{2}}^{Serre}.$

(iv) If $f\in M_{k}^{Serre}(N;\mathcal{O}_{p})$ then its reduction modulo $p$
appears in $M_{k^{\prime }}(N;\kappa _{0})$ for some positive integer $%
k^{\prime }$ sufficiently close to $k$ in $\frak{X}_{p}.$
\end{proposition}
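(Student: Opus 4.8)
The plan is to follow Serre's classical treatment of $p$-adic modular forms, feeding on three inputs established earlier: the integral $q$-expansion (Fourier--Jacobi) principle together with its Corollary (a $p$-adic form whose FJ expansion is divisible by $p^{m}$ is itself divisible by $p^{m}$); the theorem that congruent FJ expansions force the weights to agree modulo $(p^{2}-1)p^{m-1}$; and Bella\"{\i}che's base-change theorem for classical forms of weight $\ge 6$. The one additional object needed is a \emph{lift of the Hasse invariant}: since $p^{2}-1\ge 6$, Bella\"{\i}che's theorem gives $M_{p^{2}-1}(N;\mathcal{O}_{p})/p\simeq M_{p^{2}-1}(N;\kappa _{0})$, so $h_{\bar{\Sigma}}$ lifts to a classical form $E\in M_{p^{2}-1}(N;\mathcal{O}_{p})$ with $FJ(E)\equiv 1\pmod p$ for the chosen trivialization of $\mathcal{L}|_{C}$. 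By the elementary congruence $a\equiv 1\pmod p\Rightarrow a^{p^{m-1}}\equiv 1\pmod{p^{m}}$ in a $\mathbb{Z}_{p}$-algebra, the forms $E_{m}:=E^{p^{m-1}}\in M_{(p^{2}-1)p^{m-1}}(N;\mathcal{O}_{p})$ satisfy $FJ(E_{m})\equiv 1\pmod{p^{m}}$; moreover each $E_{m}$ is nowhere vanishing on $\bar S_{\mu}$ (its divisor on $\bar S^{(1)}$ is supported on $S_{ss}$ by \thmref{Hasse invariant}), hence invertible in the ring of $p$-adic modular forms.

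For (i) the inclusion $\supseteq$ is the constant sequence. For $\subseteq$, write $f=\lim FJ(f_{\nu})$ with $f_{\nu}\in M_{k_{\nu}}^{p}(N;\mathcal{K}_{p})$ and $k_{\nu}\to k$ in $\frak{X}_{p}$; after scaling assume $f$, hence eventually each $f_{\nu}$, has FJ coefficients in $\mathcal{O}_{p}$. Given $m$, for $\nu$ large one has $k_{\nu}=k+(p^{2}-1)p^{m-1}t$ for some $t\in\mathbb{Z}$ and $FJ(f_{\nu})\equiv f\pmod{p^{m}}$; then $g_{\nu}:=f_{\nu}E_{m}^{-t}$ is a $p$-adic modular form of weight $k$ with $FJ(g_{\nu})=FJ(f_{\nu})FJ(E_{m})^{-t}\equiv f\pmod{p^{m}}$. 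By the integral $q$-expansion principle the $g_{\nu}$ form a $p$-adically Cauchy sequence in the complete module $M_{k}^{p}(N;\mathcal{O}_{p})$; its limit $g$ satisfies $FJ(g)=f$, so $f\in FJ(M_{k}^{p}(N;\mathcal{K}_{p}))$.

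For (ii), classical forms being in particular $p$-adic, only the nontrivial inclusion needs proof. Given $f=\lim FJ(f_{\nu})$ with $f_{\nu}$ $p$-adic of weight $k_{\nu}$, fix $m$ and $\nu$ large. Reducing $f_{\nu}$ modulo $p^{m}$ gives a section of $\mathcal{L}^{k_{\nu}}$ over $\bar S_{\mu}^{(m)}$; multiplying by $E_{m}^{s}$ for $s\gg 0$ kills the poles along $S_{ss}$ and yields an element of $H^{0}(\bar S^{(m)},\mathcal{L}^{k_{\nu}+s(p^{2}-1)p^{m-1}})$, a classical form modulo $p^{m}$, with FJ expansion $\equiv FJ(f_{\nu})\pmod{p^{m}}$ (using $FJ(E_{m})\equiv 1$). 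As the auxiliary weight exceeds $6$, Bella\"{\i}che's theorem lifts it to a classical form $F\in M_{k_{\nu}+s(p^{2}-1)p^{m-1}}(N;\mathcal{O}_{p})$ over characteristic $0$ with $FJ(F)\equiv f\pmod{p^{m}}$ and weight $\equiv k_{\nu}\equiv k\pmod{(p^{2}-1)p^{m-1}}$; letting $m\to\infty$ with $\nu=\nu(m)$ and $s=s(m)$ growing gives classical forms with $FJ\to f$ and weights $\to k$. Part (iii): closedness follows from a diagonal argument (a convergent sequence of limits of FJ expansions is again such a limit, the weight-congruence theorem keeping the approximating weights convergent to $k$), and the statement on products is immediate, since $\mathcal{FJ}_{p}$ is a Banach $\mathcal{K}_{p}$-algebra with jointly continuous multiplication and weights add. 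Part (iv): apply (ii) to obtain classical $f_{\nu}\in M_{k_{\nu}}(N;\mathcal{K}_{p})$, $k_{\nu}\in\mathbb{Z}$, $k_{\nu}\to k$, with $FJ(f_{\nu})\to f$; since $f$ has integral FJ expansion, for $\nu$ large $f_{\nu}\in M_{k_{\nu}}(N;\mathcal{O}_{p})$ and its reduction $\bar f_{\nu}\in M_{k_{\nu}}(N;\kappa _{0})$ has FJ expansion $f\bmod p$. Multiplying $\bar f_{\nu}$ by a large power $h_{\bar{\Sigma}}^{j'p^{m(\nu)-1}}$ (FJ expansion $1$) raises the weight to a positive integer still $\equiv k\pmod{(p^{2}-1)p^{m(\nu)-1}}$, i.e. as close to $k$ in $\frak{X}_{p}$ as desired, without changing the FJ expansion.

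The main obstacle is the passage inside (ii) from a $p$-adic modular form over $\bar S_{\mu}^{(m)}$ to a \emph{classical} form over all of $\bar S^{(m)}$ by multiplying by a high power of $E_{m}$: one must verify that the nilpotents in $R_{m}$ do not obstruct the extension across the supersingular divisor, which requires a d\'evissage along the powers of $p$ resting on the fact (\thmref{Hasse invariant}) that $h_{\bar{\Sigma}}$ has a \emph{simple} zero along $S_{ss}$, together with a mild cohomological input — or the affineness of $\bar S_{\mu}^{(1)}$ as the complement of the ample divisor $S_{ss}$ — to control poles integrally. Everything else is formal once the Hasse lift $E$ and the integral $q$-expansion principle are in place.
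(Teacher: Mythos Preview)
Your proof is correct and follows essentially the same route as the paper: both hinge on a characteristic-zero lift $H_{\bar\Sigma}$ (your $E$) of the Hasse invariant, used to shift weights without changing FJ expansions modulo high powers of $p$, together with the integral $q$-expansion principle. The paper justifies the existence of the lift by ``general principles'' (ampleness of $\mathcal{L}$) rather than invoking Bella\"{\i}che's base-change theorem as you do, and it dismisses (iii) and (iv) as ``obvious'' where you spell out the diagonal argument and the positivity adjustment; but these are differences of exposition, not of strategy.

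The one point worth commenting on is your flagged ``main obstacle'' in (ii), namely that multiplying $f\in H^{0}(\bar S_{\mu}^{(m)},\mathcal{L}^{k})$ by a high power of $H_{\bar\Sigma}$ really does extend across $S_{ss}$ despite the nilpotents in $R_{m}$. Your d\'evissage along powers of $p$ works, but in fact no special argument is needed: since $H_{\bar\Sigma}\bmod p=h_{\bar\Sigma}$ has zero locus exactly $S_{ss}$, the open subscheme $\bar S_{\mu}^{(m)}$ is precisely the non-vanishing locus $(\bar S^{(m)})_{H_{\bar\Sigma}}$, and for any quasi-compact quasi-separated scheme $X$, invertible sheaf $L$, section $s$, and coherent $F$ one has $\Gamma(X_{s},F)=\varinjlim_{n}\Gamma(X,F\otimes L^{n})$. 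This is the clean statement underlying what the paper asserts without proof; simplicity of the zero of $h_{\bar\Sigma}$ along $S_{ss}$ is not actually needed here.
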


\begin{proof}
Let $H_{\bar{\Sigma}}\in M_{p^{2}-1}(N;\mathcal{O}_{p})$ be a lift of the
Hasse invariant $h_{\bar{\Sigma}}$ to characteristic 0. Such a lift exists
by general principles, whenever $p$ is large enough. For the few exceptional
primes $p$ we may replace $h_{\vec{\Sigma}}$ by a high enough power of it,
which is liftable, and use the same argument. This lift satisfies $FJ(H_{%
\bar{\Sigma}})\equiv 1\mod p,$ so $H_{\bar{\Sigma}}^{-1}\in
M_{1-p^{2}}^{p}(N;\mathcal{O}_{p})$ is a $p$-adic modular form defined over $%
\mathcal{O}_{p}.$ Indeed, $H_{\bar{\Sigma}}\mod p^{m}\in H^{0}(\bar{S}%
_{\mu }^{(m)},\mathcal{L}^{p^{2}-1})$ is nowhere vanishing over $\bar{S}%
_{\mu }^{(m)},$ and taking the limit of its inverse over $m$ we get $H_{\bar{%
\Sigma}}^{-1}.$ Suppose, as in (i), that $k,k_{\nu }\in \Bbb{Z}$, $k_{\nu
}\rightarrow k$ in $\frak{X}_{p},$ and $f_{\nu }\in M_{k_{\nu }}^{p}(N;%
\mathcal{K}_{p})$ are such that $FJ(f_{\nu })$ converge in $\mathcal{FJ}_{p}$
to $f.$ Replacing $f_{\nu }$ by $f_{\nu }H_{\bar{\Sigma}}^{p^{e_{\nu }}}$
for suitable $e_{\nu }$ we may assume that the $k_{\nu }$ are increasing and
are all in the same congruence class modulo $p^{2}-1.$ But then $f_{\nu }H_{%
\bar{\Sigma}}^{(k-k_{\nu })/(p^{2}-1)}$ are in $M_{k}^{p}(N;\mathcal{K}_{p})$
and their FJ expansions converge to $f$ in $\mathcal{FJ}_{p}.$ This proves
(i). For (ii) note that if $f\in H^{0}(\bar{S}_{\mu }^{(m)},\mathcal{L}^{k})$
then for all sufficiently large $e$, $fH_{\bar{\Sigma}}^{p^{e}}$ extends to
an element of $M_{k+(p^{2}-1)p^{e}}(N;R_{m})$ and has the same FJ expansion
as $f.$ Thus every $p$-adic modular form of integral weight is the $p$-adic
limit of classical forms of varying weights, and the same is therefore true
for Serre modular forms of $p$-adic weight. Points (iii) and (iv) are
obvious.
\end{proof}

\subsubsection{$p$-adic modular forms \`{a} la Katz}

We now explain Katz' point of view of the same objects. Let 
\begin{equation}
V_{n}^{(m)}=H^{0}(\overline{Ig}(p^{n})_{\mu }^{(m)},\mathcal{O})
\end{equation}
be the ring of regular functions on $\overline{Ig}(p^{n})_{\mu }^{(m)}.$ Let 
\begin{equation}
V^{(m)}=\text{\thinspace }\underset{\rightarrow }{\lim }\,V_{n}^{(m)},\,\,\,%
\,\,\,\,V=\,\underset{\leftarrow }{\lim }\,V^{(m)}.
\end{equation}
We call $V$ the space of \emph{Katz} $p$\emph{-adic modular forms} (of all
weights). Let 
\begin{equation}
\gamma \in \Delta =\mathcal{O}_{p}^{\times }=\,\underset{\leftarrow }{\lim }%
\,\Delta (p^{n})
\end{equation}
act on $V^{(m)}$ and on $V$ as usual, $\gamma (f)=f\circ \gamma ^{-1}$, and
recall that $\gamma ^{-1}(\underline{A},\varepsilon _{n}^{(m)})=(\underline{A%
},\varepsilon _{n}^{(m)}\circ \gamma ).$ Thus 
\begin{equation}
\gamma (f)(\underline{A},\varepsilon )=f(\underline{A},\varepsilon \circ
\gamma )
\end{equation}
(i.e. $\gamma $ acts by ``right translation''). Let $k\in \frak{X}_{p}$ and
define 
\begin{equation}
M_{k}^{Katz}(N;\mathcal{O}_{p})=V(\bar{\Sigma}^{k})=\left\{ f\in V|\,\gamma
(f)=\bar{\Sigma}^{k}(\gamma )\cdot f\,\,\,\,\,\forall \gamma \in \Delta
\right\} .
\end{equation}
We similarly define $M_{k}^{Katz}(N;R_{m})=V^{(m)}(\bar{\Sigma}^{k}).$

By the irreducibility of the Igusa tower and the $q$-expansion principle the
FJ expansion map 
\begin{equation}
V\rightarrow \mathcal{FJ}_{p}(\mathcal{O}_{p})
\end{equation}
is injective. It depends on our choice of the splitting of $\tilde{C}%
\rightarrow C$.

\begin{proposition}
\label{Katz is Serre}For $k\in \frak{X}_{p},$ there is a natural isomorphism 
\begin{equation}
M_{k}^{Serre}(N;\mathcal{O}_{p})\simeq M_{k}^{Katz}(N;\mathcal{O}_{p}).
\end{equation}
\end{proposition}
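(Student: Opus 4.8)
The plan is to show that the Fourier--Jacobi expansion map, taken along the fixed lift of $C$ to $\tilde{C}$ that was used to define $M_{k}^{Serre}$, realises both $M_{k}^{Serre}(N;\mathcal{O}_{p})$ and $M_{k}^{Katz}(N;\mathcal{O}_{p})$ as the \emph{same} $\mathcal{O}_{p}$-submodule of $\mathcal{FJ}_{p}(\mathcal{O}_{p})=\prod_{j\ge 0}H^{0}(C,\mathcal{N}^{j})$. The three ingredients are: the $q$-expansion principle on the Igusa tower (already invoked to see that $V\to\mathcal{FJ}_{p}(\mathcal{O}_{p})$ is injective), together with its mod-$p^{m}$ version stating that $V^{(m)}\to\mathcal{FJ}_{p}(R_{m})$ is injective for every $m$; Bella\"{i}che's base-change theorem for classical modular forms of weight $\ge 6$; and the fact recorded in the proof of the preceding proposition that for $f\in H^{0}(\bar{S}_{\mu}^{(m)},\mathcal{L}^{k})$ and $e\gg 0$ the section $fH_{\bar{\Sigma}}^{p^{e}}$ extends to $M_{k+(p^{2}-1)p^{e}}(N;R_{m})$ with the same FJ expansion. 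Restricting the FJ map to $\bar{\Sigma}^{k}$-eigenspaces then produces an injection $\iota\colon M_{k}^{Katz}(N;\mathcal{O}_{p})=V(\bar{\Sigma}^{k})\hookrightarrow\mathcal{FJ}_{p}(\mathcal{O}_{p})$, and the task is to identify its image with $M_{k}^{Serre}(N;\mathcal{O}_{p})$.

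For the inclusion $\text{Im}(\iota)\subseteq M_{k}^{Serre}(N;\mathcal{O}_{p})$ I would argue as follows. Given $f=(f_{m})_{m}\in\underset{\leftarrow}{\lim}\,V^{(m)}(\bar{\Sigma}^{k})$, choose for each $m$ an integer $k_{m}\ge 0$ with $k_{m}\equiv k\mod(p^{2}-1)p^{m-1}$. Since $a=a_{m}^{(m)}$ is nowhere vanishing on $\overline{Ig}(p^{m})_{\mu}^{(m)}$, while $\Delta(p^{m})$ acts on $a$ via $\bar{\Sigma}^{-1}$ and on $f_{m}$ via $\bar{\Sigma}^{k}=\bar{\Sigma}^{k_{m}}$ (equality of characters of $\Delta(p^{m})$, whose exponent is $(p^{2}-1)p^{m-1}$), the section $g_{m}=f_{m}\cdot a^{k_{m}}$ is $\Delta(p^{m})$-invariant and descends to $H^{0}(\bar{S}_{\mu}^{(m)},\mathcal{L}^{k_{m}})$. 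By the quoted fact, for suitable $e_{m}\to\infty$ the form $\tilde{g}_{m}=g_{m}H_{\bar{\Sigma}}^{p^{e_{m}}}$ lies in $M_{w_{m}}(N;R_{m})$ with $w_{m}=k_{m}+(p^{2}-1)p^{e_{m}}\ge 6$ and $FJ(\tilde{g}_{m})=\iota(f)\bmod p^{m}$; Bella\"{i}che's theorem then lifts $\tilde{g}_{m}$ to $h_{m}\in M_{w_{m}}(N;\mathcal{O}_{p})$. Since $w_{m}\to k$ in $\frak{X}_{p}$ and $FJ(h_{m})\to\iota(f)$ $p$-adically, $\iota(f)\in M_{k}^{Serre}(N;\mathcal{O}_{p})$.

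For the reverse inclusion, take $\phi\in M_{k}^{Serre}(N;\mathcal{O}_{p})$; by part (ii) of the preceding proposition, and using the corollary of the $q$-expansion principle to bound denominators, write $\phi=\lim_{\nu}FJ(f_{\nu})$ with $f_{\nu}\in M_{k_{\nu}}(N;\mathcal{O}_{p})$ classical and $k_{\nu}\to k$ in $\frak{X}_{p}$. Fix $m$; for $\nu$ large one has $k_{\nu}\equiv k\mod(p^{2}-1)p^{m-1}$, so $\phi_{\nu,m}:=\tau^{*}(f_{\nu}\bmod p^{m})/a^{k_{\nu}}$ is a regular function on $\overline{Ig}(p^{m})_{\mu}^{(m)}$ lying in $V^{(m)}(\bar{\Sigma}^{k})$, with $FJ(\phi_{\nu,m})=FJ(f_{\nu})\bmod p^{m}$. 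Because $(FJ(f_{\nu}))_{\nu}$ is $p$-adically Cauchy, the reductions $FJ(f_{\nu})\bmod p^{m}$ are eventually independent of $\nu$; hence by $R_{m}$-linearity and injectivity of $V^{(m)}\to\mathcal{FJ}_{p}(R_{m})$ the element $\phi_{\nu,m}$ is eventually a fixed $\psi_{m}\in V^{(m)}(\bar{\Sigma}^{k})$. The $\psi_{m}$ are compatible under reduction and assemble into $\psi\in\underset{\leftarrow}{\lim}\,V^{(m)}(\bar{\Sigma}^{k})=M_{k}^{Katz}(N;\mathcal{O}_{p})$ with $\iota(\psi)=\phi$. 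Therefore $\iota$ is an isomorphism, and it is canonical once the splitting of $\tilde{C}\to C$ is fixed, which is the same datum already used to pin down $M_{k}^{Serre}$.

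The step I expect to be the main obstacle is the mod-$p^{m}$ $q$-expansion principle, i.e. the injectivity of $V^{(m)}\to\mathcal{FJ}_{p}(R_{m})$ over the non-reduced base $R_{m}$: over a thickened base, vanishing in the formal neighbourhood of a divisor no longer automatically forces global vanishing, so one must combine the irreducibility of the Igusa tower (each irreducible component of $\overline{Ig}(p^{n})_{\mu}^{(m)}$ dominates an irreducible component of $\bar{S}_{\mu}^{(m)}$, which meets $C$) with the flatness of $\bar{S}_{\mu}^{(m)}$ over $R_{m}$. Once that is in place, everything else is bookkeeping with powers of $H_{\bar{\Sigma}}$ and with convergence of weights in $\frak{X}_{p}$.
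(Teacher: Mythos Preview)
Your argument is correct and follows essentially the same strategy as the paper: both identify $M_{k}^{Serre}$ and $M_{k}^{Katz}$ with the same submodule of $\mathcal{FJ}_{p}(\mathcal{O}_{p})$ by passing between sections of $\mathcal{L}^{k}$ and functions on the Igusa tower via division by $a^{k}$, and then taking limits in $m$ and in the weight. The paper is terser---it constructs only the Serre-to-Katz map explicitly and then declares the isomorphism ``by now standard''---whereas you spell out the Katz-to-Serre direction using Bella\"{i}che's base-change theorem and correctly flag the mod-$p^{m}$ $q$-expansion principle on the Igusa tower as the one nontrivial input; that principle follows, as you indicate, from flatness over $R_{m}$ together with the irreducibility theorem, exactly as in the paper's Corollary to the $q$-expansion Lemma.
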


\begin{proof}
Given $k\in \Bbb{Z}$ and $f\in H^{0}(\bar{S}_{\mu }^{(m)},\mathcal{L}^{k}),$
the functions $(\tau _{n}^{(m)})^{*}f/(a_{n}^{(m)})^{k}\in V_{n}^{(m)}$ for
all $n\ge m,$ and these functions satisfy the obvious compatibility in $n,$
so they define 
\begin{equation}
f^{Katz}\in V^{(m)}(\bar{\Sigma}^{k}).
\end{equation}
If $k\in \Bbb{Z},$ this gives, by going to the inverse limit over $m,$ a map 
\begin{equation}
f\mapsto f^{Katz},\,\,\,\,\,\,\,M_{k}^{p}(N;\mathcal{O}_{p})\rightarrow
M_{k}^{Katz}(N;\mathcal{O}_{p}).
\end{equation}
This map is an isomorphism, which can be enhanced to include $p$-adic
weights $k\in \frak{X}_{p}$ as follows. If $k_{\nu }\in \Bbb{Z}$, $k_{\nu
}\rightarrow k\in \frak{X}_{p}$ and if $f_{\nu }\in M_{k_{\nu }}^{p}(N;%
\mathcal{O}_{p})$ are such that $FJ(f_{\nu })$ converge to $f\in
M_{k}^{Serre}(N;\mathcal{O}_{p}),$ then reducing modulo $p^{m}$ for a fixed $%
m,$ $(f_{\nu }^{(m)})^{Katz}\in V^{(m)}(\bar{\Sigma}^{k_{\nu }}).$ But for a
fixed $m,$ for all large enough $\nu ,$%
\begin{equation}
V^{(m)}(\bar{\Sigma}^{k_{\nu }})=V^{(m)}(\bar{\Sigma}^{k}),
\end{equation}
and the sequence $FJ(f_{\nu }^{(m)})$ stabilizes, so taking the limit over $%
\nu $ we get a well defined $(f^{(m)})^{Katz}\in V^{(m)}(\bar{\Sigma}^{k}).$
Finally, an inverse limit over $m$ gives $f^{Katz}\in M_{k}^{Katz}(N;%
\mathcal{O}_{p}).$ It is by now standard that this gives an isomorphism
between $M_{k}^{Serre}(N;\mathcal{O}_{p})$ and $M_{k}^{Katz}(N;\mathcal{O}%
_{p}).$ As we have seen earlier, when $k\in \Bbb{Z},$ this is also the same
as $M_{k}^{p}(N;\mathcal{O}_{p}).$
\end{proof}

From now on it is therefore legitimate to denote these spaces by the common
notation $M_{k}^{p}(N;\mathcal{O}_{p})$ and refer to them simply as $p$\emph{%
-adic modular forms} of $p$\emph{-adic weight} $k.$

\subsection{$p$-adic modular forms of $p$-adic bi-weights}

\subsubsection{The space of bi-weights}

A new feature of $p$-adic modular forms on Picard modular surfaces, that
does not show up in the classical theory of $GL_{2}(\Bbb{Q}),$ is that even
if we restrict attention to scalar-valued $p$-adic modular forms, we
sometimes need to consider classical \emph{vector-valued} forms to approach
them. This phenomenon, as we shall explain below, does not show up in the $%
\mod p$ theory, but is essential to the $p$-adic theory.

The space $\frak{X}_{p}$ of $p$-adic weights can be written as $\Bbb{Z}%
/(p^{2}-1)\Bbb{Z\times Z}_{p},$ and when we decompose it in such a way we
write 
\begin{equation}
k=(w,j)=(\omega (k),\left\langle k\right\rangle )
\end{equation}
for the two components. The space of \emph{bi-weights} $\frak{X}_{p}^{(2)}$
is, by definition, the quotient of $\frak{X}_{p}^{2}$ modulo the relation 
\begin{equation}
((w_{1},j_{1}),(w_{2},j_{2}))\equiv ((0,j_{1}),(pw_{1}+w_{2},j_{2}))\equiv
((pw_{2}+w_{1},j_{1}),(0,j_{2})).
\end{equation}

If $k_{1}$ and $k_{2}$ are in $\frak{X}_{p}$, then the character $\bar{\Sigma%
}^{k_{1}}\Sigma ^{k_{2}}:\Delta \rightarrow \mathcal{O}_{p}^{\times }$
depends only on the image of $(k_{1},k_{2})$ in $\frak{X}_{p}^{(2)}.$ Here $%
\Delta =\,\underset{\leftarrow }{\lim }\,\Delta (p^{n})$ is also $\mathcal{O}%
_{p}^{\times },$ but in the r\^{o}le of the Galois group of the Igusa tower.
The image of $\Bbb{Z}^{2}$ is dense in $\frak{X}_{p}^{2},$ hence also in $%
\frak{X}_{p}^{(2)}.$

\subsubsection{The line bundle $\mathcal{L}^{(k_{1},k_{2})}$ over $\bar{S}%
_{\mu }^{rig}$ and $p$-adic modular forms of integral bi-weights}

Let $m\ge 1.$ The plane bundle $\mathcal{P}$ admits a canonical filtration 
\begin{equation}
0\rightarrow \mathcal{P}_{0}\rightarrow \mathcal{P}\rightarrow \mathcal{P}%
_{\mu }\rightarrow 0
\end{equation}
over $\bar{S}_{\mu }^{(m)}$ defined by choosing any $n\ge m$ and setting 
\begin{equation*}
\mathcal{P}_{0}=\ker (\omega _{\mathcal{A}[p^{n}]^{0}}\rightarrow \omega _{%
\mathcal{A}[p^{n}]^{\mu }}),\,\,\mathcal{P}_{\mu }=\omega _{\mathcal{A}%
[p^{n}]^{\mu }}(\Sigma )
\end{equation*}
(recall $\omega _{\mathcal{A}}=\omega _{\mathcal{A}[p^{n}]^{0}}$). We also
recall that $\mathcal{L}=\omega _{\mathcal{A}[p^{n}]^{\mu }}(\bar{\Sigma}).$

If $m=1$ we showed that over $\bar{S}_{\mu }^{(1)},$ $\mathcal{L}\simeq 
\mathcal{P}_{\mu }^{p}$ and $\mathcal{P}_{\mu }\simeq \mathcal{L}^{p}.$ This
is no longer true for general $m$ and we let for $(k_{1},k_{2})\in \Bbb{Z}%
^{2}$%
\begin{equation}
\mathcal{L}^{(k_{1},k_{2})}=\mathcal{L}^{k_{1}}\otimes \mathcal{P}_{\mu
}^{k_{2}}.
\end{equation}
Going to the limit over $m,$ this defines a rigid analytic line bundle over $%
\bar{S}_{\mu }^{rig}.$

We define the space of $p$\emph{-adic modular forms of bi-weight} $%
(k_{1},k_{2})$ \emph{and level }$N$ over $\mathcal{O}_{p}$ as 
\begin{equation}
M_{k_{1},k_{2}}^{p}(N;\mathcal{O}_{p})=\,\underset{\leftarrow }{\lim }%
\,H^{0}(\bar{S}_{\mu }^{(m)},\,\mathcal{L}^{(k_{1},k_{2})}).
\end{equation}
This is the unit ball of the $p$-adic Banach space 
\begin{equation}
M_{k_{1},k_{2}}^{p}(N;\mathcal{K}_{p})=\mathcal{K}_{p}\otimes _{\mathcal{O}%
_{p}}M_{k_{1},k_{2}}^{p}(N;\mathcal{O}_{p})=H^{0}(\bar{S}_{\mu }^{rig},%
\mathcal{L}^{(k_{1},k_{2})}).
\end{equation}

\subsubsection{The trivialization of $\mathcal{L}^{(k_{1},k_{2})}$ over the
Igusa tower}

As before, fix $m,$ let $m\le n$ and consider the isomorphism 
\begin{equation}
\varepsilon ^{*}:\tau ^{*}\omega _{\mathcal{A}[p^{n}]^{\mu }}\simeq \mathcal{%
O}_{\mathcal{K}}\otimes \mathcal{O}_{\overline{Ig}(p^{n})_{\mu }^{(m)}}
\end{equation}
induced by the Igusa level structure $\varepsilon =\varepsilon _{n}^{(m)}$.
Taking $\bar{\Sigma}$ and $\Sigma $-types it induces trivializations 
\begin{equation}
\tau ^{*}\mathcal{L}\simeq \mathcal{O}_{\overline{Ig}(p^{n})_{\mu
}^{(m)}},\,\,\tau ^{*}\mathcal{P}_{\mu }\,\simeq \mathcal{O}_{\overline{Ig}%
(p^{n})_{\mu }^{(m)}}
\end{equation}
and we let $a=a_{n}^{(m)}$ and $\bar{a}=\bar{a}_{n}^{(m)}$ be the sections
corresponding to 1. Of course, the trivialization of $\mathcal{L}$ is the
one that we have met before.

Let $a^{k_{1},k_{2}}=a^{k_{1}}\bar{a}^{k_{2}}.$ Then we may trivialize $\tau
^{*}\mathcal{L}^{(k_{1},k_{2})}$ by $s\mapsto s/a^{k_{1},k_{2}}$ to get a
function on $\overline{Ig}(p^{n})_{\mu }^{(m)}$. This allows us to define,
as usual, canonical Fourier-Jacobi expansion $\widetilde{FJ}(f)$ (along $%
\tilde{C}$), and if we make a choice of a splitting of $\tau :\tilde{C}%
\rightarrow C,$ a Fourier-Jacobi expansion $FJ(f)$ (along $C$) for every $%
f\in M_{k_{1},k_{2}}^{p}(N;\mathcal{K}_{p}).$

\subsubsection{$p$-adic modular forms of $p$-adic bi-weights}

The yoga of $p$-adic weights, either \`{a} la Serre or \`{a} la Katz, allows
us now to define the space 
\begin{equation}
M_{k_{1},k_{2}}^{p}(N;\mathcal{K}_{p})
\end{equation}
of $p$-adic modular forms of any bi-weight $(k_{1},k_{2})\in \frak{X}%
_{p}^{(2)}.$ If we follow Serre, we define them as elements of the Banach
space $\mathcal{FJ}_{p}$ via limits of $p$-adic modular forms of integral
bi-weights. If we follow Katz, we have 
\begin{equation}
M_{k_{1},k_{2}}^{p}(N;\mathcal{O}_{p})=V(\bar{\Sigma}^{k_{1}}\Sigma
^{k_{2}}).
\end{equation}
We let the reader complete the details, which are identical to the case of a
single weight treated before.

\subsection{The theta operator for $p$-adic modular forms}

We are finally able to define the operator $\Theta $ on $p$-adic modular
forms. Compare [Ka3, V.5.8]. Let $f\in M_{k_{1},k_{2}}^{p}(N;\mathcal{O}%
_{p}).$ Assume first that $k_{1}$ and $k_{2}$ are from $\Bbb{Z},$ and reduce
modulo $p^{m},$ to get $f\in H^{0}(\bar{S}_{\mu }^{(m)},\mathcal{L}%
^{k_{1}}\otimes \mathcal{P}_{\mu }^{k_{2}}).$ Take any $n\ge m,$ pull back
to $\overline{Ig}(p^{n})_{\mu }^{(m)},$ divide by $a^{k_{1},k_{2}}$ and
consider 
\begin{equation}
\eta _{f}=d(\tau ^{*}f/a^{k_{1},k_{2}})\in H^{0}(\overline{Ig}(p^{n})_{\mu
}^{(m)},\Omega _{Ig}^{1}).
\end{equation}
Apply $KS^{-1}$ to $\eta _{f}.$ This results in a section of $\mathcal{L}%
\otimes \mathcal{P}.$ As explained before, when we project this section to $%
\mathcal{L}\otimes \mathcal{P}_{\mu }$ we get a section that is holomorphic
along $\tilde{C}$ and even vanishes there (recall $KS$ had a pole along the
cuspidal divisor). Multiply back by $a_{k_{1},k_{2}}$ and use Galois descent
to descend the resulting section to $S_{\mu }^{(m)}.$

We may now take the limit over $m$ to get our $\Theta ,$ if $%
(k_{1},k_{2})\in \Bbb{Z}^{2}.$ A further limit over weights, as in the proof
of Proposition \ref{Katz is Serre}, allows us to extend the definition to $%
(k_{1},k_{2})\in \frak{X}_{p}^{(2)}.$ Using Katz' approach, where the
process of dividing and multiplying back by $a^{k_{1},k_{2}}$ is already
built into the isomorphism with $V(\bar{\Sigma}^{k_{1}}\Sigma ^{k_{2}}),$ $%
\Theta $ is nothing but the map 
\begin{equation}
\Theta :f\mapsto (1\otimes pr_{\mu })\circ KS^{-1}\circ d(f)
\end{equation}
sending $V(\bar{\Sigma}^{k_{1}}\Sigma ^{k_{2}})$ to $V(\bar{\Sigma}%
^{k_{1}+1}\Sigma ^{k_{2}+1}).$ Here $pr_{\mu }:\mathcal{P}\rightarrow 
\mathcal{P}_{\mu }$ is the projection defined over $\bar{S}_{\mu }^{rig}.$

\begin{theorem}
Let $(k_{1},k_{2})\in \frak{X}_{p}^{(2)}.$ The operator 
\begin{equation}
\Theta :M_{k_{1},k_{2}}^{p}(N;\mathcal{O}_{p})\rightarrow
M_{k_{1}+1,k_{2}+1}^{p}(N;\mathcal{O}_{p})
\end{equation}
defined by the above formula, satisfies the following properties (and is
uniquely determined by its effect on $q$-expansions).

(i) When one reduces $M_{k_{1},k_{2}}^{p}(N;\mathcal{O}_{p})$ modulo $p,$
and uses the isomorphism $\mathcal{P}_{\mu }\simeq \mathcal{L}^{p},$ $\Theta 
$ reduces to the operator 
\begin{equation}
\Theta :M_{k}(N;\kappa )\rightarrow M_{k+p+1}(N;\kappa )
\end{equation}
on $\mod p$ modular forms.

(ii) The effect of $\Theta $ on the canonical FJ expansion $\widetilde{FJ}(f)
$ is given by ``$q\frac{d}{dq}",$ i.e. by the formula (\ref{qd/dq}).
\end{theorem}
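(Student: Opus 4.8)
The plan is to prove the theorem first for integral bi-weights $(k_{1},k_{2})\in\Bbb{Z}^{2}$, working modulo $p^{m}$ on the tower $\overline{Ig}(p^{n})_{\mu }^{(m)}$ ($n\ge m$), and then to extend to $(k_{1},k_{2})\in\frak{X}_{p}^{(2)}$ by the same density/limit argument used for Proposition \ref{Katz is Serre}. For the integral case one must first check the construction is well posed. Given $f\in H^{0}(\bar{S}_{\mu }^{(m)},\mathcal{L}^{k_{1}}\otimes\mathcal{P}_{\mu }^{k_{2}})$, set $\eta_{f}=d(\tau^{*}f/a^{k_{1},k_{2}})$, apply $KS(\Sigma)^{-1}$ and project modulo $\mathcal{P}_{0}$. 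The resulting section of $\mathcal{L}\otimes\mathcal{P}_{\mu }$ is holomorphic along $\tilde{C}=\tau^{-1}(C)$, and in fact vanishes there: this is Corollary \ref{complex kappa} together with Corollary \ref{KS at cusps}, proved in characteristic $0$ and inherited modulo $p^{m}$ because $\Omega_{\widehat{S}}^{1}$ and $\omega_{\mathcal{A}/\widehat{S}}$ are flat over $R_{0}$; concretely, $\psi=(1\otimes pr_{\mu })\circ KS(\Sigma)^{-1}$ has a simple zero along $C$ since $2\pi i\,dz=M\,dq/q$. Multiplying back by $a^{k_{1},k_{2}}$ kills the character $\bar{\Sigma}^{k_{1}}\Sigma^{k_{2}}$ by which $\Delta(p^{n})$ acts on $\tau^{*}f/a^{k_{1},k_{2}}$ (Proposition \ref{Root of Hasse}(i) for $a$, and the identical statement $\gamma^{*}\bar{a}=\Sigma(\gamma)^{-1}\bar{a}$ for $\bar{a}$), so the section descends to $S_{\mu }^{(m)}$; compatibility in $n$ and $m$ is formal, and in Katz' language $\Theta$ is literally $f\mapsto(1\otimes pr_{\mu })\circ KS^{-1}\circ d(f)$ on $V$, visibly carrying $V(\bar{\Sigma}^{k_{1}}\Sigma^{k_{2}})$ into $V(\bar{\Sigma}^{k_{1}+1}\Sigma^{k_{2}+1})$.

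For part (i) I would reduce everything modulo $p$. Over $\bar{S}_{\mu }^{(1)}$ the Verschiebung $V_{\mathcal{P}}$ induces an isomorphism $\mathcal{P}_{\mu }\overset{\sim}{\rightarrow}\mathcal{L}^{(p)}\simeq\mathcal{L}^{p}$ with $\ker V_{\mathcal{P}}=\mathcal{P}_{0}$, and under the $\varepsilon^{*}$-trivializations of the tower it carries $\bar{a}$ to $a^{(p)}$, hence $a^{1,1}=a\bar{a}$ to $a^{p+1}$. This is the torus identity $V(\sigma_{2})=\sigma_{3}^{(p)}$ from the second lemma of \secref{compatability}, valid over all of $\bar{S}_{\mu }^{(1)}$ and not merely at the cusps, since $\mathcal{P}_{\mu }=\omega_{\mathcal{A}[p]^{\mu }}(\Sigma)$, $\mathcal{L}=\omega_{\mathcal{A}[p]^{\mu }}(\bar{\Sigma})$, $\varepsilon$ matches $\mathcal{A}[p]^{\mu }$ with $\delta_{\mathcal{K}}^{-1}\mathcal{O}_{\mathcal{K}}\otimes\mu_{p}$, and $Ver_{\mu_{p}}=1$. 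Composing the mod-$p$ reduction of $\Theta$ with $V_{\mathcal{P}}\otimes 1$ (which equals $(\bar{V}_{\mathcal{P}}\otimes 1)\circ(pr_{\mu }\otimes 1)$) then turns $(1\otimes pr_{\mu })\circ KS(\Sigma)^{-1}\circ d(\tau^{*}f/a^{k})$, multiplied by $a^{k}$, into $a^{k}\cdot(V_{\mathcal{P}}\otimes 1)\circ KS(\Sigma)^{-1}(\eta_{f})$ --- which is exactly the definition of the $\mod p$ operator $\Theta$ of \secref{Theta on ss} --- while $a^{k_{1}+1,k_{2}+1}$ maps to $a^{k+p+1}$, giving the target $M_{k+p+1}(N;\kappa)$.

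For part (ii) I would run the complex/$p$-adic transition diagram at the end of \secref{compatability}. Working over $\mathcal{O}_{N,\frak{P}}$ and restricting to the formal neighborhood of the standard component $\tilde{E}$, the two lemmas of \secref{compatability} (and the analogue of the first, $\varepsilon^{*}\bar{a}=e_{\Sigma }\cdot(1\otimes dT/T)$, which becomes $2\pi i\,d\zeta_{2}\bmod\mathcal{P}_{0}$ over $\Bbb{C}$) identify $a|_{\tilde{E}}$ with $2\pi i\,d\zeta_{3}$ and $\bar{a}|_{\tilde{E}}$ with $2\pi i\,d\zeta_{2}\bmod\mathcal{P}_{0}$. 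Writing $g=\tau^{*}f/a^{k_{1},k_{2}}=\sum_{m}\theta_{m}(u)q^{m}$ with $q=e^{2\pi iz/M}$, one has $dg=2\pi iM^{-1}\sum_{m}m\theta_{m}q^{m}\,dz+\sum_{m}\theta_{m}^{\prime}q^{m}\,du$; by Corollary \ref{complex kappa}, $\psi$ kills $du$ and sends $dz$ to $2\pi i\,d\zeta_{2}\otimes d\zeta_{3}$, so $\psi(dg)=M^{-1}\sum_{m}m\theta_{m}q^{m}\cdot(2\pi i\,d\zeta_{2})\otimes(2\pi i\,d\zeta_{3})=M^{-1}\sum_{m}m\theta_{m}q^{m}\cdot\bar{a}|_{\tilde{E}}\otimes a|_{\tilde{E}}$. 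Dividing $\Theta(f)=a^{k_{1},k_{2}}\cdot\psi(dg)$ by $a^{k_{1}+1,k_{2}+1}$ and recalling that $\theta_{m}q^{m}=c_{m}(f)$, this yields $\widetilde{FJ}(\Theta(f))=M^{-1}\sum_{m}m\,c_{m}(f)$, i.e. formula (\ref{qd/dq}). Finally, uniqueness of an operator with this effect on Fourier--Jacobi expansions is the $q$-expansion principle ($FJ(f)=0\Rightarrow f=0$).

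The main obstacle I anticipate is the bookkeeping of trivializations across the three categories (complex analytic over $\mathcal{K}_{N}$, algebraic over $R_{N}$, and $p$-adic over $\mathcal{O}_{N,\frak{P}}$): one must pin down that $\varepsilon^{*}\bar{a}=e_{\Sigma }\cdot(1\otimes dT/T)$ reduces from $2\pi i\,d\zeta_{2}$ and that $V_{\mathcal{P}}(\bar{a})=a^{(p)}$ globally over $\bar{S}_{\mu }^{(1)}$ (upgrading the cuspidal computation of \secref{compatability}), and then track the normalizing constants so that exactly the factor $M^{-1}$, with no residual power of $2\pi i$ or $\delta_{\mathcal{K}}$, survives. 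Everything else is either formal --- Galois descent, limits over $m$ and over weights, compatibility in $m,n$ --- or has already been carried out in characteristic $0$ and merely reduced modulo $p^{m}$.
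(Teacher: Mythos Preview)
Your proposal is correct and matches the paper's approach exactly: the paper's own proof of this theorem is a single sentence (``We omit the proof of (ii), which goes along the same lines as in the $\bmod\ p$ theory''), and you have faithfully filled in those details by rerunning the argument of \S3.3 with the pair $(a,\bar a)$ in place of $a^{p+1}$, together with the straightforward observation for (i) that $V_{\mathcal P}$ identifies $\bar a$ with $a^{(p)}$ on $\bar S_\mu^{(1)}$. One bookkeeping correction: the two lemmas you invoke (identifying $a|_{\tilde E}$ with $2\pi i\,d\zeta_3$ and $a|_{\tilde E}^{p+1}$ with $2\pi i\,d\zeta_2\otimes 2\pi i\,d\zeta_3\bmod \mathcal P_0\otimes\mathcal L$) and the complex/$p$-adic transition diagram are in \S3.3, not in \S\ref{compatability}.
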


We omit the proof of (ii), which goes along the same lines as in the $\
mod p$ theory.

\bigskip

\textbf{Bibliography\medskip }

[An-Go] F. Andreatta, E. Z. Goren: Hilbert modular forms: mod $p$ and $p$%
-adic aspects, Memoirs A.M.S. \textbf{819, }2005.\medskip

[Bel] J. Bella\"{i}che: Congruences endoscopiques et repr\'{e}sentations
Galoisiennes, \emph{Th\`{e}se}, Paris XI (Orsay), 2002.\medskip

[B-N] S. B\"{o}cherer, S. Nagaoka: On mod p properties of Siegel modular
forms, Math. Ann. \textbf{338 }(2007), 421-433.\medskip

[Bu-We] O. B\"{u}ltel, T. Wedhorn: Congruence relations for Shimura
varieties associated to some unitary groups, J. Instit. Math. Jussieu 
\textbf{5} (2006), 229-261.\medskip

[Cog] J. Cogdell: Arithmetic cycles on Picard modular surfaces and modular
forms of nebentypus, J. Reine Angew. Math. \textbf{357} (1985),
115-137.\medskip

[Col] R. Coleman: Classical and overconvergent modular forms, Inv. Math. 
\textbf{124} (1996), 215-241.\medskip

[De] P. Deligne: Travaux de Shimura, S\'{e}m. Bourbaki \textbf{389}
(1971).\medskip

[De-Ra] P. Deligne, M. Rapoport: Les sch\'{e}mas de modules de courbes
elliptiques, \emph{in:} \emph{Modular Functions of One Variable} II, LNM 
\textbf{349} (1973), 143-316.\medskip

[dS-G] E. de Shalit, E. Z. Goren: Supersingular curves on Picard modular
surfaces modulo an inert prime, \emph{submitted} (2015).\medskip

[Ei1] E. Eischen: $p$-adic differential operators on automorphic forms on
unitary groups, Annales de l'Institut Fourier, \textbf{62} (2012),
177-243.\medskip

[Ei2] E. Eischen: A $p$-adic Eisenstein measure for unitary groups, J. Reine
Angew. Math. \textbf{699} (2015), 111-142.\medskip

[Fa-Ch] G. Faltings, C.-L. Chai: \emph{Degeneration of abelian varieties, }%
Springer, 1990.\medskip

[Gor] E. Z. Goren: Hilbert modular forms modulo $p^{m}$ -- the unramified
case, J. Number Theory \textbf{90} (2001), 341--375.\medskip

[Gr] B. Gross: A tameness criterion for Galois representations associated to
modular forms (mod $p$), Duke Math. J. \textbf{61 }(1990), 445-517.\medskip

[Ha] M. Harris: Arithmetic vector bundles and automorphic forms on Shimura
varieties. I, Invent. Math. \textbf{82} (1985), 151-189.\medskip

[Hart] R. Hartshorne: \emph{Algebraic Geometry,} Graduate Texts in
Mathematics, No. \textbf{52}, Springer, 1977.\medskip

[Hi1] H. Hida: $p$\emph{-adic automorphic forms on Shimura varieties},
Springer, 2004.\medskip

[Hi2] H. Hida: Irreducibility of the Igusa tower, Acta Mathematica Sinica,
English Series, \textbf{25} (2009), 1-20.\medskip 

[Hs] M.-L. Hsieh: Eisenstein congruence on unitary groups and Iwasawa main
conjectures for CM fields, J. of the AMS, \textbf{27 }(2014),
753-862.\medskip 

[Joc] N. Jochnowitz: A study of the local components of the Hecke algebra
mod $l$, Trans. Amer. Math. Soc. \textbf{270} (1982), 253-267.\medskip

[Ka1] N. Katz: $p$-adic properties of modular schemes and modular forms, 
\emph{in:} \emph{Modular functions of one variable III}, 69-190. LNM \textbf{%
350}, Springer, 1973.\medskip

[Ka2] N. Katz: A result on modular forms in characteristic $p$, \emph{in:} 
\emph{Modular functions of one variable V}, 53-61, LNM \textbf{601},
Springer, 1977.$\medskip $

[Ka3] N. Katz: $p$-adic interpolation of real analytic Eisenstein series,
Ann. of Math. \textbf{104} (1976), 459-571.\medskip

[Ka-Ma] N. Katz, B. Mazur: \emph{Arithmetic moduli of elliptic curves}.
Annals of Mathematics Studies \textbf{108}, Princeton, 1985.\medskip

[Ka-O] N. Katz, T. Oda: On the differentiation of de Rham cohomology classes
with respect to parameters, J. Math. Kyoto Univ. \textbf{8} (1968),
199-213.\medskip

[Kob] N. Koblitz: $p$-adic variation of the zeta-function over families of
varieties defined over finite fields, Compositio Math. \textbf{31} (1975),
119-218.\medskip

[Lan] K.-W. Lan: Arithmetic compactifications of PEL-type Shimura varieties,
London Mathematical Society Monographs \textbf{36}, Princeton, 2013.\medskip

[L-R] R. Langlands, D. Ramakrishnan: \emph{The zeta functions of Picard
modular surfaces}, Univ. Montr\'{e}al, Montr\'{e}al, 1992.\medskip

[La1] M. Larsen: Unitary groups and $l$-adic representations, \emph{Ph.D.
Thesis}, Princeton University, 1988.\medskip

[La2] M. Larsen: Arithmetic compactification of some Shimura surfaces \emph{%
in}: [L-R], 31-45.\medskip

[Mo] B. Moonen: Group schemes with additional structures and Weyl group
cosets \emph{in: Moduli of abelian varieties}, Progr. Math. \textbf{195},
255-298, Birkh\"{a}user, 2001.\medskip

[Mu1] D. Mumford: \emph{Abelian varieties, }Oxford University Press, London
1970.\medskip

[Mu2] D. Mumford: \emph{The red book of varieties and schemes}, second
expanded edition, LNM \textbf{1358, }Springer (1999).\medskip

[Ri] K. Ribet: A. $p$-adic interpolation via Hilbert modular forms, \emph{in:%
} \emph{Algebraic geometry}, Proc. Sympos. Pure Math. \textbf{29}, 581--592,
AMS, 1975.\medskip

[Se] J.-P. Serre: Formes modulaires et fonctions z\^{e}ta $p$-adiques. \emph{%
in:} \emph{Modular functions of one variable III}, LNM \textbf{350},
191-268, Springer, 1973.\medskip

[Sh1] G. Shimura: Arithmetic of unitary groups, Ann. of Math. \textbf{79}
(1964), 369-409.\medskip

[Sh2] G. Shimura: The arithmetic of automorphic forms with respect to a
unitary group, Ann. of Math. \textbf{107} (1978), 569-605.\medskip

[Sh3] G. Shimura: \emph{Arithmeticity in the theory of automorphic forms, }%
Math. Surveys and Monographs \textbf{82, }AMS, 2000.\medskip

[SwD] H.P.F. Swinnerton-Dyer: On $l$-adic representations and congruences
for coefficients of modular forms, \emph{in:} \emph{Modular functions of one
variable III}, LNM \textbf{350}, 1-55, Springer, 1973.\medskip

[V] I. Vollaard: The supersingular locus of the Shimura variety for $GU(1,s)$%
, Canad. J. Math. \textbf{62} (2010), 668-720.\medskip

[We] T. Wedhorn: The dimension of Oort strata of Shimura varieties of
PEL-type, \emph{in: Moduli of abelian varieties,} Progr. Math. \textbf{195},
441-471, Birkh\"{a}user, 2001.\medskip

\end{document}